\documentclass[reqno,12pts]{amsart}
\usepackage{amssymb,amsmath,amsthm,amstext,amsfonts}
\usepackage{amsmath,amstext,amsthm,amsfonts}
\usepackage[dvips]{graphicx}
\usepackage[dvips]{graphicx}
\usepackage{color}

\pagestyle{plain} \pagenumbering{arabic}

\makeatletter \@addtoreset{equation}{section} \makeatother

\renewcommand\thetable{\thesection.\@arabic\c@table}

\theoremstyle{plain}
\newtheorem{maintheorem}{Theorem}

\newtheorem{maincorollary}{Corollary}

\newtheorem{theorem}{Theorem }[section]
\newtheorem{proposition}[theorem]{Proposition}
\newtheorem{lemma}[theorem]{Lemma}
\newtheorem{corollary}[theorem]{Corollary}

\theoremstyle{definition} \theoremstyle{remark}
\newtheorem{remark}[theorem]{Remark}

\newtheorem{definition}[theorem]{Definition}

\newcommand{\field}[1]{\mathbb{#1}}
\newcommand{\real}{\field{R}}

\renewcommand{\natural}{\field{N}}

\newcommand{\torus}{\field{T}}

\newcommand{\al} {\alpha}       
        
\newcommand{\ga} {\gamma}    
\newcommand{\de} {\delta}       \newcommand{\De}{\Delta}

\newcommand{\vep}{\varepsilon}

\newcommand{\la} {\lambda}      \newcommand{\La}{\Lambda}

\newcommand{\si} {\sigma}

\newcommand{\R}{\mathbb{R}}
\newcommand{\supp}{\operatorname{supp}}
\newcommand{\diam}{\operatorname{diam}}

\newcommand{\topp}{\operatorname{top}}

\newcommand{\ov}{\overline}

\renewcommand{\field}[1]{\mathbb{#1}}

\newcommand{\re}{\field{R}}

\renewcommand{\natural}{\field{N}}
\newcommand{\vr}{\varphi}

\newcommand{\Ptop}{P_{\topp}}

\newcommand{\cL}{\mathcal{L}}
\newcommand{\cE}{\mathcal{E}}
\newcommand{\cM}{\mathcal{M}}

\newcommand{\cF}{\mathcal{F}}

\newcommand{\cW}{\mathcal{W}}
\newcommand{\cU}{\mathcal{U}}

\newcommand{\cA}{\mathcal{A}}

\def\ds{\displaystyle}

\begin{document}

\title{Differentiability of thermodynamical quantities in non-uniformly expanding dynamics}
\author{ T. Bomfim and A. Castro and P. Varandas}

\address{Thiago Bomfim, Departamento de Matem\'atica, Universidade Federal da Bahia\\
Av. Ademar de Barros s/n, 40170-110 Salvador, Brazil.}
\email{tbnunes@ufba.br}
\urladdr{https://sites.google.com/site/homepageofthiagobomfim/}

\address{Armando Castro, Departamento de Matem\'atica, Universidade Federal da Bahia\\
Av. Ademar de Barros s/n, 40170-110 Salvador, Brazil.}
\email{armando@impa.br}

\address{Paulo Varandas, Departamento de Matem\'atica, Universidade Federal da Bahia\\
Av. Ademar de Barros s/n, 40170-110 Salvador, Brazil.}
\email{paulo.varandas@ufba.br }
\urladdr{http://www.pgmat.ufba.br/varandas}

\date{\today}

\begin{abstract}
In this paper we study the ergodic theory of a robust non-uniformly expanding maps where no Markov
assumption is required. We prove that the topological pressure is differentiable as a function of the dynamics
and analytic with respect to the potential. Moreover we not only prove the continuity of the equilibrium states
and their metric entropy as well as the differentiability of the maximal entropy measure and extremal Lyapunov exponents with respect to the dynamics. We also prove a local large deviations principle and central limit
theorem and show that the rate function, mean and variance vary continuously with respect to observables, potentials
and dynamics. Finally, we show that the correlation function associated to the maximal entropy measure is differentiable with respect to the dynamics and it is $C^1$-convergent to zero.
In addition, precise formulas for the derivatives of thermodynamical quantities are given.
\end{abstract}

\subjclass[2000]{37A35, 37C30, 37C40, 37D25, 60F}
\keywords{Non-uniform hyperbolicity, regularity of topological pressure, linear response formula, regularity of limit theorems}

\maketitle

\section{Introduction}

The thermodynamical formalism was brought from statistical mechanics to dynamical systems
by the pioneering works of Sinai, Ruelle and Bowen \cite{Si72,Bo75,BR75} in the mid seventies.
Indeed, the correspondence between one-dimensional lattices and uniformly hyperbolic maps, via
Markov partitions, allowed to translate and introduce several notions of Gibbs measures and
equilibrium states in the realm of dynamical systems.
Nevertheless, although uniformly hyperbolic dynamics arise in physical systems
(see e.g. \cite{HM03}) they do not include some relevant classes of
systems including the Manneville-Pomeau transformation (phenomena of
intermittency), H\'enon maps and billiards with convex scatterers.
We note that all the previous systems present some non-uniformly
hyperbolic behavior and its relevant measure exhibits a weak
Gibbs property.
More recently there have been established
many evidences that non-uniformly hyperbolic dynamical systems admit
countable and generating Markov partitions. This is now parallel to the development
of a thermodynamical formalism of gases with infinitely many states.
We refer the reader to~\cite{Sa99,Pin11} for some recent progress in this direction.

A cornerstone of the theory that has driven the recent attention of many authors both in the physics and
mathematics literature concerns the differentiability of thermodynamical quantities as the topological pressure,
SRB measures or equilibrium states with respect to the underlying dynamical system.
Results on the differentiability of the topological entropy and pressure
include some important contributions by Walters~\cite{Wa92} and
Katok, Knieper, Pollicott and Weiss ~\cite{KKPW89} on the differentiability of the topological
entropy for Anosov and geodesic flows.
The differentiability of the SRB measure or equilibrium state with respect to the dynamical system
has been referred, for natural reasons, as linear response formulas (see e.g. \cite{Rue09}).
This has proved to be a hard subject not yet completely understood. In fact, this has been studied mostly for uniformly hyperbolic diffeomorphisms and flows in~\cite{KKPW89, Rue97, BL07, Ji12}, for the SRB measure of some partially hyperbolic difeomorphisms in~\cite{Dol04} and for one-dimensional piecewise expanding and quadratic maps in~\cite{Rue05, BS08, BS09, BS12, BBS13} and a general picture is still far from complete. In this paper we address these questions for robust classes of non-uniformly expanding maps.

If, on the one hand, the study of finer statistical properties of thermodynamical quantities as equilibrium states,
mixing properties, large deviation and limit theorems,  stability under deterministic perturbations or regularity of the
topological pressure is usually associated to good spectral properties of the Ruelle-Perron-Frobenius operator, on the other hand neither the stability of the equilibrium states or differentiability results for thermodynamical
quantities could follow directly from the spectral gap property. This is due to the fact that transfer operators acting on the space of H\"older continuous potentials may not even vary continuously with the dynamical system (see e.g. Subsection~\ref{ex:discontinuous} for an example), which makes the classical perturbation theory hard to apply.
Revealing its fundamental importance,  the functional analytic approach to thermodynamical formalism
has gained special interest in the last few years and produced new and interesting results even in both uniformly and non-uniformly hyperbolic setting (see e.g. \cite{KL99, BKL01, Cas02,Cas04, GL06,BT07,DL08,BG09,BG10,Ru10,CV13, DZ13, CN15}).

In this article we address the study of linear response formulas, continuity and differentiability of several thermodynamical quantities and limit theorems for the robustly nonuniformly expanding maps of \cite{VV10, CV13}. These are local
diffeomorphisms that admit the coexistence of expanding and contracting behavior and need not admit any Markov partition.
Such classes of maps include important classes of examples as bifurcation of expanding homeomorphisms,
subshifts of finite type or intermittency phenomena in the Maneville-Pommeau maps (see Section~\ref{s.examples}).
One of the difficulties is that these dynamical systems are not topologically conjugate.

Our strategy builds on the Birkhoff's method of projective cones.
In \cite{CV13}  this method was applied to prove that the Ruelle-Perron-Frobenius operator acting on both the Banach spaces of H\"older continuous and smooth observables has a spectral gap and to prove continuous dependence of the topological pressure with respect to the dynamical system and the potential.

Our starting point here is to prove that the Ruelle-Perron-Frobenius operator $\cL_{f,\phi}$ associated to local
diffeomorphisms  is always differentiable with respect to the dynamics and potential as an a linear operator in
$L(C^{r+\al}(M,\mathbb R), C^{r-1}(M,\mathbb R))$ for $r\ge 1$. Recall the transfer operator may be discontinuous
as an element in $L(C^{r}(M,\mathbb R), C^{r}(M,\mathbb R))$. Then we deduce a chain-rule like formula for the
derivative of the transfer operators $\cL^n_{f,\phi}$ as operators in $L(C^{r+\al}(M,\mathbb R), C^{r-1}(M,\mathbb R))$.
The program to prove differentiability of many thermodynamical quantities is to prove that these quantities
are limit of differentiable objects with uniform derivatives. In the case of the topological pressure and the
equilibrium states we prove that topological pressure $\Ptop(f,\phi)$ and equilibrium states $\mu_{f,\phi}$ can be
obtained as limits involving $\cL^n_{f,\phi} (1)$ of the $(\cL_{f,\phi}^{*})^n \eta$, where $\cL_{f,\phi}^{*}$ stands for the dual
of the transfer operator and $\eta$ is a fixed probability measure. This has been carried out with success and we are
able to provide uniform bounds for the derivatives of this expressions in terms of derivatives involving the transfer operator.
To the best of our knowledge these are the first differentiability formulas for the topological pressure
and equilibrium states for multidimensional non-uniformly expanding maps.

The second main goal of this article is to prove stability of the limit laws with respect to the dynamics and potential.
On the one hand the spectral gap property and the the differentiability of the topological pressure is well known to
imply a central limit theorem and a local large deviations principle. On the other hand, we prove that the correlation
function associated to the maximal entropy measure is smooth with respect to the dynamical system $f$ and
convergent to zero. In consequence, we deduce that the mean and variance in the central limit theorem vary
smoothly with respect to $f$. Continuity results are obtained for more general equilibrium states.
In addition, we use a continuous inverse mapping theorem for fibered maps to deduce that the large deviations
rate function vary continuously with respect to the dynamical system and potential. To the best of our knowledge
the previous results are new even in the uniformly hyperbolic setting.

This paper is organized as follows. In Section~\ref{s.statements}, we describe the setting of our results and
state our main results on the regularity of the thermodynamical quantities and the applications to the stability
of the limit laws. Some preliminary results are given in Section~\ref{s.preliminaries}, while our main results concerning the differentiability of topological pressure, conformal measures and equilibrium states are proven in Section~\ref{sec:diff}. In Sections~\ref{subsec:smooth.cor} and~\ref{sec:CLT} we prove that the correlation function
is convergent to zero in the $C^1$-topology and obtain the differentiability of mean and variance in the central limit theorem. A local large deviations principle and the regularity of the rate function is discussed in Section~\ref{sec:deviations} . Finally, some applications and examples are discussed in Section~\ref{s.examples}.

\section{Statement of the main results}\label{s.statements}

\subsection{Setting}

In this section we introduce some definitions and establish the setting.
Let $M$ be compact and connected Riemannian manifold of dimension $m$ with distance $d$.
Let $f:M \to M$ be a \emph{local homeomorphism} and assume that there exists a  function
$x\mapsto L(x)$ such that, for every $x\in M$ there is a
neighborhood $U_x$ of $x$ so that $f_x : U_x \to f(U_x)$ is
invertible and
$$
d(f_x^{-1}(y),f_x^{-1}(z))
    \leq L(x) \;d(y,z), \quad \forall y,z\in f(U_x).
$$
In particular every point has the same finite number of preimages $\deg(f)$ which coincides with the
degree of $f$.

For all our results we assume that $f$ satisfies conditions
(H1) and  (H2) below. Assume there are constants $\si>1$
and $L\ge 1$, and an open region $\cA\subset M$
such that \vspace{.1cm}
\begin{itemize}
\item[(H1)] $L(x)\leq L$ for every $x \in \cA$ and
$L(x)< \sigma^{-1}$ for all $x\notin \cA$, and $L$ is
close to $1$: the precise condition is given in \eqref{eq.vep} and \eqref{eq.vepp}.
\item[(H2)] There exists a finite covering $\cU$ of $M$ by open domains of injectivity for $f$
such that $\cA$ can be covered by $q<\deg(f)$ elements of $\mathcal U$. \vspace{.1cm}
\end{itemize}
The first condition means that we allow expanding and contracting
behavior to coexist in $M$: $f$ is uniformly expanding outside $\cA$
and not too contracting inside $\cA$. In the case that  $\cA$ is empty then
$f$ is uniformly expanding. The second condition requires
that every point has at least one preimage in the expanding region.

An observable $g: M\to \mathbb R$ is $\al$-H\"older continuous if the H\"older constant
$$
|g|_\al
	=\sup_{x\neq y} \frac{|g(x)-g(y)|}{d(x,y)^\al}
$$
is finite. As usual, given $r\in\mathbb N_0$ and $\al\in (0,1)$ we endow the space $C^{r+\al}(M, \mathbb R)$ of
$C^r$ observables $g$ such that $D^r g$ is $\al$-H\"older continuous with the norm
$\|\cdot\|_{r,\al}=\|\cdot \|_r+|\cdot|_\al$. We write for simplicity $\|\cdot\|_\al$ for the case that $r=0$.
Throughout, we let $\phi:M \to \mathbb R$ denote a potential  at least H\"older continuous and satisfying
either
\begin{itemize}
\item[(P)] $\sup\phi-\inf\phi<\vep_\phi$ \quad  and \quad $|e^\phi|_{\alpha}  <\vep_\phi \;e^{\inf \phi}$
\end{itemize}
provided that $\phi$ is $\al$-H\"older continuous, or
\begin{itemize}
\item[(P')] $\sup\phi-\inf\phi<\vep_\phi$ \quad  and \quad $\max_{s\le r}\|D^s \phi\|_0 <\vep_\phi$
\end{itemize}
if $\phi$ is $C^r$, where $\vep_\phi>0$ depends only on $L$, $\si$, $q$, $\deg(f)$, $r$, a positive integer $m$
and small $\de>0$ stated precisely in \cite{CV13} (see  equations \eqref{eq.vep} and \eqref{eq.vepp} below).
These are open conditions on the set of potentials, satisfied by constant potentials.  In particular we
can consider measures of maximal entropy and equilibrium states associated to potentials $\beta \phi$ with
$\phi$ at least H\"older continuous and $\beta$ small, which in the physics literature is known as the high temperature setting.

Throughout the paper we shall denote by $\cF$ an open set of local homeomorphisms with Lipschitz inverse and $\cW$ be some family of H\"older continuous potentials satisfying $(H1), (H2)$ and $(P)$ with uniform constants.
Moreover, we shall denote by $\cF^{r+\al}$ an open set of $C^{r+\al}$ local diffeomorphisms
such that (H1) and (H2) hold with uniform constants and their inverse branches are $C^{r+\alpha}$,
and $\cW^{r+\al}$ to denote an open set of $C^{r+\al}$ potentials such that (P) or (P') holds. We notice that the higher regularity of the dynamics is used to deduce the regularity of the inverse branches,
which are related with the Perron-Frobenius operator.
We shall always use the term {\em differentiable} to mean
{\em $C^1$-differentiable}.

\subsection{Strong statistical properties of equilibrium states}\label{existence eq. states}

Let us first introduce the necessary definitions and collect from \cite{VV10,CV13} some results
on the existence and statistical properties of equilibrium states for this robust class
of transformations.
Given a continuous map $f:M\to M$ and a potential $\phi:M \to \mathbb R$,
the variational principle for the pressure asserts that
\begin{equation*}
\label{variational principle} \Ptop(f,\phi)=\sup \left\{
h_\mu(f)+\int \phi \;d\mu : \mu \;\text{is}\; f\text{-invariant}
\right\}
\end{equation*}
where $\Ptop(f,\phi)$ denotes the topological pressure of $f$ with
respect to $\phi$ and $h_\mu(f)$ denotes the metric entropy. An
\textit{equilibrium state} for $f$ with respect to $\phi$ is an
invariant measure that attains the supremum in the right hand side
above.

In our setting equilibrium states arise as invariant measures absolutely continuous
with respect to an expanding, conformal and non-lacunary Gibbs measure $\nu$.
Since we will not use these notions here we shall refer the reader to \cite{VV10} for precise
definitions and details.
Many important properties arise from the study of transfer operators. We consider the Ruelle-Perron-Fr\"obenius
transfer operator $\cL_{f,\phi}$ associated to  $f:M\to M$ and $\phi:M\to\real$ as the linear operator defined
on a Banach space $X \subset C^0(M,\mathbb R)$ of continuous functions $g:M\to\real$ and given by
$$
\cL_{f,\phi} (g)(x) = \sum_{f(y)=x} e^{\phi(y)} g(y).
$$
Since $f$ is a local homeomorphism it is clear that $\cL_{f,\phi} g$ is continuous for every continuous $g$
and, furthermore, $\cL_{f,\phi}$  is indeed a bounded operator relative to the norm of uniform convergence in
$C^0(M,\mathbb R)$ because
$
\|\cL_{f,\phi}\| \le  \deg(f) \; e^{\sup|\phi|}.
$
Analogously, $\cL_{f,\phi}$ preserves the Banach space $C^{r+\alpha}(M,\mathbb R)$, with $r+\al>0$, provided that $\phi$ is $C^{r+\al}$. Moreover, it is not hard to check that $\cL_{f,\phi}$ is a bounded linear operator in the Banach space $C^{r}(M,\mathbb R)\subset C^0(M,\mathbb R)$ ($r\ge 1$) endowed with the norm $\|\cdot\|_r$
whenever $f$ is a $C^r$-local diffeomorphism and $\phi\in C^{r}(M,\mathbb R)$.

We say that the Ruelle-Perron-Frobenius operator $\cL_{f,\phi}$ acting on a Banach space $X$ has the
\emph{spectral gap property} if there exists a decomposition of its spectrum
$\sigma(\cL_{f,\phi})\subset \mathbb C$ as follows: $\sigma(\cL_{f,\phi})=\{\lambda_1\}\cup \Sigma_1$ where
$\lambda_1$ is a leading eigenvalue for $\cL_{f,\phi}$ with one-dimensional associated eigenspace
and there exists $0 < \lambda_{0} < \lambda_{1}$ such that $\Sigma_{1} \subset \{ z\in \mathbb C : |z|<\lambda_0 \}$.
When no confusion is possible, for notational simplicity  we omit the dependence of the Perron-Frobenius
operator on $f$ or $\phi$.
We build over the following result which is a consequence of the results in \cite{VV10,CV13}.

\begin{theorem}\label{thm.oldstuff}
Let $f:M \to M$ be a local homeomorphism with
Lipschitz continuous inverse satisfying (H1) and (H2), and let $\phi:M \to \R$
be a H\"older continuous potential  such that (P) holds. Then
\begin{enumerate}
\item  there exists a unique equilibrium state $\mu_{f,\phi}$ for $f$ with respect to $\phi$,
it is expanding, exact and absolutely continuous with respect to some conformal,  non-lacunary
Gibbs measure $\nu_{f,\phi}$;
\item  the Ruelle-Perron-Frobenius has a  spectral gap property in the space of H\"older
continuous observables and the density $d\mu_{f,\phi}/d\nu_{f,\phi}$ is H\"older;
\item $\Ptop(f,\phi) = \log \lambda_{f , \phi}$, where $\lambda_{f , \phi}$ is the spectral radius of the Ruelle-Perron-Frobenius;
\item the topological pressure function $\cF \times \cW  \ni (f,\phi)  \to \Ptop(f,\phi)$ is continuous;
\item  the invariant density function $\cF \times \cW  \to  C^\alpha(M,\mathbb R)$ given by
 $(f, \phi)  \mapsto  \frac{d\mu_{f,\phi} }{d\nu_{f,\phi}}$
is continuous, where $C^\al(M,\mathbb R)$ is endowed with the $C^0$ topology.
\end{enumerate}
If, in addition, the potential $\phi: M \to \mathbb R$ is $C^r$-differentiable and satisfies (P')
then
\begin{enumerate}
\item[(5)]  the Ruelle-Perron-Frobenius has a  spectral gap property in the space of
$C^r$-observables and the density $d\mu_{f,\phi}/d\nu_{f,\phi}$ belongs to $C^r(M, \mathbb R)$;
\item[(6)] the topological pressure $\cF^r \times \cW^r  \ni (f,\phi)  \to \Ptop(f,\phi)$ and the invariant density
function $\cF^r \times \cW^r  \to  C^{r}(M,\mathbb R)$ given by $(f, \phi) \mapsto \frac{d\mu_{f,\phi} }{d\nu_{f,\phi}}$
vary continuously in the $C^r$ topology;
\item[(7)] the conformal measure function $\cF^r \times \cW^r  \to  \cM(M)$ given
by   $(f, \phi) \mapsto \nu_{f,\phi}$
is continuous in the weak$^*$ topology. In consequence, the equilibrium measure
$\mu_{f,\phi}$ varies continuously in the weak$^*$ topology;
\end{enumerate}
\end{theorem}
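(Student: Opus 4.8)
\emph{Proof plan.} The plan is to assemble the statement from \cite{VV10} and \cite{CV13}: items (1)--(3) are essentially a repackaging of constructions there, while the real work lies in organizing the continuity statements (4)--(6) and, in particular, in upgrading them to the $C^{r}$ category and to the conformal measure. For (1)--(3) I would recall from \cite{VV10} the conformal, non-lacunary Gibbs measure $\nu_{f,\phi}$, characterized by $\cL_{f,\phi}^{*}\nu_{f,\phi}=\lambda_{f,\phi}\,\nu_{f,\phi}$ with $\lambda_{f,\phi}$ the spectral radius of $\cL_{f,\phi}$ and Jacobian $J_{\nu_{f,\phi}}f=\lambda_{f,\phi}\,e^{-\phi}$, together with the absolutely continuous invariant probability $\mu_{f,\phi}=h_{f,\phi}\,\nu_{f,\phi}$, where $\cL_{f,\phi}h_{f,\phi}=\lambda_{f,\phi}h_{f,\phi}$ and $\int h_{f,\phi}\,d\nu_{f,\phi}=1$; uniqueness, exactness and the expanding property of $\mu_{f,\phi}$, as well as the spectral gap of $\cL_{f,\phi}$ on $C^{\alpha}(M,\mathbb R)$ and the H\"older regularity of $h_{f,\phi}$, are exactly the output of the Birkhoff projective-cone argument of \cite{CV13}: after normalization, $\lambda_{f,\phi}^{-n}\cL_{f,\phi}^{n}$ strictly contracts a suitable projective cone in the Hilbert metric, with images of uniformly bounded projective diameter. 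Finally, $\Ptop(f,\phi)=\log\lambda_{f,\phi}$ follows by a standard computation: Rokhlin's entropy formula for $\mu_{f,\phi}$ combined with $J_{\nu_{f,\phi}}f=\lambda_{f,\phi}e^{-\phi}$ gives $h_{\mu_{f,\phi}}(f)+\int\phi\,d\mu_{f,\phi}=\log\lambda_{f,\phi}$, while the variational principle together with $\frac1n\log\|\cL_{f,\phi}^{n}\um\|_{0}\to\log\lambda_{f,\phi}$ yields the reverse inequality $\Ptop(f,\phi)\le\log\lambda_{f,\phi}$.

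For (4) and (5) the point, as stressed in the introduction, is that $(f,\phi)\mapsto\cL_{f,\phi}$ is \emph{not} continuous in the operator norm of $L(C^{\alpha},C^{\alpha})$, so analytic perturbation theory of the leading eigenvalue does not apply, and one must combine two facts. First, \emph{finite-time continuity}: for each fixed $n$ and each $g\in C^{0}(M,\mathbb R)$ the map $(f,\phi)\mapsto\cL_{f,\phi}^{n}g$ is continuous in the $C^{0}$ norm, since $\cL_{f,\phi}^{n}g(x)=\sum_{f^{n}(y)=x}e^{S_{n}\phi(y)}g(y)$ is a finite sum, of cardinality $\deg(f)^{n}$ controlled by (H2), over inverse branches of $f^{n}$, and these branches together with $\phi$ vary continuously with $(f,\phi)$; when $f$ ranges in $\cF^{r}$ and $g\in C^{r}$ the same map is continuous in $C^{r}$, because the inverse branches then vary $C^{r}$-continuously. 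Second, \emph{uniform cone contraction}: since the constants in (H1), (H2), (P) (resp.\ (P')) are uniform over $\cF\times\cW$ (resp.\ $\cF^{r}\times\cW^{r}$), the contraction rate and diameter bound in the cone argument can be taken uniform, so that $\lambda_{f,\phi}=\lim_{n}(\cL_{f,\phi}^{n+1}\um)(x)/(\cL_{f,\phi}^{n}\um)(x)$ and $h_{f,\phi}=\lim_{n}\lambda_{f,\phi}^{-n}\cL_{f,\phi}^{n}\um$ (after renormalization) with a rate independent of $(f,\phi)$. A uniform limit of continuous functions being continuous, this gives the continuity of $(f,\phi)\mapsto\lambda_{f,\phi}$, hence of $\Ptop(f,\phi)=\log\lambda_{f,\phi}$, and of $(f,\phi)\mapsto h_{f,\phi}$ in the $C^{0}$ topology, as well as in the $C^{r}$ topology in the smooth setting.

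For (6), fix once and for all a reference probability $\eta$ on $M$. The same uniform cone estimate applied to the dual action gives that $\lambda_{f,\phi}^{-n}(\cL_{f,\phi}^{*})^{n}\eta$ converges weak$^{*}$ to $\nu_{f,\phi}$ at a rate uniform in $(f,\phi)$; and for fixed $n$ and $g\in C^{0}(M,\mathbb R)$ one has $\int g\,d(\cL_{f,\phi}^{*})^{n}\eta=\int\cL_{f,\phi}^{n}g\,d\eta$, which depends continuously on $(f,\phi)$ by the finite-time continuity above. Hence $(f,\phi)\mapsto\nu_{f,\phi}$ is weak$^{*}$-continuous, and since $\mu_{f,\phi}=h_{f,\phi}\,\nu_{f,\phi}$ with $h_{f,\phi}$ continuous in $C^{0}$, the equilibrium measure $\mu_{f,\phi}$ is weak$^{*}$-continuous as well. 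I expect the main obstacle to be precisely the second ingredient above — showing that the projective-cone constants, and therefore the convergence rates of $\lambda_{f,\phi}^{-n}\cL_{f,\phi}^{n}\um$ and of $\lambda_{f,\phi}^{-n}(\cL_{f,\phi}^{*})^{n}\eta$, can be chosen uniformly over the whole open family, which is where the uniformity of the constants in (H1), (H2), (P)/(P') is genuinely used; this is carried out in \cite{CV13}, and the only additional work here is to run the cone argument in the $C^{r}$ spaces and to check the $C^{r}$-dependence on $f$ of the inverse branches of $f$.
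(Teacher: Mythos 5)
Your proposal follows essentially the same route as the paper: items (1)--(3) are quoted from \cite{VV10,CV13} (the conformal/Gibbs measure, the Birkhoff cone contraction yielding a spectral gap for $\cL_{f,\phi}$ on $C^\alpha$, and $\Ptop=\log\lambda_{f,\phi}$), and the continuity statements are obtained exactly as in Proposition~\ref{prop:C0cont} by combining finite-time continuity of $(f,\phi)\mapsto\cL_{f,\phi}^{n}\um$ with the uniform-in-$(f,\phi)$ exponential convergence coming from the cone estimate, and using Proposition~\ref{prop:dual} for the eigenmeasure. One small imprecision: $\lambda_{f,\phi}^{-n}(\cL_{f,\phi}^{*})^{n}\eta$ converges to $\big(\int h_{f,\phi}\,d\eta\big)\nu_{f,\phi}$ rather than to $\nu_{f,\phi}$ itself, so you must renormalize by the total mass $\int\tilde\cL_{f,\phi}^{n}\um\,d\eta\to\int h_{f,\phi}\,d\eta$ before concluding weak$^{*}$ continuity of $(f,\phi)\mapsto\nu_{f,\phi}$.
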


Let us mention that condition (1) above holds more generally for all H\"older continuous
potentials such that $\sup\phi-\inf\phi<\log\deg(f)-\log q$ (see \cite[Theorem~A]{VV10}.
The aforementioned results lead to the natural questions about the regularity of some thermodynamical
quantities as the topological pressure, equilibrium states, Lyapunov exponents, entropy, the central limit theorem, the large deviation rate function or the correlation function when one perturbs the dynamics $f$ or the potential $\phi$.
Our purpose in the present paper is to address these questions in this non-uniformly expanding context.

\subsection{Statement of the main results}

\subsubsection{Spectral theory of transfer operators} Our first result addresses the problem of the regularity
of the transfer operators with respect to the dynamical system given by a local diffeomorphism. As discussed
before,
in general the Koopman operator when acting in the space of $C^{r+\alpha}$-observables is not differentiable with respect to the dynamics in the operator norm topology. This implies also on the lack of differentiability for the transfer operators. Nevertheless, we get the following:

\begin{maintheorem}{(Differentiability of transfer operator)}\label{thm:general.differentiability}
Let  $M$ be a compact connected Riemannian manifold and 
$\phi\in C^{r+ \alpha}(M,\mathbb R)$
be any fixed potential, with $r \geq 1$ and $\al>0$. Then the map
$$
\begin{array}{ccc}
{Diff}_{loc}^{r+\al}(M) & \to & L(C^{r+\al}(M, \mathbb R), C^{r-1}(M,\mathbb R))\\
f &  \mapsto & \cL_{f, \phi}
\end{array}
$$
is $C^1$-differentiable.
\end{maintheorem}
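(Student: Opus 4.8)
The plan is to write the transfer operator in terms of local inverse branches and differentiate branch by branch. Fix $f_0 \in \mathrm{Diff}^r_{loc}(M)$ and a finite cover $\{U_i\}$ by domains of injectivity for $f_0$; by openness this cover still works for all $f$ in a neighborhood $\cV$ of $f_0$ in the $C^r$-topology. On each $U_i$ the map $f$ has a well-defined $C^r$ inverse branch $h_{i,f} := (f|_{U_i})^{-1}$, and subordinate to $\{U_i\}$ one fixes once and for all a partition of unity $\{\rho_i\}$ (independent of $f$). Then, for $g \in C^r(M,\mathbb R)$,
$$
\cL_{f,\phi}(g)(x) = \sum_{i} \rho_i(x)\, e^{\phi(h_{i,f}(x))}\, g(h_{i,f}(x)),
$$
so the dependence of $\cL_{f,\phi}$ on $f$ is entirely encoded in the family of inverse branches $f \mapsto h_{i,f}$. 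The first step is therefore the standard fact that $f \mapsto h_{i,f}$ is a $C^1$ map from $\cV$ into $C^{r-1}(U_i, M)$: this follows from the implicit function theorem applied to the $C^r$-smooth evaluation map $(f, x, y) \mapsto f(y) - x$ (working in local charts), the point being that solving $f(y)=x$ for $y$ in terms of $(f,x)$ costs one derivative, which is exactly why the target space drops from $C^r$ to $C^{r-1}$. The derivative of $h_{i,f}$ in the direction $\dot f \in C^r(M, TM)$ (a vector field along $f$) is the classical formula
$$
\bigl(D_f h_{i,f}\cdot \dot f\bigr)(x) = -\,\bigl(Df(h_{i,f}(x))\bigr)^{-1}\, \dot f\bigl(h_{i,f}(x)\bigr),
$$
which lies in $C^{r-1}$.

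The second step is to propagate this through the composition-and-multiplication structure. Each summand $g \mapsto \rho_i \cdot (e^{\phi}\, g)\circ h_{i,f}$ is built from: (a) precomposition by $h_{i,f}$, i.e. the map $(h, g) \mapsto g \circ h$, which is a bounded bilinear-type operation and is $C^1$ jointly when $g$ is taken in $C^r$ and $h$ in $C^{r-1}$ with values in $C^{r-1}$ — this is precisely the $\Omega$-lemma / composition operator regularity, and the loss of one derivative on the precomposing map is again what forces the codomain $C^{r-1}$; (b) multiplication by the fixed $C^r$ (hence $C^{r-1}$) functions $\rho_i$ and $e^{\phi}\circ h_{i,f}$, which are bounded bilinear maps on $C^{r-1}$, hence smooth. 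Composing $f \mapsto h_{i,f}$ (step one, $C^1$) with these smooth/bilinear operations and the evaluation $g \mapsto (\cdot)$ and using the chain rule for maps into spaces of operators, one gets that $f \mapsto \bigl(g \mapsto \rho_i\,(e^\phi g)\circ h_{i,f}\bigr)$ is a $C^1$ map $\cV \to L(C^r(M,\mathbb R), C^{r-1}(M,\mathbb R))$; summing over the finitely many $i$ finishes the proof, and yields the explicit derivative
$$
\bigl(D_f \cL_{f,\phi}\cdot \dot f\bigr)(g)(x)
= \sum_{i} \rho_i(x)\, \Bigl[\, \bigl(D(e^\phi g)(h_{i,f}(x))\bigr)\cdot \bigl(D_f h_{i,f}\cdot \dot f\bigr)(x)\,\Bigr],
$$
i.e. the heuristic ``differentiate under the sum in the inverse branches'' formula. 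This also makes transparent the chain-rule-type formula for $\cL^n_{f,\phi}$ alluded to in the introduction, obtained by differentiating the composition $\cL^n_{f,\phi} = \cL_{f,\phi}\circ\cdots\circ\cL_{f,\phi}$, treating each factor as a ($f$-dependent) bounded operator between the spaces $C^{r}, C^{r-1}, \dots$ and using the Leibniz rule.

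The main obstacle is bookkeeping the loss of derivatives so that everything lands in the right Banach spaces at once: the inverse branch $h_{i,f}$ is only $C^{r-1}$-regular as a function of $f$, precomposition by a $C^{r-1}$ map only acts boundedly from $C^r$ to $C^{r-1}$, and one must check that the bilinear operations (multiplication, composition) are not merely separately continuous but jointly bounded, so that the abstract chain rule for Fréchet-differentiable maps into an operator space genuinely applies. None of these is deep, but the argument must be set up in local charts with the fixed cover and partition of unity so that the uniform-norm estimates $\|h_{i,f} - h_{i,f_0}\|_{C^{r-1}} \to 0$, $\|D_f h_{i,f}\| \le C$ are available; once that scaffolding is in place, the differentiability is a routine application of the composition-operator ($\Omega$-lemma) machinery. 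I would remark that, unlike the Koopman operator $g \mapsto g\circ f$ whose $f$-dependence is non-differentiable on $C^{r+\alpha}$ because it precomposes by $f$ itself (no derivative to spare), here the $f$-dependence enters only through the inverse branches, whose variation is one order smoother, and it is exactly this gain that makes Theorem~\ref{thm:general.differentiability} possible.
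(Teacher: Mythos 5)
Your proposal follows the same route as the paper's Proposition~\ref{propdif}: decompose $\cL_{f,\phi}$ via a partition of unity, expose the $f$-dependence through the local inverse branches, differentiate those branches by an implicit-function-theorem/$\Omega$-lemma argument that costs exactly one derivative (this is precisely the cited Franks machinery), and sum. The computation and the final derivative formula match the paper's.

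One bookkeeping slip worth fixing: you declare $\{\rho_i\}$ subordinate to the domain-side cover $\{U_i\}$ of injectivity domains, but then write $\rho_i(x)$, evaluating it at the \emph{target} point $x$ rather than at the preimage $h_{i,f}(x)$. With a domain-side partition of unity the correct decomposition is $\cL_{f,\phi}(g)(x)=\sum_i \bigl(\rho_i\, e^{\phi}\, g\bigr)\bigl(h_{i,f}(x)\bigr)$ (extended by zero off $f(U_i)$), and then the $\rho_i$-factor is also hit by the derivative. The paper sidesteps this by putting the partition of unity on the target side, subordinate to a cover of $M$ by small balls $B_j$ on which all $\deg(f)$ inverse branches are simultaneously defined; that choice produces exactly the formula you wrote, with $\vr_j(x)$ a genuine prefactor that is not differentiated. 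Either fix is routine and does not affect the substance of the argument.
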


We observe that a pointwise differentiability version of the previous result holds in the more general context
where the potential $\phi$ belongs to $C^{r}(M,\mathbb R)$ (see~Subsection~\ref{sec:Banach-diff} for the definition
of pointwise differentiability and the proof of Proposition~\ref{propdif}). We will not use this fact here.

In general we can only expect pointwise continuity of the transfer operators acting on the space of $C^r$-observables. More precisely, given a fixed observable $g\in C^r(M,\mathbb R)$ the map $f\mapsto \cL_{f,\phi}(g)$
is continuous. However,  we refer the reader to Section~\ref{s.examples} for an explicit example where the
transfer operator in not even pointwise continuous when acting on the space of H\"older continuous observables.

The situation is rather different when we consider the dependence on the potential. For that reason, for the time being
let us focus on the regularity of the transfer operators as
$\mathcal{L}_{f,\phi}: C^{r+\alpha}(M, \R) \to C^{r+\alpha}(M, \R)$ on the potential $\phi$ and deduce
the analiticity of spectral radius, leading eigenfunction and eigenmeasure, and the equilibrium state when the dynamics $f$ is fixed. The precise definition of analyticity of functions acting on Banach spaces is postponed to Subsection~\ref{sec:analytic}.

\begin{maintheorem}\label{thm:B}
Assume $r+\al>0$. Let $f:M \to M$ be a local homeomorphism with $C^{r+\al}$ inverse branches satisfying
(H1) and (H2)  and let $\mathcal{W}^{r+\al}\subset C^{r+\alpha}(M, \R)$ be an open subset of H\"older continuous potentials $\phi:M \to \R$ such that either (P) holds (in the case $r=0$) or (P') holds (in the case $r>0$) with uniform constants.
Then the following functions are analytic:
\begin{enumerate}
\item[(i)] The Ruelle-Perron-Frobenius operator $C^{r+\al}(M, \mathbb R) \!\ni \!\phi \!\mapsto \!\mathcal{L}_{\phi} \!\in \!
L(C^{r+\alpha}(M, \R))$;
\item[(ii)] The spectral radius function $\mathcal{W}^{r+\al} \ni  \phi \mapsto \lambda_{ \phi}=\exp (\Ptop(f,\phi))$;
\item[(iii)] The invariant density function $\mathcal{W}^{r+\al} \ni \phi \mapsto h_{ \phi} \in C^{r+\alpha}(M, \R)$;
\item[(iv)] The conformal measure function $\mathcal{W}^{r+\al} \ni \phi \mapsto \nu_{f,\phi} \in (C^{r+\alpha})^*$. In particular,  for any fixed $g\in C^{r+\al}(M,\R)$ the map $\phi \mapsto \int g\;  d\nu_{f,\phi}$ is analytic;
\item[(v)] The equilibrium state function $\mathcal{W}^{r+\al} \ni \phi \mapsto \mu_{f,\phi}=h_\phi \nu_{f,\phi} \in (C^{r+\alpha})^*$. In particular,
for any fixed $g\in C^{r+\al}(M,\R)$ the map $\phi \mapsto \int g\;  d\mu_{f,\phi}$ is analytic.
\end{enumerate}
\end{maintheorem}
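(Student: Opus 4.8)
The plan is to deduce all five items from (i) together with the spectral gap of Theorem~\ref{thm.oldstuff} and classical analytic perturbation theory. Throughout I would pass to the complexifications $C^{r+\al}(M,\complex)$ and read \emph{analytic} as \emph{locally given by a norm-convergent power series} (equivalently, extending holomorphically to a complex neighbourhood of the point in question); since analyticity is a local property it suffices to work near an arbitrary fixed $\phi_0\in\cW^{r+\al}$.

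For (i): the space $C^{r+\al}(M,\complex)$ is a Banach algebra under a norm equivalent to $\|\cdot\|_{r,\al}$, so the exponential series $e^{\phi}=\sum_{n\ge 0}\phi^{n}/n!$ converges absolutely and uniformly on bounded sets, and $\phi\mapsto e^{\phi}$ is entire as a map into $C^{r+\al}(M,\complex)$. Writing $M_{u}$ for multiplication by $u$, the assignment $u\mapsto M_{u}\in L(C^{r+\al}(M,\complex))$ is bounded and linear, again because $C^{r+\al}$ is an algebra. Since $\cL_{f,\phi}=\cL_{f,0}\circ M_{e^{\phi}}$, where $(\cL_{f,0}g)(x)=\sum_{f(y)=x}g(y)$ is a fixed bounded operator on $C^{r+\al}$ (here the $C^{r+\al}$ regularity of the inverse branches of $f$ is used), the map $\phi\mapsto\cL_{f,\phi}$ is a composition of analytic and bounded linear maps, hence analytic; this is (i).

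By Theorem~\ref{thm.oldstuff}(2) and its $C^{r+\al}$ counterpart in \cite{CV13}, $\cL_{f,\phi_0}$ has the spectral gap property on $C^{r+\al}$. An isolated simple part of the spectrum is stable under small perturbations in $L(C^{r+\al})$, so analytic perturbation theory (Kato) applied to the analytic family of the previous step yields that the leading eigenvalue $\lambda_{\phi}$ and the rank-one spectral projection $P_{\phi}=\frac{1}{2\pi i}\oint_{\gamma}(z-\cL_{f,\phi})^{-1}\,dz$, with $\gamma$ a small circle around $\lambda_{\phi_0}$, depend analytically on $\phi$ near $\phi_0$; as $\lambda_{\phi}=\exp(\Ptop(f,\phi))$ this is (ii). Since the range of $P_{\phi}$ is spanned by $h_{\phi}$ and $P_{\phi}$ commutes with $\cL_{f,\phi}$, one gets $P_{\phi}g=\nu_{f,\phi}(g)\,h_{\phi}$: writing $P_{\phi}g=\ell_{\phi}(g)h_{\phi}$, commutation forces $\cL_{f,\phi}^{*}\ell_{\phi}=\lambda_{\phi}\ell_{\phi}$, so $\ell_{\phi}$ is a multiple of the conformal eigenfunctional $\nu_{f,\phi}$, and the normalisations $\nu_{f,\phi}(M)=1=\int h_{\phi}\,d\nu_{f,\phi}$ pin down $\ell_{\phi}=\nu_{f,\phi}$. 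Evaluating at $g=\um$ gives $h_{\phi}=P_{\phi}\um$, which is therefore analytic, proving (iii). For (iv)--(v): the adjoint operation is an isometry $L(C^{r+\al})\to L((C^{r+\al})^{*})$, so $\phi\mapsto P_{\phi}^{*}$ is analytic, and $P_{\phi}^{*}\eta=\eta(h_{\phi})\,\nu_{f,\phi}$; taking $\eta=\delta_{x_0}$, the bounded evaluation functional at any $x_0$, and using that $h_{\phi}$ is a positive function, we get $\nu_{f,\phi}=h_{\phi}(x_0)^{-1}\,P_{\phi}^{*}\delta_{x_0}$, which is analytic, giving (iv); and $\mu_{f,\phi}=M_{h_{\phi}}^{*}\nu_{f,\phi}$, i.e.\ $\mu_{f,\phi}(g)=\int g\,h_{\phi}\,d\nu_{f,\phi}$, is analytic as a composition of the analytic maps $\phi\mapsto M_{h_{\phi}}$, $A\mapsto A^{*}$ and the bounded bilinear pairing $(A,\eta)\mapsto A^{*}\eta$, giving (v). The ``in particular'' assertions follow by composing with the bounded functional $\eta\mapsto\eta(g)$.

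The one genuinely external ingredient is the spectral gap of $\cL_{f,\phi}$ on $C^{r+\al}$ supplied by Theorem~\ref{thm.oldstuff} and \cite{CV13}; granted it, the argument is a routine combination of Banach-algebra functional calculus for the exponential and Kato's perturbation theory. I expect the only delicate points to be the passage to complexified spaces (so that Kato's holomorphic functional calculus applies verbatim) and the observation that, analyticity being local, it is enough to control $\cL_{f,\phi}$ near each $\phi_0$, where the gap persists as an isolated spectral set.
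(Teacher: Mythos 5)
Your proof is correct and does take a genuinely different, and cleaner, route than the one the paper carries out. You factor $\cL_{f,\phi}=\cL_{f,0}\circ M_{e^{\phi}}$ and use the Banach-algebra structure of $C^{r+\al}(M,\complex)$ to get (i) immediately, then feed the analytic operator family into Kato's holomorphic perturbation theory (isolated simple eigenvalue, Riesz projection $P_\phi$, adjoint $P_\phi^*$) to extract $\lambda_\phi$, $h_\phi=P_\phi\um$, $\nu_{f,\phi}=h_\phi(x_0)^{-1}P_\phi^*\delta_{x_0}$ and $\mu_{f,\phi}(g)=\int g\,h_\phi\,d\nu_{f,\phi}$ as compositions of analytic maps. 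The verification that the dual eigenfunctional in the range of $P_\phi^*$ equals $\nu_{f,\phi}$ with the stated normalisations is correctly argued. The paper, by contrast, proves (i) by a direct estimate on the Taylor remainder of $\phi\mapsto\cL_{f,\phi}(e^{H}g)$ and then establishes (ii)--(v) not by abstract perturbation theory but by differentiating the finite-$n$ approximations $F_n(\phi)=\tfrac1n\log\int\cL_{f,\phi}^{n}\um\,d\nu_{f,\phi_0}$, $F_n(\phi)=\lambda_{f,\phi}^{-n}\cL_{f,\phi}^{n}\um$, etc., and proving uniform convergence of $DF_n$ using Proposition~\ref{prop:dual} and the spectral gap. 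This is substantially more labour, and the paper in fact acknowledges, at the start of the proof of Proposition~\ref{raio} and again in the remark right after Theorem~B, that the abstract perturbation-theoretic route you take would yield analyticity immediately. The payoff of the paper's longer path is that it produces the explicit closed formulas for the first derivatives (e.g. $D_\phi\lambda_{f,\phi}|_{\phi_0}\cdot H=\lambda_{f,\phi_0}\int h_{f,\phi_0}H\,d\nu_{f,\phi_0}$ and the resolvent-series formulas of Propositions~\ref{pontofixo}--\ref{prop:dif.conformal.potential}), which are then used elsewhere: in Corollary~\ref{cordifpot}, in the proof of Theorem~\ref{thm:C}, and in the variance and free-energy computations of Sections~\ref{sec:CLT}--\ref{sec:deviations}. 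So your argument proves the statement of Theorem~B as efficiently as one could hope, but if you also wanted the derivative formulas you would still need to evaluate, say, $D_\phi P_\phi$ via the resolvent identity, which amounts to recovering the same formulas the paper derives directly.
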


The previous result has some precursors, among which we mention the complex analyticity of the pressure function
for expanding maps (see e.g.~\cite{PP90}).  In fact, such differentiability results hold by standard operator
perturbation theory when the transfer operator has a spectral gap and it varies smoothly with respect to
the potential $\phi$. In addition, we obtain precise formula for the first derivative of the topological pressure in the previous theorem in Proposition~\ref{raio}.
Since the topological pressure is given by the logarithm of the spectral radius obtain
that  for all $ H\in C^{r+\alpha}(M, \R)$
$$
D_{\phi}P_{top}(f, \phi)_{ \;|\phi_{0}} \cdot H
	= \int h_{f, \phi_{0}} \cdot H \; d\nu_{f, \phi_{0}}
	= \int H \; d\mu_{f, \phi_{0}}.
$$

Now we focus on the regularity of the spectral objects associated to the transfer operator when one perturbs
the dynamical system. The following theorem asserts that both the topological pressure
and the maximal entropy measure are differentiable in a strong way, as functionals. More precisely,

\begin{maintheorem}\label{thm:C}
Assume $r\ge 2$. Let $\phi$ be a fixed $C^{r}$ potential on $M$ satisfying (P')
and $\cF^{r} $ be as above.
The following properties hold:
\begin{itemize}
\item[(i)] The pressure function $\Ptop(\cdot, \phi): \cF^{2} \to \re$ given by $f\mapsto \Ptop(f, \phi)$ is differentiable;
\item[(ii)] If $\phi\equiv 0$ then the maximal entropy measure function $\cF^{2} \ni f \mapsto \mu_{f} \in (C^{2}(M,\mathbb R))^{\ast}$ is differentiable. In particular, the map $ \cF^{2} \ni f \mapsto \int g\;  d\mu_{f}$  is differentiable for any fixed
$g\in C^{2}(M,\R)$.
\item[(iii)] If $\phi\equiv 0$ and $\mathcal{F}^{2} \ni f \mapsto g_{f} \in C^{2}(M , \R)$ is differentiable at ${f_{0}}$ then the map $\mathcal{F}^{2} \ni f \mapsto \int g_{f} d\mu_{f}$ is differentiable at $f_{0}$.
\end{itemize}
\end{maintheorem}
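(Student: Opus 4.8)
The plan is to obtain all three statements from the spectral description of Theorem~\ref{thm.oldstuff} together with the differentiability of the transfer operator furnished by Theorem~\ref{thm:general.differentiability}, exactly along the lines sketched in the introduction: represent the thermodynamical quantities as limits of uniformly differentiable expressions built from $\cL_{f,\phi}^n(1)$, and then differentiate through the limit. First I would fix $f_0\in\cF^2$ and recall that, by the spectral gap, $\lambda_{f,\phi}^{-n}\cL_{f,\phi}^n(1)\to h_{f,\phi}$ in $C^{1}(M,\R)$ (indeed in $C^{r-1}$), uniformly for $f$ in a neighbourhood of $f_0$, and that $\Ptop(f,\phi)=\log\lambda_{f,\phi}$ with $\lambda_{f,\phi}=\lim_n \big(\int \cL_{f,\phi}^n(1)\,d\nu_0\big)^{1/n}$ for a fixed reference measure $\nu_0$; more robustly, $\lambda_{f,\phi}=\int \cL_{f,\phi}(h_{f,\phi})\,d\nu_{f,\phi}\big/\!\int h_{f,\phi}\,d\nu_{f,\phi}$ once the eigendata are known to depend continuously on $f$ by Theorem~\ref{thm.oldstuff}. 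The key analytic input is a \emph{chain-rule formula} for $D_f(\cL_{f,\phi}^n)$ as a map $\cF^2\to L(C^{2},C^{1})$, namely $D_f(\cL_{f,\phi}^n)=\sum_{j=0}^{n-1}\cL_{f,\phi}^{\,n-1-j}\circ\big(D_f\cL_{f,\phi}\big)\circ \cL_{f,\phi}^{\,j}$, which follows by induction from Theorem~\ref{thm:general.differentiability} (this is the formula the introduction announces). The requirement $r\ge 2$ enters here: to get convergence and differentiability in $C^{1}$ we need the transfer operator to act with a loss of one derivative from $C^{2}$ to $C^{1}$, and the contraction estimates for $\lambda^{-n}\cL^n$ must hold in $C^{1}$, which needs $C^{2}$ regularity of the inverse branches.

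For (i), I would write $\Ptop(f,\phi)=\log\lambda_{f,\phi}$ and express $\lambda_{f,\phi}$ via the eigendata; differentiability then reduces to differentiating $f\mapsto \cL_{f,\phi}$ (Theorem~\ref{thm:general.differentiability}) and $f\mapsto h_{f,\phi}$, $f\mapsto \nu_{f,\phi}$. The cleanest route is to bypass differentiating $\nu_{f,\phi}$ in isolation: using that $\lambda^{-n}\cL^n(1)\to h$ and $\lambda^{-n}(\cL^*)^n\eta\to \nu$ for a fixed probability $\eta$, one gets $\Ptop(f,\phi)=\lim_n \tfrac1n\log \int \cL_{f,\phi}^n(1)\,d\eta$, and each term $f\mapsto \tfrac1n\log\int\cL_{f,\phi}^n(1)\,d\eta$ is differentiable by the chain-rule formula above (a finite sum of differentiable pieces), with derivatives $\tfrac1n \big(\int\cL^n(1)d\eta\big)^{-1}\int D_f(\cL^n)(1)\,d\eta$. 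The point is to show these derivatives form a Cauchy sequence in the operator norm on $L(T_{f_0}\cF^2,\R)$ uniformly in $f$; this uses the spectral gap to control $\sum_j\cL^{n-1-j}(D_f\cL)(\cL^j 1)$ by splitting $\cL^j(1)=\lambda^j(h+\text{exponentially small in }C^1)$ and likewise for the outer iterates, so that the normalized sum converges. Having uniform convergence of derivatives of differentiable functions forces the limit $\Ptop(\cdot,\phi)$ to be $C^1$, yielding (i) together with the formula $D_f\Ptop(f,\phi)=\int \dot{\cL}(h_{f,\phi})\,d\nu_{f,\phi}$ (suitably normalized), where $\dot\cL=D_f\cL_{f,\phi}$.

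For (ii), with $\phi\equiv0$ we have $\mu_f=h_f\,\nu_f$, and I would test against a fixed $g\in C^2(M,\R)$: $\int g\,d\mu_f=\lim_n \dfrac{\int \cL_f^n(g)\,d\eta}{\int \cL_f^n(1)\,d\eta}$, a ratio of expressions each differentiable in $f$ by the chain-rule formula. Again the task is to check that the $f$-derivatives of the $n$-th approximants converge uniformly on a neighbourhood of $f_0$ in the operator norm on $L(T_{f_0}\cF^2,\R)$; the spectral gap makes the error terms decay geometrically, so the limit functional $f\mapsto \int g\,d\mu_f$ is differentiable, and since $g\in C^2$ is arbitrary and the bound is uniform over the unit ball of $C^2$, the map $f\mapsto\mu_f\in(C^2)^*$ is differentiable. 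Statement (iii) is then a formal consequence: if $f\mapsto g_f\in C^2(M,\R)$ is differentiable at $f_0$, write $\int g_f\,d\mu_f=\langle \mu_f, g_f\rangle$ and apply the product/chain rule for the bilinear pairing $(C^2)^*\times C^2\to\R$, using part (ii) for differentiability of $f\mapsto\mu_f$ and the hypothesis for $f\mapsto g_f$; the derivative is $\langle D_f\mu_{f_0},g_{f_0}\rangle+\langle \mu_{f_0},D_f g_{f_0}\rangle=\int Dg_{f_0}\,d\mu_{f_0}+D_f\big(\int g\,d\mu_f\big)_{|f_0}\cdot(\,\cdot\,)$ applied to $g_{f_0}$.

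The main obstacle I anticipate is the uniform (in $f$) geometric control of the telescoped sum $\sum_{j=0}^{n-1}\cL_f^{\,n-1-j}(\dot{\cL}\,\cL_f^{\,j}(1))$ after normalization by $\lambda_f^n$: one must simultaneously use that $\cL_f^{\,j}(1)$ is, up to an exponentially small $C^1$-error, $\lambda_f^j h_f$, and that $\cL_f^{\,n-1-j}$ contracts the codimension-one complement of the eigenspace at a uniform rate in $C^1$, all while $\dot{\cL}=D_f\cL_f$ maps $C^2$ to $C^1$ only (hence the middle term lives one derivative down, which is why the estimates must be run at the $C^1$ level and $r\ge 2$ is needed). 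Making the constants in these estimates uniform over a neighbourhood of $f_0$ in $\cF^2$—which is exactly what converts pointwise differentiability of the approximants into $C^1$-differentiability of the limit—is the delicate point; everything else is the bookkeeping of differentiating finite sums and the abstract fact that a uniform limit of $C^1$ maps with uniformly convergent derivatives is $C^1$.
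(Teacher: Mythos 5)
Your proposal follows essentially the same route as the paper: Lemma~\ref{lesimplif} for the pressure formula $\Ptop(f,\phi)=\lim_n\frac1n\log\int\cL^n_{f,\phi}(1)\,d\eta$, the chain-rule expansion of $D_f\cL^n_{f,\phi}$ from Proposition~\ref{prop:diff.transfer.f}, and uniform convergence of the derivatives of the finite-$n$ approximants via the spectral gap (Lemma~\ref{lepress} for part (i), Theorem~\ref{thm:diff.max} for part (ii), with (iii) by the bilinear product rule). One remark worth making explicit in part (ii): the geometric summability of $\sum_j \tilde\cL_f^{n-1-j}\bigl(D_f\tilde\cL_f(\tilde\cL_f^{j}g)\bigr)$ is not a consequence of the spectral gap alone, since the constant part $\int g\,d\mu_f$ of $\tilde\cL_f^{j}g$ is invariant under $\tilde\cL_f$ and so would produce a contribution of order $n$ unless it is annihilated; it \emph{is} annihilated precisely because with $\phi\equiv 0$ one has $D_f\tilde\cL_f(1)=0$ (Proposition~\ref{prop:diff.transfer.f}(iv), using that $\cL_f(1)=\deg f$ is locally constant in $f$). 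Your ratio formulation $\int\cL_f^n(g)\,d\eta\big/\int\cL_f^n(1)\,d\eta$ hides this cancellation inside the locally constant denominator, so the argument is correct, but the cancellation is the crux rather than a bookkeeping convenience.
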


Our proof of the differentiability of the topological pressure with respect to the dynamics involves the analysis
of the iterations of the transfer operator at the constant function one. For that reason it was also necessary
to obtain precise formulas for the first derivatives
of the expressions above. Given $g\in C^1(M,\mathbb R)$ fixed, we obtain an expression for the first derivative
of $f\mapsto \cL_{f,\phi}(g)$, prove the chain rule
\begin{equation*}
D_{f}\mathcal{L}_{f,\phi}^{n}(g)_{|f_{0}} \cdot H
	= \sum_{i = 1}^{n}\mathcal{L}_{f_{0},\phi}^{i - 1} (D_{f}\mathcal{L}_{f,\phi}(\mathcal{L}_{f_{0},\phi}^{ n - i}(g))_{|f_{0}} \cdot H),
\end{equation*}
even without the differentiability of the transfer operator in the strong norm topology, and deduce the
expression for the derivative of the functional $\mu_f \colon \mathcal{F}^{2} \ni f \mapsto \int g \,d\mu_{f}$
given by
\begin{equation*}
D_{f}\mu_{f}(g)_{|f_{0}} \cdot H
	= \sum_{i = 0}^{\infty}\int D_{f}\tilde{\mathcal{L}}_{f}(\tilde{\mathcal{L}}_{f_{0}}^{i}(P_{0}(g)))_{|f_{0}} 	 
		\cdot H d\mu_{f_{0}}.
\end{equation*}
We omit the potential $\phi\equiv 0$ above for notational simplicity. Furthermore, since partial derivatives are continuous then the function $(f,\phi)\mapsto\Ptop(f,\phi)$ is differentiable (cf. Subsection~\ref{sec:Banach-manifold}). We refer the reader to Proposition~\ref{propdif}, Corollary~\ref{cor:jointdiff} and Theorem~\ref{thm:diff.max} for more details.

To finish this section one should comment on higher order differentiability results. In fact, on the one hand
using operator perturbation theory methods as in \cite{GL06} it seems most likely that one can actually prove higher
order differentiability of the spectral components. E.g. if $f \in C^{r+ \alpha}(M,M)$ then the conformal
measure map
$\mathcal{F}^{r+\alpha} \times \mathcal{W}^r \ni (f,\phi)  \mapsto \nu_{f,\phi} \in \mathcal({C}^{r +\alpha- 1})^{\ast}$
is $C^{r+ \alpha- 1}$-differentiable. The novelty of our approach and method used here is that it has the
advantage of providing very useful asymptotic formulas for the derivatives of the topological pressure and
equilibrium states. A priori it is not
clear how these can be obtained by means of the classical operator perturbation theory. In fact, the
classical operator perturbation theory requires the family of operators $\cL_{f,\phi}$ acting on the same Banach space
to vary continuously with $(f,\phi)$, something that may not occur even for expanding maps
(cf. Subsection~\ref{ex:discontinuous}). 
We give a wide range of applications in the following section.

\vspace{.3cm}
\subsubsection{Applications: Stability and differentiability in dynamical systems}

In this subsection we derive some interesting consequences on the stability of the robust class of non-uniformly
expanding maps considered.
The following is a consequence of Theorem~\ref{thm:B} and Theorem~\ref{thm:C}.

\begin{maincorollary}\label{cor:diff}
Given $r\ge 2$,  let $\cF^{r} $
 and  $ \mathcal W^{r}$
 be open sets of local diffeomorphisms and potentials as above.
If $f \mapsto \phi_f \in \mathcal W^{ 2}$ is  differentiable then the pressure function
$\mathcal F^{2} \ni f \mapsto \Ptop(f,\phi_f)$ is differentiable.
In particular, if $\delta>0$ is small then the pressure functions $\cF^{3} \times (-\delta,\delta)
\ni (f , t) \mapsto \Ptop(f,-t \log\|Df^{\pm1}\|)$ are differentiable.
\end{maincorollary}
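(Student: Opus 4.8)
The plan is to deduce Corollary~\ref{cor:diff} from Theorem~\ref{thm:B} and Theorem~\ref{thm:C} by combining the two ``one variable at a time'' differentiability statements into a joint differentiability statement and then precomposing with a differentiable path of potentials. First I would observe that the map $(f,\phi)\mapsto \Ptop(f,\phi)$, defined on $\cF^{2}\times\cW^{2}$, has continuous partial derivatives: differentiability in $\phi$ with $f$ fixed (and an explicit formula for the derivative) comes from Theorem~\ref{thm:B}(ii) together with the remark that $\Ptop=\log\lambda_{\phi}$, which gives $D_{\phi}\Ptop(f,\phi)_{|\phi_{0}}\cdot H=\int H\,d\mu_{f,\phi_{0}}$; differentiability in $f$ with $\phi$ fixed comes from Theorem~\ref{thm:C}(i). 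To pass from partial to full differentiability one invokes the standard fact that a function whose partial (Fr\'echet) derivatives exist and are jointly continuous on an open subset of a product of Banach spaces is $C^{1}$; continuity of $\partial_{\phi}\Ptop$ follows from the analyticity in Theorem~\ref{thm:B} and the continuity of $(f,\phi)\mapsto\mu_{f,\phi}$ in Theorem~\ref{thm.oldstuff}(4)--(6), while continuity of $\partial_{f}\Ptop$ is part of what is proved for Theorem~\ref{thm:C} (this is alluded to in the text, ``since partial derivatives are continuous then $(f,\phi)\mapsto\Ptop(f,\phi)$ is differentiable'').

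Granting joint $C^{1}$-differentiability of $(f,\phi)\mapsto\Ptop(f,\phi)$ on $\cF^{2}\times\cW^{2}$, the first assertion is immediate by the chain rule: if $f\mapsto\phi_{f}\in\cW^{2}$ is differentiable, then $f\mapsto(f,\phi_{f})\in\cF^{2}\times\cW^{2}$ is differentiable, and composing with $\Ptop$ gives that $\cF^{2}\ni f\mapsto\Ptop(f,\phi_{f})$ is differentiable, with
\begin{equation*}
D_{f}\big[\Ptop(f,\phi_{f})\big]_{|f_{0}}\cdot H
= D_{f}\Ptop(f,\phi_{f_{0}})_{|f_{0}}\cdot H + \int \big(D_{f}\phi_{f}_{|f_{0}}\cdot H\big)\, d\mu_{f_{0},\phi_{f_{0}}}.
\end{equation*}
One must check the bookkeeping that the relevant open sets are compatible, i.e. that the path $f\mapsto\phi_{f}$ indeed lands in an admissible family $\cW^{2}$ with uniform constants (H1),(H2),(P') near $f_{0}$; since $\cW^{2}$ is open and the path is continuous, this holds on a neighborhood of each $f_{0}$, which suffices for a local statement like differentiability.

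For the last sentence, fix the geometric potentials $\phi_{f,t}^{\pm}:=-t\log\|Df^{\pm1}\|$. The point is that $(f,t)\mapsto\phi_{f,t}^{\pm}\in C^{2}(M,\mathbb R)$ is differentiable: $t\mapsto -t\log\|Df^{\pm1}\|$ is linear, and $f\mapsto \log\|Df^{\pm1}\|\in C^{2}(M,\mathbb R)$ is differentiable when $f$ ranges over $C^{3}$ local diffeomorphisms (one loses a derivative passing from $f$ to $Df$ and needs $\|Df^{\pm1}\|$ to stay bounded away from $0$ and $\infty$, which is automatic on $M$ compact; here the extra regularity $\cF^{3}$ is exactly what makes $\phi_{f,t}^{\pm}$ of class $C^{2}$ and depend differentiably on $f$). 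For $\delta>0$ small, conditions (P') hold uniformly for $\phi_{f,t}^{\pm}$ with $|t|<\delta$ since these potentials are $C^{2}$-small, uniformly over $f$ in a $C^{3}$-neighborhood; thus the path $(f,t)\mapsto\phi_{f,t}^{\pm}$ takes values in an admissible $\cW^{2}$. Applying the joint differentiability of $\Ptop$ together with the chain rule for the two-parameter family $(f,t)\mapsto(f,\phi_{f,t}^{\pm})$ then yields that $\cF^{3}\times(-\delta,\delta)\ni(f,t)\mapsto\Ptop(f,-t\log\|Df^{\pm1}\|)$ is differentiable. I expect the main obstacle to be purely a matter of care rather than depth: verifying that all the open-set/uniform-constant hypotheses are preserved under the perturbations considered (so that Theorems~\ref{thm:B} and~\ref{thm:C} genuinely apply along the path), and checking the regularity $f\mapsto\log\|Df^{\pm1}\|$ with the correct loss of derivatives; the conceptual content is entirely contained in the joint $C^{1}$ statement plus the chain rule.
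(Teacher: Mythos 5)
Your proposal follows essentially the same route as the paper: the joint $C^1$ differentiability of $(f,\phi)\mapsto\Ptop(f,\phi)$ on $\cF^{2}\times\cW^{2}$ is exactly the paper's Lemma~\ref{cor:jointdiff}, obtained by combining the partial derivative in $\phi$ (Corollary~\ref{cordifpot}) with the partial derivative in $f$ (Lemma~\ref{lepress}) and invoking continuity of the partials, and from there the corollary is the chain rule applied to a differentiable path $f\mapsto\phi_f$. Your accounting for the $\cF^{3}$ requirement in the final sentence (loss of one derivative passing from $f$ to $Df$, so that $(f,t)\mapsto -t\log\|Df^{\pm1}\|$ lands differentiably in $\cW^{2}$) matches the intent of the statement.
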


As a byproduct of Theorems~\ref{thm:B} and \ref{thm:C}, we also obtain the regularity of the measure theoretical entropy, extremal Lyapunov exponents and sum of the positive Lyapunov exponents associated to the equilibrium states.

\begin{maincorollary}\label{cor:diff2}
Assume that $r\ge 1$ and $\al>0$.
Then
 $$
\cF^{r+\al} \times \mathcal W^{1+\al} \ni (f,\phi) \mapsto h_{\mu_{f,\phi}}(f) = \Ptop(f,\phi)-\int \phi \, d\mu_{f,\phi}
 $$
and  the Lyapunov exponent functions
 $$
\cF^{r+\al} \ni f \mapsto \int \log \|Df(x)\| \, d\mu_{f,\phi}
 	\quad\text{and}\quad
\cF^{r+\al} \ni f \mapsto \int \log \|Df(x)^{-1}\|^{-1} \, d\mu_{f,\phi} 	 
 $$
and
$$
\cF^{r+\al} \ni f  \mapsto \int \log |\det Df(x)| \, d\mu_{f,\phi} 	
$$
 are continuous. Furthermore, if $\phi \equiv 0$ and $r \geq 3$ and $\alpha\ge 0$ then the previous functions vary differentially with respect to the dynamics $f$.
\end{maincorollary}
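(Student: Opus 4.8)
The plan is to reduce everything to results already available: the existence and uniqueness of the equilibrium state together with the continuity statements of Theorem~\ref{thm.oldstuff}, and the differentiability statements of Theorem~\ref{thm:C}.

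First I would dispose of the entropy. Since $\mu_{f,\phi}$ is the \emph{unique} equilibrium state (Theorem~\ref{thm.oldstuff}(1)), the variational principle forces the displayed identity $h_{\mu_{f,\phi}}(f)=\Ptop(f,\phi)-\int\phi\,d\mu_{f,\phi}$, so it suffices to show that both terms on the right vary continuously on $\cF^{r+\al}\times\cW^{1+\al}$. The first is Theorem~\ref{thm.oldstuff}(4)--(5). For the second I would combine the weak$^*$ continuity of $(f,\phi)\mapsto\mu_{f,\phi}$ (Theorem~\ref{thm.oldstuff}(6)) with the continuous inclusion $\cW^{1+\al}\hookrightarrow C^0(M,\R)$: for $(f_n,\phi_n)\to(f,\phi)$,
\[
\Bigl|\int\phi_n\,d\mu_{f_n,\phi_n}-\int\phi\,d\mu_{f,\phi}\Bigr|
\le\|\phi_n-\phi\|_0+\Bigl|\int\phi\,d\mu_{f_n,\phi_n}-\int\phi\,d\mu_{f,\phi}\Bigr|\longrightarrow 0.
\]
For the three Lyapunov exponent functions (with $\phi$ fixed) I set $\psi^{+}_{f}=\log\|Df\|$, $\psi^{-}_{f}=-\log\|Df^{-1}\|$, $\psi^{\det}_{f}=\log|\det Df|$; if $f_n\to f$ in $\cF^{r+\al}$ then $Df_n\to Df$ uniformly, the quantities $\|Df_n(x)\|^{\pm1}$ and $|\det Df_n(x)|^{\pm1}$ are uniformly bounded by compactness of $M$, hence $\psi^{\bullet}_{f_n}\to\psi^{\bullet}_{f}$ uniformly, and the same triangle inequality (now using weak$^*$ continuity of $f\mapsto\mu_{f,\phi}$) gives $\int\psi^{\bullet}_{f_n}\,d\mu_{f_n,\phi}\to\int\psi^{\bullet}_{f}\,d\mu_{f,\phi}$.

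For the differentiability part ($\phi\equiv0$ and $r\ge3$) I first note that $\int\phi\,d\mu_f\equiv0$, so the entropy function is just $f\mapsto\Ptop(f,0)$, which is differentiable by Theorem~\ref{thm:C}(i) applied to the constant potential; since the $C^{r+\al}$ norm dominates the $C^2$ one, differentiability on $\cF^2$ passes to $\cF^{r+\al}$ with its finer topology. For the Lyapunov exponent functions I would invoke Theorem~\ref{thm:C}(iii): fixing $f_0$, it is enough to show that $f\mapsto g_f:=\psi^{\bullet}_{f}$ is differentiable at $f_0$ as a map into $C^2(M,\R)$. This is clean for $\psi^{\det}_{f}$: in local charts $f\mapsto Df$ is the restriction of a bounded linear operator $C^{r+\al}\to C^{r-1+\al}$, and post-composition with the $C^\infty$ map $A\mapsto\log|\det A|$ on $GL_m(\R)$ is a smooth Nemytskii operator $C^{r-1+\al}\to C^{r-1+\al}$; since $r\ge3$ one has $C^{r-1+\al}\hookrightarrow C^2$, so $f\mapsto\psi^{\det}_{f}$ is smooth into $C^2(M,\R)$ and Theorem~\ref{thm:C}(iii) yields differentiability of $f\mapsto\int\psi^{\det}_{f}\,d\mu_f$. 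The extremal exponents $\psi^{\pm}_{f}$ would be handled by the same scheme once one knows $f\mapsto\log\|Df^{\pm1}\|$ is differentiable into $C^2$.

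The hard part is exactly this last point. The operator norm $A\mapsto\|A\|$ is merely Lipschitz --- not $C^1$ --- at linear maps whose largest singular value is not simple, so the composition-operator argument does not apply verbatim to the extremal exponents; one must either work with a genuinely smooth norm, or show that the ($f_0$-dependent) set where $x\mapsto\|Df(x)\|$ fails to be differentiable is $\mu_f$-negligible and pass to the limit by dominated convergence, which needs uniformity along the perturbation. A second, pervasive, issue is the loss of one derivative in $f\mapsto Df$: this is what forces the hypothesis $r\ge3$, so that $\psi^{\bullet}_{f}$ lands in the space $C^2(M,\R)$ in which Theorem~\ref{thm:C} is phrased, and throughout one must keep track of which topology on the dynamics is in force, using that differentiability with respect to a coarser norm automatically gives it with respect to a finer one.
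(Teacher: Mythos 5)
The paper does not supply an explicit proof of Corollary~\ref{cor:diff2} (it is presented as "a byproduct of Theorems~\ref{thm:B} and \ref{thm:C}"), so your write-up is essentially the only detailed argument here. Your reductions are the natural ones and match what the authors evidently had in mind: the entropy formula from uniqueness plus the variational principle, continuity of the pressure from Theorem~\ref{thm.oldstuff}(4)--(5), continuity of $\int\phi\,d\mu_{f,\phi}$ from weak$^*$ continuity of the equilibrium state, continuity of the Lyapunov-exponent integrals from uniform convergence of the integrands together with weak$^*$ convergence of the measures, and differentiability via Theorem~\ref{thm:C}(i) for the pressure and Theorem~\ref{thm:C}(iii) for $f\mapsto\int g_f\,d\mu_f$ once $f\mapsto g_f\in C^2(M,\mathbb R)$ is shown differentiable.

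You have, however, correctly identified a real gap in the differentiability claim for the two extremal exponent functions. Theorem~\ref{thm:C}(iii) requires $f\mapsto g_f\in C^2(M,\mathbb R)$ to be differentiable at $f_0$, and for $g_f=\log\|Df(\cdot)\|^{\pm1}$ this needs the operator norm $A\mapsto\|A\|$ on $GL_m(\mathbb R)$ to be at least $C^2$ on a neighbourhood of the range of $Df_0$. That fails wherever the top singular value of $Df_0(x)$ is not simple: the operator norm is only locally Lipschitz there, so $x\mapsto\log\|Df_0(x)\|$ need not even belong to $C^2(M,\mathbb R)$, let alone depend $C^1$ on $f$. Your Nemytskii-operator argument is sound for $\log|\det Df|$, where $A\mapsto\log|\det A|$ is real-analytic on $GL_m(\mathbb R)$, and it also works in dimension $m=1$ where $\|Df\|=|Df|$; but in general the corollary as stated needs an extra hypothesis (e.g.\ simple top and bottom singular values of $Df_0$ on $\supp\mu_{f_0}$, or a smooth choice of norm) to make the extremal-exponent part go through. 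Since the paper gives no proof, this gap is inherited from the statement itself rather than being an error in your proposal; flagging it, as you did, is the right call. The rest of the argument --- including the observation that the loss of one derivative in $f\mapsto Df$ is what forces $r\ge3$, and that differentiability in the coarser $C^2$-topology transfers to the finer $C^{r+\al}$-topology --- is correct.
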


Other application of our results include a strong stability of the statistical laws.  In \cite{CV13} we deduced that
this class of maps has exponential decay of correlations, which is well known to imply a Central Limit Theorem.
To prove the stability of this limit theorem our first step is to prove that time-$n$ correlation function with respect to the maximal entropy measure is differentiable with respect to $f$ and its derivative converges  to zero in the $C^1$-topology. More precisely,

\begin{maincorollary}\label{cor:smooth.correlation}
Given $\cF^{2}$ an open set of local diffeomorphisms and $\cW^{2}$ an open set of potentials as above, consider the correlation function
$$
C_{\varphi , \psi}(f, \phi, n) = \int(\varphi\circ f^{n})\psi \;d\mu_{f , \phi} - \int\varphi \;d\mu_{f , \phi}\int\psi \;d\mu_{f , \phi}
$$
defined for $f\in \cF^{2}$,  $\phi\in \cW^{2}$, observables $\varphi,\psi\in C^\al(M,\mathbb R)$ and
$n\in \mathbb N$.
Then
\begin{itemize}
\item[i)] The map $(f,\phi) \mapsto C_{\varphi, \psi} (f , \phi , n)$  is analytic in $\phi$ and continuous in $f$, and
\item[ii)] The map $f \mapsto C_{\varphi, \psi} (f , 0 , n)$ is differentiable, $\partial_f C_{\varphi, \psi}(f , 0, n)$ is convergent to zero as $n\to\infty$, and the convergence can be taken uniform in a small neighborhood of $f$.
\end{itemize}
 \end{maincorollary}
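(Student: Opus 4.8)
The plan is to express the time-$n$ correlation through the normalized transfer operator and then push the whole analysis onto the uniform spectral gap of Theorem~\ref{thm.oldstuff}. Write $\widetilde{\mathcal{L}}_{f,\phi}(g)=\lambda_{f,\phi}^{-1}h_{f,\phi}^{-1}\mathcal{L}_{f,\phi}(h_{f,\phi}\,g)$ for the operator normalized so that $\widetilde{\mathcal{L}}_{f,\phi}\mathbf{1}=\mathbf{1}$ and $\widetilde{\mathcal{L}}_{f,\phi}^{*}\mu_{f,\phi}=\mu_{f,\phi}$, and decompose $\widetilde{\mathcal{L}}_{f,\phi}^{\,n}=\Pi_{f,\phi}+Q_{f,\phi}^{\,n}$, where $\Pi_{f,\phi}g=\big(\int g\,d\mu_{f,\phi}\big)\mathbf{1}$ is the leading eigenprojection and $\|Q_{f,\phi}^{\,n}\|\le C\theta^{n}$, with $C$ and $\theta\in(0,1)$ \emph{uniform} over $\mathcal{F}\times\mathcal{W}$ (resp. $\mathcal{F}^{2}\times\mathcal{W}^{2}$) by the cone estimates of~\cite{CV13}. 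Using the duality $\int(\varphi\circ f^{n})\,\psi\,d\mu_{f,\phi}=\int\varphi\cdot\widetilde{\mathcal{L}}_{f,\phi}^{\,n}(\psi)\,d\mu_{f,\phi}$ one gets the identity
\[
C_{\varphi,\psi}(f,\phi,n)=\int\varphi\cdot Q_{f,\phi}^{\,n}(\psi)\,d\mu_{f,\phi},
\]
which in particular reproves the exponential decay. When $\phi\equiv 0$ we have $\mathcal{L}_{f,0}\mathbf{1}=\deg(f)\,\mathbf{1}$ with $\deg(f)$ locally constant in $f$, so $h_{f,0}\equiv\mathbf{1}$, $\mu_{f,0}=\nu_{f,0}$, $\widetilde{\mathcal{L}}_{f,0}=\deg(f)^{-1}\mathcal{L}_{f,0}$, and, crucially, $f\mapsto\widetilde{\mathcal{L}}_{f,0}\mathbf{1}$ is constant near any $f_{0}$, hence $D_{f}\widetilde{\mathcal{L}}_{f,0}(\mathbf{1})\cdot H=0$.

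For part (i), fix $f$; then $(\varphi\circ f^{n})\psi$ is a fixed H\"older function, so Theorem~\ref{thm:B}(v) makes $\phi\mapsto\int(\varphi\circ f^{n})\psi\,d\mu_{f,\phi}$, $\phi\mapsto\int\varphi\,d\mu_{f,\phi}$ and $\phi\mapsto\int\psi\,d\mu_{f,\phi}$ analytic, and hence so is the combination defining $C_{\varphi,\psi}(f,\cdot,n)$. For continuity in $f$ with $\phi$ fixed: if $f\to f_{0}$ in $C^{2}$ then $f^{n}\to f_{0}^{n}$, so $(\varphi\circ f^{n})\psi\to(\varphi\circ f_{0}^{n})\psi$ uniformly, while $\mu_{f,\phi}\to\mu_{f_{0},\phi}$ weak$^{*}$ by Theorem~\ref{thm.oldstuff}; since $\int u_{k}\,d\mu_{k}\to\int u\,d\mu$ whenever $u_{k}\to u$ uniformly and $\mu_{k}\to\mu$ weak$^{*}$ among probability measures, each of the three terms converges.

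For part (ii), set $\phi\equiv 0$. Differentiability of $f\mapsto C_{\varphi,\psi}(f,0,n)$ follows from Theorem~\ref{thm:C} and Theorem~\ref{thm:diff.max}, applied to $f\mapsto\int\varphi\,d\mu_{f}$, $f\mapsto\int\psi\,d\mu_{f}$ and $f\mapsto\int\big(\varphi\,\widetilde{\mathcal{L}}_{f}^{\,n}\psi\big)\,d\mu_{f}$, the last one via differentiability of $f\mapsto g_{f}:=\varphi\,\widetilde{\mathcal{L}}_{f}^{\,n}\psi$ granted by the chain rule for $\widetilde{\mathcal{L}}_{f}^{\,n}$ stated after Theorem~\ref{thm:C}. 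To prove $\partial_{f}C_{\varphi,\psi}(f,0,n)\to 0$, I would differentiate $C_{\varphi,\psi}(f,0,n)=\int\varphi\,Q_{f}^{\,n}(\psi)\,d\mu_{f}$ by the product rule, which yields three contributions (writing $Q_{f}^{\,n}=\widetilde{\mathcal{L}}_{f}^{\,n}-\Pi_{f}$): $(A)=\langle D_{f}\mu_{f}\cdot H,\ \varphi\,Q_{f_{0}}^{\,n}\psi\rangle=O(\theta^{n})$ by the uniform gap; $(B)=\int\varphi\cdot D_{f}\big(\widetilde{\mathcal{L}}_{f}^{\,n}\psi\big)\cdot H\,d\mu_{f_{0}}$; and $(C)=-\big(\int\varphi\,d\mu_{f_{0}}\big)\,\langle D_{f}\mu_{f}\cdot H,\psi\rangle$. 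Expanding $(B)$ with the chain rule, substituting the explicit formula $D_{f}\widetilde{\mathcal{L}}_{f}(g)|_{f_{0}}\cdot H=-\widetilde{\mathcal{L}}_{f_{0}}\big(dg(X_{H})\big)$, $X_{H}=Df_{0}^{-1}H$ (the $\phi\equiv0$ instance of the formula displayed after Theorem~\ref{thm:C}), and using $\widetilde{\mathcal{L}}_{f_{0}}^{\,k}=\Pi_{f_{0}}+Q_{f_{0}}^{\,k}$ together with $D_{f}\widetilde{\mathcal{L}}_{f}(\mathbf{1})\cdot H=0$, one obtains
\[
(B)=-\sum_{i=1}^{n}\int\varphi\cdot Q_{f_{0}}^{\,i}\big(d(Q_{f_{0}}^{\,n-i}\psi)(X_{H})\big)\,d\mu_{f_{0}}-\Big(\int\varphi\,d\mu_{f_{0}}\Big)\sum_{j=0}^{n-1}\int d(Q_{f_{0}}^{\,j}\psi)(X_{H})\,d\mu_{f_{0}}.
\]
In the first sum $\|Q_{f_{0}}^{\,i}(\cdot)\|$ and $\|Q_{f_{0}}^{\,n-i}\psi\|$ combine to $O(\theta^{n})$, so it is $O(n\theta^{n})$; and by the formula for $D_{f}\mu_{f}$ in Theorem~\ref{thm:diff.max}, $\langle D_{f}\mu_{f}\cdot H,\psi\rangle=-\sum_{j\ge0}\int d(Q_{f_{0}}^{\,j}\psi)(X_{H})\,d\mu_{f_{0}}$, so the second sum in $(B)$ cancels $(C)$ up to the tail $\big(\int\varphi\,d\mu_{f_{0}}\big)\sum_{j\ge n}\int d(Q_{f_{0}}^{\,j}\psi)(X_{H})\,d\mu_{f_{0}}=O(\theta^{n})$. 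Therefore $\partial_{f}C_{\varphi,\psi}(f,0,n)\cdot H=O(n\theta^{n})\|H\|$, and since $C$, $\theta$, the bounds on $D_{f}\mu_{f}$ and on the iterated operators are uniform on a neighborhood of $f_{0}$ in $\mathcal{F}^{2}$, the convergence is uniform there.

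The decisive obstacle is the uniform-in-$n$ control of $D_{f}\big(\widetilde{\mathcal{L}}_{f}^{\,n}\big)$: because $D_{f}\widetilde{\mathcal{L}}_{f}$ loses one derivative there is no single Banach space on which to run a resolvent-perturbation argument, and the naive chain-rule estimate gives only $O(1)$. The decay comes entirely from the two cancellations above — the vanishing $D_{f}\widetilde{\mathcal{L}}_{f}(\mathbf{1})\cdot H=0$, which discards the non-decaying ``$\Pi\psi$''-part of each factor $\widetilde{\mathcal{L}}_{f_{0}}^{\,n-i}\psi$, and the exact cancellation of the $\Pi_{f_{0}}$-part of the chain-rule expansion of $(B)$ against the measure-derivative term $(C)$, which rests on the closed form of $D_{f}\mu_{f}\cdot H$ from Theorem~\ref{thm:diff.max} (equivalently, on the exponential mixing of the base map). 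A secondary point is regularity: these manipulations require $\varphi,\psi$ to be at least $C^{1}$ so that $d(Q_{f_{0}}^{\,j}\psi)(X_{H})$ makes sense and $D_{f}\mu_{f}\cdot H$ can be paired with $\varphi\,Q_{f_{0}}^{\,n}\psi$ — matching the $C^{2}$ regularity used in Theorem~\ref{thm:C} — the general H\"older case following by approximating $\varphi,\psi$ in a slightly weaker H\"older norm and invoking the bilinearity of $C_{\varphi,\psi}$ and the uniformity of the estimates.
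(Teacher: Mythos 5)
Your proposal follows essentially the same route as the paper: express $\partial_f C_{\varphi,\psi}$ as three terms coming from the product rule applied to $\int\varphi\,\tilde{\mathcal L}_f^n\psi\,d\mu_f - \int\varphi\,d\mu_f\int\psi\,d\mu_f$, expand the middle term with the chain rule for $D_f\tilde{\mathcal L}_f^n$, exploit $D_f\tilde{\mathcal L}_f(\mathbf 1)\cdot H=0$, and control everything by the uniform spectral gap, ending with a bound of order $n\theta^n$. The only difference is bookkeeping: you split the chain-rule expansion of (B) into its $\Pi$-part and $Q$-part and observe the explicit cancellation of the $\Pi$-part against (C) up to an $O(\theta^n)$ tail, whereas the paper groups the first $n$ terms of the $D_f\mu_f(\psi)$-series with (B) into a single sum $B_n$ and bounds each summand directly by $C\tau^{n-i-1}\cdot C\tau^{i}$; these are the same estimate organized differently. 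You also make explicit the closed form $D_f\tilde{\mathcal L}_{f,0}(g)\cdot H = -\tilde{\mathcal L}_{f_0}\big(dg(Df_0^{-1}H)\big)$, which the paper only gives in the form involving $T_j$. Your remark at the end that the computation really requires $\varphi,\psi\in C^2$ (to pair with $D_f\mu_f\in (C^2)^*$ and to make sense of $d(Q^j_{f_0}\psi)$) is a fair observation, and in fact the paper's own estimates for $A_n,B_n$ also use $\|\varphi\|_2,\|\psi\|_2$, so the H\"older case in the statement is not directly covered by the argument as written; your suggested fix via approximation and bilinearity is less straightforward than you imply (the derivative bounds are not uniform in the H\"older norm alone), but this is a discrepancy internal to the paper rather than a flaw in your reconstruction.
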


One should mention that property (i) above holds more generally, namely when we consider
$f\in \cF^{1+\al}$ and $\phi\in \cW^{1+\al}$. The regularity of the correlation function also allow us to establish the regularity of the quantities involved in the central limit theorem with respect to the dynamics and potential. More precisely,

\begin{maintheorem}\label{thm:CLT}
Let $\phi \in \cW^{2}$ and $f\in \cF^{2}$ be given. If $\psi\in C^{\al}(M,\mathbb R)$ then:
\begin{enumerate}
\item[i.] either $\psi = u\circ f - u+\int \psi \,d\mu_{f,\phi}$ for some $u \in L^{2}(M , \mathcal{F} , \mu_{f,\phi})$
	(we say $\psi$ is cohomologous to a constant)
\item[ii.] or the convergence in distribution
	$$
	\frac1{\sqrt{n}} \sum_{j = 0}^{n - 1} \psi \circ f^{j}
		\xrightarrow[n \to +\infty]{\mathcal D}  \mathcal{N}(m , \sigma^2)
	$$
	holds with mean $m=m_{f,\phi}(\psi)=\int\psi \;d\mu_{f, \phi}$ and variance $\sigma^2$ given by
	$$
	\sigma^2=\sigma_{f,\phi}^{2}(\psi)
		= \int \tilde \psi^{2}\;d\mu_{f,\phi} + 2\sum_{n = 1}^{\infty}\int \tilde \psi(\tilde \psi\circ f^{n}) \;d\mu_{f,\phi}>0,
	$$
	where $\tilde \psi=\psi-\int\psi \;d\mu_{f, \phi}$ is a mean zero function depending on $(f,\phi)$.
\end{enumerate}
Moreover, both functions $(f,\phi,\psi) \mapsto m_{f,\phi,\psi}$ and $(f,\phi,\psi) \mapsto \sigma^2_{f,\phi}(\psi)$
are analytic on $\phi, \psi$ and continuous on $f$.
Finally, if $\psi \in C^{2}(M,\mathbb R)$ and $\phi\equiv 0$ then $(f,\psi) \mapsto m_{f}(\psi)$ and
$(f , \psi) \mapsto \sigma^2_f(\psi)$ are differentiable.
\end{maintheorem}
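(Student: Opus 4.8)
The plan is to decompose the statement into two essentially independent parts: the dichotomy together with the formula for the variance (items i.--ii.), which follow from the spectral gap of the transfer operator and a standard Nagaev--Guivarc'h type argument, and the regularity of $m_{f,\phi}(\psi)$ and $\sigma^2_{f,\phi}(\psi)$ as functions of the data, which is where the new input from Theorems~\ref{thm:B} and~\ref{thm:C} and Corollary~\ref{cor:smooth.correlation} enters.

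\textbf{Step 1 (dichotomy and variance).} Since $\phi$ satisfies (P) or (P'), Theorem~\ref{thm.oldstuff} gives a unique equilibrium state $\mu_{f,\phi}$ with exponential decay of correlations for H\"older observables (the latter also recalled in \cite{CV13}). For $\psi\in C^\al(M,\mathbb R)$ with $\int\psi\,d\mu_{f,\phi}=0$, the summability of $\sum_n \int \psi\,(\psi\circ f^n)\,d\mu_{f,\phi}$ is then immediate, so $\sigma^2_{f,\phi}(\psi)$ is well defined. The classical Green--Kubo computation shows $\sigma^2_{f,\phi}(\psi)\ge 0$, with $\sigma^2_{f,\phi}(\psi)=0$ if and only if $\tilde\psi=u\circ f-u$ for some $u\in L^2(\mu_{f,\phi})$; one produces $u$ as the $L^2$-convergent series $u=\sum_{k\ge 0}\mathcal L_{f,\phi}^{k}(\tilde\psi\, h_{f,\phi})/(\lambda_{f,\phi}^{k} h_{f,\phi})$ using the spectral gap. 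This is case i.; in case ii., the central limit theorem with the stated mean and variance follows either from the Gordin--Liverani martingale method or, equivalently, from analytic perturbation of the leading eigenvalue of the twisted operator $\mathcal L_{f,\phi+it\psi}$, which is legitimate by part (i) of Theorem~\ref{thm:B} applied to the perturbed potential. I would invoke the standard reference for whichever route is shorter and not reprove the CLT.

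\textbf{Step 2 (regularity in $\phi$ and $\psi$).} The mean is $m_{f,\phi}(\psi)=\int\psi\,d\mu_{f,\phi}$, which by Theorem~\ref{thm:B}(v) is analytic in $\phi$ for each fixed $\psi$, and linear (hence analytic) in $\psi$; joint analyticity in $(\phi,\psi)$ follows since the pairing $(\phi,\psi)\mapsto\int\psi\,d\mu_{f,\phi}$ is a composition of the analytic map $\phi\mapsto\mu_{f,\phi}\in(C^{r+\al})^*$ with the bounded bilinear evaluation. For the variance, write
$$
\sigma^2_{f,\phi}(\psi)=\int\tilde\psi^2\,d\mu_{f,\phi}+2\sum_{n=1}^\infty C_{\psi,\psi}(f,\phi,n),
$$
where $\tilde\psi=\psi-\int\psi\,d\mu_{f,\phi}$ and $C_{\psi,\psi}$ is the correlation function of Corollary~\ref{cor:smooth.correlation}. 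By that corollary, each term is analytic in $\phi$ and continuous in $f$; the exponential decay of correlations, \emph{uniform} on the relevant open sets by the uniform spectral gap from \cite{CV13}, makes the series normally convergent in the analytic (resp. continuous) category, so the sum inherits analyticity in $\phi$ and continuity in $f$. Continuity/analyticity in $\psi$ is handled the same way, noting $C_{\psi,\psi}$ is a bounded quadratic form in $\psi$ with uniformly summable norms.

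\textbf{Step 3 (differentiability in $f$ when $\phi\equiv 0$).} For $\psi\in C^2(M,\mathbb R)$ and $\phi\equiv 0$, the mean $f\mapsto\int\psi\,d\mu_f$ is differentiable by Theorem~\ref{thm:C}(ii)--(iii). For the variance one again differentiates the series term by term: Corollary~\ref{cor:smooth.correlation}(ii) gives that $f\mapsto C_{\psi,\psi}(f,0,n)$ is differentiable with $\partial_f C_{\psi,\psi}(f,0,n)\to 0$, and — crucially — that this convergence, hence a bound on $\|\partial_f C_{\psi,\psi}(f,0,n)\|$, is uniform on a neighborhood of $f_0$. To get summability (not just decay) of the derivatives I would combine this with the a priori exponential bound $|\partial_f C_{\psi,\psi}(f,0,n)|\le C\rho^n$ that comes out of the proof of Corollary~\ref{cor:smooth.correlation}(ii) (the derivative is expressed through finitely many iterates of $\tilde{\mathcal L}_f$ composed with contracting powers of the normalized operator, exactly as in the displayed linear-response formula in the text); normal convergence of $\sum_n\partial_f C_{\psi,\psi}(f,0,\cdot)$ on a neighborhood then licenses differentiation under the sum, and adds the $\tilde\psi^2$ term which is differentiable by the mean case. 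Finally, joint differentiability of $(f,\psi)\mapsto m_f(\psi)$ and $(f,\psi)\mapsto\sigma^2_f(\psi)$ follows because the partial derivatives in $f$ (just constructed) and in $\psi$ (from Step 2, these maps being polynomial in $\psi$) are continuous in both variables.

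\textbf{Main obstacle.} The delicate point is Step 3: differentiating the infinite series defining $\sigma^2$ with respect to $f$. Pointwise differentiability of each summand and decay of the derivatives is not enough; one needs a \emph{summable, locally uniform} bound on $\|\partial_f C_{\psi,\psi}(f,0,n)\|$. This is why the hypothesis $r\ge 2$ (indeed the $C^2$ regularity throughout, matching Corollary~\ref{cor:smooth.correlation}) is used, and why the explicit formula for $\partial_f C_{\psi,\psi}$ in terms of a geometrically weighted combination of transfer-operator iterates — rather than a soft continuity argument — is essential. Everything else is either recalled from Theorems~\ref{thm:B}, \ref{thm:C} and Corollary~\ref{cor:smooth.correlation}, or is the classical CLT machinery.
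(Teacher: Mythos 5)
Your proposal is correct and matches the paper's proof in all essentials: the dichotomy and CLT are quoted from exponential decay of correlations, the regularity of $\sigma^2$ in $(\phi,\psi)$ comes from analyticity of the spectral data (the paper phrases this through Nagaev's identity $\sigma^2_{f,\phi}(\psi)=-D^2_z\bigl(\lambda_{f,\phi+iz\psi}/\lambda_{f,\phi}\bigr)\big|_{z=0}$ and then unwinds the resolvent $(I-\tilde{\mathcal{L}}_{f,\phi}|_{E_0})^{-1}$ into the very correlation series you sum directly), and the differentiability in $f$ at $\phi\equiv 0$ is obtained exactly as you propose, by freezing $\tilde\psi$ via the chain rule, forming the partial sums $F_k(f)=\sum_{n\le k}C_{\tilde\psi,\tilde\psi}(f,0,n)$, and checking that the $A_n,B_n$ estimates from the proof of Corollary~\ref{cor:smooth.correlation}, which decay like $n\tau^{n}$ and are therefore summable, give uniform convergence of $D_fF_k$. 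Your one slip is cosmetic -- the candidate coboundary should be $\sum_{k\ge 1}$ rather than $\sum_{k\ge 0}$ -- but the paper does not construct $u$ explicitly anyway, so this has no bearing on the argument.
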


Let us make some comments on the last result. Using the continuity of the variance $\sigma^2_{f,\phi}(\psi)$ with
respect to the dynamics $f$, potential $\phi$ and observable $\psi$, and since the first case in the theorem above
corresponds to the case that $\sigma^2_{f,\phi}(\psi)=0$ then we obtain the following consequences
for the cohomological equation.

\begin{maincorollary}\label{cor:cohomo}
Let $\cF^{2}$ and $\cW^2$ be as above. Then, if $\psi$ is not cohomologous
to a constant for $(f,\phi)$ then the same property holds for all close $(\tilde f,\tilde\phi)$. In consequence, the sets
$
\{
	(f,\phi)\in \cF^{2} \times \cW^{2} : \psi \text{ is cohomologous to } \int \psi \;d\mu_{f,\phi}
\}
$
and
$
\{
	\psi\in C^{2}(M,\mathbb R) :  \psi \text{ is cohomologous to } \int \psi \;d\mu_{f,\phi}
\}
$
are closed.
\end{maincorollary}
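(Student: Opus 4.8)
The plan is to derive Corollary~\ref{cor:cohomo} as a direct consequence of the continuity statement for the variance already established in Theorem~\ref{thm:CLT}. Recall that the dichotomy in Theorem~\ref{thm:CLT} identifies the property ``$\psi$ is cohomologous to a constant for $(f,\phi)$'' precisely with the vanishing $\sigma^2_{f,\phi}(\psi)=0$: indeed, alternative (i) there is the cohomology statement, and in alternative (ii) one has $\sigma^2_{f,\phi}(\psi)>0$, so exactly one of the two holds and the first is equivalent to $\sigma^2_{f,\phi}(\psi)=0$. Hence ``$\psi$ is \emph{not} cohomologous to a constant for $(f,\phi)$'' is equivalent to $\sigma^2_{f,\phi}(\psi)>0$.

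First I would invoke the last sentence of Theorem~\ref{thm:CLT}, which asserts that the map $(f,\phi,\psi)\mapsto\sigma^2_{f,\phi}(\psi)$ is continuous on $\cF^{2}\times\cW^{2}\times C^\al(M,\mathbb R)$ (analytic in $\phi,\psi$, continuous in $f$; in particular jointly continuous). Fix $(f,\phi)$ with $\psi$ not cohomologous to a constant, so that $\sigma^2_{f,\phi}(\psi)=:c>0$. By continuity of $(\ti f,\ti\phi)\mapsto\sigma^2_{\ti f,\ti\phi}(\psi)$ at $(f,\phi)$, there is a neighborhood $\cV$ of $(f,\phi)$ in $\cF^{2}\times\cW^{2}$ on which $\sigma^2_{\ti f,\ti\phi}(\psi)>c/2>0$. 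By the equivalence recalled above, for every $(\ti f,\ti\phi)\in\cV$ the observable $\psi$ fails to be cohomologous to a constant for $(\ti f,\ti\phi)$. This proves the first (openness/persistence) assertion.

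For the two closedness statements, I would simply take complements. The set
$
\{(f,\phi)\in\cF^{2}\times\cW^{2} : \sigma^2_{f,\phi}(\psi)=0\}
$
is the preimage of the closed set $\{0\}$ under the continuous function $(f,\phi)\mapsto\sigma^2_{f,\phi}(\psi)$ on $\cF^{2}\times\cW^{2}$ (with $\psi$ fixed), hence closed; and by the equivalence this is exactly the set of $(f,\phi)$ for which $\psi$ is cohomologous to $\int\psi\,d\mu_{f,\phi}$. Likewise, fixing $(f,\phi)$, the set $\{\psi\in C^{2}(M,\mathbb R) : \sigma^2_{f,\phi}(\psi)=0\}$ is the preimage of $\{0\}$ under the continuous (indeed analytic) function $\psi\mapsto\sigma^2_{f,\phi}(\psi)$ on $C^{2}(M,\mathbb R)\subset C^\al(M,\mathbb R)$, hence closed. (Note the restriction to $C^{2}$ observables in the second set is harmless: the variance is continuous on the larger space $C^\al$, so it is continuous on the subspace with the stronger topology as well.)

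The only point requiring a word of care — and the one I expect to be the main, though minor, obstacle — is making the identification ``not cohomologous to a constant $\Longleftrightarrow$ $\sigma^2>0$'' fully rigorous, i.e.\ checking that the two alternatives of Theorem~\ref{thm:CLT} are genuinely exclusive and exhaustive and that alternative (i) is the zero-variance case; this is where one uses that a coboundary with $L^2$ transfer function has zero Birkhoff variance, and conversely that $\sigma^2_{f,\phi}(\psi)=0$ forces $\psi-\int\psi\,d\mu_{f,\phi}$ to be an $L^2$-coboundary (a standard consequence of the spectral gap from Theorem~\ref{thm.oldstuff}(2), via the Green--Kubo formula for $\sigma^2$ and the associated martingale/transfer-operator decomposition). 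Once that equivalence is in hand, the corollary is immediate from continuity.
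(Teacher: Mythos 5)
Your argument is correct and matches the paper's own proof: the authors likewise identify ``cohomologous to a constant'' with $\sigma^2_{f,\phi}(\psi)=0$ and then invoke the continuity of $(f,\phi)\mapsto\sigma^2_{f,\phi}(\psi)$ from Theorem~\ref{thm:CLT} to conclude openness of the complement (hence closedness of the sets in question). Your extra remark on verifying the dichotomy is a sensible point of care but does not mark a departure from the paper's route.
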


Therefore, a particularly interesting open question is to understand if the sets defined above have empty interior,
meaning that open and densely on the dynamical system and the potential any H\"older continuous observable
would not be cohomologous to a constant.

\vspace{.3cm}
Other consequence of the differentiability of the pressure function is related to a local large deviations
principle. First we recall some notions. Given an observable $\psi: M\to \mathbb R$ and $t\in \R$ the  \emph{free energy} $\cE_{f,\phi,\psi} $ is given by
\begin{equation*}
\cE_{f,\phi,\psi}(t)
	= \limsup_{n\to\infty} \frac1n \log \int e^{t S_n  \psi} \,d\mu_{f,\phi},
\end{equation*}
where $S_n  \psi=\sum_{j=0}^{n-1} \psi\circ f^j$ is the usual Birkhoff sum. In our setting we will prove
that the limit above does exist for all H\"older continuous $\psi$ and $|t|\le t_{\phi,\psi}$,
for some small $t_{\phi,\psi}>0$. Moreover we study its regularity in the parameters $t,\phi,\psi$ and $f$.

\begin{maintheorem}\label{thm:FreeEnergy}
Let $\al>0$, $f\in \mathcal{F}^{1+\alpha}$ and $\phi\in C^{\al}(M,\mathbb R)$ satisfy (H1), (H2) and (P). Then for any H\"older continuous observable $\psi:M\to \R$ there exists $t_{\phi,\psi}>0$ such that
for all $|t|\le t_{\phi,\psi}$ the following limit exists
$$
\cE_{f,\phi, \psi}(t)
	:=\lim_{n\to\infty} \frac1n \log \int e^{t S_n \psi} \; d\mu_{f,\phi}
	= \Ptop(f, \phi +t\psi) -\Ptop(f, \phi).
$$
In consequence, $\mathcal E_{f,\phi}(t)$ is analytic in $t,\phi$ and $\psi$.
Moreover, if $\psi$ is cohomologous to a constant then $t \mapsto \cE_{f,\phi,\psi}(t)$ is affine and, otherwise,
$t \mapsto \cE_{f,\phi,\psi}(t)$ is real analytic and strictly convex in $[-t_{\phi,\psi}, t_{\phi,\psi}]$.
Furthermore, if $\psi\in C^{2}(M,\mathbb R)$ then for every fixed $|t|\le t_{\phi,\psi}$ the function $\mathcal{F}^{2} \ni f\mapsto \cE_{f,\phi,\psi}'(t)$ is continuous and if $\phi\in \mathcal{W}^{2}$ we have that $\mathcal{F}^{2} \ni f\mapsto \cE_{f,\phi,\psi}(t)$
is differentiable.
\end{maintheorem}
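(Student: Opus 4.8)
The plan is to reduce everything to the already-established regularity of the topological pressure (Theorems~\ref{thm:B} and \ref{thm:C}) together with the exponential decay of correlations and the spectral gap from Theorem~\ref{thm.oldstuff}. First I would fix $f\in\mathcal F^{1+\alpha}$ and $\phi\in C^\alpha(M,\mathbb R)$ satisfying (H1), (H2), (P), and a H\"older observable $\psi$. Since the conditions (P) and (H1)--(H2) are open, there is $t_{\phi,\psi}>0$ so that $\phi+t\psi$ still satisfies (P) with uniform constants for all $|t|\le t_{\phi,\psi}$; hence Theorem~\ref{thm.oldstuff} applies to each such potential, giving a spectral gap and $\Ptop(f,\phi+t\psi)=\log\lambda_{f,\phi+t\psi}$. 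The existence of the limit defining $\mathcal E_{f,\phi,\psi}(t)$ and the identity $\mathcal E_{f,\phi,\psi}(t)=\Ptop(f,\phi+t\psi)-\Ptop(f,\phi)$ then follows from the classical argument: writing $\int e^{tS_n\psi}\,d\mu_{f,\phi}=\int h_{f,\phi}^{-1}\,\mathcal L_{f,\phi}^n(e^{tS_n\psi}h_{f,\phi})\,d\nu_{f,\phi}$ and observing that $\mathcal L_{f,\phi}^n(e^{tS_n\psi}\,\cdot\,)=\mathcal L_{f,\phi+t\psi}^n(\cdot)$, so that $\frac1n\log\int e^{tS_n\psi}\,d\mu_{f,\phi}=\frac1n\log\mathcal L_{f,\phi+t\psi}^n(h_{f,\phi})(\ast)+o(1)\to\log\lambda_{f,\phi+t\psi}-\log\lambda_{f,\phi}$ by the spectral gap for $\mathcal L_{f,\phi+t\psi}$ applied to the strictly positive H\"older density $h_{f,\phi}$.

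Next, analyticity of $t\mapsto\mathcal E_{f,\phi,\psi}(t)$, and jointly in $(t,\phi,\psi)$, is immediate from part~(ii) of Theorem~\ref{thm:B}: the map $\psi'\mapsto\Ptop(f,\psi')=\log\lambda_{f,\psi'}$ is analytic on the relevant open set of H\"older potentials, and $(t,\phi,\psi)\mapsto\phi+t\psi$ is (real-)analytic into $C^\alpha(M,\mathbb R)$, so the composition is analytic. For the dichotomy, I would compute $\mathcal E_{f,\phi,\psi}''(t)$ using the derivative formula from Theorem~\ref{thm:B}, namely $D_\phi\Ptop(f,\phi)\cdot H=\int H\,d\mu_{f,\phi}$, which upon differentiating once more in $t$ yields $\mathcal E_{f,\phi,\psi}''(t)=\sigma^2_{f,\phi+t\psi}(\psi-\int\psi\,d\mu_{f,\phi+t\psi})$, the asymptotic variance in the CLT for the potential $\phi+t\psi$. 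This is $\ge0$ always, and vanishes identically on $[-t_{\phi,\psi},t_{\phi,\psi}]$ iff $\psi$ is cohomologous to a constant (by Theorem~\ref{thm:CLT}, the case $\sigma^2=0$ is exactly the coboundary case, and if the variance vanished at one interior $t$ it would vanish for all $t$ by analyticity); hence $\mathcal E_{f,\phi,\psi}$ is affine in the coboundary case and real-analytic and strictly convex otherwise.

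Finally, the regularity in $f$ is where the real work lies and where I expect the main obstacle. For fixed $|t|\le t_{\phi,\psi}$ and $\phi\equiv$ the relevant potential, continuity of $f\mapsto\mathcal E_{f,\phi,\psi}'(t)$ when $\psi\in C^2$ follows from $\mathcal E_{f,\phi,\psi}'(t)=\int\psi\,d\mu_{f,\phi+t\psi}$ together with Corollary~\ref{cor:diff2} (continuity of $f\mapsto\int g\,d\mu_{f,\phi'}$ in the $C^{r+\alpha}$ setting), after checking that $\phi+t\psi$ stays in $\mathcal W^2$ with uniform constants. Differentiability of $f\mapsto\mathcal E_{f,\phi,\psi}(t)$ when $\phi\in\mathcal W^2$ reduces, via $\mathcal E_{f,\phi,\psi}(t)=\Ptop(f,\phi+t\psi)-\Ptop(f,\phi)$, to the differentiability of $f\mapsto\Ptop(f,\phi')$ on $\mathcal F^2$ established in Theorem~\ref{thm:C}(i); the only point to verify is that the open set $\mathcal W^2$ can be chosen so that both $\phi$ and $\phi+t\psi$ lie in it for all $|t|\le t_{\phi,\psi}$, which again is an openness/uniform-constants bookkeeping argument. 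The main obstacle is therefore not a new analytic difficulty but making the quantifiers on $t_{\phi,\psi}$ and on the uniform constants in (P)/(P') consistent across all the invoked theorems, so that a single neighborhood of $f$ works for all $|t|\le t_{\phi,\psi}$ simultaneously.
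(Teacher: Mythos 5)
Your proposal follows essentially the same route as the paper's proof (Proposition~\ref{prop:free.energy}): express the moment generating function through iterates of the transfer operator, use $\cL_{f,\phi}^n(e^{tS_n\psi}\cdot)=\cL_{f,\phi+t\psi}^n(\cdot)$ and the spectral gap to identify $\cE_{f,\phi,\psi}(t)=\Ptop(f,\phi+t\psi)-\Ptop(f,\phi)$, invoke Theorem~\ref{thm:B} for analyticity in $(t,\phi,\psi)$, handle the coboundary case via the variational principle, show $\cE''\ge 0$ with equality exactly in the coboundary case, and invoke Theorem~\ref{thm:C} for $f$--differentiability.

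Two small remarks. First, the opening identity as written,
$\int e^{tS_n\psi}\,d\mu_{f,\phi}=\int h_{f,\phi}^{-1}\,\cL_{f,\phi}^n(e^{tS_n\psi}h_{f,\phi})\,d\nu_{f,\phi}$,
has a slip: the correct normalizing factor is $\la_{f,\phi}^{-n}$ (coming from $\cL_{f,\phi}^*\nu_{f,\phi}=\la_{f,\phi}\nu_{f,\phi}$), not $h_{f,\phi}^{-1}$. The subsequent argument is nonetheless fine, and matches the paper's. Second, for strict convexity you go through the CLT variance $\sigma^2_{f,\phi+t\psi}$ and Theorem~\ref{thm:CLT}, whereas the paper does a direct computation of $\cE''$ as a variance limit and applies H\"older's (Cauchy--Schwarz) inequality; the two routes are essentially equivalent. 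However, your justification that ``if the variance vanished at one interior $t$ it would vanish for all $t$ by analyticity'' is not right as stated: a non-negative real-analytic function can vanish at an isolated point (e.g.\ $t\mapsto t^2$). The correct reason is that the condition ``$\psi$ is cohomologous to a constant'' depends only on $(f,\psi)$ and is independent of the particular equilibrium state used (by the full support of $\mu_{f,\phi+t\psi}$ and Livsic-type rigidity for H\"older $\psi$, the $L^2$ transfer function is in fact continuous, so the cohomological equation holds pointwise), hence $\cE''(t_0)=0$ for one $t_0$ forces the coboundary condition and thus $\cE''\equiv 0$. Finally, the reference for continuity of $f\mapsto\int\psi\,d\mu_{f,\phi+t\psi}$ should be Theorem~\ref{thm.oldstuff}(6) (or Theorem~\ref{thm:C}(ii)), not Corollary~\ref{cor:diff2}, which is specifically about entropy and Lyapunov exponents; this is a harmless misattribution.
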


So, if $\psi$ is not cohomologous to a constant then the function  $[-t_{\phi,\psi},t_{\phi,\psi}] \ni t\to \cE_{f,\phi,\psi}(t)$
is strictly convex  it is well defined the ``local"   Legendre transform $I_{f,\phi,\psi}$ given by
\begin{equation*}
I_{f,\phi,\psi}(s)
	= \sup_{-t_{\phi,\psi}\le t \le t_{\phi,\psi}} \; \big\{ s\, t-\cE_{f,\phi,\psi}(t) \big\}.
\end{equation*}
Let us mention that local rate functions have also been used in \cite{RY08} and let us refer the reader to Section~\ref{sec:deviations} for more details.

In fact,  using differentiability of the pressure function we obtain a level-1 large deviation
principle and deduce that stability of the rate function with the dynamical system. More precisely,

\begin{maintheorem}\label{thm:differentiability.LDP}
Let $V$ be a compact metric space and $(f_v)_{v\in V}$ be a parametrized and injective family of maps in $\mathcal{F}^{2}$
and let $\phi\in C^{2}(M,\mathbb R)$ be a potential so that (P') holds. If the observable $\psi\in C^{2}(M,\mathbb R)$ is not cohomologous to a constant then there exists an interval
$J\subset \mathbb R$ such that: for all $v\in V$ and $[a,b]\subset J$
$$
\limsup_{n\to\infty} \frac1n \log \mu_{f_v,\phi}
	\left(x\in M : \frac1n S_n\psi(x) \in [a,b] \right)
	\le-\inf_{s\in[a,b]} I_{f_v,\phi,\psi}(s)
$$
and
$$
\liminf_{n\to\infty} \frac1n \log \mu_{f_v,\phi}
	\left(x\in M : \frac1n S_n\psi(x) \in (a,b) \right)
	\ge-\inf_{s\in(a,b)} I_{f_v,\phi_v,\psi}(s)
$$
If in addition $\psi\in C^{2}(M,\mathbb R)$ then the rate function $(s,v) \mapsto I_{f_v,\phi,\psi}(s)$
is continuous on $J\times V$ in the $C^0$-topology.
\end{maintheorem}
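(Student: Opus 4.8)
The plan is to run a localized Gärtner--Ellis argument, using Theorem~\ref{thm:FreeEnergy} as the source of the limiting log-moment generating function $\cE_{f_v,\phi,\psi}$ and its strict convexity, and then to read off continuity of the rate function from the continuity of a parametrized Legendre transform. First I would fix the range of tilting. Since $v\mapsto f_v$ is continuous on the compact space $V$, its image is a compact subset of the open set $\mathcal F^2$, so (H1)--(H2) hold for the whole family with uniform constants; as (P) is an open condition there is $t_0>0$ such that $\phi+t\psi$ satisfies (P) with uniform constants for all $|t|\le t_0$. By Theorem~\ref{thm:FreeEnergy}, for every $v\in V$ and $|t|\le t_0$ the function
\[
\cE_{f_v,\phi,\psi}(t)=\Ptop(f_v,\phi+t\psi)-\Ptop(f_v,\phi)
\]
is well defined, real analytic in $t$, and, since $\psi$ is not cohomologous to a constant for $(f_v,\phi)$, strictly convex on $[-t_0,t_0]$; moreover $\cE'_{f_v,\phi,\psi}(0)=\int\psi\,d\mu_{f_v,\phi}$ and, by compactness of $V$ together with the continuity statements of Theorem~\ref{thm:FreeEnergy}, $\cE''_{f_v,\phi,\psi}$ is bounded below by a positive constant uniformly in $(t,v)$. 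I would then take $J$ to be a nonempty open subinterval of $\bigcap_{v\in V}\big(\cE'_{f_v,\phi,\psi}(-t_0),\cE'_{f_v,\phi,\psi}(t_0)\big)$, a neighbourhood of the continuously varying means $\int\psi\,d\mu_{f_v,\phi}$. For $s\in J$ and each $v$, the supremum defining $I_{f_v,\phi,\psi}(s)$ is attained at the unique interior point $t=t_v(s)$ with $\cE'_{f_v,\phi,\psi}(t)=s$, and $s\mapsto I_{f_v,\phi,\psi}(s)$ is convex with minimum value $0$ at $\cE'_{f_v,\phi,\psi}(0)$.

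For the upper bound, the exponential Chebyshev inequality gives, for $s\ge\cE'_{f_v,\phi,\psi}(0)$ and $0\le t\le t_0$,
\[
\mu_{f_v,\phi}\Big(\tfrac1n S_n\psi\ge s\Big)\le e^{-nst}\int e^{t S_n\psi}\,d\mu_{f_v,\phi},
\]
whence $\limsup_n\frac1n\log\mu_{f_v,\phi}(\frac1n S_n\psi\ge s)\le-\sup_{0\le t\le t_0}\{st-\cE_{f_v,\phi,\psi}(t)\}=-I_{f_v,\phi,\psi}(s)$; the symmetric estimate with $-t_0\le t\le0$ covers $s\le\cE'_{f_v,\phi,\psi}(0)$. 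A short case analysis on the position of $[a,b]$ relative to $\cE'_{f_v,\phi,\psi}(0)$, using the monotonicity of $I_{f_v,\phi,\psi}$ on either side of its minimum, turns these one-sided estimates into the claimed $\limsup_n\frac1n\log\mu_{f_v,\phi}(\frac1n S_n\psi\in[a,b])\le-\inf_{s\in[a,b]}I_{f_v,\phi,\psi}(s)$. For the lower bound, fix $(a,b)\subset J$ and $s\in(a,b)$, put $t=t_v(s)$, and pass to the tilted equilibrium state $\mu_t:=\mu_{f_v,\phi+t\psi}$ supplied by Theorem~\ref{thm.oldstuff}. Using the conformal, non-lacunary Gibbs structure of $\mu_{f_v,\phi}$ and $\mu_t$, along $n$-orbit blocks one has $d\mu_{f_v,\phi}=e^{-tS_n\psi+n\cE_{f_v,\phi,\psi}(t)+o(n)}\,d\mu_t$; restricting to $\{\frac1n S_n\psi\in(a',b')\}$ with $s\in(a',b')\subset(a,b)$ and using that $\mu_t$ is exact with $\int\psi\,d\mu_t=\cE'_{f_v,\phi,\psi}(t)=s$ (so $\mu_t(\frac1n S_n\psi\in(a',b'))\to1$ by the ergodic theorem) yields $\liminf_n\frac1n\log\mu_{f_v,\phi}(\frac1n S_n\psi\in(a,b))\ge-(ts-\cE_{f_v,\phi,\psi}(t))=-I_{f_v,\phi,\psi}(s)$; taking the supremum over $s\in(a,b)$ finishes it. (Equivalently, the spectral gap of $\cL_{f_v,\phi+t\psi}$ on Hölder observables from Theorem~\ref{thm.oldstuff} produces this local lower bound by the standard machinery; local rate functions of this kind appear in~\cite{RY08}.)

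Finally, for the continuity of the rate function, the map $(t,v)\mapsto\cE_{f_v,\phi,\psi}(t)$ is continuous on $[-t_0,t_0]\times V$: it is analytic in $t$ with locally uniform bounds, and $f\mapsto\cE_{f,\phi,\psi}(t)$ is continuous (indeed differentiable) by Theorem~\ref{thm:FreeEnergy}. Hence $(s,v)\mapsto I_{f_v,\phi,\psi}(s)=\max_{|t|\le t_0}\{st-\cE_{f_v,\phi,\psi}(t)\}$ is a maximum of a jointly continuous function over the compact parameter set $\{|t|\le t_0\}$, and is therefore jointly continuous on $J\times V$ in the $C^0$-topology — this is the continuous fibered Legendre transform alluded to in the introduction. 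I expect the main obstacle to be the lower bound: one must carefully justify the exponential equivalence of $\mu_{f_v,\phi}$ with the tilted states along orbit segments from the non-lacunary Gibbs property (the two measures are generally mutually singular, hence not comparable through a global density on $M$), or else route the lower bound through the spectral gap; the upper bound and the continuity of the Legendre transform are then routine.
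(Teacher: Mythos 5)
Your proposal is correct in substance and follows the same backbone as the paper for the large deviation bounds: both deduce them from the differentiability and strict convexity of the local free energy $\cE_{f_v,\phi,\psi}$ established in Theorem~\ref{thm:FreeEnergy} and then invoke the (local) G\"artner--Ellis machinery. Your explicit Chebyshev argument for the upper bound is just the standard unpacking of that black box, and you are right to flag the delicate point in the lower bound: in this non-Markov setting the equilibrium states $\mu_{f_v,\phi}$ and the tilted states $\mu_{f_v,\phi+t\psi}$ need not be globally comparable, and the clean route is exactly the one you name as a fallback, namely the spectral-gap / G\"artner--Ellis argument (which is what the paper cites through Theorem~\ref{thm:LDP.Ellis} and the references \cite{DZ98,RY08}). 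Note also that the relevant open condition here is (P') rather than (P), since $\phi$ is $C^2$; this is immaterial since both are open.

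Where your proof genuinely diverges from the paper is the continuity of $(s,v)\mapsto I_{f_v,\phi,\psi}(s)$. The paper inverts the fibered map $F(v,t)=(v,\cE'_{f_v,\phi,\psi}(t))$: strict monotonicity along fibers makes $F$ injective, compactness of the domain makes it a homeomorphism onto its image, the optimizer $t(v,s)$ is therefore continuous, and the variational identity $I_{f_v,\phi,\psi}(s)=s\,t(v,s)-\cE_{f_v,\phi,\psi}(t(v,s))$ transfers this continuity to $I$. You instead observe that $I_{f_v,\phi,\psi}(s)=\max_{|t|\le t_0}\{st-\cE_{f_v,\phi,\psi}(t)\}$ is a maximum over a fixed compact $t$-interval of a jointly continuous function, hence continuous by uniform continuity on compacts (a Berge-type maximum argument). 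This is simpler: it requires only joint continuity of $(t,v)\mapsto \cE_{f_v,\phi,\psi}(t)$ (which follows cleanly from joint continuity of $\Ptop$ in Theorem~\ref{thm.oldstuff}, rather than from the separate-continuity phrasing you give) and dispenses with strict convexity and with the injectivity assumption on $v\mapsto f_v$, neither of which is needed for the Legendre transform to be continuous. The paper's route, on the other hand, yields the extra information that the optimizing parameter $t(v,s)$ itself depends continuously on $(v,s)$, which is useful if one wants more than $C^0$ control of $I$. One small point to tighten in your write-up: when you choose $J$ inside $\bigcap_{v}(\cE'_{f_v,\phi,\psi}(-t_0),\cE'_{f_v,\phi,\psi}(t_0))$, you should note that the means $\int\psi\,d\mu_{f_v,\phi}$ vary with $v$, so nonemptiness of the intersection requires either taking $V$ (equivalently $\cF^2$) small enough, or bounding the oscillation of the means by the uniform lower bound on $\cE''$; this is implicit in the paper as well, which works near a fixed $f_0$.
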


\begin{figure}[htb]
\includegraphics[width=9cm]{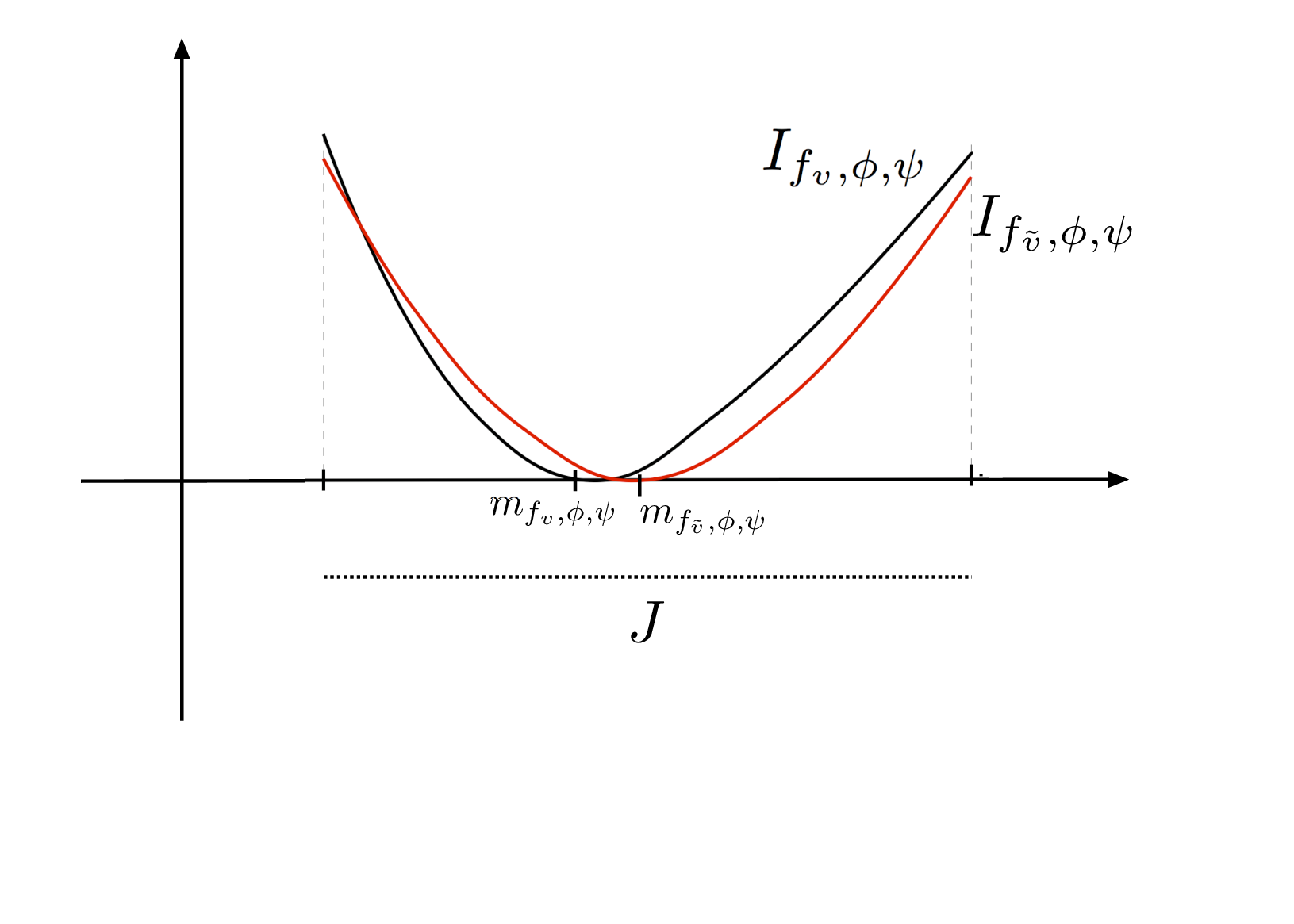}
\caption{Continuity of the rate functions}
\end{figure}

Let us mention that some upper and lower large deviation bounds for a larger class of
transformations and the class of $C^0$-observables were obtained previously in \cite{AP06,Va12}.
The previous provides sharper results for H\"older continuous observables.

\section{Preliminaries}\label{s.preliminaries}

In this section we provide some preparatory results needed for the proof of the
main results. Namely, we recall some properties of the transfer operators.

\subsection{Analytic functions on Banach spaces}\label{sec:analytic}
In what follows we recall the notion of analyticity for functions on Banach spaces. Let $E , F$ be Banach spaces and denote by $\mathcal{L}^{i}_{s}(E , F)$ the space of symmetric $i$-linear transformations from
$E^{i}$ to $F$. For notational simplicity, given $P_{i} \in \mathcal{L}^{i}_{s}(E , F)$ and $h\in E$ we set
$P_{i}(h) := P_{i}(h,\ldots,h)$.

\begin{definition}
Let $E , F$ be Banach spaces and $U\subset E$ an open subset. We say the function $f : U \subset E \rightarrow F$
is \emph{analytic} if for all $x \in U$ there exists $r > 0$ and for every $i\ge 1$ there exists $P_{i} \in \mathcal{L}^{i}_{s}(E , F)$ (depending on $x$) such that
$$
f(x + h) = f(x) + \sum_{i=1}^{\infty}\frac{P_{i}(h)}{i!}
$$ for all $h \in B(0 , r)$ and the convergence is uniform.
\end{definition}

Analytic functions on Banach spaces have completely similar properties to real analytic and complex analytic functions.
For instance, if $f : U \subset E \rightarrow F$ is analytic then $f$ is $C^{\infty}$ and for every $x\in U$ one has
$P_{i}=D^{i}f(x)$. For more details see for example \cite[Chapter~12]{Cha85}. In our setting we will be mostly
interested in considering the Banach spaces $\mathbb R$, $\mathbb C$, $C^{r+\al}(M,\mathbb R)$ or
$\cL(C^{r+\al}(M,\mathbb R),C^{r+\al}(M,\mathbb R))$. In any case, to prove analyticity of a function in an open subset of
its domain it is enough to write it in every point as a power series whose the terms are given by
symmetric $i$-linear transformations.

\subsection{Differentiability of operators on Banach spaces}\label{sec:Banach-diff}

Here we discuss several ways of differentiating operators and functionals acting on Banach spaces.
Given Banach spaces $E,F$,  we will be mostly interested to analyze the differentiability of a family:
\begin{itemize}
\item[(i)] 
$
(P_q)_{q\in U}
$
of operators in $L(E,F)$ parametrized on some open subset $U$ of a vector space;
\item[(ii)] $
(\mu_q)_{q\in U}
$
of linear functionals $L(E,\mathbb K)$, $\mathbb K=\mathbb R$ or $\mathbb C$,
parametrized on some open subset $U$ of a vector space.
\end{itemize}
Although (ii) is a special case of (i) we state in this way to highlight that
item (i) will includes e.g. transfer operators acting on different Banach spaces 
and item (ii) includes e.g. the topological pressure function and probability 
measure functionals.
There are two distinct possibilities for differentiability that we recall. On the one hand, given the Banach spaces $E, F$,
the family of linear operators $U \ni q \mapsto P_q\in \cL(E, F)$ is {\em pointwise differentiable}
if for any fixed $\varphi \in E$ the map
$$
\begin{array}{ccc}
U & \to & F \\
 q  & \mapsto &  P_q(\varphi) 
\end{array}
$$
is differentiable. On the other hand, the family of linear operators $U \ni q \mapsto P_q\in \cL(E, F)$ is 
{\em norm operator differentiable}, or just {\em differentiable} for short, if the map
$$
\begin{array}{ccc}
U & \to & \cL(E,F) \\
 q  & \mapsto &  P_q (\cdot)
\end{array}
$$
is differentiable. This is clearly stronger than pointwise differentiability.
Subsection~\ref{sec:Banach-manifold} below will allow to make the same considerations 
on these different notions of differentiability for maps acting on Banach manifolds, which
include the space of local diffeomorphisms on a compact manifold.
Most results on linear response formula deal with the pointwise differentiability
of Gibbs equilibrium states whereas the parameter is either a smooth family of potentials
or a  smooth family of dynamical systems.
In this paper, if not stated otherwise, we will always consider the (strong) differentiability 
of Gibbs measures $\mu_{\phi, f}$ as functionals and differentiability of transfer operators 
parametrized on open sets of potentials and dynamics.

\subsection{Banach manifolds and Fr\'echet differentiability}\label{sec:Banach-manifold}

In what follows we recall the structure of Banach manifold on the space $\text{Diff}_{\text{loc}}^r(M)$ of $C^r$-local diffeomorphisms on $M$, $r\ge 1$, and make precise the notion of differentiability used along this paper. The description of Banach manifold
on the space of $C^r$-diffeomorphisms given below we will follow closely \cite{Fr79}, and
refer the reader to \cite{Pa68,Fr79} for more details and proofs.
\smallskip

If the compact manifold $M$ is a submanifold of $\mathbb R^m$ it follows from \cite[Theorem~4.1]{Fr79} that
$C^r(M,M)$ is a submanifold of the Banach space $C^r(M,\mathbb R^m)$. Since the space $\text{Diff}_{\text{loc}}^r(M)$
of $C^r$-local diffeomorphisms is an open subset of $C^r(M,M)$, the later implies
that one can view $\text{Diff}_{\text{loc}}^r(M)$ as a Banach manifold also modeled by the Banach space
$C^r(M,\mathbb R^m)$.
In what follows we recall the atlas used for $C^r(M,M)$ since it helps to make precise its tangent space.

By Whitney's embedding theorem,
any smooth manifold can be embedded in $\mathbb R^{m}$ with $m=2\dim M$.
Then $C^r(M,M)$ is embedded on the Banach space $C^r(M,\mathbb R^m)$.
In fact, $C^r(M,M)$ inherits a structure of Banach manifold modeled by the Banach space as we now
describe.
Consider a tubular neighborhood $\pi: U \to M$ of $M$ in $\mathbb R^m$,
the tangent bundle $TM=\{(y,v) \in \mathbb R^{2m}: y\in M \,\text{and}\, v\in T_y M\} \subset M \times \mathbb R^m$,
the normal bundle $TM^\perp=\{(y,v) \in \mathbb R^{2m}: y\in M \,\text{and}\, <v,w>=0, \,\forall w \in T_y M\} \subset M \times \mathbb R^m$ and the projections $P, P^\perp: M \times \mathbb R^m \to \mathbb R^m$
so defined so that $P(x,v)$ is the orthogonal projection of $v$ on $T_x M$ and $P^\perp(x,v) = v - P(x,v) \in T_xM^\perp$.
Now choose a fixed $f \in C^r(M,M)$. Then one can decompose
$
E:= C^r(M, \mathbb R^m) = E^t \oplus E^n
$
with
$$
E^t =\{ H \in C^r(M, \mathbb R^m) : (f(x),H(x)) \in TM, \; \forall x\in M \}
$$
and
$$
E^n =\{ H \in C^r(M, \mathbb R^m) : (f(x),H(x)) \in TM^\perp, \; \forall x\in M \}.
$$
Franks~\cite{Fr79} proved that if $\omega_\pi : C^r(M, U) \to C^r(M, M)$ denote the projection $\omega_\pi(g) = \pi\circ g$
then
the map
$$
\begin{array}{rcl}
\alpha : B^t \times B^n & \to & C^r(M, \mathbb R^m)\\
	(h^t, h^n) & \mapsto & \omega_\pi(f + h^t) + h^n
\end{array}
$$
is a local diffeomorphism at $(0,0)$ and $\alpha^{-1}(C^r(M,M)) = B^t \times \{0\}$
(here $B^t \subset E^t$ and $B^n \subset E^n$ denote small balls so that
$f + h^t + h^n \in C^r(M, U)$). Up to diminish the balls $B^t, B^n$ if necessary, this guarantees that $\alpha$
is invertible. Therefore, if $V=\alpha(W)$ and $\varphi=(\alpha_W)^{-1}$ then $(V,\varphi)$ is a chart and
$\varphi( V \cap C^r(M, M)) = B^t \subset E^t$. This justifies that $C^r(M,M)$ is a submanifold of
$C^r(M, \mathbb R^m)$ and that for any $f \in C^{r}(M , M)$ the tangent space
$T_{f} C^{r}(M , M)$ is naturally identified with the space
\begin{equation}\label{eq:spaceG}
\Gamma^{r}_{f} := \{\gamma \in C^{r}(M , TM) : \gamma(x) \in T_{f(x)}M, \forall x \in M\}
\end{equation}
of $C^r$-sections (or vector fields) over $f$. The space $\Gamma^{r}_{f}$ is Banachable, that is,
since it is naturally isomorphic to
\begin{equation}\label{eq:spaceGB}
E^{t} = \{H \in C^{r}(M , \mathbb R^{m}) : (f(x),\gamma(x)) \in TM \},
\end{equation}
then it inherits a structure of Banach space.
One should mention the later identification is independent of the embedding of $M$ (cf. page 238 in \cite{Fr79}).
Throughout we will consider the space $\text{Diff}_{\text{loc}}^r(M)$
as a Banach manifold modeled by $C^r(M, \mathbb R^m)$,
from which $T_{f} \text{Diff}_{\text{loc}}^r(M) \simeq E^t \subset C^r(M, \mathbb R^m)$ for every ${f} \in \text{Diff}_{\text{loc}}^r(M)$.

\begin{definition}
Let $F$ be a Banach space. Given $f_0 \in \text{Diff}_{\text{loc}}^r(M)$, we say that a function $\Psi : \text{Diff}_{\text{loc}}^r(M) \subset C^r(M, \mathbb R^m) \to F$
is \emph{Fr\'echet differentiable at $f_0$ } if there exists a continuous linear functional
$D\Psi(f_0) : T_{f_0} \text{Diff}_{\text{loc}}^r(M) \subset C^r(M, \mathbb R^m) \to  F$ so that
\begin{equation}\label{eq:deriv}
\lim_{H \to 0} \frac1{\|H\|_{C^r(M, \mathbb R^m)}} \; \| \Psi(f_0 + H) -\Psi (f_0) - D\Psi(f_0) H\|_F=0
\end{equation}
where $H$ is taken to converge to zero in $T_{f_0} \text{Diff}_{\text{loc}}^r(M) = T_{f_0} C^{r}(M , M)$. The functional $D\Psi(f_0)$ is
the derivative of $\Psi$ at $f_0$. We say that $\Psi$ is $C^1$-\emph{differentiable} if the Fr\'echet derivative $D\Psi(f)$ exists at every $f \in \text{Diff}_{\text{loc}}^r(M)$ and the map $\text{Diff}_{\text{loc}}^r(M) \ni f \mapsto D\Psi(f) \in B(C^r(M, \mathbb R^m), F)$
is continuous.
\end{definition}

For notational simplicity, when no confusion is possible we shall omit the
spaces $C^r(M, \mathbb R^m)$ and $F$ in the norms whenever using the expression~\eqref{eq:deriv} to compute derivatives of vector valued transformations.
The following results will be instrumental:

\begin{proposition}\cite{Fr79}
Let $M$ be a compact manifold. Then for any $r,s\ge 1$ the composition map
	$$
	\begin{array}{ccc}
	C^{r+s}(M,M) \times C^r(M,M) & \to & C^r(M,M)\\
	(f,g) & \mapsto & f\circ g
	\end{array}
	$$
	is $C^s$-differentiable.
\end{proposition}

Clearly the previous proposition endows $\text{Diff}_{\text{loc}}^r(M)$ with a structure of a Lie group. Finally, we shall
comment on the differentiability of functions on the product space $\text{Diff}_{\text{loc}}^r(M) \times C^{s}(M, \mathbb R)$,
for $s>0$. Since  $C^{s}(M, \mathbb R)$ is a Banach space then $\text{Diff}_{\text{loc}}^r(M) \times C^{s}(M, \mathbb R)$ is a Banach manifold modeled by the Banach space $C^{r}(M, \mathbb R^m) \times C^{s}(M, \mathbb R)$.
So, up to consider a chart, we are interested in the differentiability of a map on the product of Banach spaces.
We endow the product
$X\times Y$ of Banach spaces with the norm $\| \cdot \|_{X\times Y}=\|\cdot\|_X + \|\cdot\|_Y$.
We need the following simple result, whose proof can be found in \cite[Theorem~7.1]{La97}.

\begin{lemma}
Let $X,Y, Z$ be normed linear spaces, let $U \subset X  \times Y$ be an open set and consider
a function $\Psi: U \to Z$.  Given $(x,y)\in U$ set $U_x=\{ y\in Y : (x,y) \in U \}$ and $U_y=\{ x\in X : (x,y) \in U \}$
and assume the maps $\Psi_x : U_x \subset Y \to Z$ and $\Psi_y : U_y \subset X \to Z$ are $C^1$-differentiable. Then
$\Psi$ is $C^1$-differentiable in $U$.
\end{lemma}
For more results on the differentiability of functions on vector spaces or Banach manifolds we refer the reader to e.g. \cite{DM07, AMR07}.

\subsection{Spectral radius of Ruelle-Perron-Frobenius operators and conformal measures}\label{Conformal Measure}

Let $\cL_{f,\phi}: C^0(M,\mathbb R) \to C^0(M,\mathbb R)$ be the Ruelle-Perron-Frobenius transfer operator
associated to $f:M\to M$ and $\phi:M\to\real$ previously defined by
$$
\cL_{f,\phi} g(x) = \sum_{f(y)=x} e^{\phi(y)} g(y).
$$
for every $ g\in C^0(M,\mathbb R)$.  We consider also the dual operator $\cL^*_{f,\phi}:\cM(M)\to\cM(M)$ acting on
the space $\cM(M)$ of Borel measures in $M$ by
$$
\int g \, d(\cL_{f,\phi}^*\eta) = \int (\cL_{f,\phi} g) \, d\eta
$$
for every $g \in C^0(M,\mathbb R)$. Let $r(\mathcal L_{f,\phi})$ be the
spectral radius of $\cL_{f,\phi}$.  In our context  conformal measures associated to
the spectral radius always exist. More precisely,

\begin{proposition}\label{p.conformal.measure}
Assume that $f$ satisfies assumptions (H1), (H2). If  $\phi$ satisfies $\sup\phi-\inf\phi<\log\deg(f)-\log q$
 then there exists a conformal  measure $\nu=\nu_{f,\phi}$ such that $\cL_{f,\phi}^* \nu=\la \nu$,
where $\la=r(\cL_{f,\phi})$. Moreover, $\nu$ is a non-lacunary Gibbs measure and
$P_{\text{top}}(f,\phi)	= \log \la$.
\end{proposition}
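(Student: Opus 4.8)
The plan is to realize $\nu$ as a fixed point of the normalized dual transfer operator and then to identify the associated eigenvalue with the spectral radius $r(\cL_{f,\phi})$. Consider the map
\[
\Phi:\cM_1(M)\to\cM_1(M),\qquad \Phi(\eta)=\frac{\cL_{f,\phi}^*\eta}{\int \cL_{f,\phi}\um\,d\eta},
\]
on the space $\cM_1(M)$ of Borel probability measures on $M$ equipped with the weak$^*$ topology. Since $f$ is a local homeomorphism, $\cL_{f,\phi}$ maps $C^0(M,\re)$ into itself and $\cL_{f,\phi}\um\ge \deg(f)\,e^{\inf\phi}>0$, so the denominator never vanishes and $\Phi$ is well defined and weak$^*$-continuous. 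As $\cM_1(M)$ is convex and weak$^*$-compact, the Schauder--Tychonoff fixed point theorem provides $\nu=\nu_{f,\phi}\in\cM_1(M)$ with $\Phi(\nu)=\nu$; writing $\la:=\int\cL_{f,\phi}\um\,d\nu>0$, this is exactly the conformality relation $\cL_{f,\phi}^*\nu=\la\,\nu$.

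Next I would check that $\la=r(\cL_{f,\phi})$. Iterating the eigenrelation yields $\int \cL_{f,\phi}^n\um\,d\nu=\la^n$ for all $n\ge1$, hence $\la^n\le \|\cL_{f,\phi}^n\um\|_0$ and therefore $\la\le r(\cL_{f,\phi})=\lim_n\|\cL_{f,\phi}^n\um\|_0^{1/n}$. For the reverse inequality the key point is a uniform bounded oscillation estimate
\[
\sup_M \cL_{f,\phi}^n\um \;\le\; C\,\inf_M \cL_{f,\phi}^n\um ,\qquad n\ge1,
\]
with $C$ independent of $n$. This is precisely where (H1), (H2) and the hypothesis $\sup\phi-\inf\phi<\log\deg(f)-\log q$ enter: grouping, for each $x$, the $\deg(f)$ inverse branches according to whether or not they meet the region $\cA$, condition (H2) forces at least $\deg(f)-q$ of them to lie in the uniformly expanding part, where the contraction rate $\si^{-1}$ together with the smallness of the oscillation of the potential (so that $q\,e^{\sup\phi}<\deg(f)\,e^{\inf\phi}$) dominates the mild $L$-expansion inside $\cA$; an inductive distortion argument as in \cite{VV10} then propagates the oscillation bound to all iterates. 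Combining this with $\sup_M\cL_{f,\phi}^n\um\le C\int\cL_{f,\phi}^n\um\,d\nu=C\la^n$ gives $r(\cL_{f,\phi})\le\la$, so equality holds; in particular the eigenvalue does not depend on the chosen fixed point.

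It remains to record the Gibbs properties and the pressure identity. The non-lacunary Gibbs property is obtained by estimating the $\nu$-mass of dynamical balls: along the (positive-frequency, bounded-gap) sequence of hyperbolic times of a point $x$ --- whose existence follows from (H1) via a Pliss-type argument --- the corresponding inverse branches contract uniformly and have bounded distortion, so the conformality relation forces $\nu(B(x,n,\ep))$ to be comparable, up to a uniform constant, to $e^{S_n\phi(x)-n\log\la}$ at those times, which is the non-lacunary weak Gibbs condition. Finally $\Ptop(f,\phi)=\log\la$ follows from the variational principle: the Gibbs bounds yield a lower bound $h_\mu(f)+\int\phi\,d\mu\ge\log\la$ for a suitable invariant measure $\mu\ll\nu$, while the upper bound $\Ptop(f,\phi)\le\log\la$ comes from covering $M$ by dynamical balls and applying the Gibbs estimate; both statements are established in \cite{VV10}, which I would simply invoke. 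The main obstacle is the uniform bounded oscillation estimate for $\cL_{f,\phi}^n\um$, i.e. controlling the competition between the contracting region $\cA$ and its uniformly expanding complement simultaneously for all $n$; the remaining ingredients are either soft (the fixed point theorem) or quotable from \cite{VV10}.
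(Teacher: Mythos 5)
The paper itself gives no argument here: its proof is a bare citation to Theorem~B, Theorem~4.1 and Proposition~6.1 of \cite{VV10}. Your sketch reconstructs precisely the route taken in that reference, so in spirit you are following the intended proof: Schauder--Tychonoff for a fixed point $\nu$ of the normalized dual operator, the easy inequality $\lambda\le r(\cL_{f,\phi})$ from the eigenrelation, the nontrivial reverse inequality via distortion control of $\cL^n_{f,\phi}\um$, hyperbolic times for the Gibbs estimates, and the variational principle for $\Ptop(f,\phi)=\log\lambda$.

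The one step you should revise is the claimed uniform oscillation estimate $\sup_M\cL^n_{f,\phi}\um\le C\,\inf_M\cL^n_{f,\phi}\um$ with $C$ independent of $n$. Under the hypothesis of this proposition --- only $\sup\phi-\inf\phi<\log\deg(f)-\log q$, with no assumption on the H\"older seminorm $|e^\phi|_\alpha$ --- such an all-$n$ bound is not available: iterates through the non-expanding region $\cA$ can amplify the local H\"older variation of $\cL^n_{f,\phi}\um$, and the cone-contraction argument that would control it at every $n$ (the one underlying \cite{CV13} and Theorem~\ref{t.cone.invariance}) requires the stronger open condition~(P), which this proposition does \emph{not} assume. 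What \cite{VV10} actually establishes, and what the phrase ``non-lacunary Gibbs'' in the conclusion is recording, is a bounded-distortion/Gibbs estimate only along the sequence of hyperbolic times of a point, a sequence of positive density with bounded gaps, not at every $n$. The good news is that the weaker version suffices for your argument: $n\mapsto\|\cL^n_{f,\phi}\um\|_0$ is submultiplicative (since $\cL^{n+m}_{f,\phi}\um\le\|\cL^m_{f,\phi}\um\|_0\,\cL^n_{f,\phi}\um$), so $\|\cL^n_{f,\phi}\um\|_0^{1/n}$ converges to its infimum and it is enough to have $\|\cL^{n_k}_{f,\phi}\um\|_0\le C\lambda^{n_k}$ along any unbounded subsequence $n_k$. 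Replace the all-$n$ oscillation claim by the hyperbolic-time version and add this submultiplicativity remark; the rest of your outline (construction of $\mu\ll\nu$ for the lower pressure bound, covering by dynamical balls for the upper bound) is, as you say, to be quoted from \cite{VV10}, which is exactly what the paper does.
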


\begin{proof}
See Theorem~B, Theorem~4.1 and Proposition~6.1 in \cite{VV10}.
\end{proof}

\subsection{Spectral gap for the transfer operator in $C^\alpha(M,\mathbb R)$}\label{s.s.cones}

Recall that the H\"older constant of $\vr\in C^\alpha(M,\mathbb R)$ is
the least constant $C>0$ such that $|\vr(x)-\vr(y)|\le C d(x,y)^\alpha$ for  all
points $x\ne y$. For any $\de>0$,  the local H\"older constant $|\vr|_{\alpha,\delta}$ is the corresponding
notion for points $x,y$ such that  $d(x,y)<\delta$.
If $\de$ is small then there exists a positive integer $m$ such that every $(C,\al)$- H\"older continuous map in
balls of radius $\de$ is globally $(Cm,\al)$-H\"older continuous (see \cite[Lemma~3.5]{CV13}).
This put us in a position to state the precise relation on the constants $L$, $\si$, $q$ and $\vep_\phi$
on the hypothesis (H1), (P) and (P'). We assume:
\begin{equation}\label{eq.vep}
e^{\vep_\phi}\cdot\left(\frac{(\deg(f)-q) \sigma^{-\alpha} + q L^\alpha [1+(L-1)^\alpha] }{\deg(f)} \right)
	+ \vep_\phi 2m L^\al \diam(M)^\al
	<1
\end{equation}
and
\begin{equation}\label{eq.vepp}
[1+\vep_\phi ] \cdot e^{\vep_\phi} \cdot \left(\frac{(\deg(f)-q) \sigma^{-\alpha} + q L^\alpha [1+(L-1)^\alpha] }{\deg(f)} \right)
	<1
\end{equation}
This choice was taken to obtain the following cone invariance.

\begin{theorem}\label{t.cone.invariance}
Assume that $f$ satisfies (H1), (H2) and that $\phi$ satisfies (P). Then there exists 
$0< \hat \la< 1$ such that $\cL_{f,\phi}(\Lambda_{\kappa,\delta}) \subset \Lambda_{\hat\la \kappa,\delta} $ for every large positive constant $\kappa$, where
$$
\Lambda_{\kappa,\delta}
	=\left\{\vr\in C^0(M,\mathbb R) : \vr>0 \text{ and } |\vr|_{\alpha,\delta}\leq \kappa \inf \vr \right\}.
$$
is a cone of locally H\"older continuous observables. Moreover, given $0< \hat\la< 1$,  the cone
${\Lambda}_{\hat\la \kappa,\delta}$ has finite ${\Lambda}_{\kappa,\delta}$-diameter in the projective metric
$\Theta_k$. Furthermore, if $\vr \in \La_{\kappa,\de}$ satisfies $\int \varphi \, d\nu_{f,\phi}=1$ and $h_{f,\phi}$
denotes the $\Theta_\kappa$-limit of $\vr_n= \tilde \cL^n_\phi(\vr)$ then, $\vr_n$ converges exponentially fast to $h_{f,\phi}$ in the H\"older norm.
\end{theorem}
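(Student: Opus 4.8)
The plan is to follow the classical Birkhoff cone contraction scheme adapted to the non-uniformly expanding setting, where the uniform expansion outside $\cA$ must compensate for the mild contraction allowed inside $\cA$. First I would establish the invariance $\cL_{f,\phi}(\Lambda_{\kappa,\delta}) \subset \Lambda_{\hat\la\kappa,\delta}$. Given $\vr\in\Lambda_{\kappa,\delta}$, one estimates the local H\"older constant of $\cL_{f,\phi}\vr$ by splitting, for each pair $x,y$ with $d(x,y)<\delta$, the sum $\cL_{f,\phi}\vr(x)-\cL_{f,\phi}\vr(y)=\sum_{i} e^{\phi(x_i)}\vr(x_i)-e^{\phi(y_i)}\vr(y_i)$ over the $\deg(f)$ inverse branches, pairing $x_i,y_i$ via the branch. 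Each term is controlled using (i) the bound $d(x_i,y_i)\le L(x)\,d(x,y)$, which is $\le\sigma^{-1}d(x,y)$ for the at least $\deg(f)-q$ branches landing outside $\cA$ and $\le L\,d(x,y)$ for the at most $q$ branches inside $\cA$ (this is where (H2) enters), (ii) the hypothesis (P) controlling $|e^\phi|_\alpha$ and $\sup\phi-\inf\phi$, and (iii) the passage from local to global H\"older constant via the integer $m$ from \cite[Lemma~3.5]{CV13}. Collecting the terms, the ratio $|\cL_{f,\phi}\vr|_{\alpha,\delta}/\inf(\cL_{f,\phi}\vr)$ is bounded by $\hat\la\kappa$ for $\kappa$ large, where $\hat\la<1$ is precisely the left-hand side of \eqref{eq.vep}; positivity of $\cL_{f,\phi}\vr$ is immediate since each term is positive.

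Second I would prove the finite-diameter statement: the image cone ${\Lambda}_{\hat\la\kappa,\delta}$ has finite diameter inside ${\Lambda}_{\kappa,\delta}$ in the projective (Hilbert) metric $\Theta_\kappa$. For two elements $\vr,\psi$ of the smaller cone one must produce $0<\alpha\le\beta<\infty$ with $\beta\psi-\vr\in\Lambda_{\kappa,\delta}$ and $\vr-\alpha\psi\in\Lambda_{\kappa,\delta}$, and then bound $\Theta_\kappa(\vr,\psi)=\log(\beta/\alpha)$ uniformly. The natural choices are $\alpha$ slightly below $\inf(\vr/\psi)$ and $\beta$ slightly above $\sup(\vr/\psi)$; one checks the cone conditions $|\beta\psi-\vr|_{\alpha,\delta}\le\kappa\inf(\beta\psi-\vr)$ and similarly for $\vr-\alpha\psi$ using that both $\vr,\psi$ already satisfy the sharper bound $|\cdot|_{\alpha,\delta}\le\hat\la\kappa\inf(\cdot)$, so there is room $(1-\hat\la)\kappa$ of slack to absorb the cross terms. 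The bound on $\log(\beta/\alpha)$ then comes from a uniform distortion estimate comparing $\sup(\vr/\psi)$ and $\inf(\vr/\psi)$, which follows because both functions are bounded above and below in terms of their infima with constants depending only on $\kappa$, $\delta$, $\alpha$ and $\diam(M)$. This argument is standard once \eqref{eq.vepp} is in force to guarantee the strict slack; I would cite or adapt the corresponding computation in \cite{CV13}.

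Third, the exponential convergence: by the Birkhoff--Hopf theorem, $\cL_{f,\phi}$ (suitably normalized as $\tilde\cL_\phi$) contracts $\Theta_\kappa$ by the factor $\tanh(\Delta/4)<1$ where $\Delta$ is the finite diameter just obtained, so $(\vr_n)$ is $\Theta_\kappa$-Cauchy and converges to a unique normalized fixed direction $h_{f,\phi}$. To upgrade from projective-metric convergence to convergence in the H\"older norm, I would use the standard comparison that on the normalized slice $\{\int\cdot\,d\nu_{f,\phi}=1\}$ the $C^0$-distance is dominated by the projective distance (two nearby directions with equal $\nu_{f,\phi}$-integral are $C^0$-close), giving exponential $C^0$-convergence; then, since all $\vr_n$ lie in $\Lambda_{\hat\la\kappa,\delta}$, their local H\"older constants are uniformly bounded by $\hat\la\kappa\sup_n\inf\vr_n<\infty$, and an interpolation between uniform H\"older bound and exponential $C^0$-decay yields exponential decay in $C^{\alpha'}$ for every $\alpha'<\alpha$; a slightly finer argument controlling $|\vr_n-h_{f,\phi}|_{\alpha,\delta}$ directly via the cone contraction upgrades this to the full $\alpha$-H\"older norm. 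The main obstacle is the bookkeeping in the first step: getting the split-sum estimate to close with the \emph{explicit} constant in \eqref{eq.vep} rather than an abstract ``$L$ close to $1$'', including the correct treatment of the factor $[1+(L-1)^\alpha]$ that arises from comparing $e^{\phi(x_i)}$ with $e^{\phi(y_i)}$ along a contracted-then-expanded branch, and ensuring the local-to-global constant $m$ is threaded correctly through every inequality.
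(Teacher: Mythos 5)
Your proposal is correct and reproduces the Birkhoff projective-cone argument that the paper itself simply cites (the proof is deferred verbatim to Theorem~4.1, Proposition~4.3 and Corollary~4.5 of \cite{CV13}): the branch-splitting estimate using (H1)--(H2) and the explicit constants in \eqref{eq.vep}--\eqref{eq.vepp} yields strict cone invariance with slack, the slack gives finite $\Theta_\kappa$-diameter of the image, and the Birkhoff--Hopf contraction upgraded via the normalized slice gives exponential $C^\alpha$-convergence. The only loose spot, which you flag yourself, is that the interpolation step only gives $C^{\alpha'}$ decay for $\alpha'<\alpha$; the ``slightly finer argument'' you allude to (directly bounding $|\vr_n - h_{f,\phi}|_{\alpha,\delta}$ by the cone order, writing $\alpha_n h \preceq \vr_n \preceq \beta_n h$ with $\beta_n/\alpha_n \to 1$ exponentially) is indeed what closes the gap at exponent $\alpha$.
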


\begin{proof}
See Theorem~4.1, Proposition~4.3 and Corollary~4.5 in \cite{CV13}.
\end{proof}

Hence the normalized operator $\tilde\cL_{f,\phi}=\la_{f,\phi}^{-1}\cL_{f,\phi}$ has the spectral gap property.

\begin{theorem}\label{t.gap}
There exists $0< r_0< 1$ such that the operator $\tilde \cL_{f,\phi}$ acting on the space $C^\al(M,\mathbb R)$
admits a decomposition of its  spectrum given by $\Sigma= \{1\} \cup \Sigma_0$, where $\Sigma_0$ contained in a
ball $B(0, r_0)$. Furthermore, there exists $C>0$ and $\tau\in(0,1)$ such that
$\| \tilde \cL^n_{f,\phi} \varphi -h_{f,\phi} \, \int \varphi\, d\nu_{f,\phi} \|_\alpha \leq C \tau^n \|\varphi\|_\alpha$ for all $n\ge 1$
and $\varphi\in C^\al(M,\mathbb R)$, where  $h_{f,\phi} \in C^\alpha(M,\mathbb R)$ is the unique fixed point for
$\tilde \cL_{f,\phi}$ such that $\int h_{f,\phi} \, d\nu_{f,\phi}=1$.
\end{theorem}

\begin{proof}
See Theorem~4.6, Proposition~4.3 and Corollary~4.5 in \cite{CV13}.
\end{proof}

As a consequence of the previous results it follows that the density of the equilibrium state with
respect to the corresponding  conformal measure vary continuously in the $C^0$-norm. We recall the
precise statement and the proof of the result since some estimates in the proof will be needed later on.

\begin{proposition}\label{prop:C0cont}
Let $\cF$ be a family of local homeomorphisms with inverse Lipschitz and $\cW$ be a family of H\"older potentials as above. 
Then the topological pressure $\cF\times \cW \ni (f,\phi) \mapsto \log \la_{f,\phi}= P_{\text{top}}(f,\phi)$ and  the
density function
\[
\begin{array}{ccc}
\mathcal F\times \cW & \to & (C^\al(M,\mathbb R),\|\cdot \|_0) \\
(f,\phi) & \mapsto & \frac{d\mu_{f,\phi}}{d\nu_{f,\phi}}
\end{array}
\]
are continuous. Moreover, $h_{f,\phi}=\lim \la_{f,\phi}^{-n}\, \cL^n_{f,\phi} 1$ and the convergence is uniform
in a neighborhood of $(f,\phi)$.
\end{proposition}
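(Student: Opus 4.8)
The plan is to deduce both continuity statements from the spectral gap established in Theorem~\ref{t.gap} together with the cone contraction of Theorem~\ref{t.cone.invariance}, being careful to obtain estimates that are \emph{uniform} over a neighborhood in $\cF\times\cW$. First I would recall that by Proposition~\ref{p.conformal.measure} (or Theorem~\ref{thm.oldstuff}(3)) one has $\la_{f,\phi}=r(\cL_{f,\phi})=\exp(\Ptop(f,\phi))$, so it suffices to prove continuity of $(f,\phi)\mapsto\la_{f,\phi}$ and of the density $h_{f,\phi}=d\mu_{f,\phi}/d\nu_{f,\phi}$ in the $C^0$-norm. The key observation is that the constant function $\um$ lies in the cone $\Lambda_{\kappa,\delta}$ for $\kappa$ large, so by Theorem~\ref{t.cone.invariance} the normalized iterates $\tilde\cL^n_{f,\phi}\um$ land in $\Lambda_{\hat\la\kappa,\delta}$ and converge in the projective metric $\Theta_\kappa$ — hence, after the remark in Theorem~\ref{t.gap} on convergence in the H\"older norm, exponentially fast in $\|\cdot\|_\alpha$ to $h_{f,\phi}\,\big(\int\um\,d\nu_{f,\phi}\big)^{-1}$, i.e.\ to $h_{f,\phi}$ up to the normalization $\int h_{f,\phi}\,d\nu_{f,\phi}=1$. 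I would record the quantitative statement $\|\tilde\cL^n_{f,\phi}\um - h_{f,\phi}\|_\alpha\le C\tau^n$ with $C,\tau$ \emph{uniform} over a neighborhood of $(f,\phi)$, which is legitimate because all constants in (H1), (P), \eqref{eq.vep}, \eqref{eq.vepp} and in the cone argument were taken uniform over $\cF\times\cW$; this gives the asserted uniform convergence $h_{f,\phi}=\lim\la_{f,\phi}^{-n}\cL^n_{f,\phi}\um$.

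Next I would extract continuity of the pressure. Writing $a_n(f,\phi)=\int \cL^n_{f,\phi}\um\,d\nu_{f,\phi}$ (or simply $\|\cL^n_{f,\phi}\um\|_0$), one has $\la_{f,\phi}^{-n}a_n(f,\phi)\to \int h_{f,\phi}\,d\nu_{f,\phi}=1$ uniformly, so $\frac1n\log a_n(f,\phi)\to\log\la_{f,\phi}=\Ptop(f,\phi)$ uniformly on a neighborhood. For each \emph{fixed} $n$, the map $(f,\phi)\mapsto \cL^n_{f,\phi}\um$ is pointwise continuous in $C^0$: indeed $\cL_{f,\phi}\um(x)=\sum_{f(y)=x}e^{\phi(y)}$ depends continuously on $(f,\phi)$ — the inverse branches of $f$ vary continuously with $f$ in $\cF$ by the uniform Lipschitz-inverse hypothesis, and $\phi\mapsto e^\phi$ is continuous in $C^0$ — and one iterates. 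Hence $\frac1n\log a_n$ is continuous for fixed $n$, and a uniform limit of continuous functions is continuous, giving continuity of $(f,\phi)\mapsto\Ptop(f,\phi)$.

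Finally, for the density, fix $(f_0,\phi_0)$ and $\varepsilon>0$; choose $N$ with $C\tau^N<\varepsilon/3$ uniformly over a neighborhood $\cV$ of $(f_0,\phi_0)$. For $(f,\phi)\in\cV$,
\begin{equation*}
\|h_{f,\phi}-h_{f_0,\phi_0}\|_0
\le \|h_{f,\phi}-\tilde\cL^N_{f,\phi}\um\|_0
 + \|\tilde\cL^N_{f,\phi}\um-\tilde\cL^N_{f_0,\phi_0}\um\|_0
 + \|\tilde\cL^N_{f_0,\phi_0}\um-h_{f_0,\phi_0}\|_0.
\end{equation*}
The first and third terms are each $<\varepsilon/3$ by the uniform exponential estimate; the middle term is $<\varepsilon/3$ for $(f,\phi)$ close enough to $(f_0,\phi_0)$, since $\tilde\cL^N_{f,\phi}\um=\la_{f,\phi}^{-N}\cL^N_{f,\phi}\um$ depends continuously in $C^0$ on $(f,\phi)$ for the fixed iterate $N$ (continuity of $\cL^N_{f,\phi}\um$ as above, together with the already-established continuity and positivity of $\la_{f,\phi}$). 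This yields $C^0$-continuity of $(f,\phi)\mapsto d\mu_{f,\phi}/d\nu_{f,\phi}$.

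The main obstacle, and the point requiring the most care, is ensuring the uniformity of the constants $C$ and $\tau$ (equivalently, the uniform bound on the $\Theta_\kappa$-diameter of $\cL_{f,\phi}(\Lambda_{\kappa,\delta})$) across the whole family $\cF\times\cW$, rather than merely for an individual pair; this is exactly where hypotheses (H1)--(H2) and (P) with \emph{uniform} constants enter, and where one invokes that the cone $\Lambda_{\kappa,\delta}$ and the bounds \eqref{eq.vep}--\eqref{eq.vepp} are the same for every member of the family. Granting that, the ``uniform limit of continuous functions'' mechanism does the rest. The secondary technical point is the pointwise $C^0$-continuity of $(f,\phi)\mapsto\cL^n_{f,\phi}\um$, which reduces to continuity of the finitely many inverse branches of $f\in\cF$ as maps into $M$ — a consequence of the uniform Lipschitz-inverse assumption and the implicit/inverse function considerations for local homeomorphisms — composed with the elementary continuity of $\phi\mapsto e^\phi$ in $C^0$.
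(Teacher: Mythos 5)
Your proof is correct and follows essentially the same route as the paper's: establish the uniform exponential convergence $\|\tilde\cL^n_{f,\phi}\um - h_{f,\phi}\|_0 \le C\tau^n$ over a neighborhood (via the cone/spectral-gap estimates and the uniformity of the constants in (H1), (P)), observe that $(f,\phi)\mapsto\cL^n_{f,\phi}\um$ is $C^0$-continuous for fixed $n$ because the finitely many inverse branches vary continuously, and then pass to the limit — by the uniform-limit-of-continuous-functions mechanism for the pressure, and by the $\varepsilon/3$ triangle inequality for the density. One small caution: of the two candidates you offer for $a_n$, you should use $\|\cL^n_{f,\phi}\um\|_0$ (as the paper does), since the alternative $\int\cL^n_{f,\phi}\um\,d\nu_{f,\phi}$ would require continuity of $(f,\phi)\mapsto\nu_{f,\phi}$, which is not yet available at this stage.
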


\begin{proof}
Recall that $P_{\text{top}}(f,\phi)=\log \la_{f,\phi}$ where $\la_{f,\phi}$
is the spectral radius of the operator $\cL_{f,\phi}$. Moreover, it follows from the proof of
Theorem~\ref{t.cone.invariance} that for any $\vr \in \La_{\kappa,\de}$ satisfying $\int \varphi \, d\nu_{f,\phi}=1$ one has
in particular
\begin{equation}\label{eq.C0contt}
\left\|\la^{-n}_{f,\phi}\cL_{f,\phi}^{n}\varphi- \frac{d\mu_{f,\phi}}{d\nu_{f,\phi}}\right\|_{0}
	\leq C \tau^n
\end{equation}
for all $n\ge 1$. Notice the previous reasoning applies to $\varphi\equiv 1\in\La_{\kappa,\de}$. Moreover, since
the spectral gap property estimates depend only on the constants $L,\si$ and $\deg(f)$ it follows that all transfer
operators $\cL_{\tilde f,\tilde \phi}$ preserve the cone $\La_{\kappa,\de}$ for all pairs $(\tilde f,\tilde\phi)$ and that
the constants $R_1$ and $\De$ can be taken uniform in a small neighborhood $\cU$ of $(f,\phi)$. Furthermore,
one has that $\int \la_{f,\phi}^{-1}\cL_{f,\phi}\; d\nu_{f,\phi}=1$ and so the convergence
$$
\lim_{n \to +\infty} \frac 1 n \log \|{\tilde {\cL^n}}_{\tilde f,\phi}(1) \|_0
	= \lim_{n \to +\infty} \frac 1 n \log\left\| {\la^{-n}_{\tilde f, \phi}}\,  {\cL^n}_{\tilde f , \phi}  (1) \right\|_0
	= 0
$$
given by Theorem~\ref{t.cone.invariance} can be taken uniform in $\cU$. This is the key
ingredient to obtain the continuity of the topological pressure and density function. Indeed, let $\vep>0$ be fixed
and take $n_0\in \mathbb N$ such that
$
\Big |\frac{1}{n_0} \log \|{\cL^{n_0}}_{\tilde f, \phi}(1)\|_0 - \log(\la_{\tilde f, \phi})\Big |
	< \frac\epsilon3.
$
for all $\tilde f  \in \cU$. Moreover,  using $P_{\text{top}}(f,\phi)=\log \la_{f,\phi}$ by triangular inequality we get
\begin{align*}
\Big | \Ptop(f,\phi) - \Ptop(\tilde f,\phi)\Big |
	& \leq \Big |\frac{1}{n_0} \log \|{\cL^{n_0}}_{\tilde f, \phi}(1)\|_0 - \log(\la_{\tilde f, \phi})\Big | \\
	&+ \Big |\frac{1}{n_0} \log \|{\cL^{n_0}}_{ f, \phi}(1)\|_0 - \log(\la_{ f, \phi})\Big | \\
	& + \Big |\frac{1}{n_0} \log \|{\cL^{n_0}}_{ f, \phi}(1)\|_0 - \frac{1}{n_0} \log \|{\cL^{n_0}}_{ \tilde f, \phi}(1)\|_0 \Big |.
\end{align*}
Now, it is not hard to check that, for $n_0$ fixed, the function $\cU \to C^0(M,\mathbb R)$
$$
\tilde f \mapsto \cL^{n_0}_{\tilde f,\phi} 1
	=\sum_{\tilde f^{n_0}(y)=x} e^{S_{n_0}\phi(y)}
$$
is continuous. Consequently, there exists a neighborhood $\mathcal V\subset \cU$ of $f$ such that
$|\frac{1}{n_0} \log \|{\cL^{n_0}}_{ f, \phi}(1)\|_0 - \frac{1}{n_0} \log \|{\cL^{n_0}}_{ \tilde f, \phi}(1)\|_0|<\vep/3$
for every $\tilde f\in \mathcal V$. Altogether this proves that $| \Ptop(f,\phi) - \Ptop(\tilde f,\phi)\Big |<\vep$
for all $\tilde f \in \mathcal V$. Since $\vep$ was chosen arbitrary we obtain that both the leading eigenvalue
and topological pressure functions vary continuously with the dynamics $f$.
Finally, by equation~\eqref{eq.C0contt} above applied to $\varphi\equiv 1$ and triangular inequality
we obtain that
\begin{align*}
\left\|\frac{d\mu_{\tilde f,\phi}}{d\nu_{\tilde f,\phi}} -  \frac{d\mu_{f,\phi}}{d\nu_{f,\phi}} \right\|_{0}
	& \leq 2C \tau^n
	+ \left\|\la^{-n}_{\tilde f,\phi}\cL_{\tilde f,\phi}^{n}1- \la^{-n}_{f,\phi}\cL_{f,\phi}^{n}1  \right\|_{0}
\end{align*}
for all $n$. Hence, proceeding as before one can make the right hand side above as close to zero as
desired provided that $\tilde f$ is sufficiently close to $f$. This proves the continuity of the density function and
finishes the proof of the proposition.
\end{proof}

\subsection{Spectral gap for the transfer operator in $C^r(M,\mathbb R)$}\label{s.s.conesCr}

Here we recall the analogous results for the action of the transfer operator in the space
of smooth observables. In particular we have the corresponding spectral gap property for the action of
Ruelle-Perron-Frobenius transfer operators in the space of smooth observables whose proof
can be found in \cite[Section~5]{CV13}.

\begin{theorem}\label{t.gapCr}
There exists $0< r_0< 1$ such that the operator $\tilde \cL_{f,\phi}$ acting on the space $C^r (M,\mathbb R)$ ($r\ge 1$)
admits a decomposition of its  spectrum given by $\Sigma= \{1\} \cup \Sigma_0$, where $\Sigma_0$ contained in a
ball $B(0, r_0)$. In consequence, there exists $C>0$ and $\tau\in(0,1)$ such that
$\| \tilde \cL^n_{f,\phi} \varphi -h_{f,\phi} \, \int \varphi\, d\nu_{f,\phi} \|_r \leq C \tau^n \|\varphi\|_r$ for all $n\ge 1$
and $\varphi\in C^r(M,\mathbb R)$, where  $h_{f,\phi} \in C^r(M,\mathbb R)$ is the unique fixed point for
$\tilde \cL_{f,\phi}$.
\end{theorem}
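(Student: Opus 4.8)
The plan is to bootstrap from the spectral gap already available on $C^\alpha(M,\mathbb R)$ (Theorem~\ref{t.gap}) to the space $C^r(M,\mathbb R)$ by proving a Lasota--Yorke (Doeblin--Fortet) inequality and invoking Hennion's quasi-compactness theorem, and then matching the peripheral spectrum with the one on $C^\alpha$. First I would work with the inverse-branch representation: writing $h_i=f_i^{-1}$ for the (now $C^r$) inverse branches of $f$, so that $\cL_{f,\phi}g(x)=\sum_i e^{\phi(h_i(x))}\,g(h_i(x))$ with $\|Dh_i(x)\|=\|Df(h_i(x))^{-1}\|\le L(h_i(x))$, the higher-order chain rule (Fa\`a di Bruno formula) expresses $D^r(\cL_{f,\phi}^{\,n}g)$ as a sum, over the depth-$n$ tree of preimage branches, of a top-order term that transports $D^r g$ weighted by $e^{S_n\phi}$ and by the $r$-th tensor power of the composed inverse-branch derivatives, plus finitely many lower-order terms involving $D^s g$ with $s\le r-1$ and bounded coefficients built out of $D^s\phi$ and $D^s h_i$, $s\le r$.

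The heart of the matter is then the estimate of the top-order term. Along any inverse branch the derivative contracts by $\le L(\cdot)$, which is $<\sigma^{-1}<1$ over $M\setminus\cA$ and only $\le L$ (close to $1$) over $\cA$; since by (H2) at most $q<\deg(f)$ of the $\deg(f)$ preimages of any point lie over $\cA$, the same combinatorial bookkeeping used for the projective cone in \cite{CV13}, now applied to the seminorm $\|D^r g\|_0$ instead of a H\"older seminorm, should yield, under the smallness hypothesis (P') quantified by the $C^r$-analogues of \eqref{eq.vep}--\eqref{eq.vepp}, constants $\theta\in(0,1)$, $N\in\mathbb{N}$ and $C>0$ with
\[
\|\tilde\cL_{f,\phi}^{\,N}\varphi\|_r\ \le\ \theta\,\|\varphi\|_r\ +\ C\,\|\varphi\|_{r-1}
\qquad\text{for all }\varphi\in C^r(M,\mathbb R),
\]
where $\tilde\cL_{f,\phi}=\la_{f,\phi}^{-1}\cL_{f,\phi}$. (Alternatively one could run a projective-cone contraction directly in $C^r$, as in \cite[Section~5]{CV13}, but the Lasota--Yorke route is the one I would carry out.) Since the inclusion $C^r(M,\mathbb R)\hookrightarrow C^{r-1}(M,\mathbb R)$ is compact (Arzel\`a--Ascoli) and $\tilde\cL_{f,\phi}$ is bounded on both spaces, Hennion's theorem then gives that $\tilde\cL_{f,\phi}$ is quasi-compact on $C^r(M,\mathbb R)$ with essential spectral radius $\le\theta^{1/N}<1$.

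It remains to identify the peripheral part and extract the rate. Any eigenvalue of $\tilde\cL_{f,\phi}$ acting on $C^r$ of modulus $>\theta^{1/N}$ has a $C^r$ (hence $C^\alpha$) eigenfunction, so it is also an eigenvalue of $\tilde\cL_{f,\phi}$ on $C^\alpha(M,\mathbb R)$; by Theorem~\ref{t.gap} the only such eigenvalue is $1$, it is simple, there is no Jordan block at it, and the spectral radius of the complement is $\le r_0<1$ — and the same lifting argument shows these facts persist on $C^r$. The fixed point $h_{f,\phi}$ is in $C^r$: by the iterated Lasota--Yorke inequality the sequence $\tilde\cL_{f,\phi}^{\,n}1$ is bounded in $C^r$ and, since it converges to $h_{f,\phi}$ in $C^0$ (Theorem~\ref{t.cone.invariance}), the limit lies in $C^r$ with $\tilde\cL_{f,\phi}^{\,n}1\to h_{f,\phi}$ in $C^{r-1}$; the fixed-point relation then identifies $h_{f,\phi}$ as the $C^r$ leading eigenfunction. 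Writing the rank-one spectral projection $P\varphi=h_{f,\phi}\int\varphi\,d\nu_{f,\phi}$ (the dual eigenfunctional being integration against the conformal measure $\nu_{f,\phi}$, which is continuous on $C^r$), the operator $\tilde\cL_{f,\phi}(I-P)$ has spectral radius $\le r_0<1$ on $C^r$, whence $\|[\tilde\cL_{f,\phi}(I-P)]^n\|_r\le C\tau^n$ for any $\tau\in(r_0,1)$, which is the asserted estimate $\|\tilde\cL_{f,\phi}^{\,n}\varphi-h_{f,\phi}\int\varphi\,d\nu_{f,\phi}\|_r\le C\tau^n\|\varphi\|_r$.

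The hard part will be the Lasota--Yorke inequality itself: one must control the product of inverse-branch derivatives along the entire depth-$n$ preimage tree when $f$ is only non-uniformly expanding, so that branches passing through $\cA$ may slightly expand the top-order derivative, and show that the combinatorial deficit $q<\deg(f)$ together with the smallness of $\sup\phi-\inf\phi$ and of the $\|D^s\phi\|_0$ (condition (P')) still forces a net contraction of $\|D^r\cdot\|_0$ after finitely many iterates. This is exactly the $C^r$ counterpart of the cone contraction in \cite{CV13}, and the delicate point is the bookkeeping of which orbit segments lie over the expanding region; the remaining steps (Hennion, lifting eigenfunctions from $C^\alpha$, regularity of $h_{f,\phi}$) are comparatively routine.
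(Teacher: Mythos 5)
Your argument is sound in structure but takes a genuinely different route from the one the paper relies on. The paper simply cites \cite[Section~5]{CV13}, where the $C^r$ spectral gap is obtained by iterating the Birkhoff projective-cone contraction on a cone of positive functions $g$ with $\|D^s g\|_0 \le \kappa\, \inf g$ for $s\le r$, producing the gap and the leading eigendata simultaneously. You instead propose a Lasota--Yorke inequality $\|\tilde\cL^N\varphi\|_r \le \theta\|\varphi\|_r + C\|\varphi\|_{r-1}$, feed the compact inclusion $C^r\hookrightarrow C^{r-1}$ into Hennion's theorem to get quasi-compactness, and then lift the peripheral spectral data from the already-established $C^\alpha$ gap (Theorem~\ref{t.gap}) up to $C^r$, including the fact that $h_{f,\phi}\in C^r$ via boundedness of $\tilde\cL^n 1$ in $C^r$ and convergence in $C^0$. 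All of these steps are legitimate; the lifting argument (a $C^r$ eigenfunction or generalized eigenvector is in particular $C^\alpha$, so $1$ is still the unique peripheral eigenvalue, simple, with no Jordan block) correctly handles the peripheral spectrum.

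What the two approaches buy is different. Hennion gives the essential spectral radius bound $\le\theta^{1/N}$ in one stroke and avoids any positivity hypothesis, but it does not by itself produce the eigenfunction or eigenmeasure, nor the explicit, uniform-in-$(f,\phi)$ contraction constants that the paper needs later for the differentiability arguments; you are therefore forced to import Theorem~\ref{t.gap} to identify the peripheral spectrum. The cone method of \cite{CV13}, by contrast, produces the eigenfunction, the eigenmeasure, the simplicity of $1$, and quantitatively uniform constants $C,\tau$ in a single argument, which is why the paper adopts it — those uniform constants are used verbatim in Propositions~\ref{raio}--\ref{prop:dif.conformal.potential} and Theorem~\ref{thm:diff.max}. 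Finally, you are right that the Lasota--Yorke inequality itself is where all the difficulty sits: the top-order term of $D^r(\tilde\cL^N\varphi)$ is weighted by $r$-th powers of products of $\|Dh_i\|$ along length-$N$ inverse orbits, and one must use (H2) (at most $q<\deg f$ preimages over $\cA$) together with the smallness in (P') to show the normalized sum over the full preimage tree is strictly less than $1$; this is exactly the same combinatorial estimate that drives the cone invariance in \cite{CV13}, so whichever framework one uses, that estimate must be carried out, and your proposal leaves it at the level of a plausible sketch rather than a proof.
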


Let us mention also that by Proposition~5.4 in \cite{CV13} one has that
\begin{equation}\label{eq:uniformCr}
\Ptop(f,\phi)
	=\lim_{n\to\infty} \frac1n \log \|\cL_{f,\phi}^n1\|_r
\end{equation}
and that the limit can be taken uniform in a $C^r$ neighborhood of $(f,\phi)$. This is the key fact that
will be used later on to prove the differentiability of the topological pressure.

\section{Differentiability results}\label{sec:diff}

In this section we address the regularity of the Perron-Frobenius operator, spectral radius and
corresponding eigenmeasure and eigenfunction. For simplicity we address first the dependence on the potential
and later on the dynamical system.
In particular, Theorems~\ref{thm:general.differentiability} and ~\ref{thm:C} will be proven on Subsection
~\ref{subsec:difdynamics1} and ~\ref{subsec:difdynamics2} while Theorem~\ref{thm:B} is proved along Subsection~\ref{subsec:difpotential} below.

\subsection{Differentiation with respect to the potential}\label{subsec:difpotential}

First we fix $f$ and will focus on the differentiability questions with respect to the potential $\phi$.
Let $\mathcal{W} $ be an open set of potentials in $C^{\alpha}(M, \R)$, $\alpha > 0$, satisfying
the condition  (P),  endowed with the $C^{\alpha}$- topology.
For notational simplicity, when
no confusion is possible, we write simply $\cL_{f,\phi}$, $ \la_{f,\phi}$ and $h_\phi$ omitting the dependence on $f$.

\begin{proposition}\label{prop:derivative.L.phi}
Assume that $r+\al>0$. The map $C^{r+\al}(M,\mathbb R) \ni \phi \mapsto \mathcal{L}_{f,\phi}^{n} \in \cL(C^{r+\al}(M,\mathbb R), C^{r+\al}(M,\mathbb R))$
is analytic, hence $C^\infty$. Moreover, for every
vectors $g , H \in C^{{r+\al}}$ and for
every $n\ge 1$, the first derivative acting in $H$ is given by
\begin{equation}\label{eq:derivative.L.phi}
(D_{\phi}\mathcal{L}_{f,\phi}^{n}(g))_{|\phi_{0}} ( H)
	= \sum_{i = 1}^{n}\mathcal{L}_{f,\phi_{0}}^{i}
	  (H\cdot \mathcal{L}_{f,\phi_{0}}^{n - i}(g)).
\end{equation}
\end{proposition}

\begin{proof}
Note that
$$
\mathcal{L}_{f, \phi + H}(g)
	= \mathcal{L}_{f,\phi}(e^{H}g)
	= \sum_{i = 0}^{\infty}\mathcal{L}_{f,\phi} \left(\frac{1}{i!}H^{i}g\right)
	= \mathcal{L}_{f,\phi}(g) +
		 \sum_{i = 1}^{\infty}\frac{1}{i!} \mathcal{L}_{f,\phi}\left(H^{i}g\right).
$$
Since $\cL_{f,\phi}$ is a bounded linear operator acting on $C^{r+\al}(M,\mathbb R)$ then
there exists a constant $K>0$ so that $\|\mathcal{L}_{f,\phi}\left(H^{i}g\right) \|_{r+\al}
\le K \|g\|_{r+\al} \, \| H \|^i_{r+\al}$ for all $i\ge 1$. In particular
$$
\| \sum_{i = 1}^{\infty}\frac{1}{i!} \mathcal{L}_{f,\phi}\left(H^{i}g\right) \|_{r+\alpha}
	\le K \|g\|_{r+\alpha}  \sum_{i = 1}^{\infty}\frac{1}{i!} \|H \|^i_{r+\alpha}
	\le K \|g\|_{r+\alpha} e^{\|H \|_{r+\alpha}}< \infty,
$$
which implies that the sum in right hand side above is convergent.
Let us denote by $\cL^i_s( C^{r+\al}(M,\mathbb R), C^{r+\al}(M,\mathbb R) )$ the space of symmetric $i$-linear maps with domain
in $[C^{r+\al}(M,\mathbb R)]^i$ into $C^{r+\al}(M,\mathbb R)$.
Note also that the maps
$$
C^{r+\al}(M,\mathbb R) \ni \phi
	\mapsto \Big( H \mapsto \mathcal{L}_{f,\phi}(H^i \cdot ) \Big) \in \cL^i_s( C^{r+\al}(M,\mathbb R))
$$
are continuous for every $i \in \natural$, and that the product between functions is also continuous in
$C^{r+\al}(M,\mathbb R)$.
Then for $k \in \natural$ observe that, using once more the continuity of the transfer operator
$\cL_{f,\phi}$ acting on the space $C^{r+\al}(M,\mathbb R)$, we get
\begin{align*}
 \sup_{\|g\|_{r+\al}= 1} \!\!\! & \frac{\| \mathcal{L}_{f, \phi + H}(g) - \mathcal{L}_{f,\phi}(g) -  \sum_{i = 1}^{k}\frac{1}{i!} \mathcal{L}_{f,\phi}\left(H^{i}g\right) \|_{{r+\al}}}{ \|H\|^k_{{r+\al}}} \\
	& \quad\quad\qquad\!\leq\! \sum_{i = k+ 1}^{\infty} \sup_{\|g\|_{r+\al}= 1} \!\!\frac{\|  \frac{1}{i!} \mathcal{L}_{f,\phi}	 
	\left(H^{i}g\right) \|_{{r+\al}}}{\|H\|^k_{{r+\al}}} \\
	& 
	\quad\quad\qquad\!\leq\! K \sum_{i = k+ 1}^{\infty}  \frac{1}{i!}
	\| H \|^{i-k}_{{r+\al}}
	\!\leq\! K \sum_{i = k+ 1}^{\infty}  \frac{1}{(i-k-1)!}
	\| H \|^{i-k}_{{r+\al}}
	  \\
	& 
	\quad\quad\qquad\! = \! K \|H\|_{{r+\al}} \; e^{\| H \|^{i-k}_{{r+\al}}}
\end{align*}
which converges to zero as $H$ tends to zero (we used the notation $0!=1$). By Theorem 1.4 in \cite{Fr79}, this implies that $\phi \mapsto \cL_{f,\phi}$ is $C^k$, for any $k \in \natural$, and its $k$-th derivative applied in $H$ is $\mathcal{L}_{f,\phi}\left(H^{i} \cdot \right)$.
Note that this also implies that $\phi \mapsto \cL_{f,\phi}$ is analytic.
By applying the chain rule to the composition $\phi \mapsto \cL^n_{f,\phi}(g)$ we  finish the proof of the proposition.
\end{proof}

\begin{remark}\label{complex}
We observe that the same argument as used in the proof of the previous theorem guarantees
the analiticity of the Ruelle-Perron-Frobenius operator operator acting on the space of complex observables.
More precisely, given a potential $\phi \in C^{r+\al}(M, \mathbb C)$ then the transfer operator
$\mathcal{L}_{f,\phi} : C^{r+\al}(M, \mathbb C) \to C^{r+\al}(M, \mathbb C)$ is analytic and \eqref{eq:derivative.L.phi}
holds for every $H \in C^{r+\al}(M, \mathbb C)$.
\end{remark}

\begin{remark}\label{rem:analytic}
Our previous argument implies in particular that the map $t\mapsto
\cL_{f,t\phi}$ is real analytic: given $g\in C^\al(M,\mathbb R)$ the previous argument shows that
$\mathcal{L}_{f, t\phi}(g)$ is also equal to the power series
$$
\sum_{i = 0}^{\infty}\mathcal{L}_{f,\phi} \left(\frac{1}{i!} [(1-t)\phi]^{i}g\right)
	\!=\! \mathcal{L}_{f,\phi}(g) + \mathcal{L}_{f,\phi}( (1-t)\phi g)
		\!+\! \sum_{i = 2}^{\infty}\mathcal{L}_{f,\phi}\left(\frac{1}{i!}[(1-t)\phi]^{i}g\right)
$$
which is convergent.
\end{remark}

Let us mention that \cite{VV10} proved that the sequence $\frac1n \sum_{j=0}^{n-1} f^j_*\nu_{f,\phi}$ converges to the
unique equilibrium state $\mu$. Here we deduce much stronger properties fundamental to the proof that the spectral radius of the Ruelle-Perron-Frobenius operator varies differentially with respect to the potential $\phi$. We show
that $ (\tilde{\cL}_{f,\phi}^n)^*\xi$ converge exponentially fast to $\nu_{f,\phi}$ for any probability measure $\xi\in \cM(M)$. More precisely,

\begin{proposition}\label{prop:dual}
Assume $\phi \in \cW$. There exists $C>0$ and $\tau\in (0,1)$ such that for every $\varphi\in C^\al(M,\mathbb R)$ and every
probability measure $\xi\in \cM(M)$ it holds that
\begin{equation*}
\left|  \int \varphi \; d (\tilde{\cL}_{f,\phi}^n)^*\xi -\int h_{f,\phi} \, d\xi \int \varphi \, d\nu_{f,\phi}  \right|
	\leq C \tau^n \|\varphi\|_\alpha.
\end{equation*}
\end{proposition}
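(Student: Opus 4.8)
The plan is to reduce the statement to the spectral gap estimate of Theorem~\ref{t.gap} via the duality between $\tilde{\cL}_{f,\phi}^n$ and its adjoint. First I would record the elementary identity $(\tilde{\cL}_{f,\phi}^n)^* = (\tilde{\cL}_{f,\phi}^*)^n$ and, more importantly, that for any probability measure $\xi\in\cM(M)$ and any $\varphi\in C^\al(M,\mathbb R)$,
$$
\int \varphi \; d(\tilde{\cL}_{f,\phi}^n)^*\xi = \int \tilde{\cL}_{f,\phi}^n\varphi \; d\xi,
$$
which is just the definition of the dual operator iterated $n$ times (each step legitimate because $\tilde{\cL}_{f,\phi}$ preserves $C^0(M,\mathbb R)$, hence $C^\al(M,\mathbb R)\subset C^0(M,\mathbb R)$).

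Next I would invoke Theorem~\ref{t.gap}, which gives constants $C>0$ and $\tau\in(0,1)$, independent of $\varphi$, such that
$$
\Big\| \tilde{\cL}_{f,\phi}^n\varphi - h_{f,\phi}\int \varphi\, d\nu_{f,\phi} \Big\|_\al \le C\tau^n \|\varphi\|_\al
\qquad \text{for all } n\ge 1.
$$
In particular the same bound holds in the weaker $C^0$-norm. Since $\xi$ is a probability measure, integrating the pointwise inequality $\big| \tilde{\cL}_{f,\phi}^n\varphi(x) - h_{f,\phi}(x)\int \varphi\, d\nu_{f,\phi}\big| \le C\tau^n\|\varphi\|_\al$ against $d\xi(x)$ and using $\int h_{f,\phi}\, d\xi \int \varphi\, d\nu_{f,\phi} = \int \big(h_{f,\phi}\int\varphi\,d\nu_{f,\phi}\big)\, d\xi$ yields
$$
\Big| \int \varphi \; d(\tilde{\cL}_{f,\phi}^n)^*\xi - \int h_{f,\phi}\, d\xi \int \varphi\, d\nu_{f,\phi} \Big|
	\le \int \Big| \tilde{\cL}_{f,\phi}^n\varphi - h_{f,\phi}\int\varphi\,d\nu_{f,\phi} \Big|\, d\xi
	\le C\tau^n\|\varphi\|_\al,
$$
which is exactly the claim.

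Since the decay estimate of Theorem~\ref{t.gap} is already in hand, there is no genuine obstacle here; the only point requiring a word of care is the passage from the $C^\al$-estimate to a bound valid after integrating against an arbitrary (possibly non-absolutely-continuous) probability measure $\xi$, which is handled simply by dominating with the uniform norm. I would also note, for later use in the differentiability arguments, that all constants $C,\tau$ may be taken uniform in a neighborhood of $(f,\phi)$, exactly as in Theorem~\ref{t.cone.invariance} and Proposition~\ref{prop:C0cont}, since the spectral gap constants depend only on $L,\si,q,\deg(f)$.
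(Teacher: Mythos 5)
Your proof is correct and follows essentially the same route as the paper's: both rewrite $\int \varphi \, d(\tilde{\cL}_{f,\phi}^n)^*\xi = \int \tilde{\cL}_{f,\phi}^n\varphi\,d\xi$ by duality, then bound the integrand by the $C^0$ (hence $C^\alpha$) estimate from Theorem~\ref{t.gap} and integrate against the probability measure $\xi$.
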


\begin{proof}
The proof is a simple consequence of the spectral gap property. In fact,
\begin{align*}
\left|  \int \varphi \, d (\tilde{\cL}_{f,\phi}^n)^*\xi -\int h_{f,\phi} d\xi \int \varphi d\nu_{f,\phi}  \right|
	& \leq \int \left|   \tilde{\cL}_{f,\phi}^n  (\varphi) - h_{f,\phi}  \int \varphi d\nu_{f,\phi}  \right| d \xi \\
	& \leq  \left\|   \tilde{\cL}^n_{f,\phi}  (\varphi) - h_{f,\phi}  \int \varphi d\nu_{f,\phi}  \right\|_0
	 \leq C \tau^n \|\varphi\|_\alpha,
\end{align*}
where $C$ and $\tau$ are given by Theorem~\ref{t.gap}. This proves our proposition.
\end{proof}

In the case $f$ is an expanding map,
$\phi,\psi \in C^\al(M,\mathbb R)$, $\mu_\phi$ is the unique equilibrium state for $f$ with respect to $\phi$ then
the pressure function $P(t)=\Ptop(f,\phi+t\psi)$ is complex analytic and $P'(0) = \int \psi \, d\mu_\phi$
(cf.~\cite[Proposition~4.40]{PP90}).
The following proposition asserts that in our non-uniformly expanding context the pressure function is analytic
as a function of the potential $\phi$ and gives the expected expression for its derivative.

\begin{proposition}\label{raio}
The spectral radius map $\mathcal{W} \ni  \phi \mapsto \lambda_{ f, \phi}$ is analytic.
Furthermore, given
$ \phi_{0} \in \mathcal{W}$ and $H \in C^{\alpha}(M, \R)$ we have:
$$
D_{\phi}\lambda_{f, \phi \;|\phi_{0}} \cdot H
	= \lambda_{f, \phi_{0}} \cdot \int h_{f, \phi_{0}} \cdot H \; d\nu_{f, \phi_{0}}.
$$
\end{proposition}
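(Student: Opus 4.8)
The plan is to exploit the fact that $\lambda_{f,\phi}$ is the leading eigenvalue of $\cL_{f,\phi}$, together with the fixed-point characterization of the eigenfunction $h_{f,\phi}$ and eigenmeasure $\nu_{f,\phi}$, and then differentiate the eigenvalue relation. By Proposition~\ref{prop:derivative.L.phi} the map $\phi \mapsto \cL_{f,\phi}$ is analytic into $L(C^\alpha(M,\mathbb R))$, and by Theorem~\ref{t.gap} the operator $\tilde\cL_{f,\phi}$ has a spectral gap, with $\lambda_{f,\phi}$ the simple isolated eigenvalue of $\cL_{f,\phi}=\lambda_{f,\phi}\tilde\cL_{f,\phi}$ of largest modulus. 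Hence I would first invoke analytic perturbation theory of isolated simple eigenvalues (Kato): a simple isolated eigenvalue of an analytic family of bounded operators depends analytically on the parameter, and so do the associated spectral projection, eigenfunction, and dual eigenfunctional. This immediately gives analyticity of $\phi\mapsto\lambda_{f,\phi}$ on $\mathcal W$, and of $\phi\mapsto h_{f,\phi}$ (normalized by $\int h_{f,\phi}\,d\nu_{f,\phi}=1$) and $\phi\mapsto\nu_{f,\phi}$.

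Next I would compute the derivative. Fix $\phi_0\in\mathcal W$ and $H\in C^\alpha(M,\mathbb R)$, and consider the curve $t\mapsto\phi_0+tH$. Writing $\lambda_t=\lambda_{f,\phi_0+tH}$, $h_t=h_{f,\phi_0+tH}$, $\nu_t=\nu_{f,\phi_0+tH}$, the eigenvalue relation is $\cL_{f,\phi_0+tH}\,h_t=\lambda_t\,h_t$. Differentiating at $t=0$ and using $\partial_t\cL_{f,\phi_0+tH}|_{t=0}(g)=\cL_{f,\phi_0}(H\cdot g)$ from Proposition~\ref{prop:derivative.L.phi} (the $i=1$ term), I get
$$
\cL_{f,\phi_0}(H\cdot h_{\phi_0}) + \cL_{f,\phi_0}(\dot h_0) = \dot\lambda_0\, h_{\phi_0} + \lambda_{f,\phi_0}\,\dot h_0,
$$
where $\dot\lambda_0=D_\phi\lambda_{f,\phi|\phi_0}\cdot H$ and $\dot h_0$ is the derivative of the eigenfunction. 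Now I would integrate both sides against $\nu_{f,\phi_0}$. Since $\cL_{f,\phi_0}^*\nu_{f,\phi_0}=\lambda_{f,\phi_0}\nu_{f,\phi_0}$, we have $\int \cL_{f,\phi_0}(u)\,d\nu_{f,\phi_0}=\lambda_{f,\phi_0}\int u\,d\nu_{f,\phi_0}$ for all $u$; applying this to $u=H\cdot h_{\phi_0}$ and to $u=\dot h_0$, the $\cL_{f,\phi_0}(\dot h_0)$ term cancels with $\lambda_{f,\phi_0}\dot h_0$ on the right, yielding
$$
\lambda_{f,\phi_0}\int H\cdot h_{f,\phi_0}\,d\nu_{f,\phi_0} = \dot\lambda_0 \int h_{f,\phi_0}\,d\nu_{f,\phi_0} = \dot\lambda_0,
$$
using the normalization $\int h_{f,\phi_0}\,d\nu_{f,\phi_0}=1$. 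This is exactly the claimed formula.

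I expect the main technical point to be justifying the application of analytic perturbation theory rigorously in this Banach-space setting — specifically, checking that $\lambda_{f,\phi_0}$ is genuinely a simple isolated point of the spectrum of $\cL_{f,\phi_0}$ acting on $C^\alpha(M,\mathbb R)$ (which follows from Theorem~\ref{t.gap}, since $1$ is simple and isolated for $\tilde\cL_{f,\phi_0}$) and that the family is analytic in the operator-norm sense (Proposition~\ref{prop:derivative.L.phi}). Alternatively, and perhaps cleaner given the tools already in the paper, one can avoid Kato's theory and argue directly: $\lambda_{f,\phi}=\lim_{n}\|\cL^n_{f,\phi}1\|_0^{1/n}$ is not obviously analytic this way, but one can instead write $\lambda_{f,\phi}=\int \cL_{f,\phi}(h_{f,\phi})\,d\nu_{f,\phi} / \int h_{f,\phi}\,d\nu_{f,\phi}$ once analyticity of $h_{f,\phi}$ and $\nu_{f,\phi}$ is known (this is likely established in the surrounding propositions), and differentiate that expression; the computation above is the content either way. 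The routine verifications — continuity of the bilinear operations, convergence of the perturbation series, smoothness of $t\mapsto h_t$ and $t\mapsto\nu_t$ — I would reference from Proposition~\ref{prop:derivative.L.phi}, Proposition~\ref{prop:dual}, and standard perturbation theory rather than redo them.
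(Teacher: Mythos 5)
Your proof is correct, but the way you compute the derivative differs from the paper. For the analyticity statement both you and the paper invoke the same mechanism: $\phi\mapsto\cL_{f,\phi}$ is analytic in the operator norm (Proposition~\ref{prop:derivative.L.phi}) and $\lambda_{f,\phi}$ is a simple isolated eigenvalue (Theorem~\ref{t.gap}), so perturbation theory applies. For the explicit formula, however, the paper does \emph{not} differentiate the eigenvalue equation. Instead it introduces the functionals $F_n(\phi)=\frac1n\log\int\cL^n_{f,\phi}1\,d\nu_{f,\phi_0}$, shows that they converge uniformly to $\log\lambda_{f,\phi}$ in a neighborhood of $\phi_0$, expands $DF_n(\phi)\cdot H$ using the chain-rule formula $D_\phi\cL^n_{f,\phi}(1)\cdot H=\sum_{i=1}^n\cL^i_{f,\phi}(\cL^{n-i}_{f,\phi}(1)\cdot H)$, and then uses the decay estimate from Proposition~\ref{prop:dual} to prove that $DF_n(\phi)\cdot H$ converges \emph{uniformly} to $\int h_{f,\phi}H\,d\nu_{f,\phi}$; the derivative of the limit is then the limit of the derivatives. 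Your route — differentiate $\cL_{f,\phi_0+tH}h_t=\lambda_t h_t$ at $t=0$ and pair with $\nu_{f,\phi_0}$ so that the $\dot h_0$ terms cancel via $\cL^*_{f,\phi_0}\nu_{f,\phi_0}=\lambda_{f,\phi_0}\nu_{f,\phi_0}$ — is the standard Hellmann--Feynman argument and is shorter, at the cost of relying more heavily on the analyticity of $t\mapsto h_t$ supplied by Kato. The paper's approach is more laborious but buys something important: it is the template reused later to differentiate the pressure with respect to the \emph{dynamics} $f$ (Lemma~\ref{lepress}, Theorem~\ref{thm:diff.max}), where $f\mapsto\cL_{f,\phi}$ is only differentiable as a map into $L(C^r,C^{r-1})$, not into $L(C^r)$, and Kato's theory is unavailable; there the uniform-convergence-of-derivatives scheme is essentially forced. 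Both derivations are valid here, and your cancellation argument is carried out correctly (you integrate against the \emph{fixed} measure $\nu_{f,\phi_0}$, so the indeterminacy in the normalization of $h_t$ for $t\neq0$ never enters).
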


\begin{proof}
Note that it is immediate that $\phi \mapsto \la_{f,\phi}$ is analytic, since $\phi \mapsto \cL_{f,\phi}$
is analytic (in the norm operator topology), and since the spectral radius of $\cL_{f,\phi}$ coincides
with an isolated eigenvalue of $\cL_{f,\phi}$ with multiplicity one.
Let us calculate explicitly the derivative of $\phi \mapsto \la_{f,\phi}$.

Let $\phi_{0} \in \mathcal{W}$ be fixed.
 It follows from the $C^{0}$-statistical stability statement in Proposition~\ref{prop:C0cont} that
$\la_{f,\phi}^{-n}  \mathcal{L}_{f,\phi}^{n}(1)  \to h_{f,\phi}$
and that the limit is  uniform in a small neighborhood $W$ of $\phi_{0}$. Moreover, since
$\tilde{\mathcal{L}}_{f, \phi}^{n}(1)(x) \leq K$ for some constant  $K$ that can be taken uniform in $W$
it follows that $h_{f,\phi}$ can be taken uniformly bounded from above for all $\phi\in W$. Since the sequence
$\tilde{\mathcal{L}}_{f,\phi}^{n}(1)$ is Cauchy in the projective metric it also follows that  $h_{f,\phi}$ can be
taken uniformly bounded away from zero for all $\phi\in W$. In consequence,
$
\lim_{n\to\infty} \frac{1}{n}\log\int\mathcal{L}_{f,\phi}^{n} 1 \; d\nu_{f,\phi_{0}}
	= \log \lambda_{f,\phi}
$
uniformly with respect to  $\phi\in W$. Hence, we consider the family of functionals $F_n: W\to \mathbb R$
given by
$$
F_{n}({\phi})
	= \frac{1}{n}\log\int\mathcal{L}_{{f,\phi}}^{n} 1\, d\nu_{f,\phi_{0}},
$$
which are well defined and converge to the constant $\log \la_{f,\phi}$, and prove
that the derivatives of  $F_{n}$ converge uniformly as $n$ tends to infinity. Write $DF_{n}({\phi}) \cdot H$ as
\begin{equation}\label{abcd}
	  \frac{\int D_{\phi}\mathcal{L}^{n}_{f,\phi}(1)_{|{\phi}} \cdot H d\nu_{f,\phi_{0}} }{ n \cdot \int	
		\mathcal{L}^{n}_{{f,\phi}}(1)d\nu_{f,\phi_{0}}}
	= \frac{ \int \sum_{i = 1}^{n}\mathcal{L}_{{f,\phi}}^{i}(\mathcal{L}_{{f,\phi}}^{n-i}(1) \cdot H) \, d\nu_{f,\phi}}
		{n \cdot \int \mathcal{L}^{n}_{{f,\phi}}(1)d\nu_{f, \phi_{0}}}
	 = \frac{A_{n}(\hat{\phi}) \cdot H}{\int\tilde{\mathcal{L}}^{n}_{f,\hat{\phi}}(1)d\nu_{f, \phi_{0}}},
 \end{equation}
where $A_n$, that uses the normalized operators $\tilde \cL_{f,\phi}=\la_{f,\phi}^{-1}\cL_{f,\phi}$, is given by
$$
A_{n}({\phi}) \cdot H
	= \frac{1}{n}\int \sum_{i = 1}^{n}\tilde{\mathcal{L}}_{{f,\phi}}^{i} (\tilde{\mathcal{L}}_{{f,\phi}}^{ n - i}(1)
	\cdot H) \; d\nu_{f, \phi_{0}}.
$$
Taking into account Proposition~\ref{prop:dual} it follows that
\begin{align*}
 |A_{n}({\phi}) \cdot H & - \frac{1}{n}\sum_{i = 0}^{n - 1}\int\tilde{\mathcal{L}}_{{f,\phi}}^{i}(1) \cdot H d\nu_{{f, \phi}} \cdot \int h_{{f, \phi}} d\nu_{f, \phi_{0}}| \\
& \leq
\frac{1}{n}\sum_{i = 1}^{n}\left| \int \tilde{\mathcal{L}}_{{f,\phi}}^{ n - i}(1) \cdot H d(\tilde{\mathcal{L}}_{{f,\phi}}^{\ast i}\nu_{f,\phi_{0}}) \!-\! \int\tilde{\mathcal{L}}_{{f,\phi}}^{n - i}(1) \cdot H d\nu_{{f, \phi}}  \int h_{{f,\phi}} d\nu_{f, \phi_{0}} \right| \\
& \leq \frac{1}{n}\sum_{i = 1}^{n} C\tau^{i} \cdot \|\tilde{\mathcal{L}}_{{f,\phi}}^{n - i}(1) \cdot H\|_{\alpha}
\leq \frac{1}{n}\sum_{i = 1}^{n}4 C\tau^{i} \cdot (C\tau^{n - i} + \|h_{{f,\phi}}\|_{\alpha}) \cdot\|H\|_{\alpha},
\end{align*}
which is uniformly convergent to zero with respect to ${\phi}$ and unitary $H \in C^{\alpha}(M, \R)$.
Furthermore,
$$
\frac{1}{n}\sum_{i = 0}^{n - 1}\int\tilde{\mathcal{L}}_{{f,\phi}}^{i}(1) \cdot H d\nu_{{f, \phi}} \cdot \int h_{{f,\phi}} d\nu_{ \phi_{0}} \xrightarrow[n \to \infty]{} \int h_{{f,\phi}} \cdot H d\nu_{{f,\phi}} \cdot \int h_{{f,\phi}} d\nu_{f,\phi_{0}}
$$
and this  convergence is uniform with respect to ${\phi}$ e $H$. Since $\int\tilde{\mathcal{L}}^{n}_{{f,\phi}} 1\, d\nu_{f, \phi_{0}}$ converges to $\int h_{{f,\phi}}d\nu_{f,\phi_{0}}$ uniformly with respect to ${\phi}$, we obtain that
$$
DF_{n}({\phi}) \cdot H
	= \frac{A_{n}({\phi}) \cdot H}{\int\tilde{\mathcal{L}}^{n}_{{f,\phi}}(1)d\nu_{f,\phi_{0}}}
	\rightarrow \int h_{{f,\phi}} \cdot H \; d\nu_{{f,\phi}},
$$
where the convergence is uniform with respect to ${\phi}$ and  $H \in C^{\alpha}(M, \R)$ satisfying
$\|H\|_{\alpha} = 1$. Now, just observe that $e^{F_n(\phi)}$ is differentiable and uniformly convergent to $\la_{f,\phi}$.
Thus, as a consequence of the chain rule it follows that
$$
D_{\phi}\lambda_{f, \phi \;|\phi_{0}} \cdot H =\lambda_{f,\phi_{0}} \cdot \int h_{f,\phi_{0}} \cdot H \; d\nu_{f,\phi_{0}}.
$$
This finishes the proof of the proposition.
\end{proof}

In our context $\Ptop(f,\phi)=\log \la_{f,\phi}$. Thus the arguments in the later proof yield the following immediate consequence:

\begin{corollary}\label{cordifpot}
The map $\mathcal{W} \ni  \phi \mapsto P_{top}(f, \phi)$ is analytic. Furthermore, given $ \phi_{0} \in \mathcal{W}$ and $H \in C^{\alpha}(M, \R)$ we have:
$$
D_{\phi}P_{top}(f, \phi)_{ \;|\phi_{0}} \cdot H
	= \int h_{f,\phi_{0}} \cdot H \; d\nu_{f, \phi_{0}}
	= \int H \; d\mu_{f, \phi_{0}}.
$$
\end{corollary}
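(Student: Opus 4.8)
The plan is to derive the statement directly from Proposition~\ref{raio} together with the identity $\Ptop(f,\phi)=\log\la_{f,\phi}$ provided by Proposition~\ref{p.conformal.measure} (see also Theorem~\ref{thm.oldstuff}(3)). In other words, the pressure is just the logarithm of the leading eigenvalue, whose regularity and first derivative have already been computed, so the corollary is obtained by an application of the chain rule.

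First I would record that $\la_{f,\phi}$, being the spectral radius of the positive operator $\cL_{f,\phi}$ for which a conformal measure exists (Proposition~\ref{p.conformal.measure}), is strictly positive for every $\phi\in\cW$; hence the real function $t\mapsto\log t$ is real analytic on a neighborhood of $\la_{f,\phi_0}$. Since $\phi\mapsto\la_{f,\phi}$ is analytic on $\cW$ by Proposition~\ref{raio}, the composition $\phi\mapsto\Ptop(f,\phi)=\log\la_{f,\phi}$ is differentiable (indeed analytic), and for every $H\in C^{\al}(M,\R)$ the chain rule gives
$$
D_{\phi}\Ptop(f,\phi)_{\;|\phi_0}\cdot H
  =\frac{1}{\la_{f,\phi_0}}\,D_{\phi}\la_{f,\phi\;|\phi_0}\cdot H
  =\int h_{f,\phi_0}\cdot H\;d\nu_{f,\phi_0},
$$
where the last equality is exactly the formula of Proposition~\ref{raio}.

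Finally, to rewrite the derivative as an integral against the equilibrium state, I would invoke that $\mu_{f,\phi_0}=h_{f,\phi_0}\,\nu_{f,\phi_0}$ with the normalization $\int h_{f,\phi_0}\,d\nu_{f,\phi_0}=1$ (Theorem~\ref{thm.oldstuff}(1)--(2) and the normalization built into Theorem~\ref{t.cone.invariance}), so that $\int h_{f,\phi_0}\cdot H\,d\nu_{f,\phi_0}=\int H\,d\mu_{f,\phi_0}$ for all $H\in C^{\al}(M,\R)$, which yields the second displayed identity. There is essentially no serious obstacle: the proof is a short corollary of Proposition~\ref{raio}. The only two points that deserve a line of care are confirming $\la_{f,\phi}>0$ so that $\log$ is smooth at the relevant point, and keeping track of the normalization of the density $h_{f,\phi_0}$ so that the two displayed expressions for the derivative genuinely agree.
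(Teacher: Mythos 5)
Your proof is correct and follows essentially the same route as the paper: the paper's own justification is the one-line remark that $\Ptop(f,\phi)=\log\la_{f,\phi}$ and that the arguments in the proof of Proposition~\ref{raio} (where the derivative of $F_n(\phi)=\frac1n\log\int\cL^n_{f,\phi}1\,d\nu_{f,\phi_0}$ is computed and shown to converge to $\int h_{f,\phi}\cdot H\,d\nu_{f,\phi}$) immediately give the result. Your chain-rule phrasing, with the check that $\la_{f,\phi_0}>0$ and that $\int h_{f,\phi_0}\,d\nu_{f,\phi_0}=1$, is a clean and equivalent way of packaging the same computation.
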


From Proposition~\ref{prop:C0cont} the invariant density is H\"older continuous function and varies continuously
in the $C^0$-topology. Here we show that it varies differentially with respect to the potential.

\begin{proposition}\label{pontofixo}
The map $\mathcal{W}^{r+\alpha} \ni \phi \mapsto h_{f, \phi} \in C^{r+\alpha}(M, \R)$ is analytic.
\end{proposition}

\begin{proof}

For the sequel, we recall some fundamental facts in spectral theory.
Given a  Banach space $E$ and a linear operator $L \in \mathcal{L}(E,E)$,
we say that a subset $S$ of the spectrum $\sigma(L)$ is an spectral component
if it is open and closed in $\sigma(L)$. In such case, $S^c:= \sigma(L)\setminus S$
is also an spectral component. If $E$ is a complex space, denoting by $E_S$ and $E_{S^c}$ the invariant
subspaces associated respectively to $S$ and $S^c$, the projection over $E_S$ that vanishes
in $E_{S^c}$ is given by
\begin{equation}
P_{E_S}= \frac{1}{2\pi i} \int_\ga \rho(z) dz, \label{eqproj}
\end{equation}
where $\rho(z) := [z I - L]^{-1}$ is the resolvent map and $\gamma$ is a closed regular curve contained
in the resolvent set such that $S$ is in the bounded open region confined by $\gamma$.
In the case
$E$ is not a complex space, but just a real space, we complexity $L$ extending it in the natural way
to an operator $\hat L$ defined in $\hat E:= E + i E$, and the same above remains valid.
In this case if $S \subset \re$, then the real $L$-invariant subspace associated to $S$ is
just the image of projection
$P_{E_S}$ restricted to the copy of $E$ in $\hat E$. Note that in any case, equation \ref{eqproj}
gives the invariant space in a implicit way. It only guarantees that the projection is an analytic
map with respect to $L \in \mathcal{L}(E,E)$. However, in our case, this also implies that if we set
$L= \mathcal{L}_{f, \phi}$ the
dominant  eigenspace varies analytically with respect to $\phi$. In fact,
$$
h_{f, \phi}=  \frac{1}{2\pi i}\Big( \int_\ga [z I- \mathcal{L}_{f, \phi}]^{-1}dz\Big)(1),
$$
since, by \cite{CV13}, $h_{f, \phi}$ is the projection of $1$ over
the dominant eigenspace of $\mathcal{L}_{ f, \phi}$ that vanishes in
$\{g : \int g d\nu_{f, \phi}= 0\}=  \{g:  \la^{-n} {\mathcal{L}}^n_{f, \phi}(g) \to 0  \text{ as } n \to +\infty\}$, which
corresponds to the invariant subspace associated to the non-dominant part
of $\sigma(\mathcal{L}_{ f, \phi})$. This implies that $\mathcal{W} \ni \phi \mapsto h_{f, \phi} \in C^{r+\alpha}(M, \R)$ is analytic, since $\phi \mapsto \mathcal{L}_{ f, \phi}$ is analytic by Proposition \ref{prop:derivative.L.phi}.
which finishes the proof of the proposition.
\end{proof}

Now we use the previous information to deduce that the conformal measures $\nu_\phi$ vary analytically.
The precise statement is as follows:

\begin{proposition}\label{prop:dif.conformal.potential}
The map $\mathcal{W}^{r+\al} \ni \phi \mapsto \nu_{f,\phi} \in \mathcal({C}^{r +\alpha}(M,\mathbb R))^{\ast}$ is analytic. In particular, the map $\mathcal{W}^{r+\al} \ni \phi \mapsto \int g\;  d\nu_{f,\phi}$ is analytic for any fixed $g \in C^{\alpha}(M, \R)$.
\end{proposition}

\begin{proof}

Fix any $g \in C^{\alpha}(M, \R)$. Then, for any $x \in M$, $ \int g\;  d\nu_{f,\phi}= \frac{P(g)}{P(1)}(x)$,
where
$$
P(g)=  \frac{1}{2\pi i}\Big( \int_\ga [z I- \mathcal{L}_{f, \phi}]^{-1}dz\Big)(g).
$$
Therefore taking any $x_0 \in M$ we just have  $\nu_{f,\phi}( \cdot)= \frac{P(\cdot)}{P(1)}(x_0)$, which implies the
analiticity of $\mathcal{W} \ni \phi \mapsto \nu_{f,\phi} \in \mathcal({C}^{r +\alpha} (M,\mathbb R))^{\ast}$.

\end{proof}

We will now deduce the analyticity of the equilibrium states $\mu_\phi$ with respect to the
potential $\phi$. In fact, using that $\mu_{f,\phi} = h_{f,\phi} \; \nu_{f,\phi}$ the following consequence is
immediate from our previous two differentiability results.

\begin{corollary}\label{cor:dif.eq.potential}
The map $\mathcal{W}^{r+\al} \ni \phi \mapsto \mu_{f,\phi} \in \mathcal({C}^{r +\alpha}(M,\mathbb R))^{\ast}$ is analytic and, consequently,  $\mathcal{W}^{r+\al} \ni \phi \mapsto \int g \;d\mu_{f,\phi}$ is analytic for any fixed $g \in C^{\alpha}(M, \R)$. 
\end{corollary}

Now, notice that it follows from our previous results that $h_{f,\phi}$, $\nu_{f,\phi}$ and $\mu_{f,\phi}$ vary analytically with
respect to the potential $\phi$ by the explicit power series formulas obtained in the previous results. This finishes the proof
of Theorem~\ref{thm:B}.

\subsection{Differentiability of the topological pressure with respect to the dynamics}\label{subsec:difdynamics1}

In this subsection we prove the differentiability of the topological pressure using the differentiability of
inverse branches for the dynamics. More precisely,

\begin{lemma}{(Local Differentiability of inverse branches)}
Let $r \geq 1$, $0 \leq k \leq r$ and $f:M \to M$ be a $C^r$-local diffeomorphism on a compact connected manifold $M$.
Let $B= B(x, \delta)\subset M$ be a ball such that the inverse branches $f_1, \dots, f_s : B \to M$ are
well defined diffeomorphisms onto their images. Then $C^r(M, M) \ni f \mapsto (f_1, \dots, f_s) \in C^{r-k}$ is a $C^{k}$ map.
\end{lemma}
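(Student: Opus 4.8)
\emph{Approach and localization.} The plan is to realize each inverse branch as the solution of an implicit equation $f\circ g=\mathrm{id}$ and to apply the implicit function theorem in Banach spaces, the loss of $k$ derivatives being dictated by the regularity of the composition operator. Fix $f_0$ as in the statement, with branches $f_1^0,\dots,f_s^0:B\to M$. Since $C^{r-k}$--regularity is a local matter and $\overline B$ is compact, it suffices to prove the assertion with $B$ replaced by each member of a finite cover of $\overline B$ by small coordinate balls; shrinking these, we may assume that $B$ and a neighbourhood $V_i$ of each compact set $\overline{f_i^0(B)}$ lie in coordinate charts, and we work in the corresponding local coordinates, so that $C^{r-k}(B,V_i)$ is identified with an open subset of the Banach space $C^{r-k}(B,\mathbb R^m)$. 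If $f$ is $C^1$--close (hence $C^r$--close) to $f_0$, the branches persist and remain $C^r$--diffeomorphisms onto their images by the inverse function theorem, so we restrict to a $C^r$--neighbourhood $\mathcal U$ of $f_0$.

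\emph{The implicit equation.} For each $i$ set
\[
F_i:\mathcal U\times C^{r-k}(B,V_i)\longrightarrow C^{r-k}(B,\mathbb R^m),\qquad F_i(f,g)=f\circ g-\iota ,
\]
where $\iota(y)=y$; then $F_i(f_0,g_0)=0$ for $g_0:=f_i^0$, and $F_i(f,g)=0$ exactly when $g$ is an inverse branch of $f$ over $B$. The one nontrivial analytic input is the $\Omega$--lemma: the composition map $(f,g)\mapsto f\circ g$ from $C^{r}\times C^{r-k}$ into $C^{r-k}$ is of class $C^{k}$ (see \cite{Fr79} and the manifold--of--mappings formalism therein), the target being $C^{r-k}$ precisely because each differentiation of the composition consumes one derivative of the outer map $f$. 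Since $\iota$ is constant, $F_i$ is $C^k$, with partial derivatives $D_fF_i(f_0,g_0)\cdot H=H\circ g_0$ and $D_gF_i(f_0,g_0)\cdot h=(Df_0\circ g_0)\,h$. The latter is a multiplication operator on $C^{r-k}(B,\mathbb R^m)$: since $f_0$ is a local diffeomorphism, $Df_0(g_0(y))\in\mathrm{GL}_m$ for all $y\in\overline B$, and both $Df_0\circ g_0$ and $y\mapsto[Df_0(g_0(y))]^{-1}$ lie in $C^{r-1}\subseteq C^{r-k}$ (here $k\ge1$; matrix inversion is smooth). Hence $D_gF_i(f_0,g_0)$ is a toplinear isomorphism, with inverse $h\mapsto[Df_0\circ g_0]^{-1}h$.

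\emph{Conclusion.} By the $C^k$ implicit function theorem there are a $C^r$--neighbourhood $\mathcal U_i$ of $f_0$ and a $C^k$ map $\Gamma_i:\mathcal U_i\to C^{r-k}(B,V_i)$ with $\Gamma_i(f_0)=g_0$ and $F_i(f,\Gamma_i(f))\equiv0$; by uniqueness of branches near $g_0$, $\Gamma_i(f)$ is the branch $f_i$ of $f$. Thus $f\mapsto f_i$ is $C^k$ into $C^{r-k}$, and $D\Gamma_i(f_0)=-[D_gF_i(f_0,g_0)]^{-1}\circ D_fF_i(f_0,g_0)$ yields the linear response formula
\[
D_f f_i\big|_{f_0}\cdot H=-\,\big[Df_0\circ f_i^0\big]^{-1}\,(H\circ f_i^0),\qquad H\in T_{f_0}C^r(M,M).
\]
Intersecting the $\mathcal U_i$ and using that a map into a finite product is $C^k$ iff its components are, we conclude that $f\mapsto(f_1,\dots,f_s)$ is $C^k$. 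The case $k=0$ must be treated apart, since the scheme above would then require $C^r$ as target, into which composition is merely continuous: the already established case $k=1$ gives that $f\mapsto f_i\in C^{r-1}$ is continuous, and then differentiating the identity $Df_i=[Df\circ f_i]^{-1}$ and invoking continuity of composition $C^{r-1}\times C^{r-1}\to C^{r-1}$ and of matrix inversion shows $f\mapsto Df_i\in C^{r-1}$ is continuous, whence $f\mapsto f_i\in C^r$ is continuous.

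\emph{Main obstacle.} The delicate point is the $\Omega$--lemma invoked in the second step: for a fixed $f\in C^r$ the map $g\mapsto f\circ g$ is Fréchet differentiable into $C^{r-k}$ only when $r-k<r$, and the continuity of the resulting derivative relies on uniform continuity of the derivatives of $f$ on compacta rather than on Lipschitz bounds; keeping track of which space each term of each derivative falls into is the heart of the matter. Once this is granted, the implicit function theorem together with the elementary invertibility of the multiplication operator $D_gF_i(f_0,g_0)$ closes the argument.
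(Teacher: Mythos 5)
Your proof is correct and follows essentially the same strategy as the paper: set up the composition equation $f\circ f_i=\mathrm{id}$ as an implicit equation in Banach (manifold) charts, invoke Franks' $\Omega$--lemma to get $C^k$ regularity of the composition operator $C^r\times C^{r-k}\to C^{r-k}$, check that the multiplication operator $h\mapsto(Df_0\circ f_i^0)\,h$ is a toplinear isomorphism, and conclude by the implicit function theorem, arriving at the same derivative formula $D_f f_i\cdot H=-Df_i\cdot(H\circ f_i)$. You are in fact slightly more careful than the paper in two respects: you make the chart localization explicit, and you observe that the implicit-function-theorem scheme genuinely requires $k\ge 1$ and supply a separate bootstrap (via $Df_i=[Df\circ f_i]^{-1}$) for the $k=0$ case, a point the paper's proof glosses over.
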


\begin{proof}

Let $F: C^r(M,M)\times [C^{r-k}(B, M)]^s \to  [C^{r-k}(B; M)]^s$
given by
$$
F(h, \underbrace{h_1, \dots, h_s}_{:= \underline{h}})= (h \circ h_1, \dots, h\circ h_s).
$$
Note that $F$ is $C^k$. In fact, on one hand, $\partial_h F$ is $C^\infty$ (in suitable charts, we
see it as a continuous linear map in $h$).
By Theorem~4.2 and Corollary~4.2 in \cite{Fr79}, the composition $h\mapsto h\circ h_i$ is differentiable and
a precise expression for the derivative is given. In our setting,
 for an increment $H= (H_1, \dots, H_s) \in T[C^{r-k}(B; M)]^s$, we obtain that
$\partial_{h_j} F \cdot H_j= h'\circ h_j \cdot H_j$, which is clearly a $C^{k}$ map.

Note that $F(f, f_1, \dots, f_s)= (id, \dots, id)$.
For the point $(f, f_1, \dots, f_s)$, we have that
$\partial_{\underline{h}} F(f, f_1, \dots, f_s) \cdot H= (f'\circ f_1 \cdot H_1, \dots, f'\circ f_s \cdot H_s)$, is an isomorphism, since
$f$ is a local diffeomorphism and so $[f' \circ f_j(x)]$ is invertible, for any $x \in M$.
Therefore, by Implicit Function Theorem, we obtain that the map $G: C^r(M,M)\to [C^{r-k}(B, M)]^s$ given by
$f \mapsto (f_1, \dots, f_s)$ is a $C^{k}$ map, and its derivative applied to an increment $h \in T [C^r(M, M)]$ is
$$
(DG\cdot h)(x)= (-f'_1(x) \cdot h \circ f_1(x), \dots, -f'_s(x) \cdot h \circ f_s(x))
$$
This finishes the proof of the lemma.
\end{proof}

Using the differentiability of the inverse branches we establish the general fact that the transfer operator
varies differentiably in the operator norm of the space $L(C^{r+\alpha}(M, \mathbb R), C^{r -1}(M,\mathbb R))$
with respect to the underlying dynamical system.
This will be the key point to deduce that $f\mapsto \mu_f$ is differentiable and deduce further dynamical consequences as differentiability of Lyapunov exponents.

\begin{proposition}{(Differentiability of transfer operators)} \label{propdif}
Let $r \geq 1$, $\alpha> 0$ and let $f:M \to M$ be a $C^{r+\alpha}$-local diffeomorphism
on a compact connected manifold $M$
and $\phi\in C^{r+\alpha}(M,\mathbb R)$ be any fixed potential.
The map
$$
\begin{array}{ccc}
\text{Diff}^{\;r+\alpha}_{loc}(M)  & \to & \cL(C^{r+\alpha}(M, \mathbb R), C^{r-1}(M,\mathbb R))\\
f &  \mapsto & \cL_{f, \phi}
\end{array}
$$
is $C^1$-differentiable.
\end{proposition}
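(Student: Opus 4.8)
The plan is to reduce everything to the local statement just proved on the differentiability of inverse branches, together with the fact that a finite sum of differentiable maps is differentiable. First I would cover $M$ by finitely many balls $B_1,\dots,B_N$ (with a possibly smaller concentric cover $B_j'\Subset B_j$) such that on each $B_j$ all the inverse branches $f_{j,1},\dots,f_{j,\deg(f)}\colon B_j\to M$ of $f$ are well defined $C^r$ diffeomorphisms onto their images, and such that this persists for all $g$ in a $C^r$-neighborhood $\mathcal V$ of $f$ (using the open condition that $f$ is a local diffeomorphism). On $B_j'$ we then have the local expression
$$
\mathcal L_{g,\phi}(u)(x) \;=\; \sum_{i=1}^{\deg(f)} e^{\phi(g_{j,i}(x))}\, u(g_{j,i}(x)), \qquad x\in B_j'.
$$
Since the norm on $C^{r-1}(M,\mathbb R)$ can be controlled by the maximum over the $B_j'$ of the corresponding $C^{r-1}(B_j')$ norms, it suffices to prove that for each $j$ the map $g\mapsto \bigl(u\mapsto \mathcal L_{g,\phi}(u)|_{B_j'}\bigr)$ is differentiable from $\mathcal V$ into $L(C^r(M,\mathbb R),C^{r-1}(B_j',\mathbb R))$.

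Next I would treat each of the $\deg(f)$ summands separately, i.e. show that
$$
g \;\longmapsto\; \Bigl(u \mapsto \bigl[\,x\mapsto e^{\phi(g_{j,i}(x))}\,u(g_{j,i}(x))\,\bigr]\Bigr)
$$
is differentiable as a map $\mathcal V\to L(C^r(M,\mathbb R),C^{r-1}(B_j',\mathbb R))$. This factors through the data $g\mapsto g_{j,i}\in C^{r-1}(B_j',M)$, which is $C^1$ (indeed $C^k$ with $k=1$) by the preceding Local Differentiability of Inverse Branches lemma applied with $k=1$; composing with the $C^\infty$ map $h\mapsto e^{\phi\circ h}$ on $C^{r-1}$-maps (here one uses that $\phi$ is $C^r$, so $e^\phi\circ(\cdot)$ is differentiable from $C^{r-1}$ to $C^{r-1}$ by the composition results of \cite{Fr79}) gives that $g\mapsto e^{\phi\circ g_{j,i}}\in C^{r-1}(B_j',\mathbb R)$ is differentiable. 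Finally, the bilinear ``weighted substitution'' operator $C^{r-1}(B_j',\mathbb R)\times C^r(M,\mathbb R)\to C^{r-1}(B_j',\mathbb R)$, $(w,u)\mapsto \bigl[x\mapsto w(x)\,u(g_{j,i}(x))\bigr]$, is continuous and bilinear for fixed $g_{j,i}$, hence smooth, and depends differentiably on $g_{j,i}$ through the composition $u\mapsto u\circ g_{j,i}$ (again $C^1$, losing one derivative, by \cite{Fr79} since $u\in C^r$ and $g_{j,i}\in C^{r-1}$). Assembling these differentiable building blocks by the chain rule and Leibniz rule, and summing over $i$ and over $j$, yields differentiability of $g\mapsto\mathcal L_{g,\phi}$ into $L(C^r,C^{r-1})$, together with the explicit formula
$$
\bigl(D_f\mathcal L_{f,\phi}(u)_{|f_0}\cdot H\bigr)(x)
\;=\; \sum_{f_0(y)=x} e^{\phi(y)}\Bigl(\nabla\phi(y)\cdot \zeta_y(H) \; u(y) \;+\; Du(y)\cdot \zeta_y(H)\Bigr),
$$
where $\zeta_y(H) = -\,[Df_0(y)]^{-1}\,H(y)$ is the derivative of the inverse branch through $y$ in the direction $H$, coming from the inverse-branch lemma.

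The main obstacle I anticipate is the bookkeeping of the loss of one derivative: one must be careful that the composition operator $u\mapsto u\circ g_{j,i}$ is only differentiable as a map $C^r(M)\to C^{r-1}$ (not $C^r\to C^r$), and likewise that $g\mapsto g_{j,i}$ is only $C^1$ into $C^{r-1}(B_j',M)$ — this is precisely why the target space must be $L(C^r,C^{r-1})$ rather than $L(C^r,C^r)$, and is exactly the phenomenon underlying the earlier remark that the Koopman operator is not differentiable in the $C^{r+\alpha}$ operator norm. Patching the local operators $\mathcal L_{g,\phi}|_{B_j'}$ back into a single operator on $C^{r-1}(M,\mathbb R)$ requires checking that the $C^{r-1}$ norm is equivalent to $\max_j\|\cdot|_{B_j'}\|_{C^{r-1}(B_j')}$ on the finite subcover, which is routine but should be stated. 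Everything else is an application of the composition and Leibniz calculus for maps between spaces of $C^s$ functions, for which the cited results of \cite{Fr79} and the inverse-branch lemma above provide the needed differentiability and explicit derivatives.
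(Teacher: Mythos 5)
Your proof is correct and follows essentially the paper's route: localize $\mathcal L_{f,\phi}$ to a finite cover by domains of injectivity, apply the Local Differentiability of Inverse Branches lemma (with $k=1$) together with the composition calculus of \cite{Fr79}, and carefully track the loss of one derivative, arriving at the same derivative formula. The only technical variation is that the paper patches the local pieces via a $C^\infty$ partition of unity $\{\varrho_j\}$ subordinate to the cover, writing $\mathcal L_{f,\phi}=\sum_j\varrho_j\,\mathcal L_{f,\phi}$ so that each summand $\cL_j$ is globally defined on $M$ and vanishes outside $\overline{B_j}$, which sidesteps the restriction-plus-norm-equivalence step you flag as needing to be checked.
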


\begin{proof}
Let $\{\vr_j: j= 1, \dots, l\}$ be a $C^\infty$  partition of unity associated to some finite covering $B_1, \dots B_l$
of $M$ by balls with radius smaller or equal to $\delta> 0$ and define the auxiliary  operators
$\cL_j= \cL_{j, f, \phi}:= \cL_{f, \phi} \cdot \vr_j$. In particular it holds that $\cL_{f, \phi}= \sum_{j= 1}^{l} \cL_j$.
Therefore, all we need to prove is that any auxiliary operator $\cL_j$ is differentiable as a function of $f$.

We claim first the following pointwise differentiability: for any fixed $g \in C^{r}( M, \re)$ the map
$f\mapsto \cL_j(g)  \in C^{r-1}( M, \re) $ is  differentiable.
Recall that $\vr_j$ vanishes outside the ball $B_j$. We write $f_1, \dots, f_s$ for
the inverse branches of $f$ in $B_j$, and also $T_i= \partial_f f_i$, for $i= 1, \dots, s$.
Therefore, by a slight abuse of notation, since $\cL_j(g)|_{\ov{B_j}^c} \equiv 0$ we have
$$
\cL_{j}(g)= \sum_{i= 1}^s g(f_i) \cdot e^{\phi}(f_i) \, \vr_j,
$$
which implies that $\partial_f \cL_{j}(g)\cdot H=  \sum_{i= 1}^s (g \cdot e^{\phi})' \circ f_i \cdot [T_i \cdot H] \cdot \vr_j$
does exist, which proves our claim.

As a consequence of the previous pointwise differentiability result it is clear that $C^{r+\al}(M , \re) \ni g \mapsto \partial_f \cL_{j}(g) \in C^{r-1}(M,\mathbb R)$ is linear and continuous. Now, fix $f_{0} \in  \text{Diff}^{r}_{loc}(M)$, a chart $\Phi$ defined in a neighborhood 
of $f_0$ and $\Phi^{-1}(f_0) = \hat{f_{0}} \in C^r(M, \mathbb R^m)$
(cf. Subsection~\ref{sec:Banach-manifold}). 
For any $g \in C^{r}(M, \mathbb R)$, the fundamental theorem of 
calculus  implies that 
\begin{align*}
& \|\cL_{j, \Phi^{-1}( \hat{f_{0}} + H)}(g) - \cL_{j, \Phi^{-1}( \hat{f_{0}})}(g) - \partial_{\hat{f}}\cL_{j, \Phi^{-1}( \hat{f})}(g)_{| \hat{f_{0}}} \cdot H \|_{r-1} \\
& = \|\int_{0}^{1} \Big[\partial_{\hat{f}}\cL_{j, \Phi^{-1}( \hat{f})}(g)_{|\hat{f_{0}} + tH} - \partial_{\hat{f}}\cL_{j, \Phi^{-1}( \hat{f})}(g)_{|\hat{f_{0}}} \Big]\cdot H \,dt\, \|_{r-1} \\ 
& \leq \sum_{i=1}^{s}\int_{0}^{1}\| \Big[ (ge^{\phi})'_{|\big(\Phi^{-1}(\hat{f_{0}} + t H)\big)_{i}} \cdot [T_{i|\Phi^{-1}(\hat{f_{0}} + tH)} \cdot \partial_{\hat{f}}\Phi^{-1}(\hat{f})_{|\hat{f_{0}} +tH} \cdot H] \\
& \qquad \qquad - (g e^{\phi})'_{|\big(\Phi^{-1}(\hat{f_{0}} )\big)_{i}} \cdot [T_{i|\Phi^{-1}(\hat{f_{0}})} \cdot \partial_{\hat{f}}\Phi^{-1}(\hat{f})_{|\hat{f_{0}}} \cdot H] \Big] \cdot \varphi_{j} \|_{r-1} \; dt \\
&
\leq
 \sum_{i=1}^{s}\int_{0}^{1} \Big[ \|  (ge^{\phi})'_{|\big(\Phi^{-1}(\hat{f_{0}} + t H)\big)_{i}} \cdot [T_{i|\Phi^{-1}(\hat{f_{0}} + tH)} \cdot \partial_{\hat{f}}\Phi^{-1}(\hat{f})_{|\hat{f_{0}} +tH} \cdot H]  \\
&
\qquad \qquad -  (ge^{\phi})'_{|\big(\Phi^{-1}(\hat{f_{0}} )\big)_{i}} \cdot [T_{i|\Phi^{-1}(\hat{f_{0}} + tH)} \cdot \partial_{\hat{f}}\Phi^{-1}(\hat{f})_{|\hat{f_{0}} + tH} \cdot H] \|_{r-1} \\
  &
  \qquad \qquad +
   \|  (ge^{\phi})'_{|\big(\Phi^{-1}(\hat{f_{0}} )\big)_{i}} \cdot [T_{i|\Phi^{-1}(\hat{f_{0}} + tH)} \cdot \partial_{\hat{f}}\Phi^{-1}(\hat{f})_{|\hat{f_{0}} +tH} \cdot H] \\
   &
\qquad \qquad -
  (ge^{\phi})'_{|\big(\Phi^{-1}(\hat{f_{0}} )\big)_{i}} \cdot [T_{i|\Phi^{-1}(\hat{f_{0}})} \cdot \partial_{\hat{f}}\Phi^{-1}(\hat{f})_{|\hat{f_{0}} } \cdot H] \|_{r-1}           \Big] \cdot \|\varphi_{j} \|_{r-1} \, dt.
\end{align*}
Thus, if $C:= 2^{r+1} \|H\|_{r-1} \, \|\varphi_j\|_{r-1} \,   \|g e^\phi\|_{r+ \alpha}$ one can bound the left hand side using  
\begin{align*}
& \|\cL_{j, \Phi^{-1}( \hat{f_{0}} + H)}(g) - \cL_{j, \Phi^{-1}( \hat{f_{0}})}(g) - \partial_{\hat{f}}\cL_{j, \Phi^{-1}( \hat{f})}(g)_{| \hat{f_{0}}} \cdot H \|_{r-1}  \\
\\
&
\le C \sum_{i=1}^{s}\int_{0}^{1} \Big[ \| \big(\Phi^{-1}(\hat{f_{0}} +t H)\big)_{i} - \big(\Phi^{-1}(\hat{f_{0}} )\big)_{i} \|_{r-1} \cdot \|T_{i|\Phi^{-1}(\hat{f_{0}} + tH)} \cdot \partial_{\hat{f}}\Phi^{-1}(\hat{f})_{|\hat{f_{0}} +tH} \|_{r-1}
\\
& 
\qquad \qquad +
\|T_{i|\Phi^{-1}(\hat{f_{0}} + tH)} \cdot \partial_{\hat{f}}\Phi^{-1}(\hat{f})_{|\hat{f_{0}} + tH} - T_{i|\Phi^{-1}(\hat{f_{0}})} \cdot \partial_{\hat{f}}\Phi^{-1}(\hat{f})_{|\hat{f_{0}} } \|_{r-1}  \Big] dt.
\end{align*}
This implies that
$$
\lim_{H \to 0}\sup_{\|g\|_{r+\alpha} = 1}\frac{
\|\cL_{j, \Phi^{-1}( \hat{f_{0}} + H)}(g) - \cL_{j, \Phi^{-1}( \hat{f_{0}})}(g) - \partial_{\hat{f}}\cL_{j, \Phi^{-1}( \hat{f})}(g)_{| \hat{f_{0}}} \cdot H \|_{r-1}  }{\|H\|_{r+\alpha}} = 0.
$$
and finishes the proof of the proposition.
\end{proof}

\begin{proposition}\label{prop:diff.transfer.f}
Let $r\ge 1$,  $\alpha> 0$,  and $\phi, g \in C^{r+\alpha}(M, \R)$ be fixed. Then, the map  ${Diff}_{loc}^{r+\alpha} \ni f \mapsto
\mathcal{L}_{f,\phi}^{n} \in L(C^{r+\alpha}(M, \R), C^{r-1}(M, \R))$ is $C^1$-differentiable. Furthermore, given
$H \in \Gamma^{r+\alpha}_{f_{0}}$,
$g_{1} , g_{2} \in C^{r+\alpha} (M, \R)$ and $t \in \R$ it holds
\begin{itemize}
\item[i)] $D_{f}(\mathcal{L}_{f,\phi}^{n}(g))_{|f_{0}} \cdot H
	= \sum_{i = 1}^{n}\mathcal{L}_{f_{0},\phi}^{i - 1} (D_{f}\mathcal{L}_{f,\phi}
	(\mathcal{L}_{f_{0},\phi}^{ n - i}(g))_{|f_{0}} \cdot H)$;
\item[ii)] there exists $c_{f,\phi}>0$ so that
	$\|D_{f}\mathcal{L}_{f,\phi}(g)_{|f_{0}} \cdot H\|_{0} \leq c_{f,\phi} \|g\|_{1} \; \|H\|_{1}$;
\item[iii)] $D_{f}\mathcal{L}_{f,\phi}^{n}(g_{1} + tg_{2})_{|f_{0}} \cdot H =   D_{f}\mathcal{L}_{f,\phi}^{n}(g_{1})_{|f_{0}} \cdot H + tD_{f}\mathcal{L}_{f,\phi}^{n}(g_{2})_{|f_{0}} \cdot H$;
\item[iv)] if $\phi \equiv 0$, then $D_{f}\mathcal{L}_{f}^{n}(1)_{|f_{0}} \cdot H \equiv 0$.
\end{itemize}
\end{proposition}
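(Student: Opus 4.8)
The plan is to build everything on top of Proposition~\ref{propdif}, which already gives that $f\mapsto \cL_{f,\phi}\in L(C^r(M,\mathbb R),C^{r-1}(M,\mathbb R))$ is $C^1$, together with the standard fact that the composition of differentiable maps between Banach (manifold) spaces is differentiable. For item (i), I would first observe that $\cL_{f,\phi}^n$ is the $n$-fold composition of $\cL_{f,\phi}$ with itself, and then differentiate this composition using the chain rule. The only subtlety is that $\cL_{f,\phi}$ maps $C^r$ to $C^{r-1}$, so to iterate it $n$ times one must either fix $r$ large enough or, more cleanly, regard the iterates as maps $C^r\to C^{r-1}\to\cdots$ and note that on each factor $\cL_{f_0,\phi}$ is a bounded linear operator (hence its own derivative) in the appropriate spaces; the partial derivative hitting the $i$-th copy of $\cL_{f,\phi}$ produces the term $\cL_{f_0,\phi}^{i-1}\bigl(D_f\cL_{f,\phi}(\cL_{f_0,\phi}^{n-i}(g))_{|f_0}\cdot H\bigr)$, and summing over $i=1,\dots,n$ gives the stated formula. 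This is exactly the Leibniz/telescoping expansion $\cL_f^n - \cL_{f_0}^n = \sum_{i=1}^n \cL_{f_0}^{i-1}(\cL_f-\cL_{f_0})\cL_f^{n-i}$ linearized at $f_0$, so I would verify the formula directly from this telescoping identity rather than invoking an abstract chain rule, since that makes the bookkeeping of the mismatched regularity indices transparent.

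For item (ii), I would return to the explicit expression for $D_f\cL_{f,\phi}(g)_{|f_0}\cdot H$ computed inside the proof of Proposition~\ref{propdif}, namely $\partial_f\cL_j(g)\cdot H=\sum_{i=1}^s (g\,e^{\phi})'\circ f_i\cdot[T_i\cdot H]\,\vr_j$ summed over the partition of unity, where $T_i=\partial_f f_i$ is the derivative of the inverse branch with respect to the dynamics. Taking $C^0$-norms, each summand is bounded by $\sup|(g\,e^\phi)'|\cdot\sup|T_i|\cdot\|H\|_0$, and since $\sup|(g\,e^\phi)'|\le C(\phi)\|g\|_1$ while the number of branches $s\le\deg(f)$ and $\sup|T_i|$ is controlled by the Lipschitz/$C^1$ bounds on the inverse branches, one gets $\|D_f\cL_{f,\phi}(g)_{|f_0}\cdot H\|_0\le c_{f,\phi}\|g\|_1\|H\|_1$ with a constant depending only on $\phi$ and the branch data of $f$. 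This is a routine estimate; the $\|H\|_1$ (rather than $\|H\|_0$) appears because $T_i=\partial_f f_i$ involves one derivative of $H$ through the formula $(DG\cdot h)(x)=(-f_j'(x)\cdot h\circ f_j(x),\dots)$ from the inverse-branch lemma.

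Item (iii) is immediate: for each fixed $f_0$ the map $g\mapsto D_f\cL_{f,\phi}^n(g)_{|f_0}\cdot H$ is linear in $g$ — this is visible either from the formula in (i), where $g$ enters through the linear operators $\cL_{f_0,\phi}^{n-i}$ and then through the linear-in-$g$ expression for $D_f\cL_{f,\phi}$, or directly from the observation in the proof of Proposition~\ref{propdif} that $g\mapsto\partial_f\cL_j(g)$ is linear and continuous. Hence it respects the linear combination $g_1+tg_2$. For item (iv), when $\phi\equiv 0$ one has $\cL_f^n(1)(x)=\#\{y: f^n(y)=x\}=\deg(f)^n$, a constant independent of $f$ within a connected component of $\mathrm{Diff}^r_{loc}(M)$ (the degree is locally constant); a map that is locally constant has zero derivative, so $D_f\cL_f^n(1)_{|f_0}\cdot H\equiv 0$. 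Alternatively, one checks from the formula in Proposition~\ref{propdif} that with $\phi\equiv 0$ and $g\equiv 1$ the term $(g\,e^\phi)'=0$, killing every summand. I expect no genuine obstacle here; the one point requiring care is the regularity bookkeeping in (i) — keeping track of the loss of one derivative at each application of the transfer operator and confirming that all intermediate compositions land in spaces where the relevant maps are differentiable — and this is handled cleanly by working with the telescoping identity and treating $\cL_{f_0,\phi}$ as a bounded linear (hence smooth) operator on the intermediate $C^r$/$C^{r-1}$ spaces.
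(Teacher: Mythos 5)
Your proposal is correct and follows essentially the same route as the paper: item (i) is obtained in the paper by induction on $n$ using the case $n=1$ from Proposition~\ref{propdif}, which is algebraically equivalent to linearizing the telescoping identity you write down, and your treatments of (ii), (iii), and (iv) match the paper's (explicit formula for the derivative from Proposition~\ref{propdif}, linearity of $g\mapsto D_f\cL_{f,\phi}^n(g)$, and $\cL_{f,0}^n(1)\equiv\deg(f)^n$ with locally constant degree, respectively). Your observation about the regularity bookkeeping—that $\cL_{f_0,\phi}$ is a bounded linear (hence smooth) operator on each of $C^r$ and $C^{r-1}$, so the single derivative loss occurs only at the one factor where $D_f\cL_{f,\phi}$ acts—is exactly the point that makes the formula land in $C^{r-1}$ rather than losing $n$ derivatives.
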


\begin{proof}
Property i) is obtained by induction as follows.
The case $n= 1$ follows from the previous proposition. Now suppose the formula is valid for $n$.
Using the induction assumption and the fact that $\cL^n_{f_0+ H,\phi}(g)$ also belongs in
$C^r(M, \R)$ we obtain for all fixed diffeomorphism $f_0$ and $H \in \Gamma^{r+\alpha}_{f_{0}}$ that
 \begin{align*}
 \mathcal{L}_{f_0+ H,\phi}^{n+ 1}(g)
 	& =  \cL_{f_0,\phi} (\cL_{f_0+ H,\phi}^n(g))+ D_f\cL_{f,\phi}|_{f_0}(\cL_{f_0+ H}^n(g)) \cdot H + o(H) \\
	& = \cL_{f_0,\phi} \Big( \cL^n_{f_0}(g)+ \sum_{i = 1}^{n}\mathcal{L}_{f_{0},\phi}^{i - 1} (D_{f}\mathcal{L}_{f,\phi}
	(\mathcal{L}_{f_{0},\phi}^{ n - i}(g))_{|f_{0}} \cdot H)+ \hat o(H) \Big) \\
	& + D_f\cL_{f,\phi}|_{f_0}(\cL_{f_0+ H,\phi}^n(g)) \cdot H + o(H) \\
	& = \cL_{f_0,\phi} \Big( \cL^n_{f_0,\phi}(g)+  \sum_{i = 1}^{n}\mathcal{L}_{f_{0},\phi}^{i - 1} (D_{f}\mathcal{L}_{f,\phi}
	(\mathcal{L}_{f_{0,\phi}}^{ n - i}(g))_{|f_{0}} \!\cdot \! H)+ \hat o(H)\Big) + \hat o(H) \\
	& + D_f\cL_{f,\phi}|_{f_0}(\cL_{f_0,\phi}^n(g)) \!\cdot \! H+ (D_f\cL_{f,\phi}|_{f_0}(D_{f}(\mathcal{L}_{f,\phi}^{n}(g))_{|f_{0}} \cdot H
	+ o(H)) \cdot H \\
	& = \cL_{f_0,\phi} \Big(\cL^n_{f_0,\phi}(g)\Big)+ \sum_{i = 1}^{n+1}\mathcal{L}_{f_{0},\phi}^{i - 1} (D_{f}\mathcal{L}_{f,\phi}
	(\mathcal{L}_{f_{0},\phi}^{ (n+ 1) - i}(g))_{|f_{0}} \cdot H+ \tilde o(H),
 \end{align*}
 where $o(H), \hat o(H), \tilde o(H)$ are terms converging to zero faster than $\|H\|$. This finishes the proof
 of i).
 Part ii) is obtained by straightforward computation using the explicit formula from the previous proposition.
Part iii) follows using that
\begin{align*}
 \mathcal{L}_{f + H,\phi}^{n}(g_{1} + tg_{2})
 	&  = \mathcal{L}_{f + H,\phi}^{n}(g_{1}) + t\mathcal{L}_{f + H,\phi}^{n}(g_{2}) \\
	& =  \mathcal{L}_{f,\phi}^{n}(g_{1})  + t\mathcal{L}_{f,\phi}^{n}(g_{2}) + D_{f}\mathcal{L}_{f,\phi}^{n}(g_{1}) \cdot H \\
	& + tD_{f}\mathcal{L}_{f,\phi}^{n}(g_{2}) \cdot H +r_{1}(H) + r_{2}(H)
\end{align*}
for $r_1(H), r_2(H)$ that tend to zero as $H$ approaches zero. Finally, part iv) follows immediately from the
fact that $\mathcal{L}_{f,0}^{n}(1) \equiv \deg(f)^{n}$ and that $\deg(f)$ is locally constant. This finishes the proof.
\end{proof}

Throughout, $f$ will denote a local diffeomorphism satisfying (H1) and (H2) and $\phi$ a H\"older potential
such that (P) holds.

\begin{lemma}\label{lesimplif}
For any probability measure $\eta$, the  topological pressure $\Ptop(f,\phi)$ is given by
$$
\Ptop(f, \phi)
	=  \lim_{n \to +\infty} \frac{1}{n} \log \left[ \int \cL_{f, \phi}^n(1) \,d\eta \right].
$$
In particular, for any given $x \in M$
$$
\Ptop(f, \phi)=  \lim_{n \to +\infty} \frac{1}{n} \log [\cL_{f, \phi}^n(1)(x) ].
$$
\end{lemma}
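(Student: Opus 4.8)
The plan is to reduce both assertions to the exponential convergence of the normalized iterates $\tilde\cL_{f,\phi}^n(1)$ to the invariant density $h_{f,\phi}$ provided by the spectral gap. First I would recall that, by Proposition~\ref{p.conformal.measure} (equivalently, item~(3) of Theorem~\ref{thm.oldstuff}), one has $\Ptop(f,\phi)=\log\lambda_{f,\phi}$, where $\lambda_{f,\phi}=r(\cL_{f,\phi})$ is the spectral radius; hence it suffices to show that the two limits on the right-hand side of the statement both equal $\log\lambda_{f,\phi}$. Applying Theorem~\ref{t.gap} to the constant observable $\varphi\equiv 1$ (for which $\|1\|_\alpha=1$ and $\int 1\,d\nu_{f,\phi}=1$, since $\nu_{f,\phi}$ is a probability measure), and using $\tilde\cL_{f,\phi}^n=\lambda_{f,\phi}^{-n}\cL_{f,\phi}^n$ together with $\|\cdot\|_0\le\|\cdot\|_\alpha$, I obtain constants $C>0$ and $\tau\in(0,1)$ with
$$
\big\| \lambda_{f,\phi}^{-n}\,\cL_{f,\phi}^n(1) - h_{f,\phi} \big\|_0 \le C\tau^n
\qquad\text{for all } n\ge 1
$$
(the same estimate also follows from \eqref{eq.C0contt} in the proof of Proposition~\ref{prop:C0cont}, which is stated there for $\varphi\equiv 1$). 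Here $h_{f,\phi}=d\mu_{f,\phi}/d\nu_{f,\phi}$ is a positive continuous function on the compact manifold $M$, so there is $c>0$ with $c\le h_{f,\phi}\le c^{-1}$.

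Next, let $\eta$ be an arbitrary probability measure on $M$. Integrating the estimate above against $\eta$ yields
$$
\Big| \lambda_{f,\phi}^{-n}\!\int \cL_{f,\phi}^n(1)\,d\eta - \int h_{f,\phi}\,d\eta \Big| \le C\tau^n ,
$$
so that $\int \cL_{f,\phi}^n(1)\,d\eta = \lambda_{f,\phi}^n\big(\int h_{f,\phi}\,d\eta + \theta_n\big)$ with $|\theta_n|\le C\tau^n$. Since $\eta$ is a probability measure, $c\le \int h_{f,\phi}\,d\eta\le c^{-1}$, hence for all large $n$ the quantity $\int h_{f,\phi}\,d\eta + \theta_n$ remains in a fixed compact subinterval of $(0,+\infty)$. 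Taking $\frac1n\log$ and letting $n\to+\infty$ then gives
$$
\lim_{n\to+\infty}\frac1n\log\Big[\int \cL_{f,\phi}^n(1)\,d\eta\Big] = \log\lambda_{f,\phi} = \Ptop(f,\phi),
$$
as claimed.

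Finally, the pointwise statement is the particular case $\eta=\delta_x$, the Dirac mass at a point $x\in M$: then $\int \cL_{f,\phi}^n(1)\,d\eta = \cL_{f,\phi}^n(1)(x)$ and $\int h_{f,\phi}\,d\eta = h_{f,\phi}(x)\ge c>0$, so the computation above gives $\frac1n\log[\cL_{f,\phi}^n(1)(x)]\to\log\lambda_{f,\phi}=\Ptop(f,\phi)$. I do not expect a genuine obstacle here; the only point that needs a little care is the uniform positivity and boundedness of $h_{f,\phi}$, which guarantees that the error term $\theta_n$ is harmless after dividing the logarithm by $n$ — it is precisely this positivity, automatic for probability measures, that is being used.
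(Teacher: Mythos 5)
Your argument is correct and follows essentially the same route as the paper's: reduce the pointwise case to $\eta=\delta_x$, use $\Ptop(f,\phi)=\log\lambda_{f,\phi}$, and exploit the uniform convergence of $\lambda_{f,\phi}^{-n}\cL_{f,\phi}^n(1)$ to the positive bounded function $h_{f,\phi}$ to show that $\frac1n\log\int\lambda_{f,\phi}^{-n}\cL_{f,\phi}^n(1)\,d\eta\to 0$. The only cosmetic difference is that you invoke the exponential rate from the spectral gap, whereas the paper only needs (and only uses) the weaker fact that $\lambda_{f,\phi}^{-n}\cL_{f,\phi}^n(1)$ is eventually pinched between two positive constants.
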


\begin{proof}
Since the second assertion above is a direct consequence of the first one with $\eta=\de_x$ (the Dirac measure
at $x$) it is enough to prove the first one. Let $\eta$ be any fixed probability measure.
Recall that  the topological pressure is the logarithm of the spectral radius of
the transfer operator $\cL_{f, \phi}$, that is,  $\Ptop(f,\phi)=\log \la_{f,\phi}$. Moreover, since $\cL_{f, \phi}$ is a positive
operator then the spectral radius can be computed as
$$
\la_{f,\phi}
	= \lim_{n \to +\infty} \sqrt[n]{\|\cL_{f, \phi}^n\|}
	= \lim_{n \to +\infty} \sqrt[n]{\| \cL_{f, \phi} ^n(1)\|_0}
$$
Using that the functions $\la_{f,\phi}^{-n} \,\cL^n_{\phi, f}(1)$ are uniformly convergent to
the eigenfunction $h_{f,\phi}$ which is bounded away from zero and infinity
(see \cite[Proposition~4.4]{CV13})  there exists $K>0$ and
$n_0\ge 1$ such that $ K^{-1}\leq \la_{f,\phi}^{-n} \,\cL^n_{\phi, f}(1) \leq K$ for all  $n\ge n_0$. In consequence, we
get
\begin{equation*}
\lim_{n\to\infty} \frac1n \log \int \la_{f,\phi}^{-n} \,\cL^n_{f,\phi}(1) \, d\eta=0,
\end{equation*}
which proves the lemma.
\end{proof}

The next lemma will be fundamental to study the differentiability of equilibrium states. In fact we
show that the topological pressure associated to smooth potentials is differentiable.

\begin{lemma}[Differentiability of Topological Pressure with respect to dynamics]\label{lepress}
Let $\phi$ be a fixed $C^{2}$ potential on $M$ satisfying (P').  Then the topological pressure function
$P_\phi: \cF^{2}(M) \to \re$ given by $P_\phi(f)= \Ptop(f, \phi)$ is $C^1$-differentiable with respect to $f$.
\end{lemma}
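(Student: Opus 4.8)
The plan is to run the scheme of Proposition~\ref{raio} with the roles of the potential and the dynamics interchanged. Fix $f_{0}\in\cF^{2}$ and an arbitrary Borel probability measure $\eta$ on $M$. By Lemma~\ref{lesimplif} the functionals
$$
F_{n}(f)=\frac{1}{n}\log\int\cL_{f,\phi}^{n}(1)\,d\eta
$$
are well defined for $f$ near $f_{0}$ and converge to $\Ptop(f,\phi)=\log\la_{f,\phi}$; moreover, since the cone and spectral gap constants of Theorems~\ref{t.cone.invariance} and~\ref{t.gapCr} depend only on $L,\si,q,\deg(f)$ and $\de$, they are uniform over $\cF^{2}$, so (arguing as in Proposition~\ref{prop:C0cont}) this convergence is uniform on some neighborhood $W$ of $f_{0}$. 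Each $F_{n}$ is differentiable: by Proposition~\ref{prop:diff.transfer.f} the map $f\mapsto\cL_{f,\phi}^{n}\in L(C^{2}(M,\re),C^{1}(M,\re))$ is differentiable, evaluation at $1\in C^{2}$ and integration against $\eta$ are bounded and linear, and $\cL_{f,\phi}^{n}(1)>0$. Hence it suffices to show that $DF_{n}(f)$ converges uniformly on $W$ over unit increments $H$, and then invoke the classical fact that a locally uniform limit of $C^{1}$ maps whose derivatives converge locally uniformly is $C^{1}$, with derivative the limit of the derivatives.

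To compute $DF_{n}(f)\cdot H$ I would start from
$$
DF_{n}(f)\cdot H=\frac{1}{n}\cdot\frac{\int D_{f}\cL_{f,\phi}^{n}(1)_{|f}\cdot H\,d\eta}{\int\cL_{f,\phi}^{n}(1)\,d\eta},
$$
insert the chain-rule formula of Proposition~\ref{prop:diff.transfer.f}(i) into the numerator, use that $D_{f}\cL_{f,\phi}$ is linear in its function argument (Proposition~\ref{prop:diff.transfer.f}(iii)) to extract the factors of $\la_{f,\phi}$ arising from $\cL_{f,\phi}^{n-i}(1)=\la_{f,\phi}^{\,n-i}\tilde\cL_{f,\phi}^{\,n-i}(1)$ and $\cL_{f,\phi}^{\,i-1}=\la_{f,\phi}^{\,i-1}\tilde\cL_{f,\phi}^{\,i-1}$, and cancel against $\int\cL_{f,\phi}^{n}(1)\,d\eta=\la_{f,\phi}^{n}\int\tilde\cL_{f,\phi}^{n}(1)\,d\eta$. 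This gives
$$
DF_{n}(f)\cdot H=\frac{\la_{f,\phi}^{-1}}{n}\cdot\frac{\sum_{i=1}^{n}\int\tilde\cL_{f,\phi}^{\,i-1}\big(D_{f}\cL_{f,\phi}(\tilde\cL_{f,\phi}^{\,n-i}(1))_{|f}\cdot H\big)\,d\eta}{\int\tilde\cL_{f,\phi}^{n}(1)\,d\eta}.
$$
Here $\int\tilde\cL_{f,\phi}^{n}(1)\,d\eta\to\int h_{f,\phi}\,d\eta$ uniformly on $W$ and is bounded away from $0$ and $\infty$ there (Theorems~\ref{t.gapCr} and~\ref{thm.oldstuff}), and $\la_{f,\phi}^{-1}$ is continuous in $f$, so everything reduces to the convergence of the Ces\`aro-type sum in the numerator.

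The core estimate is on the summand $a_{i}^{(n)}(f,H):=\int\tilde\cL_{f,\phi}^{\,i-1}\big(D_{f}\cL_{f,\phi}(\tilde\cL_{f,\phi}^{\,n-i}(1))_{|f}\cdot H\big)\,d\eta$, which must be controlled from both ends. Since $\nu_{f,\phi}$ is a probability measure, Theorem~\ref{t.gapCr} gives $\|\tilde\cL_{f,\phi}^{\,n-i}(1)-h_{f,\phi}\|_{2}\le C\tau^{\,n-i}$; combined with linearity of $D_{f}\cL_{f,\phi}$ and the bound $\|D_{f}\cL_{f,\phi}(g)_{|f}\cdot H\|_{1}\le C_{0}\,\|g\|_{2}\,\|H\|$ read off from the explicit formula in Proposition~\ref{propdif} (with $C_{0}$ uniform on $W$ thanks to differentiability of the inverse branches), one may replace $\tilde\cL_{f,\phi}^{\,n-i}(1)$ by $h_{f,\phi}$ at the cost of an $O(\tau^{\,n-i}\|H\|)$ error in $C^{1}$; then applying the $C^{1}$ spectral gap to $\tilde\cL_{f,\phi}^{\,i-1}$ acting on $D_{f}\cL_{f,\phi}(h_{f,\phi})_{|f}\cdot H\in C^{1}(M,\re)$ yields
$$
a_{i}^{(n)}(f,H)=\Big(\int h_{f,\phi}\,d\eta\Big)\Big(\int D_{f}\cL_{f,\phi}(h_{f,\phi})_{|f}\cdot H\,d\nu_{f,\phi}\Big)+O(\tau^{\,n-i}\|H\|)+O(\tau^{\,i-1}\|H\|),
$$
all $O$'s uniform on $W$ by uniformity of the spectral gap over $\cF^{2}$. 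Averaging kills the two geometric tails at rate $O(1/n)$, so $DF_{n}(f)\cdot H\to\la_{f,\phi}^{-1}\int D_{f}\cL_{f,\phi}(h_{f,\phi})_{|f}\cdot H\,d\nu_{f,\phi}$ uniformly on $W$; hence $P_{\phi}$ is differentiable at $f_{0}$ with
$$
D_{f}\Ptop(f,\phi)_{|f_{0}}\cdot H=\la_{f_{0},\phi}^{-1}\int D_{f}\cL_{f,\phi}(h_{f_{0},\phi})_{|f_{0}}\cdot H\,d\nu_{f_{0},\phi}.
$$
I expect the main obstacle to be exactly this two-sided control of the $n$-dependent summand, together with keeping every constant uniform for $f$ near $f_{0}$ — the latter resting entirely on the uniformity of the cone/spectral-gap data of \cite{CV13} on the open set $\cF^{2}$ — plus the bookkeeping of which space ($C^{2}$ or $C^{1}$) each object lives in, which is precisely why $r\ge 2$ (so that $\cL_{f,\phi}$ maps $C^{2}$ into $C^{1}$ and the spectral gap applies in $C^{1}$) is needed.
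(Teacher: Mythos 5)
Your argument is correct and follows the same overall strategy as the paper: transplant the scheme of Proposition~\ref{raio} (differentiation of $F_n(f)=\frac1n\log\int\cL_{f,\phi}^n(1)\,d\eta$, uniform convergence of $DF_n$ via the spectral gap) to the $f$-variable, using Proposition~\ref{prop:diff.transfer.f}(i) for the chain rule and the uniformity of the cone/spectral-gap constants of \cite{CV13} over $\cF^2$. The main difference is organizational: the paper inserts the explicit product-rule expansion of $D_f\cL_{f,\phi}(g)\cdot H=\sum_j e^{\phi(\hat f_j)}\bigl[Dg|_{\hat f_j}+g(\hat f_j)D\phi|_{\hat f_j}\bigr][T_j H]$ before estimating, which splits the numerator into two series $B_n$ and $C_n$ (Claims 1 and 2, each handled by a separate two-sided estimate), whereas you keep $D_f\cL_{f,\phi}(\cdot)\cdot H$ as a single $C^2\to C^1$ object and do one two-sided geometric estimate — first replacing $\tilde\cL^{n-i}_{f,\phi}(1)$ by $h_{f,\phi}$ in $C^2$ with error $O(\tau^{n-i})$, then applying the dual spectral-gap estimate for $\tilde\cL^{i-1}_{f,\phi}$ acting on $C^1$ with error $O(\tau^{i-1})$. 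This buys a more compact writeup and the tidier closed form $D_f\Ptop(f,\phi)_{|f_0}\cdot H=\la_{f_0,\phi}^{-1}\int D_f\cL_{f,\phi}(h_{f_0,\phi})_{|f_0}\cdot H\,d\nu_{f_0,\phi}$, which, once one substitutes the explicit formula for $D_f\cL$, coincides with the paper's two-term expression. You also correctly identify the one point where more than Proposition~\ref{prop:diff.transfer.f}(ii) is needed — namely a bound $\|D_f\cL_{f,\phi}(g)\cdot H\|_1\le C_0\|g\|_2\|H\|$ rather than the stated $C^0$-bound — and correctly observe that this is read off from the explicit formula in Proposition~\ref{propdif}; this is exactly why $r\ge 2$ (and hence $h_{f,\phi}\in C^2$ with a $C^2$ spectral gap) is required. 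No gaps.
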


\begin{proof}
By the last lemma we are reduced to prove the differentiability
of the function
$$
C^{2} \ni f  \mapsto P(f,\phi)=\lim_{n \to +\infty} \frac{1}{n} \log \int \cL_{f,\phi}^n(1) \,  d\nu_{f_{0},\phi}
$$
for some fixed $f_{0}$.
We will use derivation of sequence $P_n(f) =   \frac{1}{n} \log \int \cL_{f,\phi}^n(1) \, d\nu_{f_{0},\phi}$,
which converge to the topological pressure of $f$ uniformly in a small neighborhood of $f_{0}$.
By the chain rule, the derivative of $P_n$ with respect to $f$ is given by
 $$
 D_f P_n(f)
	= \frac{\nu_{f_{0},\phi} ((\frac{d}{df} \cL_{f,\phi}^n(1)(\cdot))}
		{n \, \nu_{f_{0},\phi} (\cL_{f,\phi}^n(1)(\cdot))}.
 $$

This yields that
\begin{align*}
D_fP_{n}(\hat{f}) \cdot (H)
	& =
\frac{\int D_{f }\mathcal{L}^{n}_{f ,\phi}(1)_{|\hat{f} } \cdot (H) d\nu_{f_{0},\phi} }{ n \cdot \int\mathcal{L}^{n}_{\hat{f},\phi}(1)d\nu_{f_{0},\phi}} \\
	& =
\frac{ \int \sum_{i = 1}^{n}\mathcal{L}_{\hat{f},\phi }^{i - 1} (D_{f}\mathcal{L}_{f ,\phi}(\mathcal{L}_{\hat{f},\phi}^{ n - i}(1))_{|\hat{f} } \cdot (H)) d\nu_{f_{0},\phi } }{ n \cdot \int\mathcal{L}^{n}_{\hat{f},\phi }(1)d\nu_{f_{0},\phi}}.
\end{align*}
In fact after multiplication by $\lambda_{\hat f,\phi}^{-n}$ in both the numerator and denominator,
the later can be written also as the sum
$$
\frac{ \int \sum_{i = 1}^{n} \tilde{\mathcal{L}}^{i-1}_{\hat{f} , \phi}(\sum_{j = 1}^{\deg(\hat{f})} e^{{\phi}(\hat{f}_{j}(\cdot))} \cdot D\tilde{\mathcal{L}}_{\hat{f} ,\phi}^{ n - i}(1)_{|\hat{f}_{j}(\cdot)} \cdot [(T_{j|\hat{f}} \cdot H)(\cdot)]) \; d\nu_{f_{0},\phi } }{n \lambda_{\hat{f},\phi}\cdot \int\tilde{\mathcal{L}}^{n}_{\hat{f} ,\phi}(1)d\nu_{f_{0},\phi }}
$$
\begin{equation}\label{abc}
+\frac{ \int \sum_{i = 1}^{n}\tilde{\mathcal{L}}_{\hat{f} ,\phi}^{i - 1} (\sum_{j = 1}^{deg(\hat{f})} e^{{\phi}(\hat{f}_{j}(\cdot))} \cdot \tilde{\mathcal{L}}_{\hat{f} }^{ n - i}(1)(\hat{f}_{j}(\cdot)) \cdot D{\phi}_{|\hat{f}_{j}(\cdot)} \cdot [(T_{j|\hat{f}} \cdot H)(\hat{f}_{j}(\cdot))]) \; d\nu_{f_{0},\phi } }{n \lambda_{\hat{f},\phi}\cdot \int\tilde{\mathcal{L}}^{n}_{\hat{f},\phi}(1)d\nu_{f_{0},\phi}},
\end{equation}
where $\hat f_j$ denote the inverse branches of the map $\hat f$.
To analyze the previous expressions we consider the two sums below
$$
B_{n}(\hat{f} ) \cdot H = \frac{1}{n}\int \sum_{i = 1}^{n}\tilde{\mathcal{L}}_{\hat{f},\phi }^{i - 1} (\sum_{j = 1}^{\deg(\hat{f})} e^{{\phi}(\hat{f}_{j}(\cdot))} \cdot D\tilde{\mathcal{L}}_{\hat{f},\phi}^{ n - i}(1)_{|\hat{f}_{j}(\cdot)} \cdot [(T_{j|\hat{f}} \cdot H)(\cdot)]) \; d\nu_{f_{0},\phi }
$$
and 
$$
C_{n}(\hat{f} ) \cdot H
	\! = \!\frac{1}{n}\int \sum_{i = 1}^{n}
	\tilde{\mathcal{L}}_{\hat{f},\phi }^{i - 1}
	\Big(\!
	\sum_{j = 1}^{\deg(\hat{f})} e^{{\phi}(\hat{f}_{j}(\cdot))}    \tilde{\mathcal{L}}_{\hat{f} ,\phi}^{ n - i}(1)(\hat{f}_{j}(\cdot))
		 D{\phi}_{|\hat{f}_{j}(\cdot)}  [(T_{j|\hat{f}} \cdot H)(\hat{f}_{j}(\cdot))] \Big)  d\nu_{f_{0},\phi }.
$$

To establish our result we will use the following:

\vspace{.2cm}
\noindent \emph{Claim 1:}
$B_{n}(\hat{f} )\cdot H$ is uniformly convergent on $(\hat{f} , \phi)$ and $H \in \Gamma^{2}_{\hat{f}}$ with $\|H\|_{2}
 \leq 1$ to the expression $\int \sum_{j = 1}^{\deg(\hat{f})} e^{{\phi}(\hat{f}_{j}(\cdot))} \cdot Dh_{\hat{f} , \hat{\phi}|\hat{f}_{j}(\cdot)} \cdot [(T_{j|\hat{f}} \cdot H)(\cdot)] d\nu_{\hat{f} ,\phi} \cdot 
\int h_{\hat{f},\phi } d\nu_{f_{0},\phi }$.
\vspace{.2cm}

\noindent \emph{Claim 2:}
$C_{n}(\hat{f}) \cdot H$ is uniformly convergent on $(\hat{f} , \phi)$ and $H \in \Gamma^{2}_{\hat{f}}$ with $\|H\|_{2}  \leq 1$ to the expression
$$
\int \sum_{j = 1}^{\deg(\hat{f})} e^{{\phi}(\hat{f}_{j}(\cdot))} \cdot h_{\hat{f} , {\phi}}(\hat{f}_{j}(\cdot)) \cdot D{\phi}_{|\hat{f}_{j}(\cdot)} \cdot [(T_{j|\hat{f}} \cdot H)(\hat{f}_{j}(\cdot))] \; d\nu_{\hat{f} } \cdot 
\int h_{\hat{f} , {\phi}} d\nu_{f_{0},\phi}.
$$
\vspace{.2cm}

We notice that our result will be a direct consequence of the two claims above. Indeed, using \eqref{abc}
it follows that
$$
D_f P_{n}(\hat{f}) \cdot (H) =
\frac{B_{n}(\hat{f} , \hat{\phi}) \cdot H}{\lambda_{\hat{f},\phi }\int\tilde{\mathcal{L}}^{n}_{\hat{f},\phi }(1)d\nu_{f_{0},\phi }} +
\frac{C_{n}(\hat{f} , \hat{\phi}) \cdot H}{\lambda_{\hat{f} ,\phi}\int\tilde{\mathcal{L}}^{n}_{\hat{f},\phi }(1)d\nu_{f_{0},\phi }}.
$$
Moreover, using that $\int\tilde{\mathcal{L}}^{n}_{\hat{f} ,\phi}(1)d\nu_{f_{0} ,\phi}$ converges to
$\int h_{\hat{f} , {\phi}}d\nu_{f_{0} ,\phi}$ and the uniform limits given by Claims 1 and 2 we obtain that
$DP_{n}(\hat{f} ) \cdot H$ is convergent as $n\to\infty$ to the sum
\begin{align*}
& \lambda_{\hat{f} ,{\phi}}^{-1} \int \sum_{j = 1}^{\deg(\hat{f})} e^{{\phi}(\hat{f}_{j}(\cdot))} \cdot Dh_{\hat{f} {\phi}|\hat{f}_{j}(\cdot)} \cdot [(T_{j|\hat{f}} \cdot H_{1})(\cdot)] \; d\nu_{\hat{f} ,\phi} \\
& + \lambda_{\hat{f} ,{\phi}}^{-1} \int \sum_{j = 1}^{\deg(\hat{f})} e^{{\phi}(\hat{f}_{j}(\cdot))} \cdot h_{\hat{f} , {\phi}}(\hat{f}_{j}(\cdot)) \cdot D{\phi}_{|\hat{f}_{j}(\cdot)} \cdot [(T_{j|\hat{f}} \cdot H)(\hat{f}_{j}(\cdot))] \; d\nu_{\hat{f},\phi},
\end{align*}
where the convergence is uniform on $(\hat{f} , {\phi})$ and $H \in \Gamma^{2}_{\hat{f}}$ such that  $\|H\|_{2} = 1$.
Hence
\begin{align*}
D_{f }\lambda_{f , \phi \;|{\hat f}, \phi} \cdot H
	& = \sum_{j = 1}^{\deg({\hat f})}  \int e^{\phi(\hat f_{j}(\cdot))}  Dh_{{\hat f} ,
		\phi|\hat f_{j}(\cdot)}     [(T_{j|{\hat f}} \cdot H)(\cdot)] \; d\nu_{{\hat f} , \phi} \\
	& + \!\!\sum_{j = 1}^{deg({\hat f})} \int e^{\phi(\hat f_{j}(\cdot))}    h_{{\hat f} , \phi}(\hat f_{j}(\cdot))    D\phi_{|\hat f_{j}
		(\cdot)}    [(T_{j|{\hat f}} \cdot H)(\hat f_{j}(\cdot))] d\nu_{{\hat f},\phi}.
\end{align*}
which proves the lemma. Therefore, in the remaining we prove the previous claims.
As for Claim~1, observe that the following uniform convergence holds
\begin{align*}
& \frac{1}{n} \sum_{i = 0}^{n - 1} \int \sum_{j = 1}^{\deg(\hat{f})} e^{{\phi}(\hat{f}_{j}(\cdot))} \cdot D\left[\tilde{\mathcal{L}}_{\hat{f} ,\phi}^{ i}(1)\right]_{|\hat{f}_{j}(\cdot)} \cdot [(T_{j|\hat{f}} \cdot H)(\cdot)] \; d\nu_{\hat{f}} \cdot \int h_{\hat{f} ,{\phi}} d\nu_{ f_{0} ,\phi} \\
& \xrightarrow[n \to \infty]{}
\int \sum_{j = 1}^{\deg(\hat{f})} e^{{\phi}(\hat{f}_{j}(\cdot))} \cdot Dh_{\hat{f} , {\phi}|\hat{f}_{j}(\cdot)} \cdot [(T_{j|\hat{f}} \cdot H)(\cdot)] d\nu_{\hat{f} , \hat{\phi}} \cdot \int h_{\hat{f} , {\phi}} d\nu_{f_{0} ,\phi}.
\end{align*}
In fact, since the transfer operator $\cL_{\hat f,\phi}$ acting on the space
$C^r(M,\mathbb R)$ has a spectral gap and $\tilde \cL_{\hat f, \phi}^i (1)$ tends to $h_{\hat f, \phi}$ as
$i\to\infty$ (see Theorem~\ref{thm.oldstuff} and Subsection~5.1 in \cite{CV13}) then $D\tilde \cL_{\hat f, \phi}^i (1)$
converges to $h_{\hat f, \phi}$ in the $C^0$-topology. Moreover, one also has that
\begin{align*}
 \Big| B_{n}(\hat{f} , \hat{\phi}) \cdot H
 	& \!-\! \frac{1}{n} \sum_{i = 0}^{n - 1} \int \!\!\sum_{j = 1}^{\deg(\hat{f})} e^{{\phi}(\hat{f}_{j}(\cdot))} D
	\tilde{\mathcal{L}}_{\hat{f} , {\phi}}^{ i}(1)_{|\hat{f}_{j}(\cdot)}  [(T_{j|\hat{f}} \cdot H)(\cdot)] \; d\nu_{\hat{f} ,\phi}
	\int h_{\hat{f} , {\phi}} d\nu_{f_{0} ,\phi}\Big| \\
	&\leq  \frac{1}{n}\sum_{i = 1}^{n} \Big| \int  \sum_{j = 1}^{\deg(\hat{f})} e^{{\phi}(\hat{f}_{j}(\cdot))}  D
	\tilde{\mathcal{L}}_{\hat{f} ,\phi}^{ n - i}(1)_{|\hat{f}_{j}(\cdot)}  [(T_{j|\hat{f}} \cdot H)(\cdot)]  \;
	 d(\tilde{\mathcal{L}}_{\hat{f} ,\phi}^{\ast i - 1}\nu_{f_{0} ,\phi}) \\
	 & - \int  \sum_{j = 1}^{\deg(\hat{f})} e^{{\phi}(\hat{f}_{j}(\cdot))}  D\tilde{\mathcal{L}}_{\hat{f} ,\phi}^{ n - i}(1)_{|\hat{f}_{j}(\cdot)}  [(T_{j|\hat{f}} \cdot H)(\cdot)]  \; d\nu_{\hat{f}} \cdot \int h_{\hat{f} , {\phi}} d\nu_{f_{0} ,\phi}\Big|  \\
	  & \leq  \frac{1}{n}\sum_{i = 1}^{n}C \tau^{i - 1}\deg(\hat{f})\|e^{{\phi}}\|_{1}
	  \!\! \max_{1 \le j \le \deg(\hat{f})} \{\|(T_{j|\hat{f}} \cdot H)\|_{1}\} \,
	  \|\tilde{\mathcal{L}}_{\hat{f} ,\phi}^{ n - i}(1)\|_{2} \\
	  & \leq  \frac{1}{n}\sum_{i = 1}^{n} C \tau^{i - 1}\deg(\hat{f})\|e^{{\phi}}\|_{1}
	  \!\! \max_{1 \le j \le \deg(\hat{f})} \{\|(T_{j|\hat{f}} \cdot H)\|_{1}\}  [C \tau^{n - i} + \|h_{\hat{f} , {\phi}}\|_{2}]
\end{align*}
which is uniformly convergent to zero with respect to  $(\hat{f} , {\phi})$ and all $H \in \Gamma^{2}_{\hat{f}}$ with
$\|H\|_{2} \leq 1$. This proves Claim~1. We now proceed to prove Claim~2.
\begin{align*}
& \frac{1}{n} \sum_{i = 0}^{n - 1} \int \sum_{j = 1}^{deg(\hat{f})} e^{{\phi}(\hat{f}_{j}(\cdot))}
 \tilde{\mathcal{L}}_{\hat{f},\phi}^{i}(1)(\hat{f}_{j}(\cdot))  D{\phi}_{|\hat{f}_{j}(\cdot)}  [(T_{j|\hat{f}} \cdot H)(\hat{f}_{j}(\cdot))]  d\nu_{\hat{f} , {\phi}} \cdot \int h_{\hat{f} , {\phi}} d\nu_{f_{0} ,\phi} \\
& \xrightarrow[n \to +\infty]{} \int \sum_{j = 1}^{deg(\hat{f})} e^{{\phi}(\hat{f}_{j}(\cdot))}
h_{\hat{f} , {\phi}}(\hat{f}_{j}(\cdot))  D{\phi}_{|\hat{f}_{j}(\cdot)} \cdot [(T_{j|\hat{f}} \cdot H)(\hat{f}_{j}(\cdot))]  d\nu_{\hat{f},\phi } \cdot \int h_{\hat{f} , {\phi}} d\nu_{f_{0} ,\phi},
\end{align*}
uniformly with respect to  $(\hat{f} , {\phi})$ and $H$. Then,  the difference between $C_{n}(\hat{f} ) \cdot H$ and
$\frac{1}{n} \! \sum_{i = 0}^{n - 1} \! \int \sum_{j = 1}^{\deg(\hat{f})} e^{{\phi}(\hat{f}_{j}(\cdot))}
	\tilde{\mathcal{L}}_{\hat{f} ,\phi}^{i}(1)(\hat{f}_{j}(\cdot)) D{\phi}_{|\hat{f}_{j}(\cdot)}
	[(T_{j|\hat{f}} \cdot H)(\hat{f}_{j}(\cdot))] d\nu_{\hat{f},\phi}  \int h_{\hat{f} , {\phi}} d\nu_{f_{0},\phi}$
is bounded from above (in absolute value) by
\begin{align*}
 & \frac{1}{n}\sum_{i = 1}^{n} \Big| \int \sum_{j = 1}^{deg(\hat{f})} e^{{\phi}(\hat{f}_{j}(\cdot))}
	\tilde{\mathcal{L}}_{\hat{f} ,\phi}^{ n - i}(1)(\hat{f}_{j}(\cdot))  D{\phi}_{|\hat{f}_{j}(\cdot)}
	[(T_{j|\hat{f}} \cdot H)(\hat{f}_{j}(\cdot))] \; d(\tilde{\mathcal{L}}_{\hat{f} ,\phi}^{\ast i - 1}\nu_{f_{0} ,\phi}) \\
	& -\!  \int \sum_{j = 1}^{\deg(\hat{f})} e^{{\phi}(\hat{f}_{j}(\cdot))}
	\tilde{\mathcal{L}}_{\hat{f} ,\phi}^{n - i}(1)(\hat{f}_{j}(\cdot))  D{\phi}_{|\hat{f}_{j}(\cdot)}
	[(T_{j|\hat{f}} \cdot H)(\hat{f}_{j}(\cdot))] \; d\nu_{\hat{f} , {\phi}} \cdot \int h_{\hat{f} , \hat{\phi}}
	d\nu_{f_{0},\phi} \Big|\\
	&\le \frac{1}{n}\sum_{i = 1}^{n} C \tau^{i-1}\|\sum_{j = 1}^{\deg(\hat{f})} e^{{\phi}(\hat{f}_{j}(\cdot))}
	\tilde{\mathcal{L}}_{\hat{f},\phi}^{ n - i}(1)(\hat{f}_{j}(\cdot)) \cdot D{\phi}_{|\hat{f}_{j}(\cdot)}
	 [(T_{j|\hat{f}} \cdot H)(\hat{f}_{j}(\cdot))]\|_{1} \\
	 & \leq \frac{1}{n}\sum_{i = 1}^{n} 4 \hat C  C\tau^{i-1}\deg(\hat{f}) \cdot \|e^{{\phi}}\|_{1} \cdot \|
	 \tilde{\mathcal{L}}_{\hat{f} ,\phi}^{ n - i}(1)\|_{1}  \\
	 & \leq \frac{1}{n}\sum_{i = 1}^{n} 4 \hat C C \tau^{i-1}\deg(\hat{f}) \cdot \|e^{{\phi}}\|_{1}
	 \cdot (C\tau^{n - i} + \|h_{\hat{f} , {\phi}}\|_{1})
\end{align*}
where $\hat C=\|{\phi}\|_{1} \cdot \max_{j = 1 , \ldots, \deg(\hat{f})} \{\|[(T_{j|\hat{f}} \cdot H)(\hat{f}_{j}(\cdot))]\|_{1} \}$.
Since the later expression is uniformly convergent to zero with respect to $(\hat{f} ,{\phi})$ and
$ H \in \Gamma^{2}_{\hat{f}}$ such that  $\|H\|_{2} \leq 1$ this proves Claim~2 and finishes the proof
of the lemma.
\end{proof}

\begin{corollary}\label{cor:jointdiff}
The topological pressure  $P_{top}:\cF^{2} \times \cW^{2} \to \re$ is $C^1$-differentiable.
\end{corollary}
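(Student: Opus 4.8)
The plan is to derive the joint differentiability of $(f,\phi)\mapsto\Ptop(f,\phi)$ from the two separate differentiability statements already obtained, via the standard criterion that a map between open subsets of Banach spaces is of class $C^{1}$ as soon as both of its partial derivatives exist throughout the domain and are continuous there (see, e.g., \cite{Fr79}); recall that ``differentiable'' means $C^{1}$-differentiable throughout the paper. Writing $F=\Ptop$ on $\cF^{2}\times\cW^{2}$, existence of $D_{\phi}F(f,\phi)$ at every point is given by Corollary~\ref{cordifpot} (equivalently, by the analyticity statement of Theorem~\ref{thm:B}), and existence of $D_{f}F(f,\phi)$ at every point is given by Lemma~\ref{lepress}. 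Thus the whole matter reduces to showing that
$$
(f,\phi)\longmapsto D_{\phi}F(f,\phi)\in\big(C^{2}(M,\R)\big)^{*}
\qquad\text{and}\qquad
(f,\phi)\longmapsto D_{f}F(f,\phi)\in\big(C^{2}(M,M)\big)^{*}
$$
are continuous on $\cF^{2}\times\cW^{2}$.

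The first of these is the easy half. By Corollary~\ref{cordifpot} one has $D_{\phi}F(f,\phi)\cdot H=\int H\,d\mu_{f,\phi}$, so the operator-norm distance between $D_{\phi}F(f,\phi)$ and $D_{\phi}F(f_{0},\phi_{0})$ is $\sup_{\|H\|_{2}\le1}\big|\int H\,d(\mu_{f,\phi}-\mu_{f_{0},\phi_{0}})\big|$. Since $\mu_{f,\phi}$ varies continuously in the weak$^{*}$ topology (Theorem~\ref{thm.oldstuff}(6)) and the closed unit ball of $C^{2}(M,\R)$ is relatively compact in $C^{0}(M,\R)$ by Arzel\`a--Ascoli, weak$^{*}$ convergence of the measures is automatically uniform on that ball, so this supremum tends to $0$ as $(f,\phi)\to(f_{0},\phi_{0})$.

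The continuity of $(f,\phi)\mapsto D_{f}F(f,\phi)$ is the main point, and the place where the only genuine difficulty lies. Here I would start from the closed expression for $D_{f}F(f,\phi)\cdot H$ derived at the end of the proof of Lemma~\ref{lepress}: it equals $\lambda_{f,\phi}^{-1}$ times a finite sum over the inverse branches $f_{j}$ of $f$ of integrals against $\nu_{f,\phi}$ of functions assembled from $e^{\phi}$, $h_{f,\phi}$, $Dh_{f,\phi}$, $D\phi$ (all composed with $f_{j}$) and the operator $T_{j|f}=\partial_{f}f_{j}$ applied to the increment $H$; write $\Psi_{f,\phi}(H)$ for this integrand. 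The key structural facts are: (a) $\Psi_{f,\phi}$ is a bounded linear operator $C^{2}(M,M)\to C^{1}(M,\R)$ and the map $(f,\phi)\mapsto\Psi_{f,\phi}$ is continuous for that operator norm, because $h_{f,\phi}$, hence $Dh_{f,\phi}$, varies continuously in $C^{2}$ by Theorem~\ref{thm.oldstuff}(5), the branches $f_{j}$ vary continuously in $C^{2}$ by the Local Differentiability of inverse branches lemma, and $\phi,D\phi$ vary continuously; (b) consequently the operators $\Psi_{f,\phi}$ have uniformly bounded norm for $(f,\phi)$ near $(f_{0},\phi_{0})$, so $\{\Psi_{f,\phi}(H):\|H\|_{2}\le1,\ (f,\phi)\ \text{near}\ (f_{0},\phi_{0})\}$ is bounded in $C^{1}(M,\R)$, hence relatively compact in $C^{0}(M,\R)$ by Arzel\`a--Ascoli. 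Combining (a) and (b) with the weak$^{*}$ continuity of $\nu_{f,\phi}$ (Theorem~\ref{thm.oldstuff}(6)), which by (b) is uniform on the relevant compact set of test functions, and with the continuity of $\lambda_{f,\phi}=e^{\Ptop(f,\phi)}$, which stays bounded away from $0$, a routine estimate then gives $\sup_{\|H\|_{2}\le1}\big|D_{f}F(f,\phi)\cdot H-D_{f}F(f_{0},\phi_{0})\cdot H\big|\to0$ as $(f,\phi)\to(f_{0},\phi_{0})$.

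With both partial derivatives continuous on $\cF^{2}\times\cW^{2}$, the criterion quoted at the outset yields that $F=\Ptop$ is Fr\'echet differentiable at every point, with $DF(f,\phi)=(D_{f}F(f,\phi),D_{\phi}F(f,\phi))$, and that $DF$ is continuous; that is, $\Ptop:\cF^{2}\times\cW^{2}\to\re$ is differentiable, as claimed. The one step I expect to require care is the continuity of $D_{f}F$: the conformal measures $\nu_{f,\phi}$ are only known to converge in the weak$^{*}$ topology, and upgrading this to continuity of $D_{f}F$ in the operator norm is exactly what the compact embedding $C^{2}\hookrightarrow C^{1}\hookrightarrow C^{0}$ --- applied to the images of the unit ball under the operators $\Psi_{f,\phi}$ --- provides.
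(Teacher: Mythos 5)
Your proposal is correct and takes the same route as the paper: both reduce joint $C^{1}$-differentiability of $\Ptop$ to the existence (from Corollary~\ref{cordifpot} and Lemma~\ref{lepress}) and joint continuity of the two partial derivatives, invoking the standard criterion for $C^{1}$ maps between Banach spaces. Where the paper's proof simply asserts in one line that these partial derivatives are jointly continuous, you actually verify it from the explicit derivative formulas together with the weak$^{*}$-continuity of $\mu_{f,\phi}$ and $\nu_{f,\phi}$, the $C^{2}$-continuity of $h_{f,\phi}$ and of the inverse branches, and the compact embedding $C^{2}\hookrightarrow C^{0}$ via Arzel\`a--Ascoli --- a sound and welcome filling-in of a step the paper leaves to the reader.
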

\begin{proof}
Just note that the derivatives calculated in Corollary \ref{cordifpot} and in the Lemma above are partial derivatives
for the function $P_{top} (f, \phi)$, and jointly continuous with respect to both variables $f$ and $\phi$.

\end{proof}

\subsection{Differentiability of maximal entropy measure with respect to dynamics}\label{subsec:difdynamics2}

Through this section we deal with maximal entropy measures and henceforth we fix the potential
$\phi \equiv 0$ and  fix $f_0$ local diffeomorphism satisfying (H1) and (H2). For that reason we shall omit the
dependence on $\phi$.
Recall that for every $C^1$ local diffeomorphism $f$ satisfying (H1) and (H2) we have maximal eigenvalue
$\la_{f} = \deg(f)$, eigenfunction $h_{f} = \frac{d\mu_f}{d\nu_f} = 1$ and conformal measure $\nu_{f} = \mu_{f}$
for the Perron-Frobenius operator. In particular, the topological entropy $h_{\text{top}}(f)=\log \deg(f)$ is constant.

Let $r\in \mathbb N_0$ and $\al\in [0,1)$ be such that $r+\al>0$ and $f\in \cF^{r+\al}$ be given. It follows from  Theorems~\ref{t.gap}
and ~\ref{t.gapCr} that all transfer operators $\mathcal{L}_{f }: C^{k+\al}(M, \R) \to C^{k+\al}(M, \R)$ have
the spectral gap property for $k+\al \in \{\al, 1+\al,\dots, r+\al\} \cap \mathbb R^*_+ $, provided that $f$ is
sufficiently $C^{r+\al}$-close to $f_0$.
In consequence, it is not hard to check that if $E^{k+\al}_{0,f}=\{\hat g \in C^{k+\al}(M, \R) :  \int \hat g \,d\nu_f =0\}$
then $C^{k+\al}(M, \R) = \{\ell \, h_f : \ell\in \mathbb R\} \oplus E^{k+\al}_{0,f}$ is a
$\tilde \cL_{f,\phi}$-invariant decomposition in $C^{k+\al}(M, \R)$. Furthermore there are constants $C_{f,k+\al}>0$
and $\tau_{f,k+\al}\in(0,1)$ such that for all  $\hat g\in  E^{k+\al}_{0,f}$ it follows:
$$
\|\tilde\cL_{f}^n \hat g \|_k \le C_{f,k+\al} \tau_{f,k+\al}^n \|\hat g\|_k,
	\quad \text{ for all } n\ge 1.
$$
Set $C_f=\max\{ C_{f,k+\al} : k\le r \}$, $\tau_f=\max\{ \tau_{f,k+\al} : k\le r \}$. We also set $c_{f}$ to be a bound
for the norm of of $D_f\tilde\cL_f$. Notice that
these constants can be taken uniform in a neighborhood of $f_0$.
For that reason, we shall omit the dependence of $C_f$, $c_f$, and $\tau_f$ on $f$.
Consider also the spectral projections
$P^{k+\al}_{0,f}: C^{k+\al}(M,\mathbb R) \to E^{k+\al}_{0,f}$ given by
$P^{k+\al}_{0,f}(g)= g - \int g \, d\nu_f$. In what follows,  when no confusion is possible we shall omit
the dependence on $f$ in the corresponding subspaces and spectral projections.

\begin{theorem} \label{thm:diff.max}
The map $\mathcal{F}^{2} \ni f \mapsto \mu_{f} \in (C^{2}(M, \R))^{\ast}$ is
differentiable. In particular, for any $g \in C^{2}(M, \R)$ the map $\mathcal{F}^{2} \ni f \mapsto \int g \; d\mu_{f}$ is
$C^1$-differentiable and its derivative acting in  $H \in \Gamma^{2}_{f_{0}}$ is given by
$$
D_{f}\mu_{f}(g)_{|f_{0}} \cdot H
	= \sum_{i = 0}^{\infty}\int D_{f}\tilde{\mathcal{L}}_{f}(\,\tilde \cL_{f_{0}}^{i}(P_{0}(g))\, ) \cdot H \,d\mu_{f_{0}}.$$
\end{theorem}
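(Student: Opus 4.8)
The plan is to mimic the strategy already used for the potential derivatives (Propositions~\ref{raio}--\ref{prop:dif.conformal.potential}) and for Lemma~\ref{lepress}: realize $\mu_f$ as a uniform limit of differentiable objects with uniformly convergent derivatives. Since $\phi\equiv0$ gives $\lambda_f=\deg(f)$, $h_f\equiv1$ and $\nu_f=\mu_f$, for fixed $g\in C^2(M,\mathbb R)$ the natural approximating functionals on a small neighborhood $W$ of $f_0$ are
$$
F_n(f)=\int \tilde{\mathcal L}_f^{\,n}(g)\,d\mu_{f_0},
$$
which by Theorem~\ref{t.gap} converge uniformly on $W$ to $\int g\,d\nu_f=\int g\,d\mu_f$ (using $\int d\mu_f=1$ and that $\tilde{\mathcal L}_f^n g\to \int g\,d\nu_f$ in $C^\al$, hence in $C^0$). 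By Proposition~\ref{propdif} and Proposition~\ref{prop:diff.transfer.f}, $f\mapsto \tilde{\mathcal L}_f^{\,n}(g)\in C^1(M,\mathbb R)$ is differentiable, so each $F_n$ is differentiable, and the chain rule (Proposition~\ref{prop:diff.transfer.f}(i), with the normalizing factor $\deg(f)$ locally constant so it contributes nothing) gives
$$
D_fF_n(f)_{|f_0}\cdot H=\sum_{i=1}^{n}\int \tilde{\mathcal L}_{f_0}^{\,i-1}\big(D_f\tilde{\mathcal L}_f(\tilde{\mathcal L}_{f_0}^{\,n-i}(g))_{|f_0}\cdot H\big)\,d\mu_{f_0}.
$$

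The next step is to show this sum converges uniformly (in $f_0$ in a neighborhood, and in $H$ with $\|H\|_2\le1$) as $n\to\infty$. First reindex $j=n-i$ to write the summand as $\int \tilde{\mathcal L}_{f_0}^{\,n-1-j}\big(D_f\tilde{\mathcal L}_f(\tilde{\mathcal L}_{f_0}^{\,j}(g))_{|f_0}\cdot H\big)\,d\mu_{f_0}$. The key observation is that $D_f\tilde{\mathcal L}_f(\,\tilde{\mathcal L}_{f_0}^{\,j}(P_0(g))\,)_{|f_0}\cdot H$ has zero $\nu_{f_0}$-average: indeed $\int D_f\tilde{\mathcal L}_f(\psi)_{|f_0}\cdot H\,d\mu_{f_0}=D_f\big(\int\tilde{\mathcal L}_f(\psi)\,d\mu_{f_0}\big)$ evaluated appropriately — more precisely one uses $\int \tilde{\mathcal L}_f(\psi)\,d\nu_{f_0}$ together with the fact that for $f=f_0$ one has $\tilde{\mathcal L}_{f_0}^*\nu_{f_0}=\nu_{f_0}$, so differentiating $f\mapsto \int\tilde{\mathcal L}_f(\tilde{\mathcal L}_{f_0}^{j}(P_0 g))\,d\mu_{f_0}$ and using $\int P_0g\,d\mu_{f_0}=0$ shows the relevant terms lie in $E_0$. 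Hence $\tilde{\mathcal L}_{f_0}^{\,n-1-j}$ applied to such a term decays like $C\tau^{n-1-j}$ by Theorem~\ref{t.gap}, while $\|D_f\tilde{\mathcal L}_f(\tilde{\mathcal L}_{f_0}^{\,j}(g))_{|f_0}\cdot H\|_0\le c\,\|\tilde{\mathcal L}_{f_0}^{\,j}(g)\|_1\,\|H\|_1\le c(C\tau^j\|P_0 g\|_1+\|{\textstyle\int}g\,d\nu_{f_0}\|)\|H\|_1$ by Proposition~\ref{prop:diff.transfer.f}(ii) and Theorem~\ref{t.gapCr}. Replacing $g$ by $P_0(g)$ in the argument (the constant part of $g$ contributes $D_f\tilde{\mathcal L}_f(1)\cdot H\equiv0$ by Proposition~\ref{prop:diff.transfer.f}(iv)), the double estimate gives geometric control of the tail, so $D_fF_n(f_0)\cdot H$ converges uniformly to $\sum_{i=0}^{\infty}\int D_f\tilde{\mathcal L}_f(\tilde{\mathcal L}_{f_0}^{\,i}(P_0(g)))_{|f_0}\cdot H\,d\mu_{f_0}$.

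Having uniform convergence of $F_n$ together with uniform convergence of $DF_n$, a standard theorem on differentiation of limits yields that $f\mapsto\int g\,d\mu_f$ is differentiable with the stated derivative, and that this derivative depends continuously on $f_0$ (the estimates being uniform), hence it is genuinely $C^1$. Linearity and boundedness of $g\mapsto D_f\mu_f(g)_{|f_0}\cdot H$ in $g\in C^2(M,\mathbb R)$ — clear from Proposition~\ref{prop:diff.transfer.f}(iii) and the series bound above — then upgrade pointwise differentiability to differentiability of $\mathcal F^2\ni f\mapsto\mu_f\in(C^2(M,\mathbb R))^*$. The main obstacle I anticipate is the bookkeeping in the zero-average step: one must carefully justify that the terms fed into the high powers $\tilde{\mathcal L}_{f_0}^{\,n-i}$ lie in $E_0$ (or differ from such terms by something controllably small), so that the spectral gap of Theorem~\ref{t.gap}/\ref{t.gapCr} can be invoked; this is exactly the mechanism that already appears in Propositions~\ref{raio} and \ref{pontofixo}, and the computation here is the dynamical analogue, with $D_f\tilde{\mathcal L}_f$ playing the role that multiplication by $H$ played in the potential case.
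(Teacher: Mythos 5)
Your overall architecture matches the paper's proof: you use the same functionals $F_n(f)=\int\tilde{\mathcal L}_f^n(g)\,d\mu_{f_0}$, the same chain-rule formula, kill the constant part of $g$ via $P_0$ together with $D_f\tilde{\mathcal L}_f(1)\equiv 0$, and aim for a double geometric estimate on $DF_n$. However, the central claim you flag as your ``key observation'' is false, and that is exactly the step the paper handles differently.

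You assert that $D_f\tilde{\mathcal L}_f\big(\tilde{\mathcal L}_{f_0}^j(P_0 g)\big)_{|f_0}\cdot H$ has zero $\nu_{f_0}$-average, arguing that $\int\tilde{\mathcal L}_{f_0}(\psi)\,d\nu_{f_0}=0$ when $\psi\in E_0$ and then ``differentiating.'' But the fact that $f\mapsto\int\tilde{\mathcal L}_f(\psi)\,d\mu_{f_0}$ vanishes at $f=f_0$ says nothing about its derivative there. Concretely, with $\phi\equiv 0$ one computes from the inverse-branch formula that
$D_f\tilde{\mathcal L}_f(\psi)_{|f_0}\cdot H = -\tilde{\mathcal L}_{f_0}\big(D\psi\cdot (Df_0)^{-1}\cdot H\big)$,
so
$\int D_f\tilde{\mathcal L}_f(\psi)_{|f_0}\cdot H\,d\mu_{f_0} = -\int D\psi\cdot (Df_0)^{-1}\cdot H\,d\mu_{f_0}$,
which is an integration-by-parts pairing that has no reason to vanish for a mean-zero $\psi$. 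So $\tilde{\mathcal L}_{f_0}^{\,n-1-j}$ applied to this term does \emph{not} decay on the strength of Theorem~\ref{t.gap}, and the second geometric factor in your double estimate is unjustified.

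The paper obtains that second factor by a dual argument rather than a zero-average argument. In the chain-rule sum one does not push $\tilde{\mathcal L}_{f_0}^{\,i-1}$ onto the $D_f\tilde{\mathcal L}_f$-term; instead one writes
$\int\tilde{\mathcal L}_{f}^{\,i-1}(\xi)\,d\mu_{f_0}=\int\xi\,d\big(\tilde{\mathcal L}_{f}^{*(i-1)}\mu_{f_0}\big)$
and invokes Proposition~\ref{prop:dual}, which gives $\big|\int\xi\,d(\tilde{\mathcal L}_f^{*(i-1)}\mu_{f_0})-\int\xi\,d\mu_f\big|\le C\tau^{i-1}\|\xi\|_\alpha$. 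Combined with $\|\tilde{\mathcal L}_f^{\,n-i}(P_0 g)\|_2\le C\tau^{n-i}\|g\|_2$ and Proposition~\ref{prop:diff.transfer.f}(ii), this produces the $\tau^{i-1}\cdot\tau^{n-i}$ decay and the bound $4cC^2(n-1)\tau^{n-1}\|g\|_2\|H\|_2$ on $|DF_n(f)\cdot H-\sum_{i}\int D_f\tilde{\mathcal L}_f(\tilde{\mathcal L}_f^{\,n-i}(P_0 g))\cdot H\,d\mu_f|$. Absolute convergence of the limit series needs only the single decay coming from $\tilde{\mathcal L}_{f_0}^{\,i}(P_0 g)$ and Proposition~\ref{prop:diff.transfer.f}(ii); no zero-average property of $D_f\tilde{\mathcal L}_f$ is used or needed. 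Replacing your ``zero-average'' step by this dual/Proposition~\ref{prop:dual} argument repairs the proof and recovers the paper's estimates.
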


\begin{proof}
Let $g \in C^{2}(M, \R)$ and $f_{0} \in \mathcal{F}^{2}$ be fixed. We define a sequence of
maps $F_{n}: \mathcal{F}^{2} \to \mathbb R$ given by $F_{n}(f ) = \int \tilde{\mathcal{L}}_{f}^{n}(g) \, d\mu_{f_{0}}$ and notice that $F_{n}(f)$ is convergent to $\int g \,d\mu_f$, whereas the convergence is uniform
in a sufficiently small neighborhood of $f_0$ and for $g$ in the unit sphere of $C^{2}(M,\R)$,
as a consequence of the estimates in Proposition~\ref{prop:dual}. Indeed, for the
potential $\phi\equiv 0$ one has $\mu_{f,\phi}= \nu_{f,\phi}$ and $h_{f,\phi}=1$, and there are
constants $C>0$ and $\tau\in (0,1)$ (uniform in a neighborhood of $(f,\phi)$, such that for every
probability measure $\xi\in \cM(M)$
\begin{equation*}
\left|  \int \tilde{\cL}_{f,\phi}^n \varphi \; d \xi - \int \varphi \, d\mu_{f,\phi}  \right|
	\leq C \tau^n \|\varphi\|_\alpha.
\end{equation*}
 for every $\varphi\in C^\al(M,\mathbb R)$ and $n\ge 1$.
Moreover, if $H \in \Gamma^{2}_{f}$ then
\begin{align*}
DF_{n}(f) \cdot H 
                        & =  \sum_{i = 1}^{n - 1}\int \tilde{\mathcal{L}}_{f}^{i - 1}(D_{f}\tilde{\mathcal{L}}_{f}
                        (\tilde{\mathcal{L}}_{f}^{n - i}(g)) \; \cdot H) \; d\mu_{f_{0}} \\
                        & =   \sum_{i = 1}^{n - 1}\int \tilde{\mathcal{L}}_{f}^{i - 1}(D_{f}\tilde{\mathcal{L}}_{f}
                        \left( \int g \, d\mu_{f} + \tilde{\mathcal{L}}_{f}^{n - i}(P_{0}(g)) \right) \; \cdot  H) \; d\mu_{f_{0}} \\
                        & =   \sum_{i = 1}^{n - 1}\int \tilde{\mathcal{L}}_{f}^{i - 1}(D_{f}\tilde{\mathcal{L}}_{f}
                        (\tilde{\mathcal{L}}_{f}^{n - i}(P_{0}(g))) \; \cdot  H) \; d\mu_{f_{0}}.
\end{align*}
On the other hand, since we assumed $\phi\equiv 0$ then $\mu_{f}=\nu_f$ and $\cL_{f,\phi}^*\mu_f=\mu_f$.
Thus,
$$
\sum_{i = 1}^{n - 1}\int D_{f}\tilde{\mathcal{L}}_{f}(\tilde{\mathcal{L}}_{f}^{n - i}(P_0(g)) \; \cdot  H \; d\mu_{f}
= \sum_{i = 0}^{n - 1}\int D_{f}\tilde{\mathcal{L}}_{f}(\tilde{{\mathcal{L}}_{f}^{i}}(P_0(g))\; \cdot  H \; d\mu_{f}
$$
and
\begin{align*}
\sum_{i = 0}^{n - 1}|\int D_{f}\tilde{\mathcal{L}}_{f}(\tilde{\mathcal{L}}_{f}^{i}(P_{0 , f}(g)))\; \cdot  H \; d\mu_{f}|
	& \leq  \sum_{i = 0}^{n - 1}\| D_{f}\tilde{\mathcal{L}}_{f}(\tilde{\mathcal{L}}_{f}^{i}(P_{0 , f}(g)))_{f}\; \cdot H\|_{0} \\
         & \leq  \sum_{i = 0}^{n - 1}\|\tilde{\mathcal{L}}_{f}^{i}(P_{0 , f}(g))\|_{1} \cdot \|H\|_{1}\cdot c \\
         & \leq  \sum_{i = 0}^{n -1} C\tau^{i} \cdot 2\cdot \|g\|_{1} \cdot c \cdot \|H\|_{1},
\end{align*}
that is bounded from above by $\frac{C}{1 - \tau}\cdot 2\|g\|_{1} c \|H\|_{1}$.
So the previous upper bound
is uniform for $g$ in the unit sphere of $C^{2}(M,\R)$ and $H$ in the unit sphere of $\Gamma^{2}_{f}$. This implies that the limit
$$
\ds\lim_{n \to +\infty}\sum_{i = 1}^{n - 1}
	\int D_{f}\tilde{\mathcal{L}}_{f}(\tilde{\mathcal{L}}_{f}^{n - i}(P_{0}(g))) \cdot  H \; d\mu_{f},
$$
does exist and is uniform with respect to the dynamics, $g$ in the unit sphere of $C^{2}(M,\R)$ and $H$ in the unit sphere of $\Gamma^{2}_{f}$.
We proceed and estimate
\begin{align*}
|& D  F_{n}(f) \cdot H
	 - \sum_{i = 1}^{n - 1}\int D_{f}\tilde{\mathcal{L}}_{f}(\tilde{\mathcal{L}}_{f}^{n - i}(P_0(g)))
	\cdot  H \, d\mu_{f}|  \\
	& \le  \sum_{i = 1}^{n - 1}
	\left|
	\int \tilde{\mathcal{L}}_{f}^{i - 1}(D_{f}\tilde{\mathcal{L}}_{f}(\tilde{\mathcal{L}}_{f}^{n - i}
	(P_0(g))) \cdot  H) \, d\mu_{f_{0}}
	\!-\! \int D_{f}\tilde{\mathcal{L}}_{f}(\tilde{\mathcal{L}}_{f}^{n - i}(P_0(g)))  \cdot  H \,
	d\mu_{f}
	\right| \\
	& = \sum_{i = 1}^{n - 1}
	\left|
	\int D_{f}\tilde{\mathcal{L}}_{f}(\tilde{\mathcal{L}}_{f}^{n - i}(P_0(g))) \cdot  H \, d\tilde{\mathcal{L}}_{f}^{\ast i - 1}
	(\mu_{f_{0}})
	\!-\! \int D_{f}\tilde{\mathcal{L}}_{f}(\tilde{\mathcal{L}}_{f}^{n - i}(P_0(g))) \cdot  H \, d\mu_{f}
	\right| \\
	& \le
	\sum_{i = 1}^{n - 1}C \tau^{i - 1}  2\|D_{f}\tilde{\mathcal{L}}_{f}(\tilde{\mathcal{L}}_{f}^{n - i}
	(P_0(g)))  H\|_{1}
	 \leq \sum_{i = 1}^{n - 1} C \tau^{i - 1} \cdot 2  \|\tilde{\mathcal{L}}_{f}^{n - i}(P_0(g))\|_{2}
	 c  \|H\|_{2} \\
	&  \leq \sum_{i = 1}^{n - 1} C \tau^{i - 1} \cdot 2 \cdot C \tau^{n - i} \cdot 2\cdot \|g\|_{2} \cdot
	c \|H\|_{2}
	 \leq  4 c C^2 \, (n - 1) \tau^{n - 1}  \cdot\|g\|_{2} \cdot \|H\|_{2}	
\end{align*}
which converges to zero. Thus
$
\lim DF_{n}(f) \cdot H
	= \sum_{i = 1}^{\infty} \int D_{f}\tilde{\mathcal{L}}_{f}(\tilde{\mathcal{L}}_{f}^{n - i}(P_0(g))) \cdot  H \,d\mu_{f}
$
uniformly with respect to the dynamics $f$, and $g$ in the unit sphere of $C^{2}(M,\R)$ and $H$ in the unit sphere of $\Gamma^{2}_{f}$. One can deduce that
for all $f$ close to $f_0$ the sequence $F_{n}(f)$ converges uniformly to $\int g \,d\mu_{f}$ and the sequence $DF_{n}$ is also uniformly convergent to the continuous linear functional defined above. We conclude that
$$
D_{f}\mu_{f}(g)_{|f_{0}} \cdot H
	= \sum_{i = 0}^{\infty}\int D_{f}\tilde{\mathcal{L}}_{f}(\tilde{\mathcal{L}}_{f_{0}}^{i}(P_{0}(g))) \cdot H d\mu_{f_{0}}.
$$
This finishes the proof of the theorem.
\end{proof}

Since previous results contain the statements of Theorems~\ref{thm:B} and ~\ref{thm:C}, their proofs are now complete.

\section{Stability and differentiability in dynamical systems}

In this section we prove that many dynamical objects vary continuously or differentially with respect
to perturbations of the potential and dynamics. We organize this into subsections for the readers convenience.

\subsection{Smoothness of the correlation function}\label{subsec:smooth.cor}

Our purpose here is to prove Corollary~\ref{cor:smooth.correlation} on the smoothness of the
correlation function $(f,\phi) \mapsto C_{\varphi , \psi}(f, \phi, n) $ and its asymptotic behavior when
$n$ tends to infinite.
Let $\cF^{2}$ an open set of local diffeomorphisms, $\cW^{\al}$ be an open set of potentials
as introduced before. Consider the observables $\varphi,\psi\in C^\al(M,\mathbb R)$ and $n\in \mathbb N$.
First we will prove that the map $(f,\phi) \mapsto C_{\varphi, \psi} (f , \phi , n)$  is analytic in $\phi$ and
differentiable in $f$ whenever $\phi\equiv 0$. Recall one can write
$$
C_{\varphi , \psi}(f, \phi, n)
= \int\varphi\Big[\tilde{\mathcal{L}}^{n}_{f ,\phi}(\psi h_{f , \phi}) - h_{f , \phi} \int\psi d\mu_{f , \phi} \Big] d\nu_{f , \phi}.
$$
This expression varies analytically with $\phi$ since it is composition of analytic functions.
As for the differentiability of the correlation function with respect to $f$, if $\phi\equiv 0$ then clearly
$\mu_f=\nu_f$, $h_{f}=1$. Moreover,  for $f_0\in \cF^{2}$ and $H\in \Gamma^{2}_{f_{0}}$
\begin{align*}
D_{f}C_{\varphi , \psi}(f , 0 , n)_{|f_{0}} \cdot H
	& =  [D_{f}\mu_{f|f_{0}} \cdot H]\Big( \varphi( \tilde{\mathcal{L}}^{n}_{f_{0}}(\psi) - \int\psi d\mu_{f_{0}}  ) \Big) \\
	& + \int \varphi \cdot \Big(D_{f}\tilde{\mathcal{L}}_{f}^{n}(\psi)_{|f_{0}}\cdot H -  [D_{f}\mu_{f|f_{0}} \cdot H](\psi) \Big)  d\mu_{f_{0}} \\
	& = \sum_{i = 0}^{\infty}\int D_{f}\tilde{\mathcal{L}}_{f}\Big(\,\tilde \cL_{f_{0}}^{i}\big(P_{0}(\varphi(
\tilde{\mathcal{L}}^{n}_{f_{0}}(\psi) - \int\psi d\mu_{f_{0}}  ))\, \big) \Big)_{|f_{0}} \cdot H \,d\mu_{f_{0}} \\
	&+ \sum_{i = 1}^{n}\int \varphi\tilde{\mathcal{L}}^{i - 1}_{f_{0}} \big( D_{f}\tilde{\mathcal{L}}_{f}(\tilde{\mathcal{L}}^{n - i}_{f_{0}}\psi)_{f_{0}} \cdot H \big)d \mu_{f_{0}} \\
	& - \sum_{i = 0}^{\infty}\int D_{f}\tilde{\mathcal{L}}_{f}\Big(\,\tilde \cL_{f_{0}}^{i}\big(P_{0}(\psi) \big) \Big)_{f_{0}} \cdot H d\mu_{f_{0}} \cdot  \int\varphi d\mu_{f_{0}}.
\end{align*}
Hence we deduce
\begin{align}
D_{f}C_{\varphi , \psi}(f , 0 , n)_{|f_{0}} \cdot H
	& =   \sum_{i = 0}^{\infty}\int D_{f}\tilde{\mathcal{L}}_{f}\Big(\,\tilde \cL_{f_{0}}^{i}\big(P_{0}(\varphi( \tilde{\mathcal{L}}^{n}_{f_{0}}(\psi) - \int\psi d\mu_{f_{0}}  ))\, \big) \Big)_{|f_{0}} \cdot H \,d\mu_{f_{0}}  \nonumber \\ \nonumber
	&+ \sum_{i = 0}^{n - 1}\int \varphi\tilde{\mathcal{L}}^{n - i - 1}_{f_{0}} \big( D_{f}\tilde{\mathcal{L}}_{f}(\tilde{\mathcal{L}}^{i}_{f_{0}}\psi)_{f_{0}} \cdot H \big)d \mu_{f_{0}} \\  \nonumber
	& - \sum_{i = 0}^{\infty}\int D_{f}\tilde{\mathcal{L}}_{f}\Big(\,\tilde \cL_{f_{0}}^{i}\big(P_{0}(\psi) \big) \Big)_{f_{0}} \cdot H d\mu_{f_{0}} \cdot  \int\varphi d\mu_{f_{0}}.  \nonumber
\end{align}
Consider the series
 $$
 A_{n}(f_{0} , H) = \sum_{i = 0}^{\infty}\int D_{f}\tilde{\mathcal{L}}_{f}\Big(\,\tilde \cL_{f_{0}}^{i}\big(P_{0}(\varphi( \tilde{\mathcal{L}}^{n}_{f_{0}}(\psi) - \int\psi d\mu_{f_{0}}  ))\, \big) \Big)_{|f_{0}} \cdot H \,d\mu_{f_{0}}
 $$
and
\begin{align*}
B_{n}(f_{0}, H)
	& = \sum_{i = 0}^{n - 1}\int \varphi\tilde{\mathcal{L}}^{n - i - 1}_{f_{0}} \big( D_{f}\tilde{\mathcal{L}}_{f}(\tilde{\mathcal{L}}^{i}_{f_{0}}\psi)_{f_{0}} \cdot H \big)d \mu_{f_{0}}  \\
	& - \sum_{i = 0}^{n- 1}\int D_{f}\tilde{\mathcal{L}}_{f}\Big(\,\tilde \cL_{f_{0}}^{i}\big(P_{0}(\psi) \big) \Big)_{f_{0}} \cdot H d\mu_{f_{0}} \cdot  \int\varphi d\mu_{f_{0}}.
\end{align*}
We will prove that both expressions $A_{n}(f_{0}, H)$ and $B_{n}(f_{0}, H)$ converge uniformly to zero in
$\{H \in \Gamma^{2}_{f_{0}} : \|H\|_{2} \leq 1 \}$ and for all $f$ close enough to $f_{0}$.
In fact, on the one hand
\begin{align*}
 |A_{n}(f_{0} , H)|
 	& \leq \sum_{i = 0}^{\infty}c \cdot \|H\|_{2} \cdot \|\tilde \cL_{f_{0}}^{i}\big(P_{0}(\varphi( \tilde{\mathcal{L}}^{n}_{f_{0}}(\psi) - \int\psi d\mu_{f_{0}}  ))\, \big)\|_{2} \\
	& \leq \sum_{i = 0}^{\infty} c \cdot \|H\|_{2} C \tau^{i} \cdot \|P_{0}\|_{2} \|\varphi\|_{2} C \tau^{n}\|\psi - \int\psi d\mu_{f_{0}}\|_{2}
\end{align*}
which is uniformly convergent to zero for $\{H \in \Gamma^{2}_{f_{0}} : \|H\|_{2} \leq 1 \}$ and $f$ close
to $f_{0}$.
On the other hand, $|B_{n}(f_{0} , H)|$ is bounded from above by
\begin{align*}
&  \sum_{i = 0}^{n-1}\|\varphi\|_{2} \cdot \|\tilde{\mathcal{L}}^{n - i - 1}_{f_{0}} \big( D_{f}\tilde{\mathcal{L}}_{f}(\tilde{\mathcal{L}}^{i}_{f_{0}}\psi)_{f_{0}} \cdot H \big) - \int D_{f}\tilde{\mathcal{L}}_{f}\Big(\,\tilde \cL_{f_{0}}^{i}\big(P_{0}(\psi) \big) \Big)_{f_{0}} \cdot H d\mu_{f_{0}}\|_{0}\\
& \le \sum_{i = 0}^{n-1} C \tau^{n -i - 1} \, \|\varphi\|_{2} \, \|D_{f}\tilde{\mathcal{L}}_{f}(\tilde{\mathcal{L}}^{i}_{f_{0}}\psi)_{f_{0}} \cdot H - \int D_{f}\tilde{\mathcal{L}}_{f}\Big(\,\tilde \cL_{f_{0}}^{i}\big(P_{0}(\psi) \big) \Big)_{f_{0}} \cdot H d\mu_{f_{0}}\|_{2}\\
& \leq \sum_{i = 0}^{n-1}\|\varphi\|_{2} \cdot C^{2}\tau^{n - 1} 4 \cdot c \cdot \|\psi\|_{2} \|H\|_{2}
\end{align*}
which is again uniformly convergent to zero as $n$ goes to infinity. This proves not only that  the correlation function
for the maximal entropy measure is differentiable in $f$ but also that $D_{f}C_{\varphi , \psi}(f , 0 , n)_{|f_{0}}$ is uniformly
convergent to zero as $n$ goes to infinity. This finishes the proof of Corollary~\ref{cor:smooth.correlation}.

\subsection{Stability of the Central Limit Theorem}\label{sec:CLT}

Our purpose here is to prove Theorem~\ref{thm:CLT}.
Let $\cW^{2}$ be an open set of
$C^{2}$ potentials and $\cF^{2}$ an open set of $C^{2}$ local diffeomorphisms  satisfying the conditions (H1), (H2) and (P') with uniform constants as before. For any $\psi\in C^{\al}(M,\mathbb R)$ consider the mean and variance given, respectively,
by
\begin{equation*}\label{eq:mean.variance}
m_{f,\phi}=\int\psi \;d\mu_{f, \phi}
	\quad\text{and}\quad
\sigma_{f,\phi}^{2}
		= \int \tilde \psi^{2}\;d\mu_{f,\phi} + 2\sum_{j = 1}^{\infty}\int \tilde \psi(\tilde \psi\circ f^{j}) \;d\mu_{f,\phi},
\end{equation*}
where $\tilde\psi=\psi-m_{f,\phi}$. We omit the dependence of $m_{f,\phi}$ and $\sigma_{f,\phi}^{2}$
on  $\psi$ for notational simplicity. By invariance of the measure $\mu_{f,\phi}$ we can also write
$$
\sigma_{f,\phi}^{2}
	= \lim_{n\to\infty} \frac1n \int \Big( \sum_{j=0}^{n-1}  \tilde \psi\circ f^j \Big)^2 \; d \mu_{f,\phi}
	\ge 0.
$$
Moreover, it follows from the exponential decay of correlations that the  Central Limit Theorem holds
(see e.g.~\cite[Corollary~2]{CV13}). So, either $\sigma_{f,\phi}^{2}=0$  and consequently $\psi = u\circ f - u+\int \psi \,d\mu_{f,\phi}$ for some $u \in L^{2}(X , \mathcal{F} , \mu_{f,\phi})$ or the random variables
$\frac1{\sqrt{n}} \sum_{j = 0}^{n - 1} \psi \circ f^{j} $ converge in distribution to the Gaussian
$\mathcal{N}(m_{f,\phi} , \sigma^2_{f,\phi})$.

First we will prove that the functions $(f,\phi) \mapsto m_{f,\phi}$ and $(f,\phi) \mapsto \sigma^2_{f,\phi}$ are analytic on  $\phi$ and that are differentiable on $f$ in the case of the maximal entropy measure, provided that $\psi$ is smooth enough.
According to Remark~\ref{complex}  the map $\phi\mapsto \cL_{f,\phi}$ is analytic whenever it acts on the space of
complex valued observables.
Thus, if $T(z) = \frac{\lambda_{f,\phi + iz\psi}}{\lambda_{f , \phi}}$ it follows from the classical perturbation theory and Nagaev's method that $\sigma_{f,\phi}^{2}(\psi) = - D^{2}_{z}T(z)_{|z = 0}$ (see e.g.~\cite{Sar12} for details) and the dependence is analytic in $\phi$.
In fact one can check
\begin{align*}
\sigma_{f,\phi}^{2}(\psi) =
	&\Big(\int{\psi} \;d\mu_{f , \phi}\Big)^{2}
	+ \int(I - \tilde{\mathcal{L}}_{f , \phi\;|E_{0}})^{-1}
		\Big({\psi} h_{f , \phi} - h_{f , \phi}\, \int{\psi} d\mu_{f , \phi}\Big)  {\psi} \; d\nu_{f , \phi} \\
	& + \int{\psi} \;d\mu_{f , \phi} \; \int (I - \tilde{\mathcal{L}}_{f , \phi\;|E_{0}})^{-1}(1 - h_{f , \phi}) \, {\psi} \;d\nu_{f , \phi}\\
	&= \Big(\int{\psi} \;d\mu_{f , \phi}\Big)^{2}+ \int \sum_{k=0}^{\infty} \tilde \cL^k_{f,\phi \mid_{E_0}}
		\Big({\psi} h_{f , \phi} - h_{f , \phi}\, \int{\psi} d\mu_{f , \phi}\Big)  {\psi} \; d\nu_{f , \phi} \\
	&+ \int{\psi} \;d\mu_{f , \phi} \; \int \sum_{k=0}^{\infty} \tilde \cL^k_{f,\phi \mid_{E_0}} (1 - h_{f , \phi}) \, {\psi} \;d\nu_{f , \phi}
\end{align*}
Hence, for any fixed $\psi\in C^{1+\al}(M,\mathbb R)$ the variance map
$(f,\phi)\mapsto \sigma_{f,\phi}^{2}(\psi)$ is continuous since it is obtained as composition of continuous functions and $\tilde \cL^k_{f,\phi } (1 - h_{f , \phi})$ is uniformly convergent to zero in a neighborhood of $(f , \phi)$.
We obtain further regularity in the case of maximal entropy measures. More precisely,

\begin{lemma}
Let $\phi\equiv 0$ and $\psi\in C^{2}(M,\mathbb R)$ be given. Then the variance map
$
\mathcal{F}^{2} \ni  f \mapsto \sigma^2_{f , 0}(\psi)
$
$C^1$-differentiable.
\end{lemma}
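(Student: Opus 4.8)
The plan is to read off differentiability from the two results already available for maximal entropy measures: Theorem~\ref{thm:diff.max} ($f\mapsto\mu_f$ differentiable in $(C^2(M,\mathbb R))^\ast$) and Corollary~\ref{cor:smooth.correlation} ($f\mapsto C_{\psi,\psi}(f,0,n)$ differentiable with derivative decaying to zero). First I would simplify the formula for $\sigma^2_{f,\phi}(\psi)$ displayed above using $\phi\equiv 0$: then $h_f\equiv 1$, $\mu_f=\nu_f$, the third summand vanishes, and using the duality $\int\tilde{\mathcal L}_f^k(g)\,h\,d\mu_f=\int g\,(h\circ f^k)\,d\mu_f$ together with $f$-invariance of $\mu_f$ one checks that $\int\tilde{\mathcal L}_{f|E_0}^k(P_{0,f}(\psi))\,\psi\,d\mu_f=C_{\psi,\psi}(f,0,k)$, so that
$$
\sigma^2_{f,0}(\psi)=\Big(\int\psi\,d\mu_f\Big)^2+\sum_{k=0}^{\infty}C_{\psi,\psi}(f,0,k).
$$
(The same structure — a differentiable scalar plus a series of correlation functions — results from the other expression for $\sigma^2$ quoted in Theorem~\ref{thm:CLT}, since $\int\tilde\psi^2\,d\mu_f=C_{\psi,\psi}(f,0,0)$ and $\int\tilde\psi\,(\tilde\psi\circ f^n)\,d\mu_f=C_{\psi,\psi}(f,0,n)$.) Thus it suffices to establish differentiability of each summand.

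The scalar $(\int\psi\,d\mu_f)^2$ is differentiable by Theorem~\ref{thm:diff.max}, as $\psi\in C^2(M,\mathbb R)$. For the series, by Corollary~\ref{cor:smooth.correlation} each $f\mapsto C_{\psi,\psi}(f,0,k)$ is differentiable (taking $\varphi=\psi$, and noting that $\psi\in C^2$ is exactly what the proof of that corollary requires of the observables), so I would invoke the classical term-by-term differentiation theorem: it is enough to show that, in a small neighbourhood $\mathcal V$ of an arbitrary $f_0\in\mathcal F^2$, both $\sum_k C_{\psi,\psi}(f,0,k)$ and $\sum_k D_fC_{\psi,\psi}(f,0,k)$ converge uniformly. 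Uniform convergence of the first is exponential decay of correlations with constants uniform on $\mathcal V$ (Theorem~\ref{t.gapCr} and the uniformity of the spectral gap recorded in Section~\ref{subsec:difdynamics2}). Uniform convergence of the derivative series is precisely what the estimates in the proof of Corollary~\ref{cor:smooth.correlation} deliver: there $D_fC_{\psi,\psi}(f,0,k)\cdot H$ is split into $A_k(f_0,H)$ and $B_k(f_0,H)$, with $|A_k(f_0,H)|\le\operatorname{const}\cdot\tau^{k}$ and $|B_k(f_0,H)|\le\operatorname{const}\cdot k\,\tau^{k-1}$ for $\|H\|_2\le1$ and $f\in\mathcal V$; hence $\sup_{\|H\|_2\le1}\|D_fC_{\psi,\psi}(f,0,k)\|$ is summable in $k$ uniformly over $\mathcal V$. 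Consequently $f\mapsto\sigma^2_{f,0}(\psi)$ is, near $f_0$, a differentiable function plus a uniformly convergent series of differentiable functions whose derivative series converges uniformly, so it is differentiable; since $f_0$ is arbitrary the lemma follows.

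The one genuinely delicate point is the uniformity of all constants over a neighbourhood of $f_0$, which is already guaranteed by the uniform spectral gap for $\tilde{\mathcal L}_f$ on the spaces $C^{k+\al}(M,\mathbb R)$, $k\le r$, fixed in Section~\ref{subsec:difdynamics2}; the rest is bookkeeping on estimates from the proof of Corollary~\ref{cor:smooth.correlation}. If one wishes to avoid the correlation function, an equivalent route is to argue directly along the lines of Proposition~\ref{pontofixo} and Theorem~\ref{thm:diff.max}: write $\sigma^2_{f,0}(\psi)=\lim_N G_N(f)$ with $G_N(f)=(\int\psi\,d\mu_f)^2+\sum_{k=0}^{N}\int\tilde{\mathcal L}_f^{k}(P_{0,f}(\psi))\,\psi\,d\mu_f$; each $G_N$ is differentiable because only one transfer operator is differentiated per term (Proposition~\ref{prop:diff.transfer.f}, giving $D_f\tilde{\mathcal L}_f\in L(C^2,C^1)$ applied to a function fixed at $f_0$ and then paired with the fixed measure $\mu_{f_0}$), while $f\mapsto P_{0,f}(\psi)\in C^2(M,\mathbb R)$ and $f\mapsto\mu_f\in(C^2)^\ast$ are differentiable by Theorem~\ref{thm:diff.max}; and one controls $DG_N$ uniformly by inserting $\tilde{\mathcal L}_f^{\ast i}\mu_{f_0}-\mu_f\cdot(\text{scalar})$ and applying the spectral gap exactly as in those proofs.
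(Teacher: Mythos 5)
Your proposal is correct and follows essentially the same route as the paper: reduce $\sigma^2_{f,0}(\psi)$ to a series of correlation functions $C_{\psi,\psi}(f,0,n)$, differentiate term by term via Corollary~\ref{cor:smooth.correlation}, and pass the derivative through the series using the $A_n$/$B_n$ exponential bounds together with the uniform spectral gap, which is exactly what the paper's partial-sum argument with $F_k(f)=\sum_{n\le k}C_{\tilde\psi,\tilde\psi}(f,0,n)$ carries out. (Your observation that $C_{\tilde\psi,\tilde\psi}=C_{\psi,\psi}$ neatly replaces the paper's chain-rule ``freezing'' of $\tilde\psi$; and although your first displayed identity is transcribed from the paper's Nagaev display, which does not match the Green--Kubo form $\sigma^2_{f,0}(\psi)=C_{\psi,\psi}(f,0,0)+2\sum_{n\ge1}C_{\psi,\psi}(f,0,n)$ the paper actually uses in the proof, you correctly note that the differentiability argument is indifferent to which of the two expressions one starts from.)
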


\begin{proof}
Notice that we want to study the variance
$$
\sigma^{2}_{f , 0}(\psi) = \int \tilde \psi^{2}\;d\mu_{f} +2\sum_{n = 1}^{\infty}C_{\tilde \psi , \tilde \psi}(f , 0 , n)
$$
where $\tilde\psi=\tilde\psi(f)=\psi-\int \psi \;d\mu_f$, that is differentiable in $f$. So, to prove the differentiability
of the previous expression, using the chain rule, we are reduced to prove the differentiability of the map
$
f \mapsto \sum_{n = 1}^{\infty}C_{\tilde \psi , \tilde \psi}(f , 0 , n)
$
assuming that the observable $\tilde \psi$ is fixed and independent of $f$. Fix $\hat f \in \mathcal{F}^{2}$ and  proceed to consider the sequence of functions
$$
F_{k}(f) = \sum_{n= 0}^{k} C_{\tilde \psi , \tilde \psi}(f , 0 , n),
$$
which is differentiable and uniformly convergent to  $F(f)=\sum_{n= 0}^{\infty}C_{\tilde \psi , \tilde \psi}(f , 0 , n)$
in a small neighborhood of  $\hat f$. We claim that the derivatives of $F_k$ are
uniformly convergent. In fact,
$
D_{f} F_k(f) \cdot H
	= \sum_{n= 0}^{k}D_{f}C_{\tilde\psi , \tilde\psi}(f , 0 , n)\mid_{{f}} \cdot H
$
and so, under the notations of Subsection~\ref{subsec:smooth.cor},
\begin{align*}
D_{f}F_{k}(f)_{|{f}} \cdot H
	 = 2\sum_{n = 1}^{k}\Big[ & A_{n}({f} , H) + B_{n}({f} , H) \\
	 & + \sum_{i = 0}^{n- 1}\int D_{f}\tilde{\mathcal{L}}_{f}\Big(\,\tilde \cL_{{f}}^{i}\big(P_{0}(\tilde\psi) \big) \Big)_{{f}} \cdot H
	d\mu_{{f}} \cdot  \int\varphi d\mu_{{f}} \\
	 &- \sum_{i = 0}^{\infty}\int D_{f}\tilde{\mathcal{L}}_{f}\Big(\,\tilde \cL_{{f}}^{i}\big(P_{0}(\tilde\psi) \big)
	\Big)_{{f}} \cdot H d\mu_{{f}} \cdot  \int\tilde\psi d\mu_{{f}} \Big].
\end{align*}
On the one hand, $\sum_{n = 1}^{n}|A_{n}({f} , H) + B_{n}({f} , H)|$ is bounded from above by
\begin{align*}
\sum_{n = 1}^{n} &
	\Big[ \sum_{i = 0}^{\infty} c \cdot \|H\|_{2} C \tau^{i} \cdot \|P_{0}\|_{2} \|\tilde\psi\|_{2} C \tau^{n}\|\tilde\psi - \int\tilde\psi d\mu_{{f}}\|_{2}\\
	&  +
	\sum_{i = 0}^{n-1}\|\tilde\psi\|_{2} \cdot C^{2}\tau^{n - 1} 4 \cdot c \cdot \|\tilde\psi\|_{2}
	\|H\|_{2} \Big],
\end{align*}
that is summable. Then it is well defined the limit $ \sum_{n = 0}^{\infty} A_{n}({f} , H) + B_{n}({f} , H) $ and the convergence is uniform for $f$ in a small neighborhood of $\hat f$ and $\|H\|_{2} = 1$. On the other hand, in view of the differentiability of the maximal entropy measure,
\begin{align*}
\sum_{j = 1}^{n} \Big|
	&\sum_{i = 0}^{j- 1}\int D_{f}\tilde{\mathcal{L}}_{f}\Big(\,\tilde \cL_{\hat{f}}^{i}\big(P_{0}(\tilde\psi) \big) \Big)_{\hat{f}} \cdot H
	d\mu_{\hat{f}} \cdot  \int\tilde\psi d\mu_{\hat{f}} \\
	& - \sum_{i = 0}^{\infty}\int D_{f}\tilde{\mathcal{L}}_{f}\Big(\,\tilde \cL_{\hat{f}}^{i}\big(P_{0}(\tilde\psi) \big) \Big)_{\hat{f}} \cdot H
	d\mu_{\hat{f}} \cdot  \int\tilde\psi d\mu_{\hat{f}} \Big| \\
	& \leq  4 c C^{2} \, \sum_{j = 1}^{n}(j - 1) \tau^{j - 1} \cdot
	{\|\tilde\psi\|_{1} \cdot \|H\|_{1}} \cdot \int\tilde\psi
	d\mu_{\hat{f}},
\end{align*}
and so, we get the convergence of the series
\begin{align*}
\lim_{n \to \infty} \sum_{j = 0}^{n}\sum_{i = 0}^{j- 1}
	&\int D_{f}\tilde{\mathcal{L}}_{f}\Big(\,\tilde \cL_{\hat{f}}^{i}\big(P_{0}(\tilde\psi) \big) \Big)_{\hat{f}} \cdot H
d\mu_{\hat{f}} \cdot  \int\tilde\psi d\mu_{\hat{f}} \\
	& -
\sum_{i = 0}^{\infty}\int D_{f}\tilde{\mathcal{L}}_{f}\Big(\,\tilde \cL_{\hat{f}}^{i}\big(P_{0}(\tilde\psi) \big)
\Big)_{\hat{f}} \cdot H d\mu_{\hat{f}} \cdot  \int\tilde\psi d\mu_{\hat{f}}
\end{align*}
also uniform in a neighborhood of $\hat f$ and with $\|H\|_{2} = 1$. This proves that
 $D_{f}F_{k}(f)_{|\hat{f}} \cdot H$ is uniformly convergent, proving that  $F$ is differentiable. This finishes the
 proof of the lemma.
\end{proof}

Finally, if  $\sigma^2_{f,\phi}> 0$ one can use the continuity of the function $(f,\phi) \mapsto \sigma^2_{f,\phi}$ to
obtain $\mathcal U \subset \cF^{2} \times \cW^{2}$ open such that for every $(\tilde f, \tilde \phi)
\in \mathcal U$ it holds that $\sigma^2_{\tilde f,\tilde \phi}> 0$.
In consequence, if $\psi$ is not a coboundary in $L^2(\mu_{f,\phi})$ then the same property holds
for all close $\tilde f$ and $\tilde \phi$. This finishes the proof of Theorem~\ref{thm:CLT} and Corollary~\ref{cor:cohomo}.

\subsection{Differentiability of the free energy and stability  of large deviations}\label{sec:deviations}

In this section we prove the differentiability of the free energy function and deduce some further properties
for large deviations corresponding to Theorems~\ref{thm:FreeEnergy} and \ref{thm:differentiability.LDP}.

\subsubsection{Free energy function}\label{s.free.energy}

First we establish some properties of the free energy function as consequence of the spectral gap property.
Recall that an observable $\psi: M\to \mathbb R$ is \emph{cohomologous to a constant} if there exists
$A \in \mathbb R$ and an observable $\tilde \psi : M \to \mathbb R$  such that $\psi=\tilde \psi \circ f - \tilde \psi+A$.
Now we prove the following:

\begin{proposition}\label{prop:free.energy}
Let $f$ and $\phi$ be as above and satisfy assumptions (H1), (H2) and (P). Then for any
H\"older continuous observable $\psi:M\to \R$ there exists $t_{\phi,\psi}>0$ such that
for all $|t|\le t_{\phi,\psi}$ the following limit exists
$$
\cE_{f,\phi,\psi}(t)
	:=\lim_{n\to\infty} \frac1n \log \int e^{t S_n \psi} \; d\mu_{f,\phi}
	= \Ptop(f, \phi +t\psi) -\Ptop(f, \phi).
$$
Moreover, if $\psi$ is cohomologous to a constant then $t \mapsto \cE_{f,\phi,\psi}(t)$ is affine and otherwise
$t \mapsto \cE_{f,\phi,\psi}(t)$ is real analytic, strictly convex. Furthermore, if $(f, \phi)\in \cF^{2} \times
\cW^{2}$ 
then for every $t\in\mathbb (-t_{\phi,\psi}, t_{\phi,\psi})$ the function $\cF^{2} \ni f\mapsto \cE_{f,\phi,\psi}(t)$ is differentiable and $\cF^{2} \ni f\mapsto \cE_{f,\phi,\psi}'(t)$ is continuous.
\end{proposition}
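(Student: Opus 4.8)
The plan is to deduce the proposition from the already-established regularity of the topological pressure in the potential variable (Theorem~\ref{thm:B}, Corollary~\ref{cordifpot}, Corollary~\ref{cor:dif.eq.potential}) and in the dynamics (Lemma~\ref{lepress}), together with the spectral gap (Theorem~\ref{t.gap}). \emph{First} I would prove the ``twisting'' identity: since $\mu_{f,\phi}=h_{f,\phi}\,\nu_{f,\phi}$, $\cL_{f,\phi}^{*}\nu_{f,\phi}=\lambda_{f,\phi}\,\nu_{f,\phi}$ (Proposition~\ref{p.conformal.measure}) and $\cL_{f,\phi}^{n}\!\big(e^{t S_n\psi}h_{f,\phi}\big)=\cL_{f,\phi+t\psi}^{n}(h_{f,\phi})$ by definition of the transfer operators, one gets
\[
\int e^{t S_n\psi}\,d\mu_{f,\phi}
	= \lambda_{f,\phi}^{-n}\int \cL_{f,\phi+t\psi}^{n}(h_{f,\phi})\,d\nu_{f,\phi}.
\]
Because (P) is an open condition on potentials, there is $t_{\phi,\psi}>0$ with $\phi+t\psi\in\cW$ for all $|t|\le t_{\phi,\psi}$, and then $\cL_{f,\phi+t\psi}$ has the spectral gap, so $\lambda_{f,\phi+t\psi}^{-n}\cL_{f,\phi+t\psi}^{n}(h_{f,\phi})$ converges uniformly to $h_{f,\phi+t\psi}\int h_{f,\phi}\,d\nu_{f,\phi+t\psi}$, a function bounded away from $0$ and $\infty$. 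Taking $\frac{1}{n}\log$ and recalling $\Ptop(f,\phi)=\log\lambda_{f,\phi}$ yields both the existence of the limit and the formula $\cE_{f,\phi,\psi}(t)=\Ptop(f,\phi+t\psi)-\Ptop(f,\phi)$.

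\emph{Analyticity} in $t,\phi,\psi$ is then immediate: $(\phi,\psi,t)\mapsto\phi+t\psi$ is an affine (hence analytic) map into $C^{\alpha}(M,\R)$ and $\phi\mapsto\lambda_{f,\phi}$ is analytic by Theorem~\ref{thm:B}(ii), so $\Ptop(f,\cdot)=\log\lambda_{f,\cdot}$ and therefore $\cE_{f,\phi,\psi}$ are analytic on the open region where $\phi+t\psi$ satisfies (P). For the \emph{affine-versus-strictly-convex dichotomy}, differentiating the formula twice and using Corollary~\ref{cordifpot} together with Corollary~\ref{cor:dif.eq.potential} (equivalently the expression for $\sigma^2$ recalled in Section~\ref{sec:CLT}) gives $\cE_{f,\phi,\psi}'(t)=\int\psi\,d\mu_{f,\phi+t\psi}$ and $\cE_{f,\phi,\psi}''(t)=\sigma^{2}_{f,\phi+t\psi}(\psi)\ge 0$. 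If $\psi=\tilde\psi\circ f-\tilde\psi+A$ with $\tilde\psi$ H\"older, then $S_n\psi=\tilde\psi\circ f^{n}-\tilde\psi+nA$ is uniformly bounded plus $nA$, so $\frac{1}{n}\log\int e^{t S_n\psi}\,d\mu_{f,\phi}\to tA$ directly and $\cE_{f,\phi,\psi}$ is affine; conversely, if $\psi$ is not cohomologous to a constant then $\sigma^{2}_{f,\phi+t\psi}(\psi)>0$ for every $|t|\le t_{\phi,\psi}$, since vanishing of the variance at some $t$ would force $\psi-\int\psi\,d\mu_{f,\phi+t\psi}$ to be an $L^{2}$-coboundary and hence, by a Liv\v{s}ic-type regularity argument, a H\"older coboundary, against the hypothesis; thus $\cE_{f,\phi,\psi}''>0$ on the whole interval and $\cE_{f,\phi,\psi}$ is strictly convex there.

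Finally, for the regularity in $f$ one works on $\cF^{2}\times\cW^{2}$ and, shrinking $t_{\phi,\psi}$ if necessary so that $\phi+t\psi\in\cW^{2}$ for $|t|\le t_{\phi,\psi}$ (here the relevant regularity $\psi\in C^{2}$, as in Theorem~\ref{thm:FreeEnergy}, is used), one writes $\cE_{f,\phi,\psi}(t)=\Ptop(f,\phi+t\psi)-\Ptop(f,\phi)$: both summands are differentiable in $f$ by Lemma~\ref{lepress}, and $f\mapsto\cE_{f,\phi,\psi}'(t)=\int\psi\,d\mu_{f,\phi+t\psi}$ is continuous by the weak$^{*}$ continuity of $(f,\phi)\mapsto\mu_{f,\phi}$ (Theorem~\ref{thm.oldstuff}). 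The one genuinely delicate point is in the convexity step: identifying $\cE''$ with the variance uniformly in $t$ and upgrading ``$\sigma^{2}=0$'' from an $L^{2}$-coboundary statement to a H\"older-coboundary statement; every remaining step reduces to the spectral gap and to the analyticity, respectively differentiability, of $\Ptop$ in the potential and in the dynamics already established above.
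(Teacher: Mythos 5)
Your proof follows essentially the same route as the paper's: the same twisting identity $\int e^{tS_n\psi}\,d\mu_{f,\phi}=(\lambda_{f,\phi+t\psi}/\lambda_{f,\phi})^n\int\tilde\cL_{f,\phi+t\psi}^n(h_{f,\phi})\,d\nu_{f,\phi}$, the same appeal to the openness of (P) to define $t_{\phi,\psi}$, the same use of the spectral gap to bound the correction factor, and the same reduction of differentiability in $f$ to Lemma~\ref{lepress}. Two places where you deviate slightly are worth flagging. For the affine case, you argue via $S_n\psi=\tilde\psi\circ f^n-\tilde\psi+nA$ being ``uniformly bounded plus $nA$'', which silently assumes $\tilde\psi\in L^\infty$; the paper instead invokes the variational principle and invariance, which gives $\int\psi\,d\eta=A$ for every $f$-invariant $\eta$ without any boundedness requirement, and is the cleaner route given that ``cohomologous to a constant'' is defined in Theorem~\ref{thm:CLT} only at the $L^2$ level. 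For the strict-convexity case, the paper computes $\cE''_{f,\phi,\psi}(t)\ge0$ directly as a limit of normalized variances and applies the Cauchy--Schwarz equality criterion; you instead identify $\cE''$ with the asymptotic variance $\sigma^2_{f,\phi+t\psi}(\psi)$ and then invoke a Liv\v{s}ic-type regularity upgrade from $L^2$ to H\"older coboundary. That upgrade is unnecessary here: since the paper's notion of cohomologous to a constant is already the $L^2$ one, vanishing of $\sigma^2$ directly contradicts the hypothesis, and a Liv\v{s}ic theorem in this non-uniformly expanding setting is not established in the paper and would require its own justification. Apart from these two points --- one a mild gap, one an unneeded detour --- the argument matches the paper's.

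Also be aware that your identification $\cE''_{f,\phi,\psi}(t)=\sigma^2_{f,\phi+t\psi}(\psi)$ is correct but is not literally a cited formula: you would need to match the expression produced by differentiating Corollary~\ref{cordifpot} and Corollary~\ref{cor:dif.eq.potential} with the series for $\sigma^2$ written out in Section~\ref{sec:CLT}. The paper avoids this algebraic matching by computing $\cE''$ from first principles as $\lim_n\frac1n[\int(S_n\psi)^2e^{tS_n\psi}d\mu/\int e^{tS_n\psi}d\mu - (\,\cdots)^2]$.
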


\begin{proof}
The first part of the proof goes along some well known arguments that we include here for completeness. Observe first that
for all $n\in \mathbb N$
\begin{align*}
\int e^{t S_n \psi} \; d\mu_{f,\phi}  & =  \int \la_{f,\phi}^{-n} \, \cL_{f,\phi}^n  (h_{f,\phi} e^{t S_n \psi}) \; d\nu_{f,\phi} \\
&= \left(\frac{\la_{f,\phi+t\psi}}{\la_{f,\phi}}\right)^n  \int \la_{f,\phi+t\psi}^{-n} \, \cL_{f,\phi+t\psi}^n  (h_{f,\phi}) \; d\nu_{f,\phi}.
\end{align*}
Since (P) is an open condition, then for every $|t|\le t_{\phi,\psi}$ the potential $\phi+t\psi$ satisfies (P) provided that
$t_{\phi,\psi}$ is small enough.

\begin{figure}[htb]
\includegraphics[width=8cm]{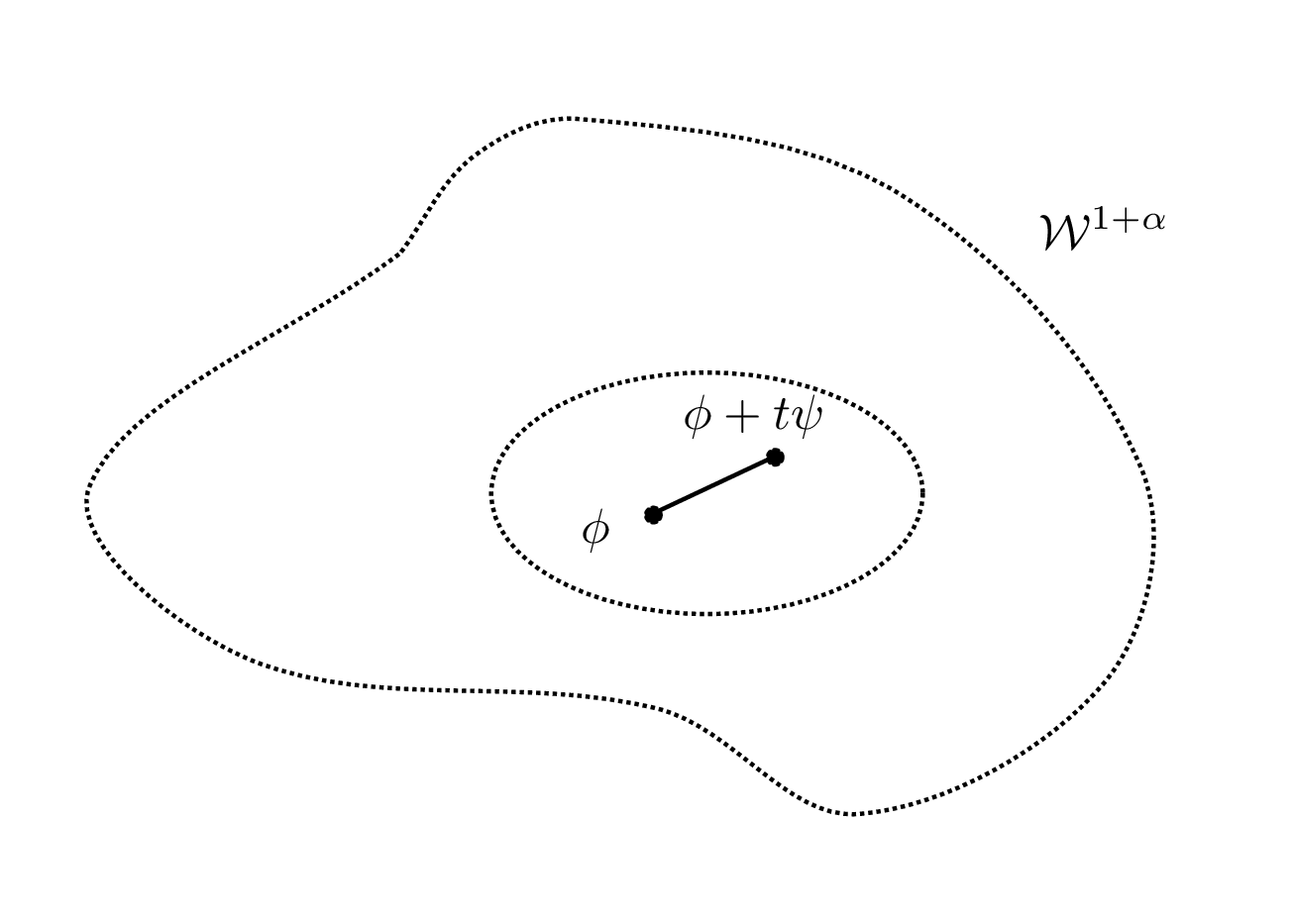}
\caption{Open domain in $\cW^{1+\alpha}$}
\end{figure}

 Since $h_{f,\phi}$ is positive and bounded away from zero and infinity this implies that
$\la_{f,\phi+t\psi}^{-n} \, \cL_{f,\phi+t\psi}^n  (h_{f,\phi})$ is uniformly convergent to $h_{f,\phi+t\psi} \cdot \int h_{f,\phi}d\nu_{f , \phi +t\psi}$, thus uniformly bounded from zero and infinity for all large $n$.
Therefore using the dominated convergence theorem
$$
\lim_{n\to\infty} \frac1n \log \int e^{t S_n \psi} \; d\mu_{f,\phi}
	= \log\la_{f, \phi +t\psi} -\log\la_{f, \phi}
	=\Ptop(f, \phi +t\psi)- \Ptop(f, \phi),
$$
proving the first assertion of the proposition.
Now, assume first that there exists  $A \in \mathbb R$ and a potential $\tilde \psi : M \to \mathbb R$  such that
$\psi=\tilde \psi \circ f - \tilde \psi+A$. Then it follows from the variational principle and invariance that
\begin{align*}
\Ptop(f, \phi +t\psi)
	& =\sup_{\mu\in \cM_1(f)} \left\{ h_\mu(f) +\int (\phi +t\psi) \, d\mu \right\} \\
	& = t A + \sup_{\mu\in \cM_1(f)} \left\{ h_\mu(f) +\int \phi \, d\mu \right\} \\
	& =  t A + \Ptop(f, \phi)
\end{align*}
and, consequently, $\cE_{f,\phi,\psi}(t)=tA$ is affine.

Now, we proceed to prove that if $\psi$ is not cohomologous to a constant then the free energy function
is strictly convex. Since $t\mapsto \Ptop(f,\phi+t\psi)$ is real analytic (recall Remark~\ref{rem:analytic})
then to prove that $t\mapsto \cE_{f,\phi,\psi}(t)$ is strictly convex it is enough to show that $\cE_{f,\phi,\psi}''(t)>0$
for all $t$.
Assume that there exists $t$ such that $\cE_{f,\phi,\psi}''(t)=0$.
Up to replace $\phi$ by the potential $\tilde \phi=\phi+t\psi$ we may assume without loss of generality
that $t=0$, that is, $\cE_{f,\phi,\psi}''(0)=0$. Hence, using Corollary~\ref{cordifpot} and differentiation under the
sign of integral we obtain
$$
\cE_{f,\phi,\psi}'(t)
	=\left.\frac{d \Ptop(f,\phi+t\psi)}{dt}\right|_{t=0}
	=\int \psi \,d\mu_{f,\phi+t\psi}
	=\lim_{n\to\infty} \frac1n \frac{ \int (S_n\psi) \, e^{t S_n\psi} d\mu_{f,\phi}}{\int e^{t S_n\psi} d\mu_{f,\phi} }
$$
(hence $\cE_{f,\phi,\psi}'(0) =\int \psi \,d\mu_{f,\phi}$). Using Theorem~B and differentiating again with respect to $t$
under the sign of integral it follows that
\begin{align*}
\cE_{f,\phi,\psi}''(t)
	& =\lim_{n\to\infty} \frac1n
		\left[
		\frac{ \int (S_n\psi)^2 \, e^{t S_n\psi} d\mu_{f,\phi}}{\int e^{t S_n\psi} d\mu_{f,\phi} }
		- \left(\frac{ \int S_n\psi \, e^{t S_n\psi} d\mu_{f,\phi}}{\int e^{t S_n\psi} d\mu_{f,\phi} } \right)^2
		\right]
	 \ge 0
\end{align*}
(hence $\cE_{f,\phi,\psi}''(0)=\lim_{n\to\infty} \frac1n [\int (S_n\psi)^2 \,d\mu_{f,\phi} -(\int S_n\psi \, d\mu_{f,\phi})^2] >0$)
because, if $\mu_n= e^{t S_n\psi} d\mu_{f,\phi}$ the inequality is equivalent to $\int S_n \psi \, d\mu_n  \le (\int (S_n\psi)^2 \, d\mu_n )^\frac12  (\int 1 \, d\mu_n )^\frac12$ that holds by
H\"older's inequality.
In particular,  $\cE_{f,\phi,\psi}''(t)=0$ if and only if $\psi$ is cohomologous to a  constant. Thus we conclude
that $\cE_{f,\phi,\psi}$ is a strictly convex function.
Finally,
using that the topological pressure is differentiable with respect to the dynamics the proof of the
proposition is now complete.
\end{proof}

The following result illustrates some characteristics of the behavior on the free energy function.

\begin{corollary}
For any H\"older continuous potential $\psi$ so that $\int \psi\, d\mu_{f,\phi}=0$
the free energy function $[-t_{\phi,\psi},t_{\phi,\psi}] \ni t\to \cE_{f,\phi,\psi}(t)$ satisfies:
\begin{enumerate}
\item $\cE_{f,\phi,\psi}(0)=0$ and $\cE_{f,\phi,\psi}(t)\ge 0$ for all $t\in (-t_{\phi,\psi},t_{\phi,\psi})$;
\item $t\inf\psi \le  \cE_{f,\phi,\psi}(t) \le t \sup\psi$ for all $t\in(0,t_{\phi,\psi}]$;
\item $t\sup\psi \le  \cE_{f,\phi,\psi}(t) \le t \inf \psi$ for all $t\in[-t_{\phi,\psi},0)$.
\end{enumerate}
\end{corollary}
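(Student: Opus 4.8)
The plan is to derive all three items directly from the two expressions for the free energy established in Proposition~\ref{prop:free.energy}, namely
$$
\cE_{f,\phi,\psi}(t)
	= \lim_{n\to\infty} \frac1n \log \int e^{tS_n\psi}\,d\mu_{f,\phi}
	= \Ptop(f,\phi+t\psi)-\Ptop(f,\phi),
$$
valid for $|t|\le t_{\phi,\psi}$, combined with the trivial pointwise bounds $n\inf\psi\le S_n\psi\le n\sup\psi$ on the Birkhoff sums. Note that the hypothesis $\int\psi\,d\mu_{f,\phi}=0$ forces $\inf\psi\le 0\le\sup\psi$, so all the bounds below will be consistent.

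For item (1), the identity $\cE_{f,\phi,\psi}(0)=\Ptop(f,\phi)-\Ptop(f,\phi)=0$ is immediate from the second expression. To obtain $\cE_{f,\phi,\psi}(t)\ge 0$ for every $|t|<t_{\phi,\psi}$ I would invoke Jensen's inequality for the convex function $\exp$ and the probability measure $\mu_{f,\phi}$: using that $\mu_{f,\phi}$ is $f$-invariant one has $\int S_n\psi\,d\mu_{f,\phi}=n\int\psi\,d\mu_{f,\phi}=0$ for every $n$, hence $\frac1n\log\int e^{tS_n\psi}\,d\mu_{f,\phi}\ge \frac1n\int tS_n\psi\,d\mu_{f,\phi}=t\int\psi\,d\mu_{f,\phi}=0$, and letting $n\to\infty$ gives $\cE_{f,\phi,\psi}(t)\ge 0$. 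Alternatively, this also follows from the proof of Proposition~\ref{prop:free.energy}, where $\cE_{f,\phi,\psi}'(0)=\int\psi\,d\mu_{f,\phi}=0$ was computed and $\cE_{f,\phi,\psi}$ was shown to be convex, so that $t=0$ is a global minimum.

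For items (2) and (3) I would simply multiply the pointwise bounds $n\inf\psi\le S_n\psi\le n\sup\psi$ by $t$, exponentiate, and integrate against the probability measure $\mu_{f,\phi}$. When $t\in(0,t_{\phi,\psi}]$ this gives $e^{tn\inf\psi}\le \int e^{tS_n\psi}\,d\mu_{f,\phi}\le e^{tn\sup\psi}$, so that after taking $\frac1n\log(\cdot)$ and letting $n\to\infty$ one recovers $t\inf\psi\le \cE_{f,\phi,\psi}(t)\le t\sup\psi$, which is item (2). When $t\in[-t_{\phi,\psi},0)$ multiplication by the negative number $t$ reverses the inequalities, yielding $e^{tn\sup\psi}\le \int e^{tS_n\psi}\,d\mu_{f,\phi}\le e^{tn\inf\psi}$ and hence $t\sup\psi\le \cE_{f,\phi,\psi}(t)\le t\inf\psi$, which is item (3). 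There is essentially no obstacle here: the argument is a routine consequence of Proposition~\ref{prop:free.energy}, and the only point requiring mild care is keeping track of the direction of the inequalities when multiplying by $t$ according to its sign.
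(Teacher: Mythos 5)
Your proof is correct, and it takes a genuinely different and more elementary route than the paper's. The paper derives the corollary analytically: item (1) follows from $\cE_{f,\phi,\psi}(0)=0$, $\cE_{f,\phi,\psi}'(0)=\int\psi\,d\mu_{f,\phi}=0$, and strict convexity ($\cE_{f,\phi,\psi}''>0$), so $t=0$ is a strict minimum; items (2) and (3) come from the mean value theorem combined with the explicit derivative formula $\cE_{f,\phi,\psi}'(t)=\int\psi\,d\mu_{f,\phi+t\psi}$, which lies between $\inf\psi$ and $\sup\psi$. You instead work directly with the integral representation $\cE_{f,\phi,\psi}(t)=\lim_n\frac1n\log\int e^{tS_n\psi}\,d\mu_{f,\phi}$: for (1) Jensen's inequality and $f$-invariance give $\int e^{tS_n\psi}\,d\mu_{f,\phi}\ge e^{tn\int\psi\,d\mu_{f,\phi}}=1$, and for (2)--(3) the trivial bounds $n\inf\psi\le S_n\psi\le n\sup\psi$ are exponentiated and integrated against a probability measure. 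Your argument has the advantage of not needing the derivative formula or strict convexity, so it also covers without a separate case the degenerate situation where $\psi$ is cohomologous to a constant (where $\cE$ is affine rather than strictly convex); the paper's route, on the other hand, is shorter given the machinery already established in Proposition~\ref{prop:free.energy}. Both are valid proofs of the statement.
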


\begin{proof}
It follows from the first part of Proposition~\ref{prop:free.energy} that $\cE_{f,\phi,\psi}(0)=0$. Now, since
$\cE_{f,\phi,\psi}''(t)>0$ then $\cE_{f,\phi,\psi}'$ is strictly increasing. Therefore, using $\cE_{f,\phi,\psi}'(0)
=\int \psi \,d\mu_{\phi}=0$ it follows that $\cE_{f,\phi,\psi}$ is strictly increasing for $t\in (0,t_{\phi,\psi})$ and
strictly decreasing for $t\in (-t_{\phi,\psi},0)$. This proves item (1) above.
Finally, (2) and (3) is a simple consequence of the mean value theorem and, using $\cE_{f,\phi,\psi}'(t)
=\int \psi \,d\mu_{\phi+t\psi} $, the fact that
$
\inf \psi
	\le \cE_{f,\phi,\psi}'(t)
	\le \sup\psi.
$
This finishes the proof of the corollary.
\end{proof}

In what follows assume that $\psi$ is not cohomologous to a constant and that $m_{f,\phi}=\int \psi \, d\mu_{f,\phi}=0$.
Therefore, since the function
$[-t_{\phi,\psi},t_{\phi,\psi}] \ni t\to \cE_{f,\phi,\psi}(t)$ is strictly convex it is well defined the ``local"
Legendre transform $I_{f,\phi,\psi}$ given by
\begin{equation*}
I_{f,\phi,\psi}(s)
	= \sup_{-t_{\phi,\psi}\le t \le t_{\phi,\psi}} \; \big\{ st-\cE_{f,\phi,\psi}(t) \big\}.
\end{equation*}

\begin{remark}
This is a convex function since it is supremum of affine functions and, using that $\cE_{f,\phi,\psi}$ is strictly convex
and non-negative,  $I_{f,\phi,\psi} \ge 0$. Moreover, notice that since $\cE_{f,\phi,\psi+c}(t)= \cE_{f,\phi,\psi}(t)+ct$ then we also get that $I_{f,\phi,\psi+c}(t)= I_{f,\phi,\psi}(t-c)$ for every $c,t\in\mathbb R$.
\end{remark}

When the free energy function is differentiable it is not hard to check the variational property
$
I_{f,\phi,\psi} (\cE'_{f,\psi}(t) )
	= t \cE'_{f,\psi}(t) - \cE_{f,\phi,\psi}(t)
$
and the domain of $I_{f,\phi,\psi}$ contains the interval $[\cE'_{f,\psi}(-t_{\phi,\psi}), \cE'_{f,\psi}(t_{\phi,\psi})]$.
Moreover, $I_{f,\phi,\psi}(s)=0$ if and only if $s=m_{f,\phi}$ belongs to the domain of $I_{f,\phi,\psi}$.
It is also well known that the strict convexity of $\cE_{f,\phi,\psi}$ together with differentiability of
$\cE_{f,\phi,\psi}$ yields that $[-t_{\phi,\psi},t_{\phi,\psi}] \ni t \mapsto I_{f,\phi,\psi}(t)$ is strictly convex and differentiable. Using the previous remark we collect all of these statements in the following:

\begin{corollary}
Let $f\in \cF$ be arbitrary and let $\psi$ be an H\"older continuous observable. Then the rate function
$I_{f,\phi,\psi}$ satisfies:
\begin{enumerate}
\item The domain $[\cE'_{f,\psi}(-t_{\phi,\psi}), \cE'_{f,\psi}(t_{\phi,\psi})]$ contains $m_{f,\phi}=\int \psi \,d\mu_{f,\phi}$;
\item $I_{f,\phi,\psi}\ge 0$ is strictly convex and $I_{f,\phi,\psi}(s)= 0$ if and only $s=\int \psi \, d\mu_{f,\phi}$;
\item $s\mapsto I_{f,\phi,\psi}(s)$ is real analytic.
\end{enumerate}
\end{corollary}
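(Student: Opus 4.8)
The plan is to obtain the three items directly from the properties of the free energy function recorded in Proposition~\ref{prop:free.energy} together with the classical duality between a smooth strictly convex function and its Legendre transform. Write $\cE=\cE_{f,\phi,\psi}$ and $I=I_{f,\phi,\psi}$; by the standing assumptions $\psi$ is not cohomologous to a constant and $m_{f,\phi}=\int\psi\,d\mu_{f,\phi}=0$ (the case $m_{f,\phi}\ne 0$ reducing to this one via the translation $I_{f,\phi,\psi+c}(t)=I_{f,\phi,\psi}(t-c)$ noted in the Remark).

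First I would record the structural facts about $\cE$ on $[-t_{\phi,\psi},t_{\phi,\psi}]$: by Proposition~\ref{prop:free.energy} it is real analytic and strictly convex, hence $\cE''(t)>0$ throughout; moreover $\cE(0)=0$ and, by Corollary~\ref{cordifpot} and differentiation under the integral sign as in the proof of Proposition~\ref{prop:free.energy}, $\cE'(0)=\int\psi\,d\mu_{f,\phi}=m_{f,\phi}=0$. Consequently $\cE'$ is a strictly increasing real-analytic diffeomorphism of $[-t_{\phi,\psi},t_{\phi,\psi}]$ onto $J:=[\cE'(-t_{\phi,\psi}),\cE'(t_{\phi,\psi})]$, and $m_{f,\phi}=\cE'(0)\in\interior J$; this is item (1).

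Next I would invert the Legendre transform on $\interior J$. For $s\in\interior J$ the map $t\mapsto st-\cE(t)$ is strictly concave on $[-t_{\phi,\psi},t_{\phi,\psi}]$ with derivative $s-\cE'(t)$ vanishing at the unique interior point $t(s):=(\cE')^{-1}(s)$, so the supremum defining $I(s)$ is attained there and
\[
I(s)=s\,t(s)-\cE(t(s)),\qquad s\in\interior J.
\]
Since $\cE''>0$, the real-analytic inverse function theorem shows $s\mapsto t(s)$ is real analytic on $\interior J$, hence so is $I$; differentiating and using $\cE'(t(s))=s$ gives $I'(s)=t(s)=(\cE')^{-1}(s)$ and $I''(s)=1/\cE''(t(s))>0$. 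This yields item (3) and the strict convexity in item (2).

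Finally, the sign and zero set: since $0\in[-t_{\phi,\psi},t_{\phi,\psi}]$ we get $I(s)\ge s\cdot 0-\cE(0)=0$ for all $s$, so $I\ge 0$; and $I'(s)=(\cE')^{-1}(s)=0$ exactly when $s=\cE'(0)=m_{f,\phi}$, so the strictly convex $I$ has its unique minimum at $s=m_{f,\phi}$, where $I(m_{f,\phi})=I(0)=\sup_t\{-\cE(t)\}=0$ because $\cE$ has strict minimum $\cE(0)=0$. Hence $I(s)=0$ iff $s=m_{f,\phi}$, completing item (2). The one genuinely substantive point in all of this is the real analyticity of $I$, which rests on $\cE''>0$ on the full closed interval; this in turn is precisely the strict convexity of $\cE$ supplied by Proposition~\ref{prop:free.energy} under the hypothesis that $\psi$ is not cohomologous to a constant.
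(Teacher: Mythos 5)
Your argument is correct and follows essentially the same route the paper takes: the paper merely sketches these facts informally in the paragraph preceding the corollary (invoking the variational relation $I(\cE'(t))=t\cE'(t)-\cE(t)$, the normalization $m_{f,\phi}=0$, and standard Legendre-transform duality for a smooth strictly convex function) and then states the corollary without a formal proof. You supply exactly the omitted details — the inversion $t(s)=(\cE')^{-1}(s)$, the identities $I'(s)=t(s)$ and $I''(s)=1/\cE''(t(s))>0$, and the real-analytic inverse function theorem to upgrade the paper's stated ``differentiability'' of $I$ to the real analyticity claimed in item (3) — so this is a faithful expansion rather than a different approach.
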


\subsubsection{Estimating deviations}

Now we use the previous free energy function to obtain a ``local" large deviation results.
In fact, the following results hold from Gartner-Ellis theorem (see e.g.~\cite{DZ98,RY08}) as a consequence
of the differentiability of the free energy function.

\begin{theorem}\label{thm:LDP.Ellis}
Let $f$ be a local diffeomorphism so that (H1) and (H2) holds and let $\phi$ be a H\"older continuous
potential such that (P) holds.
Given any interval $[a,b]\subset [\cE'_{f,\psi}(-t_{\phi,\psi}), \cE'_{f,\psi}(t_{\phi,\psi})]$ it holds that
$$
\limsup_{n\to\infty} \frac1n \log \mu_{f,\phi}
	\left(x\in M : \frac1n S_n\psi(x) \in [a,b] \right)
	\le-\inf_{s\in[a,b]} I_{f,\phi,\psi}(s)
$$
and
$$
\liminf_{n\to\infty} \frac1n \log \mu_{f,\phi}
	\left(x\in M : \frac1n S_n\psi(x) \in (a,b) \right)
	\ge-\inf_{s\in(a,b)} I_{f,\phi,\psi}(s)
$$
\end{theorem}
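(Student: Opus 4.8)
The plan is to deduce the statement from a local version of the G\"artner--Ellis theorem (as in \cite{DZ98,RY08}), whose only substantive hypothesis is the differentiability and strict convexity of the free energy on $[-t_{\phi,\psi},t_{\phi,\psi}]$ --- exactly what Proposition~\ref{prop:free.energy} and the corollaries preceding this theorem supply. For self-containedness I would, however, run the classical Chebyshev / change-of-measure argument directly. Throughout one keeps the standing normalization $m_{f,\phi}=\int\psi\,d\mu_{f,\phi}=0$, so $\cE_{f,\phi,\psi}(0)=0=\cE'_{f,\phi,\psi}(0)$; since $\psi$ is not cohomologous to a constant, $\cE_{f,\phi,\psi}$ is real analytic and strictly convex, $\cE'_{f,\phi,\psi}$ is a strictly increasing homeomorphism of $[-t_{\phi,\psi},t_{\phi,\psi}]$ onto $[\cE'_{f,\phi,\psi}(-t_{\phi,\psi}),\cE'_{f,\phi,\psi}(t_{\phi,\psi})]$, and $I_{f,\phi,\psi}$ is $C^1$, strictly convex, nonnegative, vanishes only at $0$, is strictly increasing to the right of $0$ and strictly decreasing to its left, with $I_{f,\phi,\psi}(\cE'_{f,\phi,\psi}(t))=t\,\cE'_{f,\phi,\psi}(t)-\cE_{f,\phi,\psi}(t)$. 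Since (P) is an open condition, after slightly shrinking $t_{\phi,\psi}$ one may also assume that $\phi+t\psi$ satisfies (P) on an open interval strictly containing $[-t_{\phi,\psi},t_{\phi,\psi}]$, so that $\cE_{f,\phi,\psi}$ and all the moment generating function estimates below are valid in a neighbourhood of each endpoint.

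For the upper bound I would use the exponential Chebyshev inequality: for $0\le t\le t_{\phi,\psi}$ one has $\mathbf{1}_{\{\frac1n S_n\psi\ge a\}}\le e^{tS_n\psi-tna}$, hence $\mu_{f,\phi}(\frac1n S_n\psi\in[a,b])\le e^{-tna}\int e^{tS_n\psi}\,d\mu_{f,\phi}$, which after taking $\tfrac1n\log$ and $\limsup$ gives $\limsup_n\tfrac1n\log\mu_{f,\phi}(\frac1n S_n\psi\in[a,b])\le\cE_{f,\phi,\psi}(t)-ta$; symmetrically one gets $\le\cE_{f,\phi,\psi}(t)-tb$ for $-t_{\phi,\psi}\le t\le 0$. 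I would then optimise over $t$: when $a>0$ the convex map $t\mapsto\cE_{f,\phi,\psi}(t)-ta$ has negative derivative at $0$, so its minimiser on $[-t_{\phi,\psi},t_{\phi,\psi}]$ lies in $[0,t_{\phi,\psi}]$ precisely because $a\le\cE'_{f,\phi,\psi}(t_{\phi,\psi})$, and the minimum equals $-I_{f,\phi,\psi}(a)=-\inf_{s\in[a,b]}I_{f,\phi,\psi}(s)$ by the monotonicity of $I_{f,\phi,\psi}$; the case $b<0$ is symmetric (using the $t\le 0$ estimate), and when $0\in[a,b]$ the claimed bound is $0$ and is automatic since $\mu_{f,\phi}\le 1$.

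For the lower bound I would use exponential tilting. Fix $s\in(a,b)$; since $(a,b)$ lies in the interior of $[\cE'_{f,\phi,\psi}(-t_{\phi,\psi}),\cE'_{f,\phi,\psi}(t_{\phi,\psi})]$ there is a unique $t=t_s\in(-t_{\phi,\psi},t_{\phi,\psi})$ with $\cE'_{f,\phi,\psi}(t)=s$. Put $Z_n=\int e^{tS_n\psi}\,d\mu_{f,\phi}$ (so $\tfrac1n\log Z_n\to\cE_{f,\phi,\psi}(t)$) and $d\mathbb{P}_n=Z_n^{-1}e^{tS_n\psi}\,d\mu_{f,\phi}$. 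Choosing $\delta>0$ with $(s-\delta,s+\delta)\subset(a,b)$ and bounding $e^{-tS_n\psi}\ge e^{-tns}e^{-|t|n\delta}$ on $\{|\frac1n S_n\psi-s|<\delta\}$, one obtains
$$
\tfrac1n\log\mu_{f,\phi}\!\Big(\tfrac1n S_n\psi\in(a,b)\Big)\ \ge\ \tfrac1n\log Z_n - ts - |t|\delta + \tfrac1n\log\mathbb{P}_n\!\Big(\big|\tfrac1n S_n\psi-s\big|<\delta\Big).
$$
The remaining point is a weak law of large numbers under the tilted measures: $\tfrac1n\log\mathbb{E}_{\mathbb{P}_n}[e^{uS_n\psi}]=\tfrac1n\log\!\big(\textstyle\int e^{(t+u)S_n\psi}d\mu_{f,\phi}\big)-\tfrac1n\log Z_n\to\cE_{f,\phi,\psi}(t+u)-\cE_{f,\phi,\psi}(t)$ for small $u$, whose derivative in $u$ at $0$ is $\cE'_{f,\phi,\psi}(t)=s$; applying the exponential Chebyshev inequality under $\mathbb{P}_n$ to each tail then shows $\mathbb{P}_n(|\frac1n S_n\psi-s|\ge\delta)$ decays exponentially, so the last term above tends to $0$. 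Letting $\delta\to0$ and using $\cE_{f,\phi,\psi}(t)-ts=-I_{f,\phi,\psi}(s)$ yields $\liminf_n\tfrac1n\log\mu_{f,\phi}(\frac1n S_n\psi\in(a,b))\ge-I_{f,\phi,\psi}(s)$, and taking the supremum over $s\in(a,b)$ gives the lower bound.

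The step I expect to be the main obstacle is this concentration estimate for $\frac1n S_n\psi$ under the tilted measures $\mathbb{P}_n$: it is the place where the \emph{differentiability} of the free energy (not merely its existence) is genuinely used, and it is also where one must be careful that $t_{\phi,\psi}$ is interior to the effective domain of $\cE_{f,\phi,\psi}$ so that the moment generating function estimates near the endpoints are legitimate --- this is exactly what the openness of (P) is invoked for. Everything else is elementary convex-analytic bookkeeping linking $\cE_{f,\phi,\psi}$, its derivative, and the Legendre transform $I_{f,\phi,\psi}$, all of whose regularity is supplied by Proposition~\ref{prop:free.energy} and its corollaries; alternatively one simply quotes the local G\"artner--Ellis theorem and checks these same hypotheses.
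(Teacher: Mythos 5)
Your proposal matches the paper's approach exactly: the paper proves Theorem~\ref{thm:LDP.Ellis} simply by invoking the (local) G\"artner--Ellis theorem from~\cite{DZ98,RY08}, whose hypotheses (differentiability and strict convexity of $\cE_{f,\phi,\psi}$ on $[-t_{\phi,\psi},t_{\phi,\psi}]$) are supplied by Proposition~\ref{prop:free.energy} and the surrounding corollaries, and your Chebyshev/exponential-tilting argument is precisely the standard proof of that theorem unpacked. Your convex-analytic bookkeeping and the tilted-measure concentration step are correct, so the only difference is that you make the cited result self-contained rather than quoting it.
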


Finally, to finish this section we deduce a regular dependence of the large deviations rate function with respect to the
dynamics and potential. To the best of our knowledge these results are new even in the uniformly expanding setting.

\begin{proposition}\label{prop:differentiability.rate}
Let $\psi$ be a H\"older continuous observable. There exists an interval $J\subset \mathbb R$
containing $m_f,\phi$ such that for all $[a,b]\subset J$ and $f\in \cF^{1+\al}$
$$
\limsup_{n\to\infty} \frac1n \log \mu_{f,\phi}
	\left(x\in M : \frac1n S_n\psi(x) \in [a,b] \right)
	\le-\inf_{s\in[a,b]} I_{f,\phi,\psi}(s)
$$
and
$$
\liminf_{n\to\infty} \frac1n \log \mu_{f,\phi}
	\left(x\in M : \frac1n S_n\psi(x) \in (a,b) \right)
	\ge-\inf_{s\in(a,b)} I_{f,\phi,\psi}(s)
$$
Moreover, if $V$ is a compact metric space and  $V \ni v \mapsto f_v \in \mathcal{F}^{2}$ is a continuous and injective map then the rate function $(s,v) \mapsto I_{f_v,\phi,\psi}(s)$ is continuous on $J\times V$.
\end{proposition}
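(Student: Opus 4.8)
The plan is to treat the two assertions separately: the local large deviation bounds follow at once from the material already established, while the continuity of the rate function is the substantive point and is obtained through a \emph{continuous inverse mapping theorem for fibered maps}.

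For the large deviation bounds, fix $\phi$ satisfying (H1), (H2), (P) and $\psi$ H\"older continuous and not cohomologous to a constant. By Proposition~\ref{prop:free.energy}, for every $f\in\cF^{1+\al}$ the free energy $\cE_{f,\phi,\psi}$ is well defined, real analytic and strictly convex on $[-t_{\phi,\psi},t_{\phi,\psi}]$, so $\cE'_{f,\phi,\psi}$ is a strictly increasing homeomorphism of $[-t_{\phi,\psi},t_{\phi,\psi}]$ onto the interval $[\cE'_{f,\phi,\psi}(-t_{\phi,\psi}),\cE'_{f,\phi,\psi}(t_{\phi,\psi})]$, whose interior contains $m_{f,\phi}$. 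I would then let $J$ be a compact interval contained in the open set $\bigcap_{f}\,(\cE'_{f,\phi,\psi}(-t_{\phi,\psi}),\,\cE'_{f,\phi,\psi}(t_{\phi,\psi}))$, which is nonempty once the family of dynamics is taken with uniform constants (using the continuity of $f\mapsto\cE'_{f,\phi,\psi}(\pm t_{\phi,\psi})$, and shrinking $t_{\phi,\psi}$ or the family if needed so that $m_{f,\phi}\in J$). With this choice, for $f\in\cF^{1+\al}$ and $[a,b]\subset J$ the two displayed inequalities are precisely Theorem~\ref{thm:LDP.Ellis}.

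For the continuity of the rate function, let $V$ be a compact metric space and $v\mapsto f_v\in\cF^2$ continuous (injectivity is not needed below), and assume $\psi\in C^2$ is not cohomologous to a constant for every $f_v$. Write $\cE_v=\cE_{f_v,\phi,\psi}$ and $\Phi(v,t)=\cE_v'(t)$ for $(v,t)\in V\times[-t_{\phi,\psi},t_{\phi,\psi}]$; from the proof of Proposition~\ref{prop:free.energy} one has $\cE_v(t)=\Ptop(f_v,\phi+t\psi)-\Ptop(f_v,\phi)$ and $\Phi(v,t)=\int\psi\,d\mu_{f_v,\phi+t\psi}$. The \textbf{key step} is joint continuity: the map $(v,t)\mapsto(f_v,\phi+t\psi)\in\cF^2\times\cW^2$ is continuous, composing it with the continuity of the topological pressure (Theorem~\ref{thm.oldstuff}) yields joint continuity of $(v,t)\mapsto\cE_v(t)$, while composing it with the weak$^*$ continuity of $(f,\phi)\mapsto\mu_{f,\phi}$ (Theorem~\ref{thm.oldstuff}(6); the analyticity of $\phi\mapsto\int\psi\,d\mu_{f,\phi}$ from Theorem~\ref{thm:B}(v) also applies) yields joint continuity of $\Phi$. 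Since $\psi$ is not cohomologous to a constant, for each fixed $v$ the map $t\mapsto\Phi(v,t)$ is strictly increasing, hence a homeomorphism onto $[\cE_v'(-t_{\phi,\psi}),\cE_v'(t_{\phi,\psi})]\supset J$, and I define the fibered inverse $\Psi(v,s)=(\Phi(v,\cdot))^{-1}(s)$ on $V\times J$.

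It remains to check that $\Psi$ is continuous and to identify $I$. If $(v_k,s_k)\to(v,s)$ in $V\times J$, the points $t_k=\Psi(v_k,s_k)$ lie in the compact interval $[-t_{\phi,\psi},t_{\phi,\psi}]$, and any subsequential limit $t^*$ of $(t_k)$ satisfies $\Phi(v,t^*)=\lim_j\Phi(v_{k_j},t_{k_j})=\lim_j s_{k_j}=s$ by joint continuity of $\Phi$, hence $t^*=\Psi(v,s)$ by strict monotonicity of $\Phi(v,\cdot)$; since all subsequential limits coincide, $t_k\to\Psi(v,s)$. Finally, for $(v,s)\in V\times J$ the strictly concave function $t\mapsto st-\cE_v(t)$ on $[-t_{\phi,\psi},t_{\phi,\psi}]$ has derivative vanishing only at $t=\Psi(v,s)$ (where $\cE_v'(t)=s$), so
\[
I_{f_v,\phi,\psi}(s)=s\,\Psi(v,s)-\cE_v\big(\Psi(v,s)\big),
\]
which is a composition of the continuous maps $(v,s)\mapsto(v,\Psi(v,s))$, $(v,t)\mapsto\cE_v(t)$ and multiplication; hence $(s,v)\mapsto I_{f_v,\phi,\psi}(s)$ is continuous on $J\times V$. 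I expect the main obstacle to be the joint continuity of $\Phi(v,t)=\cE'_{f_v,\phi,\psi}(t)$ — assembling it from the continuity of equilibrium states in the dynamics and in the potential — together with the uniform choice of the domain $J$; once $\Phi$ is jointly continuous, the fibered inverse and the conclusion are routine.
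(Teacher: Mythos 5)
Your proof is correct and follows essentially the same route as the paper's: choose a compact interval $J$ inside $\bigcap_v \big(\cE'_{f_v,\phi,\psi}(-t_{\phi,\psi}),\cE'_{f_v,\phi,\psi}(t_{\phi,\psi})\big)$ to invoke Theorem~\ref{thm:LDP.Ellis}, then exploit strict monotonicity of $\cE'_{f_v,\phi,\psi}$ together with compactness to invert the fibered map $(v,t)\mapsto(v,\cE'_{f_v,\phi,\psi}(t))$ continuously and read off $I_{f_v,\phi,\psi}(s)=s\,t(s,v)-\cE_{f_v,\phi,\psi}(t(s,v))$; your subsequence argument is just a spelled-out version of the paper's appeal to ``a continuous injection from a compact metric space is a homeomorphism onto its image.'' Your remark that injectivity of $v\mapsto f_v$ is never used is accurate — in the paper too, injectivity of the skew-product comes solely from the strict monotonicity of $\cE'_{f_v,\phi,\psi}$ along fibers.
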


\begin{proof}
Fix $f_0\in \cF$. We obtain a large deviation principle for Birkhoff averages on subintervals of a
given interval $[\cE'_{f_0,\phi,\psi}(-t_{\phi,\psi}), \cE'_{f_0,\phi,\psi}(t_{\phi,\psi})]$ given by Theorem~\ref{thm:LDP.Ellis}. Observe that
the interval $[\cE'_{f,\phi,\psi}(-t_{\phi,\psi}), \cE'_{f,\phi,\psi}(t_{\phi,\psi})]$ is non-degenerate and varies continuously with
$f$ and $\psi$.
Hence, we may take a non-degenerate interval $J$ contained in all intervals $[\cE'_{f,\phi,\psi}(-t_{\phi,\psi}),
\cE'_{f,\phi,\psi}(t_{\phi,\psi})]$ for all $f\in \cF$ sufficiently close to $f_0$. This proves the first assertion above.

Finally, from the variational relation using the Legendre transform and the convexity of the free energy
function  (that is, $\cE''_{f,\phi,\psi}(t)>0$ for all $t$) we get that for any $s\in J$ there exists a unique $t=t(s,v)$
such that $s=\cE'_{f_v,\psi}(t) $ and
\begin{equation}\label{eq.var.rate}
I_{f_v,\phi,\psi} (s)= s \cdot t(s, v) - \cE_{f,\phi,\psi}(t(s, v)).
\end{equation}
Now, we consider the continuous skew-product
$$
\begin{array}{ccc}
F: V \times J & \to & V \times \mathbb R\\
(v,t) & \mapsto & (v, \cE'_{f_v,\phi,\psi}(t))
\end{array}
$$
and notice that it is injective because it is strictly increasing along the fibers.
Since $V\times J$ is a compact metric space then $F$ is a homeomorphism
onto its image $F(V\times J)$. In particular this shows that for every $(v,s)\in F(V\times J)$ there
exists a unique $t=t(v,s)$ varying continuously with $(v,s)$ such that $F(v,t(v,s))=(v, s)$ and
$s=\cE'_{f_v,\phi,\psi}(t)$. Finally, relation~\eqref{eq.var.rate} above yields that $(s,v) \mapsto I_{f_v,\phi,\psi}(s)$ is
continuous on $J\times V$.  This finishes the proof of the corollary.
\end{proof}

It is not hard to check that the rate function is real analytic with respect to the
potential. However, since the proof is much simpler than the previous one we shall omit it
and leave as an exercise to the reader.

\section{Some examples}\label{s.examples}

In this section we provide some applications where we discuss mainly the smooth or continuous variation relevant
dynamical quantities for non-uniformly expanding maps obtained through bifurcation theory. In particular,
although our results apply for uniformly expanding dynamics we discuss some robust examples that can be
far  from being expanding.

\subsection{One-dimensional examples}\label{s.examples.one}

\subsubsection{Discontinuity of Perron-Frobenius operator for circle expanding maps}\label{ex:discontinuous}

In order to illustrate the discontinuity of the Perron-Frobenius operator when acting on the space of
functions with low regularity. We provide a one-dimensional example just for simplicity.

We claim that he transfer operator $\cL_{f,\phi} : C^{\al}(S^1,\mathbb R) \to C^{\al}(S^1,\mathbb R)$ associated to
the doubling circle map $f : S^1 \to S^1$ is discontinuous both in the operator norm as well as pointwise.
In fact, up to consider the metric $\tilde d(x,y)=d(x,y)^\al$ we are reduced to prove the discontinuity of the
transfer operators acting on the space of Lipschitz observables.
The key idea is that the composition operator $\vr \to \vr \circ g$ acting in the
space of Lipschitz functions does not vary continuously with $g$, as we now detail.

Let $S^1 \simeq \re/[-1/2, 1/2)$ be the circle and consider the expanding maps of the circle
$f_n(x)= 2(x+ \frac1{10n}) (\!\!\mod 1)$, $n\ge 1$. It is clear that the sequence $(f_n)_n$ is convergent to
the doubling map $f(x)= 2 x (\!\!\mod 1)$ in the $C^\infty$-topology.

Now, pick a Lipschitz function $\varphi$ in the circle so that $\varphi(x)= |x|$ for $|x| \leq 1/8$ and
$\varphi(x)= 0$  for $1/2 \geq |x|\geq 1/5$, and consider the potential $\phi \equiv 0$. In this way,
if  $0< x_n< y_n< 1/10n$, we obtain that
\begin{align*}
\text{Lip}((\cL_{f_n,\phi}- \cL_{f,\phi})(\vr))
	& \geq  \frac{|\cL_{f_n,\phi}(\varphi)(y_n)- \cL_{f_n,\phi}(\varphi)(x_n) +\cL(\varphi)(x_n)- \cL(\varphi)(y_n)|}{y_n- x_n} \\
	& =  \frac{| |y_n/2 - 1/10n|- |x_n/2- 1/10n|+ |x_n/2|- |y_n/2\|}{y_n- x_n} \\
	& = \frac{ |-y_n - x_n|}{y_n- x_n}= 1 = Lip(\varphi)
\end{align*}
for all $n\in\mathbb N$. Thus the sequence of transfer operators $(\cL_{f_n,\phi})_n$ does not converge to
$\cL_{f,\phi}$ even in the strong operator topology.
Nevertheless we have that
$$
(f,\phi) \mapsto \cL_{f_n,\phi} 1=\sum_{f(y)=x} e^{\phi(y)}
$$
is indeed continuous, which was enough for us to prove the differentiability of the topological pressure function.

\subsubsection{Manneville-Pomeau maps}

Given $\al >0$, let $f_\al:[0,1]\to [0,1]$ be the local
diffeomorphism given by
\begin{equation}\label{eq.Man.Pom}
f_\al(x)= \left\{
\begin{array}{cl}
x(1+2^{\alpha} x^{\alpha}) & \mbox{if}\; 0 \leq x \leq \frac{1}{2}  \\
2x-1 & \mbox{if}\; \frac{1}{2} < x \leq 1
\end{array}
\right.
\end{equation}
and the family of potentials $\varphi_{\al,t}=-t\log|Df_\al|$. Note that $f$ is a $C^{r+\beta}$-local
diffeomorphism, where $r=1+[\alpha]$ and $\beta=\alpha-[\alpha]$. In consequence $\varphi_{\al,t}\in C^{r-1+\beta}(M,\mathbb R)$.
Since it is not hard to check that a similar construction of
an expanding map with an indifferent fixed point can be realized as a circle local diffeomorphism, we will deal with this
family for simplicity. Moreover, conditions (H1) and (H2) are clearly verified for all $f_\al$. It is well known that if $\al\in(0,1)$ then an intermittency phenomenon occurs for the potentials $\varphi_{\al,t}$ at $t=1$. However no phase transitions occur
at high temperature, as we now discuss.

Assume first $\al\in(0,2)$. The family $\varphi_{\al,t}$ of $C^{r-1+\beta}$-potentials do satisfy condition (P) for all $|t|\le t_0$ small, that can be taken not depending on $\al$ since $\alpha<2$ and
\begin{equation*}
|\varphi_{\al,t}(x)-\varphi_{\al,t}(y)|
	= |t\log|Df_\al(x)|-t\log|Df_\al(y)\|
	=|t| \log \frac{|Df_\al(x)|}{|Df_\al(y)|}
	\leq |t| \log ( 2+\al ).
\end{equation*}
In fact, if on the one hand, $\sup \varphi_{\al,t}-\inf \varphi_{\al,t} \leq t_0 \log 4$, on the other hand the H\"older constant
of  $\varphi_{\al,t}$ can be made small provided that $t$ is small.
Hence, it follows from Theorem~\ref{thm.oldstuff} that for all $|t|\le t_0$ there exists a unique equilibrium state $\mu_{\al,t}$
for $f_\al$ with respect to $\varphi_{\al,t}$, it has exponential decay of correlations in the space of H\"older
observables,  and that  the pressure
$t \mapsto \Ptop(f_\al,-t \log|Df_\al|)$ and the equilibrium state  $t \mapsto \mu_{\al,t}$
are continuous in the interval $(-t_0,t_0)$. In consequence, the Lyapunov exponent function
 $t \mapsto \lambda(\mu_{\al,t})=\int \log |f_\al'| \,d\mu_{\al,t}$ is also continuous.

Now we will discuss the case that $\al \in [2,+\infty)$. In this case one can say that $f_\al$ is
at least $C^{3}$ and the potentials $\varphi_{\al,t}$ are at least $C^{2}$.
Moreover, $|\varphi_{\al,t}'(x)| \le |t| 2^\al (1+\al) \al |x|^{\al-1}$ can be taken
uniformly small, thus satisfying (P'), provided that $|t|\le t_\al$ small. Therefore our results imply that no
transition occurs once one considers the order of contact $\al$ of the indifferent fixed point to increase.
In fact, not only the maximal entropy measure varies differentially with the contact order $\al$ of the
indifferent fixed point as we deduce from  Corollary~\ref{cor:diff}  that the topological pressure
$$
\begin{array}{ccc}
(1,+\infty) \times [-t_\al,t_\al] & \to & \mathbb R \\
 (\al,t) & \mapsto & \Ptop(f_\al, -t\log|Df_\al| )
\end{array}
$$
and the Lyapunov exponent function
$$
\begin{array}{ccc}
(1,+\infty) \times [-t_\al,t_\al] & \to & \mathbb R \\
 (\al,t) & \mapsto & \int \log|Df_\al| \;d\mu_{\al,t}
\end{array}
$$
are differentiable.

\subsubsection{Bifurcations of circle expanding maps}

Let $f$ be a $C^{r+\al}$-expanding map of the circle $S^1$ with degree $d$ and let $p\in S^1$ be a fixed point
for $f$, with $r\ge 2$ and $\al\ge 0$.  Assume that $(f_t)_{t\in[0,1]}$ is a one-parameter family of $C^{r+\al}$-local
diffeomorphisms of the circle such that $f_0=f$, all maps $f_t$ satisfy hypothesis (H1) and (H2) with uniform constants and $f_1$ exhibits a periodic attractor at $p$.

Then, Theorems~\ref{thm:B} and \ref{thm:C} yield that there exists a $C^{3}$-neighborhood $\cF$ of the curve  $(f_t)_{t\in [0,1]}$ and a $C^{\al}$ neighborhood $\cW$ of the constant zero potential such that the pressure function $\cF\times \cW \ni (\tilde f,\tilde \phi) \mapsto \Ptop (\tilde f,\tilde \phi)$ is analytic on $\tilde \phi$
and differentiable in $\tilde f$. Moreover, both the maximal entropy measure function $\tilde f\mapsto \mu_{\tilde f}$ and the Lyapunov exponent function $\tilde f \mapsto \int \log |D\tilde f| \; d\mu_{\tilde f}$ varies differentiably.
In particular, since
$$
t
	\mapsto \dim_H (\mu_{ f_t})= \frac{h_{\mu_{ f_t}}( f_t)}{\int \log |D f_t| \; d\mu_{ f_t}}
		= \frac{\log d }{\int \log |D f_t| \; d\mu_{ f_t}}
$$
then the Hausdorff dimension of the maximal entropy measure is smooth on $t$ along the bifurcation.

\subsection{Higher dimensional examples}

\subsubsection{Derived from expanding maps}\label{ex.saddle}

Let $f_0:\torus^d\to\torus^d$ be a linear expanding map. Fix some
covering $\cU$ by domains of injectivity for $f_0$ and some $U_0\in\cU$ containing a fixed (or
periodic) point $p$. Then deform $f_0$ on a small neighborhood of
$p$ inside $U_0$ by a pitchfork bifurcation in such a way that $p$
becomes a saddle for the perturbed local diffeomorphism $f$. In particular,
such perturbation can be done in the $C^r$-topology, for every $r>0$.
By construction, $f$ coincides with $f_0$ in the complement of $P_1$,
where uniform expansion holds. Observe that we may take the
deformation in such a way that $f$ is never too contracting in
$P_1$, which guarantees that conditions (H1) and (H2) hold.
Since the later are open conditions let $\mathcal F^2$ be a small open neighborhood of $f$
by $C^{2}$ local diffeomorphisms satisfying (H1) and (H2).
Since condition (P') is clearly satisfied by  $\phi\equiv 0$ one can take $\mathcal W^2$ to be an
open set of $C^{2}$-potentials close to zero and satisfying (P') with uniform constants. It follows from
\cite{VV10,CV13} that there exists a unique equilibrium state for $f$ with respect to $\phi$, it has full support,
is has exponential decay of correlations in the space of H\"older observables and that equilibrium states and topological pressure vary continuously with the dynamics.

Concerning higher regularity of these functions it follows from Theorems~\ref{thm:B} and \ref{thm:C} that the pressure function $(f,\phi) \mapsto P(f,\phi)$ is analytical in $\phi$ and differentiable  with respect to $f$, the invariant density function $(f,\phi)\mapsto h_{f,\phi}$  and the equilibrium state function $(f,\phi) \mapsto \mu_{f,\phi}$ are analytical
in $\phi$. Furthermore, if one considers perturbations in the $C^3$-topology then the largest, smallest and
sum of Lyapunov exponents and the metric entropy of the equilibrium states $\mu_{f,\phi}$ vary continuously with
respect to $f$ and $\phi$; the largest, smallest and sum of Lyapunov exponents  of the maximum entropy $\mu_{f,0}$ vary
differentially with respect to $f$.
Finally, the unique measure of maximal entropy $\mu_{f}$ is differentiable with respect to $f$.

\begin{remark}
Let us mention an easy modification of the previous example allows to consider multidimensional
expanding maps with indifferent periodic points. In consequence all the results discussed above hold
also in this context.
\end{remark}

\subsubsection{Non-uniformly expanding repellers though Hopf bifurcations}\label{ex.Hopf}

Hopf bifurcations constitute an important class of bifurcations and arise in many physical phenomena
as e.g. the Selkov model of glycolysis or the Belousov- Zhabotinsky reaction.
We obtain applications also to these class of examples.

 Let $f_0$ be a linear endomorphism on the 2-dimensional torus $M = \mathbb T^2$ with two complex conjugate eigenvalues  $\sigma e^{i\zeta}$ with $\sigma > 3$ and $\theta\in\mathbb R$ satisfying the non-resonance condition $k\zeta\notin
 2\pi\mathbb Z$ for $k\in\{1,2,3,4\}$. Following \cite{HV05} we consider a one parameter family $(\hat f_t)_t$ of $C^5$-local
 diffeomorphisms going through a Hopf bifurcation at $t=0$ in a small neighborhood $V$ of the fixed point corresponding
 to the origin.
 More precisely, in local cylindrical coordinates $(r,\theta)$ in a neighborhood of zero the map $f_0$ can be expressed
 as $f_0(r,\theta)=(\sigma r, \theta+\zeta)$. So, proceeding as in \cite{HV05} we can obtain a one-parameter family
$$
\hat f_t(r,\theta)=(g(t, r^2) \,r , \theta+\zeta)
$$
with $g$ being a real valued $C^\infty$ map on $[-1,1]^2$ and constants $C,\delta>0$ such that $g(t,0)=1-t\le g(t,s)$
for all $s\ge 0$,  that $g(t,s)=\sigma$ whenever $s\ge \delta_0$, that $\partial_s g(t,s)\in(0,C/\delta]$ for all $0\le s <\delta$, and also there exists $1<\sigma_1<\sigma$ and $0<\delta_1<\delta$ satisfying $g(t,s)>\sigma_1$ for all $s>\sigma_1$
and $\partial_s g(t,s)\ge \partial_s g(t,0)$ for $s\in (0,\delta_1]$.
Using the non-resonance condition, for any family $(f_t)_t$ that is $C^5$-close to $(\hat f_t)_t$ there exists a curve of fixed points $(p_t)_t$ close to the origin that also go through a Hopf bifurcation at some parameter $t_*$ (depending continuously of the family) close to zero.

The complement $\Lambda_t$ of the basin of attraction of the periodic attractor $p_t$ is a
repeller. Moreover, since $\Lambda_t$ contains an invariant circle obtained as the boundary of the immediate basin of
attraction of $p_t$ then cannot be a uniformly expanding repeller.
Nevertheless, if $t_0$ is not much larger than $t_*$ then the curve $(f_t)_{t\in[0,t_0]}$ can be assumed to satisfy
conditions (H1) and (H2) with uniform constants. In particular, for any small H\"older continuous potential $\phi$
$$
(t,\phi) \mapsto \Ptop (f_t,\phi)
$$
is differentiable and the equilibrium state $(t,\phi)\mapsto \mu_{f_t,\phi}$ varies continuously.
Moreover, it follows from our results that for any $t\in[0,t_0]$ there exists a unique maximal entropy measure $\mu_t$
with exponential decay of correlations, that $\supp(\mu_t)=M$ for $t\in[0,t_*]$ and $\supp(\mu_t)=\Lambda_t$ is the
repeller for $t\in(t_*,t_0]$. In fact, $t\mapsto \mu_t$ varies differentially along the Hopf bifurcation.

\subsubsection{Large deviations for (non)-uniformly expanding maps}

Assume that $f$ is $C^{2}$ local diffeomorphism and $\Lambda\subset M$ be a transitive and $f$-invariant set
such that $f\mid_\Lambda$  uniformly expanding. In \cite{You90}, Young obtained a large deviations principle
for the unique SRB measure which in our setting generalizes as follows: if $\phi$ is a H\"older continuous
potential then for every $\psi : M \to\mathbb R$ continuous
\begin{align}\label{eq.LdP1}
 \limsup_{n\to \infty} \frac1n & \log \nu_{f,\phi} \left(x \in M : \frac1n S_n \psi(x) \in [a,b] \right)
         \leq - \inf_{s\in[a,b]} K_{f,\psi}(s)
\end{align}
and
\begin{align}\label{eq.LdP2}
 \liminf_{n\to \infty} \frac1n & \log \nu_{f,\phi} \left(x \in M : \frac1n S_n \psi(x) \in (a,b) \right)
         \geq - \inf_{s\in (a,b)} K_{f,\psi}(s)
\end{align}
where $K_{f,\psi}(s)=-\sup\left\{-\Ptop(f,\phi)+h_\eta(f) + \int \phi \,d\eta \colon \int \psi \, d\eta = s\right\}$.
We refer the reader to \cite{Va12} for a proof of the previous assertions and extension for weak Gibbs measures.
Moreover, if $\psi$ is H\"older continuous then Theorem~\ref{thm:differentiability.LDP} yields a large deviation
principle where the rate function $K_{f,\phi}$ in \eqref{eq.LdP1} and \eqref{eq.LdP1} is replaced by $I_{f,\phi,\psi}$,
where $I_{f,\phi,\psi}(s) = \sup_{-t_{\phi,\psi}\le t \le t_{\phi,\psi}} \; \big\{ st-\cE_{f,\phi,\psi}(t) \big\}$ is the Legendre transform of the
free energy function varies continuously.
In particular this proves that the two rate functions  above do coincide in the interval $(\cE'_{f,\psi}(-t_{\phi,\psi}), \cE'_{f,\psi}(t_{\phi,\psi}))$.

Now, take $T_-=\min \{ \int \psi \,d\eta\}$ and $T_+=\max \{ \int \psi \,d\eta\}$ where the minimum and maximum
are taken  over all $f$-invariant measures (we omit the dependence on $f$, $\phi$ and $\psi$ for notational
simplicity). Then for any fixed $t\in (T_-,T_+)$
$$
(f,\phi) \mapsto \sup \left\{ \Ptop(f,\phi)-h_\eta(f)-\int\phi\,d\eta : \eta\in\cM_1(f) \text{ and } \int \psi \,d\eta=t \right\}
$$
is continuous, provided that $\psi$ is H\"older continuous. This illustrates the space of invariant
probability measures is rich for uniformly expanding dynamical systems.

\begin{remark} Our large deviation results apply also to the robust class of multidimensional local diffeomorphisms
$\mathcal F^{2}$ obtained by bifurcations of expanding maps as in Subsections~\ref{ex.saddle} and ~\ref{ex.Hopf} above,
and it yields that for any H\"older continuous observable $\psi$ not cohomologous to a constant  there exists an interval
 $J\subset \mathbb R$ such that
$$
\limsup_{n\to\infty} \frac1n \log \nu_{f,\phi}
	\left(x\in M : \frac1n S_n\psi(x) \in [a,b] \right)
	\le-\inf_{s\in[a,b]} I_{f,\phi,\psi}(s)
$$
and
$$
\liminf_{n\to\infty} \frac1n \log \nu_{f,\phi}
	\left(x\in M : \frac1n S_n\psi(x) \in (a,b) \right)
	\ge-\inf_{s\in (a,b)} I_{f,\phi,\psi}(s).
$$
for all $f\in \cF^{2}$ and $[a,b]\subset J$. Previous to this local large deviations principle some upper and lower bounds were obtained in \cite{Va12}. In addition, for any injectively parametrized family $V \ni v\to f_v$, as in the Hopf bifurcation
construction, the rate function $(s,v) \mapsto I_{f_v,\psi}(s)$ varies continuously with the dynamics and the potential.
\end{remark}

\textbf{Acknowledgements.}
The authors are deeply grateful to the anonymous referee for many useful comments
that helped us to improve the manuscript. We are also grateful to Welington de Melo, Ian Melbourne and the
anonymous referee for indicating some important references related to this work. 
This work was partially supported by CNPq-Brazil, FAPESB and CAPES AEX 18517-12-9.

\bibliographystyle{alpha}

\begin{thebibliography}{3}


\bibitem[AMR07]{AMR07}
R. Abraham; J. E. Marsden; T. S. Ratiu.
\newblock Manifolds, tensor analysis, and applications.
\newblock {\em Reading, Mass.: Addison-Wesley}, 3rd. edition, 2007.


\bibitem[AP06]{AP06}
V.~Ara\'ujo and M.J. Pac\'ifico.
\newblock Large deviations for non-uniformly expanding maps.
\newblock {\em J. Statist. Phys.}, 125:415--457, 2006.




\bibitem[Bal00]{Ba00}
V.~Baladi.
\newblock {\em Positive transfer operators and decay of correlations}.
\newblock World Scientific Publishing Co. Inc., 2000.

\bibitem[BBS13]{BBS13}
V.~Baladi and M.~Benedicks and D.~Schnellmann.
\newblock On the H\"older regularity of the SRB measure in the quadratic family.
\newblock  Preprint 2013.

\bibitem[BG09]{BG09}
V.~Baladi and S.~S. Gouezel.
\newblock Good Banach spaces for piecewise hyperbolic maps via interpolation.
\newblock {\em Annales de l'Institut Henri Poincar\'e - Analyse non lin\'eaire},  26:1453--1481, 2009

\bibitem[BG10]{BG10}
V.~Baladi and S.~S. Gouezel.
\newblock Banach spaces for piecewise cone hyperbolic maps.
\newblock {\em J. Modern Dynam.},  4, 91--137, 2010.

\bibitem[BS08]{BS08}
V.~Baladi and D.~Smania.
\newblock  Linear response formula for piecewise expanding unimodal maps.
\newblock  {\em  Nonlinearity}, 21, 677--711,  2008.

\bibitem[BS09]{BS09}
V.~Baladi and D.~Smania.
\newblock Analyticity of the SRB measure for holomorphic families of quadratic-like Collet-Eckmann maps.
\newblock  {\em Proc. Amer. Math. Soc.}   137, 1431--1437, 2009.

\bibitem[BS12]{BS12}
V.~Baladi and D.~Smania.
\newblock Linear response for smooth deformations of generic nonuniformly hyperbolic unimodal maps.
\newblock  {\em Annales scientifiques de l'ENS},  45,  861-926, 2012.


\bibitem[BT07]{BT07}
V.~Baladi and M.~Tsujii,
\newblock H\"older and Sobolev spaces for hyperbolic diffeomorphisms.
\newblock \emph{Ann.Inst. Fourier}, 57,  27--54, 2007.

\bibitem[BY93]{BY93}
V.~Baladi and L.S.~Young,
\newblock On the spectra of randomly perturbed expanding maps.
\newblock \emph{Commun. Math. Phys.}, 156, 355--385, 1993.

\bibitem[BKL01]{BKL01}
M.~Blank and G.~Keller and C.~Liverani.
\newblock Ruelle-Perron-Frobenius spectrum for {A}nosov maps.
\newblock \emph{Nonlinearity}, 15,  1905--1973, 2001.

\bibitem[Bow75]{Bo75}
R.~Bowen.
\newblock {\em Equilibrium states and the ergodic theory of {A}nosov
  diffeomorphisms}, volume 470 of {\em Lect. Notes in Math.}
\newblock Springer Verlag, 1975.

\bibitem[BR75]{BR75}
R.~Bowen and D.~Ruelle.
\newblock The ergodic theory of {A}xiom {A} flows.
\newblock {\em Invent. Math.}, 29:181--202, 1975.

\bibitem[BL07]{BL07}
O.~Butterley and C.~Liverani
\newblock Smooth {A}nosov flows: correlation spectra and stability.
\newblock {\em J. Mod. Dyn.}, 1, 301--322, 2007.

\bibitem[Cas02]{Cas02} A. Castro,  Backward inducing and exponential decay of
correlations for partially hyperbolic attractors, {\em Israel J. Math.}
130,  29-75, 2002.

\bibitem[Cas04]{Cas04} A. Castro, Fast mixing for attractors with mostly
contracting central direction, {\em  Ergod. Th. {\&} Dynam. Sys.}
24, 17-44, 2004.

\bibitem[CV13]{CV13} A.~Castro, P.~Varandas. Equilibrium states for non-uniformly expanding
maps: decay of correlations and strong stability. {\em Annales de l Institut Henri Poincar\'e - Analyse non Lineaire},
30:2, 225--249, 2013.

\bibitem[CN15]{CN15} A.~Castro, T.~Nascimento. Statistical properties of the maximal  entropy
measure for partially hyperbolic attractors. {\em  Ergod. Th. {\&} Dynam. Sys.},
to appear.

\bibitem[Cha85]{Cha85} S. B. Chae.
  \newblock Holomorphy and calculus in normed spaces.
   \newblock {\em Monographs and Textbooks in Pure and Applied Mathematics}, New York: M. Dekker, 1985.


\bibitem[DZ98]{DZ98}
A.~Dembo and O.~Zeitouni.
\newblock Large {D}eviation {T}echniques and {A}pplications, Second Edition
\newblock {\em Springer Verlag}, 1998.

\bibitem[DL08]{DL08}
M.~Demers and C.~Liverani.
\newblock Stability of statistical properties in two-dimensional piecewise hyperbolic maps.
\newblock {\em Trans. Amer. Math. Soc.}, 360, 4777-4814, 2008.

\bibitem[DZ13]{DZ13}
M.~Demers and H.-K. Zhang.
\newblock A functional analytic approach to perturbations of the Lorentz Gas.
\newblock {\em Comm. Math. Phys.}, 324, 767-830, 2013.

\bibitem[DPU96]{DPU96}
M.~Denker, F.~Przytycki, and M.~Urba{\'n}ski.
\newblock On the transfer operator for rational functions on the {R}iemann
  sphere.
\newblock {\em Ergodic Theory Dynam. Systems}, 16:255--266, 1996.

\bibitem[Dol04]{Dol04}
D. Dolgopyat.
\newblock On differentiability of SRB states for partially hyperbolic systems.
\newblock {\em Invent. Math.}, 155, 389--449, 2004.

\bibitem[DM07]{DM07}
P. Drabek and J. Milota.
\newblock \emph{Methods of Nonlinear Analysis - Application to Differential Equations}
\newblock Birkhauser, 2007.

\bibitem[El85]{El85}
R.~Ellis.
\newblock {\em Entropy, {L}arge {D}eviation and {S}tatistical {M}echanics}.
\newblock Springer Verlag, 1985.


\bibitem[Fr79]{Fr79}
J.~Franks.
\newblock {\em Manifolds of $C^r$ Mappings and Applications to Differentiable Dynamical Systems}.
\newblock Studies in Analysis. Advances in Mathematics Supplementary Studies, vol 4, pages 271-- 290.
  Academic Press, 1979.

\bibitem[GL06]{GL06}
S.~Gouezel and C.~Liverani.
\newblock Banach spaces adapted to Anosov systems
\newblock Ergod. Th. {\&} Dynam. Sys. , 26, 189--217, 2006.


\bibitem[HV05]{HV05}
V. Horita and M. Viana
\newblock Hausdorff dimension for non-hyperbolic repellers. II. DA diffeomorphisms.
\newblock {\em Discrete Contin. Dyn. Syst.}, 13, 1125--1152, 2005.

\bibitem[HM03]{HM03}
T.~Hunt and R.~MacKay.
\newblock Anosov parameter values for the triple linkage and a physical system
  with a uniformly chaotic attractor.
\newblock {\em Nonlinearity}, 16:1499--1510, 2003.

\bibitem[KKPW89]{KKPW89}
A.~Katok and G.~Knieper and M.~Pollicott M and H.~Weiss
\newblock Differentiability and analyticity of topological entropy for Anosov and geodesic flows.
\newblock \emph{Inventiones Math. }, 98 581--597, 1989.

\bibitem[Ji12]{Ji12}
M.~Jiang,
\newblock Differentiating potential functions of {SRB} measures on hyperbolic attractors,
\newblock {\em Ergod. Th. {\&} Dynam. Sys.}, 32: 4, 1350--1369, 2012.


\bibitem[KL99]{KL99}
G.~Keller and C.~Liverani.
\newblock Stability of the spectrum for transfer operators.
\newblock \emph{Ann. Sc. Norm. Sup. Pisa}, 28,  141--152, 1999.

\bibitem[La97]{La97}
S. Lang.
\newblock Undergraduate analysis
\newblock {\em Springer-Verlag: New York}, 2nd edition, 1997.

\bibitem[Li95]{Li95}
C.~Liverani,
\newblock Decay of correlations,
\newblock \emph{Annals of Math.}, 142, 239--301, 1995.

\bibitem[Lo90]{Lo90}
A.~Lopes,
\newblock Entropy and large deviation,
\newblock \emph{Nonlinearity}, 2, 527--546, 1990.

\bibitem[OV08]{OV08}
K.~Oliveira and M.~Viana.
\newblock Thermodynamical formalism for an open classes of potentials and
  non-uniformly hyperbolic maps.
\newblock {\em Ergod. Th. {\&} Dynam. Sys.}, 28, 2008.

\bibitem[Pa68]{Pa68}
R. Palais.
\newblock {\em Foundations of global non-linear analysis}.
\newblock Benjamin, New York, 1968.


\bibitem[PP90]{PP90}
W. Parry and M. Pollicott.
\newblock Zeta functions and closed orbits for hyperbolic systems
\newblock {\em Asterisque - (Societ\'e Math\'ematique de France}, vol. 187-188 (1990) 1--268.

\bibitem[Pin11]{Pin11}
V.~Pinheiro.
\newblock Expanding measures.
\newblock {\em Annales de l'Institut Henri Poincar\'e- Analyse Non-Linaire}, 28, 889--939, 2011.

\bibitem[PR07]{PR07}
F.~Przytycki and J.~Rivera-Letelier.
\newblock Statistical properties of topological {C}ollet-{E}ckmann maps.
\newblock {\em Ann. Sci. Ec. Norm. Sup.}, 40, 135--178, 2007.

\bibitem[PR10]{PR10}
F.~Przytycki and J.~Rivera-Letelier.
\newblock Nice inducing schemes and the thermodynamics of rational maps.
\newblock {\em Comm. in Math. Phys.},  301, 661–707, 2011.

\bibitem[RY08]{RY08}
L.~Rey-Bellet and L.-S. Young.
\newblock Large deviations in non-uniformly hyperbolic dynamical systems.
\newblock {\em Ergod. Th. Dynam. Sys.}, 28: 587--612, 2008.

\bibitem[Rue76]{Ru76b}
D.~Ruelle.
\newblock A measure associated with {A}xiom {A} attractors.
\newblock {\em Amer. J. Math.}, 98:619--654, 1976.

\bibitem[Rue97]{Rue97}
D.~Ruelle.
\newblock Differentiation of SRB states.
\newblock {\em Comm. Math. Phys.}, 187, 227--241, 1997.

\bibitem[Rue04]{Rue04}
D.~Ruelle.
\newblock Application of hyperbolic dynamics to physics: some problems and conjectures.
\newblock {\em  Bull. Amer. Math. Soc. (N.S) }, 41, 275--278 (2004).


\bibitem[Rue05]{Rue05}
D.~Ruelle.
\newblock Differentiating the absolutely contimuous invariant measure of an interval map f with respect to f.
\newblock {\em Commun. Math. Phys. }, 258, 445--453 (2005).

\bibitem[Rue09]{Rue09}
D.~Ruelle.
\newblock A review of linear response theory for general differentiable dynamical systems.
\newblock {\em Nonlinearity}, 22, 855--870,2009.

\bibitem[Ru10]{Ru10}
H. H.~Rugh.
\newblock Cones and gauges in complex spaces: Spectral gaps and complex Perron-Frobenius theory.
\newblock {\em Annals of Math.}, 171: 1707--1752, 2010.

\bibitem[Sar99]{Sa99}
O.~Sarig.
\newblock Thermodynamic formalism for countable {M}arkov shifts.
\newblock {\em Ergodic Theory Dynam. Systems}, 19:1565--1593, 1999.

\bibitem[Sar12]{Sar12}
O.~Sarig.
\newblock Introduction to the transfer operator method.
\newblock {\em Second Brazilian School on Dynamical Systems}, Lecture Notes, 2012.

\bibitem[Sin72]{Si72}
Ya. Sinai.
\newblock Gibbs measures in ergodic theory.
\newblock {\em Russian Math. Surveys}, 27:21--69, 1972.

\bibitem[Va08]{Va08}
P.~Varandas.
\newblock Correlation decay and recurrence asymptotics for some robust nonuniformly hyperbolic maps.
\newblock {\em Journal of Stat. Phys.}, 133:813--839, 2008.

\bibitem[VV10]{VV10}
P.~Varandas and M.~Viana.
\newblock Existence, uniqueness and stability of equilibrium states for non-uniformly
expanding maps,
\newblock {\em Annales de l'Institut Henri Poincar\'e- Analyse Non-Linaire}, 27:555--593, 2010.

\bibitem[Va12]{Va12}
P.~Varandas.
\newblock Non-uniform specification and large deviations for weak Gibbs measures.
\newblock {\em Journal of Stat. Phys.},  146, 330--358, 2012.

\bibitem[Wal92]{Wa92}
P.~Walters.
\newblock Differentiability properties of the pressure of a continuous transformation on a compact metric space.
\newblock {\em  J. London Math. Soc.} vol s2--46, 3: 471--481, 1992.

\bibitem[You90]{You90}
L.-S. Young.
\newblock Some large deviations for dynamical systems.
\newblock {\em Trans. Amer. Math. Soc.}, 318:525--543, 1990.

\bibitem[You98]{You98}
L.-S.~Young.
\newblock Statistical properties of dynamical systems with some hyperbolicity.
\newblock {\em Ann. of Math.}, 147, no. 3, 585Ð-650, 1998.


\end{thebibliography}

\end{document}